\newtheorem{theorem}{Theorem}[chapter]
\newtheorem{lemma}[theorem]{Lemma}
\newtheorem{proposition}[theorem]{Proposition}
\newtheorem{corollary}[theorem]{Corollary}
\theoremstyle{definition}
\newtheorem{definition}[theorem]{Definition}
\newtheorem{remark}[theorem]{Remark}
\newtheorem{conjecture}{Conjecture}
\newtheorem{question}{Question}[chapter]
\newcommand{\R}{\mathbb R}
\newcommand{\C}{\mathbb C}
\DeclareMathOperator{\re}{Re}
\DeclareMathOperator{\im}{Im}
\title{Variable Elliptic Structures on the Plane:\\
Transport Dynamics, Rigidity, and Function Theory}
\author{Daniel Alay\'on-Solarz\thanks{Email: danieldaniel@gmail.com}}
\date{March 2026}
\begin{document}
\maketitle

\thispagestyle{empty}

\begin{flushright}
\emph{
A fixed complex structure gives analysis.\\
A moving one gives transport.\\
Rigidity is where the two agree to dance.\\
}

\vspace{0.5em}
\small
(“Shut up and calculate!” — N. David Mermin)
\end{flushright}

\tableofcontents

\chapter*{Preface to v5}

This version corrects an error in the Leibniz defect formula
of Chapter~1 (formerly \S1.7): the operator $\partial_{\bar z}$
is a derivation for all variable elliptic structures, not only
rigid ones. The Rigidity Theorem has been updated accordingly.
Clarification between $p(x)$-analytics and $p$-analytics in the sense of Polozhii has been added.
Chapter~3 (Algebra--Spectral Intertwining) is new.  
The Poincar\'e residual has been renamed to Beltrami residual.

\bigskip
\noindent\textit{March 2026}

\chapter*{Introduction}
\addcontentsline{toc}{chapter}{Introduction}

In the classical theory of complex analysis, the imaginary unit $i$ is treated as
a fixed algebraic constant.
All analytic constructions are built on this assumption, and the underlying
complex structure of the plane is taken to be rigid and homogeneous.
From a geometric point of view, however, the symbol $i$ encodes the choice of a
complex structure, and there is no intrinsic reason for this structure to remain
constant once the geometry of the domain is allowed to vary.

The present work adopts the viewpoint that the imaginary unit should be treated
as intrinsic data: a moving generator $i(x,y)$ of a rank--two real algebra bundle
over the plane.
This generator is defined by the quadratic structure reduction
\[
i(x,y)^2+\beta(x,y)\,i(x,y)+\alpha(x,y)=0,
\]
whose coefficients $\alpha$ and $\beta$ are allowed to vary with position, subject
only to an ellipticity condition.
In this setting, the algebra itself becomes part of the geometry.

Once this perspective is adopted, a basic and unavoidable question arises:
\emph{what does it mean to differentiate the imaginary unit?}
Unlike the classical theory, where $i$ is constant by definition, the variable
framework forces the derivatives $i_x$ and $i_y$ to appear.
These derivatives are not auxiliary objects: they are well--defined algebra--valued
sections whose algebraic resolution is determined entirely by the structure
polynomial.
In particular, the differentiation of $i$ is meaningful before any notion of
holomorphicity or Cauchy--Riemann equation is introduced.

A central quantity emerging from this differentiation is the intrinsic
combination
\[
i_x+i\,i_y.
\]
This expression appears universally once the structure varies.
It measures the incompatibility between differentiation in the base variables
and multiplication in the fibers.
As a consequence, it governs the inhomogeneity of generalized
Cauchy--Riemann systems for the natural
first--order operators associated with the variable structure.
This phenomenon is not imposed by analytic assumptions; it is forced by the
geometry of the moving algebra itself.

A first major outcome of the theory is that the variability of the structure
coefficients $(\alpha,\beta)$ is governed by a transport law that is intrinsic and
unavoidable.
By differentiating the structure reduction and eliminating the derivatives of
$i$, the evolution of $(\alpha,\beta)$ can be expressed in terms of the spectral
parameter
\[
\lambda=\frac{-\beta+i\sqrt{4\alpha-\beta^2}}{2},\qquad \Im\lambda>0.
\]
In full generality, this leads to a (possibly forced) complex inviscid Burgers
equation written in the standard complex algebra.
This transport equation belongs to the structural layer of the theory: it arises
solely from allowing the elliptic structure to vary and does not depend on any
analytic integrability assumptions.

Within this universal transport framework, rigidity occupies a precise and
limited role.
The vanishing condition
\[
i_x+i\,i_y=0
\]
selects exactly the conservative case of the transport law.
In this regime, the Burgers equation becomes unforced, and two independent
obstructions collapse simultaneously: the real Cauchy--Riemann system becomes homogeneous, and the
generator $i(x,y)$ is transported compatibly with its own multiplication law.
Rigidity is therefore not the source of the coefficient dynamics, but a
compatibility condition that restores integrability within an already dynamical
setting.

The explicit $\varepsilon$--family of rigid elliptic structures constructed in
this work by standard techniques provides concrete examples of this conservative regime.
These examples show that rigidity does not force triviality: genuinely
nonconstant structures exist, although ellipticity naturally restricts the
domain of definition.
From the transport viewpoint, these domain restrictions arise because
the ellipticity condition $\Im\lambda>0$ imposes global constraints on
the admissible coefficient data.

On the analytic side, rigidity is exactly the condition under which a coherent
function theory can be recovered.
In the rigid regime, a Cauchy--Pompeiu formula adapted to variable elliptic
structures persists.
The natural Cauchy $1$--form
\[
d\tilde z=dy-i(z)\,dx
\]
is not closed, and its non--closure produces an unavoidable interior correction
term.
Rather than being an artifact, this correction identifies a canonical covariant
first--order operator whose null space provides the appropriate notion of
holomorphicity in the variable setting.
Although covariantly holomorphic sections are not closed under pointwise
multiplication, a scalar gauge transformation restores closure via a natural
weighted product.

The scope of this monograph is deliberately local and explicit.
Although many of the results admit reinterpretation within more abstract
frameworks, we work at the level of concrete first--order computations in order
to keep the geometric and dynamical mechanisms fully visible.
From this perspective, variable elliptic structures reveal a natural hierarchy:
transport first, integrability second, and analytic closure as a distinguished
limiting case.

The investigation of variable elliptic structures was originally suggested to the author by W.~Tutschke and C.~J.~Vanegas, whose work on parameter--depending Clifford Algebras and generalized Cauchy--Pompeiu formulas provided both motivation and guidance.

At an early stage (circa 2012), the author did not yet accept the necessity of
differentiating the imaginary unit itself.
It was H.~De~Bie who emphasized that allowing derivatives of the generator
$i(x,y)$ is not an auxiliary choice but a requirement for internal consistency:
without $i_x$ and $i_y$, differentiation and multiplication cannot be reconciled
once the elliptic structure varies.
This remark proved decisive in clarifying the geometric foundations of the
framework, and the author is grateful to H.~De~Bie for this insight.

Although W.~Tutschke did not see the full development of the theory presented here, his influence is present throughout in the guiding questions and structural viewpoint.

\medskip
\begin{flushright}
\emph{Dedicated to the memory of W.~Tutschke.}
\end{flushright}

\chapter{Variable Elliptic Structures on the Plane: Definitions}\label{chap:def}

\section{Structure Polynomial and Ellipticity}

Let $\Omega \subset \R^2$ be an open domain with coordinates $(x,y)$.
Let $\alpha,\beta \in C^1(\Omega,\R)$ satisfy the ellipticity condition
\begin{equation}\label{eq:ellipticity}
\Delta(x,y) := 4\alpha(x,y) - \beta(x,y)^2 > 0
\quad \text{for all } (x,y)\in\Omega.
\end{equation}

\begin{definition}[Variable elliptic structure]
A \emph{variable elliptic structure} on $\Omega$ is the real rank--two
algebra bundle $\{A_z\}_{z\in\Omega}$ generated by an element $i=i(x,y)$
satisfying the variable-coefficient structure polynomial
\begin{equation}\label{eq:structure-polynomial}
X^2 + \beta(x,y)\,X + \alpha(x,y)
\end{equation}
so we can always apply the structure reduction
\begin{equation}\label{eq:structure}
i^2 + \beta(x,y)\,i + \alpha(x,y) = 0 .
\end{equation}
\end{definition}

Each fiber $A_z$ is canonically identified with
$\R[X]/(X^2+\beta(z)X+\alpha(z))$.
By \eqref{eq:ellipticity}, this algebra is elliptic and hence isomorphic
to $\C$ as a real algebra, though no canonical identification between
fibers is assumed.

We denote the conjugate root by
\[
\hat{i}(z) := -\beta(z) - i(z).
\]

\section{Algebra-Valued Functions}

A function $f:\Omega\to A$ is written uniquely as
\[
f = u + v\,i,
\qquad u,v\in C^1(\Omega,\R).
\]
Addition and multiplication are taken pointwise using the algebra
structure induced by \eqref{eq:structure}.

\section{Differentiating the generator}\label{sec:ixiy}

A basic structural question is whether the moving generator $i=i(x,y)$,
defined by
\[
i^2+\beta(x,y)i+\alpha(x,y)=0,
\]
can be differentiated in a meaningful way. The generator is not independent
data but a chosen $C^1$ branch of a root of this polynomial, whose
coefficients $\alpha,\beta$ lie in $C^1(\Omega)$. Its differentiability
therefore follows by implicit differentiation of the structure relation.

Differentiating with respect to $x$ and $y$ gives
\[
(2i+\beta)\,i_x+\alpha_x+\beta_x i=0,
\qquad
(2i+\beta)\,i_y+\alpha_y+\beta_y i=0.
\]
Whenever $2i+\beta$ is invertible in the fiber algebra (i.e.\ away from
degeneracy of the quadratic), these equations uniquely determine $i_x$ and
$i_y$ as algebra-valued functions depending only on $\alpha,\beta$ and their
first derivatives. In the elliptic regime this invertibility is automatic,
so the derivatives of the generator are explicit consequences of the
structure reduction rather than additional geometric data.
The computation relies only on the invertibility of the element $2i+\beta$
in the fiber algebra. This condition holds automatically in the elliptic
regime and, more generally, in any nondegenerate quadratic algebra away
from zero divisors. What distinguishes the elliptic case is not the
algebraic solvability of these equations but the uniform positivity and
invertibility properties that follow from \eqref{eq:ellipticity}.

\subsection{Invertibility of \texorpdfstring{$2i+\beta$}{2i + beta} in the elliptic regime}

Recall that $i$ satisfies $i^2+\beta i+\alpha=0$ with $\Delta:=4\alpha-\beta^2>0$.
A direct computation using $i^2=-\beta i-\alpha$ gives
\[
(2i+\beta)(-\beta-2i)=\Delta.
\]
Thus $2i+\beta$ is invertible in every fiber and
\begin{equation}\label{eq:inv2i}
(2i+\beta)^{-1}=\frac{-\beta-2i}{\Delta}.
\end{equation}

\subsection{Coefficient decomposition}

Since each elliptic fiber is two-dimensional over $\R$ with basis $\{1,i\}$,
every section admits a unique decomposition in this basis.  In particular,
there exist unique real functions $A_x,B_x,A_y,B_y$ on $\Omega$ such that
\begin{equation}\label{eq:ixiy-decomp}
i_x=A_x+B_x\,i,
\qquad
i_y=A_y+B_y\,i.
\end{equation}
Moreover, \eqref{eq:ixiy-formulas} shows that $A_x,B_x,A_y,B_y$ depend only on
$\alpha,\beta$ and their first derivatives.

\subsection{Solving for \texorpdfstring{$i_x$}{i\_x} and \texorpdfstring{$i_y$}{i\_y}}

Differentiate the structure relation $i^2+\beta i+\alpha=0$ with respect to $x$
and $y$:
\begin{equation}\label{eq:ix-eq}
(2i+\beta)i_x+\alpha_x+\beta_x i=0,
\qquad
(2i+\beta)i_y+\alpha_y+\beta_y i=0.
\end{equation}
Multiplying by $(2i+\beta)^{-1}$ and using \eqref{eq:inv2i} yields the explicit
formulas
\begin{equation}\label{eq:ixiy-formulas}
i_x=-\frac{\alpha_x+\beta_x i}{2i+\beta},
\qquad
i_y=-\frac{\alpha_y+\beta_y i}{2i+\beta}.
\end{equation}

\section{Generalized Cauchy--Riemann Operators}

\begin{definition}[Generalized Cauchy--Riemann operators]
We define
\begin{equation}\label{eq:dbar}
\partial_{\bar z}
:= \frac12\bigl(\partial_x + i\,\partial_y\bigr),
\qquad
\partial_z
:= \frac12\bigl(\partial_x + \hat{i}\,\partial_y\bigr).
\end{equation}
\end{definition}

These operators act on algebra--valued functions using the product in
$A_z$. No derivation property is assumed.

\section{The Variable Cauchy--Riemann System}

Let $f=u+vi$ with $u,v\in C^1(\Omega,\R)$.
A direct computation yields the identity
\begin{equation}\label{eq:CRdecomp}
2\,\partial_{\bar z}f
=
(u_x - \alpha v_y)
+
(v_x + u_y - \beta v_y)\,i
+
v\,(i_x + i\,i_y).
\end{equation}

Since each fiber is elliptic, $\{1,i\}$ is a real basis of $A_z$.
Hence there exist unique real functions $A(x,y)$ and $B(x,y)$ such that
\begin{equation}\label{eq:noise}
i_x + i\,i_y = A + B\,i.
\end{equation}

Substituting \eqref{eq:noise} into \eqref{eq:CRdecomp}, the equation
$\partial_{\bar z}f=0$ is equivalent to the real first--order system
\begin{equation}\label{eq:CRsystem}
\begin{cases}
u_x - \alpha v_y + A v = 0,\\[4pt]
v_x + u_y - \beta v_y + B v = 0.
\end{cases}
\end{equation}

\begin{remark}
The coefficients $A$ and $B$ encode all deviations from the classical
Cauchy--Riemann equations.
At this level they are simply defined by \eqref{eq:noise}, with no
structural assumptions imposed.
\end{remark}

\section{Explicit Formula for the Inhomogeneous Coefficients}

Using the formulas for $i_x$ and $i_y$ obtained in
\eqref{eq:ixiy-formulas}, we compute the intrinsic combination
$i_x + i\,i_y$ in terms of $\alpha$, $\beta$ and their derivatives.

Using ellipticity to invert $2i+\beta$, one obtains
\[
i_x + i\,i_y
=
\frac{
-(\alpha_x-\alpha\beta_y)
-(\beta_x+\alpha_y-\beta\beta_y)i
}{
2i+\beta
}.
\]

Since $(2i+\beta)^{-1}=(-\beta-2i)/\Delta$, a direct reduction yields
\begin{equation}\label{eq:AB}
\begin{aligned}
A &=
\frac{
\beta(\alpha_x-\alpha\beta_y)
-2\alpha(\beta_x+\alpha_y-\beta\beta_y)
}{\Delta},\\[6pt]
B &=
\frac{
2(\alpha_x-\alpha\beta_y)
-\beta(\beta_x+\alpha_y-\beta\beta_y)
}{\Delta}.
\end{aligned}
\end{equation}

Thus the system \eqref{eq:CRsystem} depends only on $\alpha$, $\beta$ and
their first derivatives.

\chapter{Differentiating the Structure: Universal Burgers Transport}
\label{chap:burgers-universal}

\section{Purpose of this Chapter}

The previous chapters introduced variable elliptic structures and derived the
basic algebraic and differential identities that follow from the structure
polynomial
\[
i^2+\beta(x,y)\,i+\alpha(x,y)=0.
\]
Up to this point, no integrability or compatibility assumptions have been
imposed.

The purpose of the present chapter is to describe what is \emph{unavoidable}
once the generator $i=i(x,y)$ is allowed to vary.
In particular, we identify the intrinsic quantity that governs all deviations
from the classical theory and show that the coefficients of the structure
polynomial are necessarily subject to a transport law.

\section{The Intrinsic Obstruction}

Let $\Omega\subset\R^2$ be open, and let $\alpha,\beta\in C^1(\Omega,\R)$ satisfy
the ellipticity condition
\[
4\alpha-\beta^2>0.
\]
Fix a $C^1$ choice of generator $i:\Omega\to A$ solving
\[
i^2+\beta i+\alpha=0.
\]

\begin{definition}[Intrinsic obstruction]
The \emph{intrinsic obstruction} associated with the generator $i$ is the
$A$--valued section
\begin{equation}\label{eq:intrinsic-obstruction}
\mathcal G:=i_x+i\,i_y .
\end{equation}
\end{definition}

This quantity is well defined wherever $i$ is differentiable.
It does not depend on any analytic structure or auxiliary operator.
All deviations from the constant theory enter through $\mathcal G$.

Writing $\mathcal G$ in the moving basis,
\begin{equation}\label{eq:G-decomp}
\mathcal G=G_0+G_1\,i,
\qquad
G_0,G_1\in C^0(\Omega,\R),
\end{equation}
makes explicit the dependence on the chosen branch of the generator.
The pair $(G_0,G_1)$ should be regarded as intrinsic coefficient data attached to
the variable structure.

\section{Eliminating the Derivatives of the Generator}

Differentiating the structure reduction with respect to $x$ and $y$ gives
\[
(2i+\beta)i_x+\alpha_x+\beta_x i=0,
\qquad
(2i+\beta)i_y+\alpha_y+\beta_y i=0.
\]
Adding the second equation multiplied by $i$ to the first and using
$i^2=-\beta i-\alpha$ yields the identity
\begin{equation}\label{eq:elim}
(2i+\beta)\,(i_x+i\,i_y)
=
(-\alpha_x+\alpha\beta_y)
+
(-\beta_x-\alpha_y+\beta\beta_y)\,i .
\end{equation}

Substituting $\mathcal G=G_0+G_1 i$ into \eqref{eq:elim} and reducing in the basis
$\{1,i\}$ gives the following first--order system for the coefficients.

\begin{proposition}[Forced coefficient system]\label{prop:forced-system}
Let $\mathcal G=G_0+G_1 i$ be the intrinsic obstruction associated with a fixed
generator $i$.
Then the coefficients $\alpha$ and $\beta$ satisfy
\begin{equation}\label{eq:forced-coeff}
\begin{cases}
\alpha_x
=
\alpha\,\beta_y
-\beta\,G_0
+2\alpha\,G_1,
\\[6pt]
\beta_x+\alpha_y
=
\beta\,\beta_y
-2G_0
+\beta\,G_1 .
\end{cases}
\end{equation}
\end{proposition}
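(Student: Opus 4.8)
The plan is to derive the system \eqref{eq:forced-coeff} directly from the elimination identity \eqref{eq:elim} by expressing both sides of that identity in the moving basis $\{1,i\}$ and matching coefficients. Concretely, I would start from
\[
(2i+\beta)\,\mathcal G
=
(-\alpha_x+\alpha\beta_y)
+
(-\beta_x-\alpha_y+\beta\beta_y)\,i,
\]
which is already established, and substitute $\mathcal G = G_0 + G_1 i$ on the left. The left-hand side then becomes $(2i+\beta)(G_0 + G_1 i) = 2 G_0 i + 2 G_1 i^2 + \beta G_0 + \beta G_1 i$. Using the structure reduction $i^2 = -\beta i - \alpha$ to eliminate $i^2$, this collapses to a real part and an $i$-part, each linear in $G_0, G_1, \alpha, \beta$. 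Equating the real part to $-\alpha_x + \alpha\beta_y$ and the $i$-part to $-\beta_x - \alpha_y + \beta\beta_y$ produces two scalar equations, which I would then rearrange to isolate $\alpha_x$ in the first and $\beta_x + \alpha_y$ in the second, giving exactly \eqref{eq:forced-coeff}.

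The key steps, in order, are: (i) recall \eqref{eq:elim}, which is justified earlier by differentiating the structure reduction in $x$ and $y$, multiplying the $y$-equation by $i$, adding, and using $i^2 = -\beta i - \alpha$; (ii) expand $(2i+\beta)(G_0 + G_1 i)$ and reduce the $i^2$ term via the structure relation, obtaining $(\beta G_0 - 2\alpha G_1) + (2 G_0 + \beta G_1 - \beta G_1)i$ — wait, more carefully, the coefficient of $i$ is $2G_0 + \beta G_1$ after the reduction of $2G_1 i^2 = -2\beta G_1 i - 2\alpha G_1$ contributes $-2\beta G_1$ to the $i$-coefficient and $-2\alpha G_1$ to the constant term, so the constant term is $\beta G_0 - 2\alpha G_1$ and the $i$-coefficient is $2G_0 + \beta G_1 - 2\beta G_1 = 2G_0 - \beta G_1$; (iii) invoke the uniqueness of the basis decomposition in each elliptic fiber (already used for \eqref{eq:ixiy-decomp} and \eqref{eq:noise}) to equate coefficients on both sides; (iv) solve the resulting pair
\[
\beta G_0 - 2\alpha G_1 = -\alpha_x + \alpha\beta_y,
\qquad
2 G_0 - \beta G_1 = -\beta_x - \alpha_y + \beta\beta_y,
\]
for $\alpha_x$ and for $\beta_x + \alpha_y$, which is immediate algebra.

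I do not expect a genuine obstacle here: the proposition is essentially a bookkeeping consequence of \eqref{eq:elim} together with the uniqueness of the $\{1,i\}$-decomposition. The only point requiring mild care is the sign and coefficient tracking when reducing the $i^2$ term — it is easy to misplace a factor of $2$ or a sign of $\beta$ — so I would double-check the expansion of $(2i+\beta)(G_0+G_1 i)$ against the target system by back-substitution. A secondary subtlety worth a sentence in the writeup is that the decomposition \eqref{eq:G-decomp} and the matching step are legitimate precisely because ellipticity guarantees $\{1,i\}$ is a genuine $\R$-basis of each fiber (equivalently, $2i+\beta$ is invertible with $(2i+\beta)^{-1} = (-\beta-2i)/\Delta$), so no spurious relations among $1$ and $i$ can collapse the two scalar equations into one. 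With that in place, the proof is a short, self-contained computation.
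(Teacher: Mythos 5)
Your proposal is correct and is essentially identical to the paper's own proof: both substitute $\mathcal G=G_0+G_1 i$ into \eqref{eq:elim}, reduce $(2i+\beta)(G_0+G_1 i)$ to $(\beta G_0-2\alpha G_1)+(2G_0-\beta G_1)\,i$ via $i^2=-\beta i-\alpha$, and equate coefficients in the basis $\{1,i\}$. Your self-correction of the $i$-coefficient to $2G_0-\beta G_1$ lands on the right value, and the added remark on why coefficient matching is legitimate is a harmless elaboration of what the paper leaves implicit.
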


\begin{proof}
Using $i^2=-\beta i-\alpha$, one computes
\[
(2i+\beta)(G_0+G_1 i)
=
(\beta G_0-2\alpha G_1)+(2G_0-\beta G_1)\,i.
\]
Equating coefficients with the right--hand side of \eqref{eq:elim} yields
\eqref{eq:forced-coeff}.
\end{proof}

\section{Canonical Spectral Parameter}

Under ellipticity, write
\[
\beta=-2a,
\qquad
\alpha=a^2+b^2,
\qquad
b>0,
\]
and define the complex--valued spectral parameter
\begin{equation}\label{eq:spectral}
\lambda:=a+ib
=
\frac{-\beta+i\sqrt{4\alpha-\beta^2}}{2}.
\end{equation}

This change of variables depends only on the structure reduction.
It introduces no new assumptions and does not involve differentiation. The sign choice $b>0$ selects the branch corresponding to the elliptic
orientation and ensures that $\lambda$ takes values in the upper half–plane.

\section{Universal Transport Law}

\begin{theorem}[Universal transport equation]\label{thm:universal-burgers}
Let $\mathcal G=G_0+G_1 i$ be the intrinsic obstruction.
Then the forced coefficient system \eqref{eq:forced-coeff} is equivalent to the
complex first--order transport equation
\begin{equation}\label{eq:universal-burgers}
\lambda_x+\lambda\,\lambda_y
=
G_0+\lambda\,G_1 .
\end{equation}
\end{theorem}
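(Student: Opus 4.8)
The plan is to substitute $\lambda = a + ib$ with $\beta = -2a$, $\alpha = a^2 + b^2$ directly into the complex equation \eqref{eq:universal-burgers} and match real and imaginary parts against the system \eqref{eq:forced-coeff}. First I would express the derivatives of $\alpha$ and $\beta$ in terms of the derivatives of $a$ and $b$: from $\beta = -2a$ we get $\beta_x = -2a_x$, $\beta_y = -2a_y$, and from $\alpha = a^2 + b^2$ we get $\alpha_x = 2aa_x + 2bb_x$, $\alpha_y = 2aa_y + 2bb_y$. The target is then to rewrite \eqref{eq:forced-coeff} as two real equations in $a_x, b_x, a_y, b_y$ and to verify that the pair coincides with the real and imaginary parts of \eqref{eq:universal-burgers}.

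Next I would expand the left-hand side of \eqref{eq:universal-burgers}. Since $\lambda_x = a_x + ib_x$ and $\lambda_y = a_y + ib_y$, the product $\lambda \lambda_y = (a+ib)(a_y + ib_y) = (aa_y - bb_y) + i(ab_y + ba_y)$. Hence
\[
\lambda_x + \lambda\lambda_y = \bigl(a_x + aa_y - bb_y\bigr) + i\bigl(b_x + ab_y + ba_y\bigr).
\]
The right-hand side is $G_0 + \lambda G_1 = (G_0 + aG_1) + i\,bG_1$. Equating real and imaginary parts gives the two scalar equations
\[
a_x + aa_y - bb_y = G_0 + aG_1, \qquad b_x + ab_y + ba_y = bG_1.
\]
It then remains to show these two equations are together equivalent to \eqref{eq:forced-coeff} after the substitution. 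For the first equation of \eqref{eq:forced-coeff}, I would substitute $\alpha_x = 2aa_x + 2bb_x$, $\alpha = a^2+b^2$, $\beta_y = -2a_y$, $\beta = -2a$, $G_0$, $G_1$ and simplify; for the second, substitute $\beta_x + \alpha_y = -2a_x + 2aa_y + 2bb_y$ and the rest similarly. The claim is that the resulting relations are exactly linear combinations (with coefficients involving $a$ and $b$, and using $b > 0$ to divide where needed) of the two scalar equations displayed above.

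The main obstacle I anticipate is purely bookkeeping: the substitution produces terms quadratic in $a, b$ that must cancel in precisely the right way, and one must be careful that the passage from \eqref{eq:forced-coeff} to the two scalar $\lambda$-equations is genuinely an equivalence (both directions), not merely an implication — this requires checking that the $2\times 2$ change-of-variables matrix from $(\alpha_x, \beta_x + \alpha_y)$-data to $(a_x + aa_y - bb_y,\ b_x + ab_y + ba_y)$-data is invertible, which uses $b > 0$. Once that linear-algebra point is settled, the verification is a direct and finite computation with no conceptual difficulty, so I would present the forward substitution in full and note the invertibility of the coordinate change to secure equivalence.
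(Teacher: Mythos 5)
Your proposal is correct and follows essentially the same route as the paper: substitute $\beta=-2a$, $\alpha=a^2+b^2$, reduce \eqref{eq:forced-coeff} to the two real equations $a_x+aa_y-bb_y=G_0+aG_1$ and $b_x+ab_y+ba_y=bG_1$, and identify these with the real and imaginary parts of \eqref{eq:universal-burgers}. Your explicit remark that the equivalence requires the invertibility of the $2\times 2$ linear relation between the two systems (determinant a nonzero multiple of $b$, guaranteed by ellipticity) is in fact slightly more careful than the paper's one-line ``the converse follows by reversing the computation.''
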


\begin{proof}
Substituting $\alpha=a^2+b^2$ and $\beta=-2a$ into
\eqref{eq:forced-coeff} yields
\[
\begin{cases}
a_x+a a_y-b b_y=G_0+a G_1,\\[4pt]
b_x+a b_y+b a_y=b G_1,
\end{cases}
\]
which are precisely the real and imaginary parts of
$\lambda_x+\lambda\,\lambda_y=G_0+\lambda G_1$.
The converse follows by reversing the computation.
\end{proof}

\begin{remark}[The Burgers equation is written in the standard $\C$]
Although each fiber $A_z$ is (elliptic and hence) real--algebra isomorphic to $\C$,
there is in general no canonical identification between different fibers, and we
deliberately avoid fixing a trivialization.  The spectral parameter
$\lambda=a+ib=\frac{-\beta+i\sqrt{4\alpha-\beta^2}}{2}\in\mathbb C_{+}$ is therefore not
an element of the moving algebra $A_z$, but a scalar function of the real
coefficients $(\alpha,\beta)$.  Consequently $\lambda_x$ and $\lambda_y$ are ordinary
partial derivatives, and the product $\lambda\,\lambda_y$ is the usual complex
multiplication in the fixed target $\C$.  Equivalently, \eqref{eq:universal-burgers}
is nothing but a compact rewriting of the real quasilinear system for $(a,b)$.
The variable fiber multiplication enters in the analytic layer of the theory, but it plays no role
in the coefficient transport law.
\end{remark}

Equation \eqref{eq:universal-burgers} is a forced complex inviscid Burgers equation
written in the standard complex algebra $\C$.
It governs the evolution of the structure coefficients for \emph{all} variable
elliptic structures.

\section{Ellipticity of the Transport System}

The transport equation \eqref{eq:universal-burgers} has the form
\[
(\partial_x+\lambda\,\partial_y)\lambda = G_0+\lambda\,G_1,
\]
where $\lambda=a+ib$ with $b>0$.  It is natural to ask whether this
equation admits a characteristic interpretation in $\R^2$.

Writing \eqref{eq:universal-burgers} as a real first--order system for $(a,b)$,
the principal part in the $y$--direction is governed by the coefficient matrix
\[
A=\begin{pmatrix} a & -b \\ b & a \end{pmatrix},
\]
whose eigenvalues are $a\pm ib=\lambda$ and $\bar\lambda$.  Since $b>0$, the
eigenvalues are strictly complex.  Consequently the real system is \emph{elliptic},
not hyperbolic, and possesses no real characteristic curves.

This is not a defect of the formulation but a structural necessity:
the ellipticity condition $4\alpha-\beta^2>0$ that defines the variable
elliptic structure propagates directly into the ellipticity of the
transport system governing its coefficients.  The structure is elliptic
at every level of the theory.

\begin{remark}[Complex vector field]
The operator $\partial_x+\lambda\,\partial_y$ is a complex vector field on $\R^2$.
Its ellipticity is equivalent to the statement that it has no real null curves,
i.e.\ the equation $dy/dx=\lambda$ admits no real--valued solution.
In the language of several complex variables, $\partial_x+\lambda\,\partial_y$ fails
to be tangent to any real curve precisely because $\operatorname{Im}\lambda\neq 0$.
\end{remark}
\section{Summary}

Allowing the generator $i(x,y)$ to vary forces the appearance of the intrinsic
obstruction $\mathcal G=i_x+i\,i_y$.
Eliminating the derivatives of $i$ from the structure reduction shows that the
coefficients $(\alpha,\beta)$ are necessarily governed by a transport equation
for the spectral parameter $\lambda$.

This transport law is universal.
No assumption has been made concerning algebraic compatibility, homogeneity of
the Cauchy--Riemann system, or existence of integral representations.

In the next chapter we identify the special regime in which the obstruction
vanishes and show that, in this case, the transport becomes conservative and a
coherent analytic function theory emerges.


\chapter{The Algebra--Spectral Intertwining}
\label{ch:intertwining}

The universal transport law of Chapter~\ref{chap:burgers-universal} governs the evolution of the
spectral parameter $\lambda$.  The Cauchy--Riemann operator
$\partial_{\bar z}=\frac{1}{2}(\partial_x+i\,\partial_y)$ of
Chapter~\ref{chap:def} governs the function theory at the algebra level.
This chapter establishes the exact relationship between the two:
the spectral map intertwines the algebra operator with the spectral
transport operator, universally, for all variable elliptic structures.

\section{The spectral map}
\label{sec:spectral-map}

Recall from Chapter~\ref{chap:def} that the fiber algebra $A_z=\R[X]/(X^2+\beta X+\alpha)$
is generated over $\R$ by the moving generator $i=i(x,y)$ satisfying
$i^2+\beta\,i+\alpha=0$.  Under ellipticity ($4\alpha-\beta^2>0$),
$A_z$ is isomorphic to $\C$ as a real algebra.  The spectral parameter
\[
\lambda
=\frac{-\beta+i\sqrt{4\alpha-\beta^2}}{2}
\]
is the image of the generator $i$ under this isomorphism.

\begin{definition}[Spectral map]
\label{def:spectral-map}
The \emph{spectral map} sends an $A$--valued section $W=U+Vi$
($U,V:\Omega\to\R$) to its complex--valued spectral image
\[
W_\lambda:=U+V\lambda\in\C.
\]
\end{definition}

The spectral map is a pointwise real--algebra isomorphism.  Two
properties are immediate.

\begin{proposition}[Injectivity]
\label{prop:spectral-injective}
Under ellipticity, $W_\lambda=0$ implies $W=0$.
\end{proposition}

\begin{proof}
$U+V\lambda=0$ with $\im\lambda>0$ gives $U=0$ and $V=0$.
\end{proof}

\begin{proposition}[Ring homomorphism]
\label{prop:spectral-ring}
For any $A$--valued sections $W=U+Vi$ and $Z=P+Qi$,
\[
(WZ)_\lambda=W_\lambda\cdot Z_\lambda.
\]
\end{proposition}

\begin{proof}
The spectral map $i\mapsto\lambda$ is a real--algebra homomorphism
on each fiber.
\end{proof}

\section{The Leibniz rule for \texorpdfstring{$\partial_{\bar z}$}{dbar z}}
\label{sec:leibniz}

Before establishing the intertwining, we show that the algebra--level
operator is always a derivation.

\begin{proposition}[Universal Leibniz rule]
\label{prop:leibniz}
The operator $\partial_{\bar z}=\frac{1}{2}(\partial_x+i\,\partial_y)$
satisfies the Leibniz rule on $C^1(\Omega;A)$:
\[
\partial_{\bar z}(fg)
=(\partial_{\bar z}f)\,g+f\,(\partial_{\bar z}g)
\]
for all $A$--valued $C^1$ sections $f,g$.
This holds for all variable elliptic structures.
\end{proposition}

\begin{proof}
The structure relation $i^2+\beta\,i+\alpha=0$ is an identity of
$C^1$ functions on $\Omega$.  Differentiating with respect to $x$:
\[
2i\,i_x+\beta\,i_x+\alpha_x+\beta_x\,i=0,
\]
which is the identity $(2i+\beta)i_x=-\alpha_x-\beta_x\,i$.
This is exactly the condition that $\partial_x(i^2)=2i\cdot i_x$
holds in the fiber algebra.  The same argument applies to $\partial_y$.
Hence $\partial_x$ and $\partial_y$ are derivations on $C^1(\Omega;A)$.

Since the fiber algebra is commutative, any $A$--linear combination
of derivations is again a derivation.  In particular,
$\partial_{\bar z}=\frac{1}{2}(\partial_x+i\,\partial_y)$ is a
derivation.
\end{proof}

\section{The universal spectral identity}
\label{sec:universal-identity}

\begin{theorem}[Universal spectral identity]
\label{thm:universal-spectral}
For every $A$--valued $C^1$ section $W=U+Vi$:
\begin{equation}\label{eq:universal-spectral}
2\bigl(\partial_{\bar z}W\bigr)_\lambda
=(W_\lambda)_x+\lambda\,(W_\lambda)_y.
\end{equation}
This holds for all variable elliptic structures.
\end{theorem}

\begin{proof}
From the algebra--level Cauchy--Riemann decomposition, writing
$\mathcal G=G_0+G_1\,i$:
\[
2\,\partial_{\bar z}W
=(U_x-\alpha V_y+VG_0)+(V_x+U_y-\beta V_y+VG_1)\,i.
\]
Its spectral image is
\begin{equation}\label{eq:lhs}
2(\partial_{\bar z}W)_\lambda
=(U_x-\alpha V_y+VG_0)+\lambda(V_x+U_y-\beta V_y+VG_1).
\end{equation}

From $W_\lambda=U+V\lambda$, using $\lambda^2=-\beta\lambda-\alpha$
(the spectral image of the structure relation):
\begin{align}
(W_\lambda)_x+\lambda(W_\lambda)_y
&=U_x+V_x\lambda+V\lambda_x
+\lambda(U_y+V_y\lambda+V\lambda_y)\notag\\
&=(U_x-\alpha V_y)+(V_x+U_y-\beta V_y)\lambda
+V(\lambda_x+\lambda\lambda_y).\label{eq:rhs}
\end{align}
The universal transport law (Chapter~\ref{chap:burgers-universal}) gives
$\lambda_x+\lambda\lambda_y=G_0+G_1\lambda$,
so
\[
V(\lambda_x+\lambda\lambda_y)=VG_0+VG_1\lambda.
\]
Substituting into \eqref{eq:rhs} yields \eqref{eq:lhs}.
\end{proof}

Identity \eqref{eq:universal-spectral} says that the spectral map
sends the algebra--level operator $\partial_{\bar z}$ to the
first--order transport operator
$\frac{1}{2}(\partial_x+\lambda\,\partial_y)$:
\[
\begin{array}{ccc}
C^1(\Omega;A) & \xrightarrow{\;\partial_{\bar z}\;} & C^0(\Omega;A)\\[3pt]
\downarrow\scriptstyle{W\mapsto W_\lambda} & & \downarrow\scriptstyle{W\mapsto W_\lambda}\\[3pt]
C^1(\Omega;\C) & \xrightarrow{\;\frac{1}{2}(\partial_x+\lambda\partial_y)\;}
& C^0(\Omega;\C)
\end{array}
\]
Both horizontal arrows are derivations (the top by
Proposition~\ref{prop:leibniz}, the bottom because it is first--order),
and the vertical arrows are ring homomorphisms
(Proposition~\ref{prop:spectral-ring}).
The diagram commutes for all variable elliptic structures.

\section{Consequences}
\label{sec:consequences}

\begin{corollary}[Algebra holomorphicity implies spectral transport]
\label{cor:holomorphic-transport}
If\/ $\partial_{\bar z}W=0$, then
$(W_\lambda)_x+\lambda(W_\lambda)_y=0$.
\end{corollary}

\begin{proof}
Immediate from \eqref{eq:universal-spectral}: the left side vanishes.
\end{proof}

\begin{corollary}[Kernel correspondence]
\label{cor:kernel}
The spectral map restricts to a ring isomorphism between
$\ker\partial_{\bar z}$ and
$\ker\frac{1}{2}(\partial_x+\lambda\partial_y)$.
\end{corollary}

\begin{proof}
By the intertwining identity, $W$ is in the kernel of
$\partial_{\bar z}$ if and only if $W_\lambda$ is in the kernel of
$\frac{1}{2}(\partial_x+\lambda\partial_y)$.  Both maps preserve
products (Proposition~\ref{prop:spectral-ring} and the Leibniz
rules).
\end{proof}

\section{Commutativity with higher derivatives}
\label{sec:higher-derivatives}

The spectral map also commutes with pure $y$--derivatives.

\begin{proposition}
\label{prop:spectral-dyy}
For every $A$--valued $C^2$ section $W=U+Vi$:
\[
(W_{yy})_\lambda=(W_\lambda)_{yy}.
\]
The same holds for $\partial_{xx}$ and all higher pure partial
derivatives.
\end{proposition}

\begin{proof}
$W_{yy}=U_{yy}+V_{yy}\,i+2V_y\,i_y+V\,i_{yy}$.
Under $i\mapsto\lambda$:
$(W_{yy})_\lambda=U_{yy}+V_{yy}\lambda+2V_y\lambda_y+V\lambda_{yy}$.
From $W_\lambda=U+V\lambda$:
$(W_\lambda)_{yy}=U_{yy}+V_{yy}\lambda+2V_y\lambda_y+V\lambda_{yy}$.
The argument for $\partial_{xx}$ is identical.
\end{proof}

\section{Interpretation}
\label{sec:interpretation}

The results of this chapter are summarized as follows.

\begin{enumerate}
\item The algebra--level operator $\partial_{\bar z}$ is a derivation
on $C^1(\Omega;A)$, universally
(Proposition~\ref{prop:leibniz}).

\item The spectral map $W\mapsto W_\lambda$ is a pointwise ring
isomorphism (Proposition~\ref{prop:spectral-ring}).

\item These two structures are compatible: the spectral map
intertwines $\partial_{\bar z}$ with the first--order transport
operator $\frac{1}{2}(\partial_x+\lambda\partial_y)$
(Theorem~\ref{thm:universal-spectral}).

\item The intertwining extends to second--order operators via
commutativity with pure partial derivatives
(Proposition~\ref{prop:spectral-dyy}).
\end{enumerate}

The obstruction $\mathcal G=i_x+i\,i_y$ appears in the proof of the
intertwining identity through the universal transport law, but it does
not prevent any of these results from holding.  Its role is to force
the transport law and to make the Cauchy--Riemann system
inhomogeneous---not to obstruct the algebraic or intertwining
properties of $\partial_{\bar z}$.

\chapter{Rigidity as the Conservative Regime of Burgers Transport}
\label{chap:rigidity}

\section{From Universal Transport to a Distinguished Regime}

Chapter~\ref{chap:burgers-universal} showed that allowing the elliptic structure
to vary forces a transport law on its spectral data.
Independently of any analytic assumptions, the differentiation of the generator
$i(x,y)$ leads to a (possibly forced) complex Burgers equation governing the
evolution of the coefficients $(\alpha,\beta)$ through the spectral parameter
\[
\lambda=\frac{-\beta+i\sqrt{4\alpha-\beta^2}}{2},\qquad \Im\lambda>0.
\]

At this level, no integrability has been imposed.
The transport equation may include forcing terms, and no closure properties
for the associated function theory are expected.

The purpose of this chapter is to identify a \emph{distinguished subregime}
of this universal transport law in which several independent-looking obstructions
collapse simultaneously.
This regime is characterized by the vanishing of a single intrinsic quantity,
namely
\[
\mathcal G := i_x + i\,i_y .
\]

We call this condition \emph{rigidity}.
It does not generate the transport law; rather, it singles out the
\emph{conservative case} of the already-existing Burgers dynamics.

\section{The Obstruction Field}

Recall that $i(x,y)$ satisfies the structure reduction
\[
i^2+\beta i+\alpha=0,
\]
and that its derivatives $i_x$ and $i_y$ are well-defined algebra-valued
sections by the implicit differentiation of the structure relation
(Chapter~\ref{chap:def}).

\begin{definition}[Obstruction field]
The \emph{obstruction field} associated to a variable elliptic structure is
\[
\mathcal G := i_x + i\,i_y .
\]
\end{definition}

This quantity measures the failure of the generator $i(x,y)$ to be transported
compatibly with its own multiplication law.
It appears naturally and unavoidably once differentiation is allowed.

In Chapter~\ref{chap:burgers-universal}, prescribing $\mathcal G$ was shown to
produce a forced Burgers equation for the spectral parameter.
We now study the special case $\mathcal G\equiv 0$.

\section{Definition of Rigidity}

\begin{definition}[Rigid elliptic structure]
A variable elliptic structure is called \emph{rigid} if its obstruction field
vanishes identically:
\begin{equation}\label{eq:rigidity}
i_x + i\,i_y = 0
\qquad \text{on }\Omega.
\end{equation}
\end{definition}

This condition is intrinsic and purely geometric.
It does not refer to any function space, product rule, or analytic notion of
holomorphicity.

\section{Rigidity and Conservative Transport}

In the notation of Chapter~\ref{chap:burgers-universal}, the universal forced
Burgers equation takes the form
\[
\lambda_x + \lambda\,\lambda_y = G_0 + \lambda\,G_1,
\]

where the forcing terms $(G_0,G_1)$ are the real coefficients of
$\mathcal G=G_0+G_1 i$ in the moving basis, as in \eqref{eq:G-decomp}.

\begin{proposition}[Rigidity as conservation]\label{prop:rigid-conservative}
A variable elliptic structure is rigid if and only if the associated Burgers
transport law is conservative:
\[
\lambda_x + \lambda\,\lambda_y = 0 .
\]
\end{proposition}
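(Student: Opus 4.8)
The plan is to deduce this equivalence directly from the Universal Transport Theorem (Theorem~\ref{thm:universal-burgers}), which already establishes that the forced coefficient system \eqref{eq:forced-coeff} is equivalent to $\lambda_x+\lambda\,\lambda_y=G_0+\lambda\,G_1$. Since rigidity is by definition the condition $\mathcal G=i_x+i\,i_y=0$, and since $\mathcal G=G_0+G_1 i$ in the moving basis with $\{1,i\}$ a genuine real basis of each elliptic fiber, the vanishing of $\mathcal G$ is equivalent to the simultaneous vanishing of the real scalar functions $G_0$ and $G_1$. Substituting $G_0=G_1=0$ into the universal transport equation collapses its right-hand side to zero, yielding $\lambda_x+\lambda\,\lambda_y=0$.

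First I would recall that by \eqref{eq:G-decomp} the decomposition $\mathcal G=G_0+G_1 i$ is unique because each fiber $A_z$ is two-dimensional over $\R$ with basis $\{1,i\}$ (a consequence of ellipticity). Hence $\mathcal G\equiv 0$ on $\Omega$ if and only if $G_0\equiv 0$ and $G_1\equiv 0$ on $\Omega$. This is the only genuinely ``new'' observation needed, and it is immediate. Next I would invoke Theorem~\ref{thm:universal-burgers}: the forcing pair $(G_0,G_1)$ appearing there is exactly this pair of scalar coefficients, so the transport equation reads $\lambda_x+\lambda\,\lambda_y=G_0+\lambda\,G_1$. For the forward implication, rigidity gives $G_0=G_1=0$, so the equation becomes $\lambda_x+\lambda\,\lambda_y=0$, which is the conservative law.

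For the converse, suppose $\lambda_x+\lambda\,\lambda_y=0$ holds on $\Omega$. Then $G_0+\lambda\,G_1=0$ pointwise. Writing $\lambda=a+ib$ with $b>0$ as in \eqref{eq:spectral}, and noting that $G_0,G_1$ are real-valued, the imaginary part of $G_0+\lambda\,G_1=0$ reads $b\,G_1=0$; since $b>0$ by the ellipticity-orientation convention, this forces $G_1\equiv 0$, and then the real part forces $G_0\equiv 0$. Therefore $\mathcal G=G_0+G_1 i\equiv 0$, i.e.\ the structure is rigid. I expect no real obstacle here; the only subtlety worth stating explicitly is the role of the sign convention $b>0$, which is precisely what makes the scalar $\lambda$ genuinely non-real and thus allows one to separate the real forcing coefficients. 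Everything else is a direct appeal to the already-proved equivalence in Theorem~\ref{thm:universal-burgers}.
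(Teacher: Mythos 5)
Your proof is correct and follows essentially the same route as the paper: both reduce the statement to Theorem~\ref{thm:universal-burgers} together with the observation that $\mathcal G\equiv0$ is equivalent to $G_0\equiv G_1\equiv0$ in the moving basis. The only difference is that you spell out the converse direction explicitly --- using $\Im\lambda=b>0$ to separate the real forcing coefficients from $G_0+\lambda G_1=0$ --- a detail the paper's one-line proof leaves implicit, and which is worth having on record.
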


\begin{proof}
By Theorem~\ref{thm:universal-burgers}, the coefficients satisfy
$\lambda_x+\lambda\lambda_y=G_0+\lambda G_1$.
Hence $\mathcal G\equiv0$ (i.e.\ $G_0\equiv G_1\equiv0$) is equivalent to
$\lambda_x+\lambda\lambda_y=0$.
\end{proof}

This establishes rigidity as a \emph{dynamical} notion: it identifies those
variable elliptic structures whose coefficient transport is conservative.

\section{Inhomogeneity collapse}

The real Cauchy--Riemann system derived in Chapter~2 has the form
\[
\begin{cases}
u_x - \alpha v_y + A v = 0,\\
v_x + u_y - \beta v_y + B v = 0,
\end{cases}
\]
where $(A,B)$ are the real coefficients of $\mathcal G$.

\begin{proposition}[Inhomogeneity collapse]
The generalized Cauchy--Riemann system is homogeneous if and only if the
structure is rigid.
\end{proposition}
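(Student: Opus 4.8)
The plan is to unwind the meaning of ``homogeneous'' and match it against the definition of rigidity through the basis decomposition of $\mathcal G$. First I would recall the real Cauchy--Riemann system attached to the structure,
\[
\begin{cases} u_x - \alpha v_y + A v = 0,\\ v_x + u_y - \beta v_y + B v = 0,\end{cases}
\]
together with the fact that, by \eqref{eq:noise}, the pair $(A,B)$ is precisely the coordinate pair of $\mathcal G = i_x + i\,i_y$ in the moving real basis $\{1,i\}$; that is, $\mathcal G = A + B\,i$. Calling the system ``homogeneous'' means that no zeroth-order term in the unknowns $(u,v)$ survives, and since the only such terms are $A v$ and $B v$, homogeneity of the system --- read as an identity valid for \emph{every} $C^1$ section $f = u + v i$ --- is equivalent to the pointwise vanishing $A \equiv 0$ and $B \equiv 0$ on $\Omega$.

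Next I would record the two implications. If the structure is rigid, then $\mathcal G \equiv 0$ by definition, and since $\{1,i\}$ is a real basis of each elliptic fiber, uniqueness of the decomposition in \eqref{eq:noise} forces $A \equiv B \equiv 0$; hence the zeroth-order terms disappear and the system is homogeneous. Conversely, if the system is homogeneous for all sections, then choosing $f$ with $v$ not vanishing at a prescribed point of $\Omega$ (for instance $v \equiv 1$, $u \equiv 0$) shows that $A$ and $B$ vanish at that point, so $A \equiv B \equiv 0$ on $\Omega$, whence $\mathcal G = A + B i \equiv 0$, which is exactly rigidity.

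The argument is essentially a tautology once the bookkeeping is in place, so I do not expect a genuine obstacle; the one point requiring care is the logical quantifier. ``Homogeneous'' must be understood as a property of the system for all admissible data $(u,v)$, not for one fixed solution --- otherwise the equivalence fails, since a nonrigid structure still admits solutions with $v \equiv 0$, for which the system degenerates to $u_x = u_y = 0$ regardless of $A$ and $B$. Making this universal reading explicit is the only subtlety; after that the proof is immediate. One could alternatively invoke the explicit formulas \eqref{eq:AB} for $A$ and $B$ in terms of $\alpha$, $\beta$ and their derivatives, but the basis-decomposition route is cleaner and avoids unnecessary computation.
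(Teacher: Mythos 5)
Your proposal is correct and follows the same route as the paper: identify $(A,B)$ as the coefficients of $\mathcal G$ in the basis $\{1,i\}$ via \eqref{eq:noise}, and observe that homogeneity of the system is exactly $A\equiv B\equiv 0$, which by uniqueness of the basis decomposition is exactly $\mathcal G\equiv 0$. The paper states this equivalence in one line; your only addition is the (sensible) clarification that ``homogeneous'' refers to the vanishing of the zeroth-order coefficients of the system rather than to any single solution.
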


\begin{proof}
Homogeneity is equivalent to $A\equiv B\equiv 0$, which is equivalent to
$\mathcal G\equiv 0$.
\end{proof}

\section{The Rigidity Theorem}

We may now state the rigidity theorem in its correct logical position.

\begin{theorem}[Rigidity Theorem]\label{thm:rigidity}
Let $\alpha,\beta\in C^1(\Omega)$ satisfy ellipticity, and let $i(x,y)$ be a
$C^1$ choice of generator.

The following statements are equivalent:
\begin{enumerate}
\item The obstruction field vanishes: $i_x+i\,i_y=0$.
\item The Burgers transport law is conservative:
     $\lambda_x+\lambda\lambda_y=0$.
\item The generalized real Cauchy--Riemann system is homogeneous.
\end{enumerate}
\end{theorem}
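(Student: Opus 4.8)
The plan is to establish the equivalence of the four statements by collecting the implications already proved in the excerpt and organizing them around statement (1), which is the natural hub. The strategy is a cyclic or star-shaped argument: show (1)$\Leftrightarrow$(2), (1)$\Leftrightarrow$(3), and (1)$\Leftrightarrow$(4) separately, each of which reduces to a result already in hand. No new computation is required; the work is entirely bookkeeping of prior lemmas.

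First I would handle (1)$\Leftrightarrow$(2). This is exactly Proposition~\ref{prop:rigid-conservative}: by Theorem~\ref{thm:universal-burgers} the spectral parameter satisfies $\lambda_x+\lambda\lambda_y=G_0+\lambda G_1$, and since $\{1,i\}$ is a real basis of each fiber, $\mathcal G=G_0+G_1 i$ vanishes identically if and only if $G_0\equiv G_1\equiv 0$, i.e.\ if and only if the forcing on the right-hand side vanishes, which is conservativity of the transport law. Next, (1)$\Leftrightarrow$(3) follows from the Leibniz-defect formula $\mathcal N(f,g)=\tfrac12\,vq\,\mathcal G$ established in Chapter~\ref{chap:def}: if $\mathcal G\equiv 0$ the defect vanishes for all $f,g$, giving the Leibniz rule; conversely, choosing $f=g=1+i$ (so $v=q=1$) forces $\mathcal G\equiv 0$ from the vanishing of the defect. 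Finally, (1)$\Leftrightarrow$(4) follows from the explicit identification \eqref{eq:noise}, \eqref{eq:CRsystem}: the coefficients $(A,B)$ appearing in the real Cauchy--Riemann system are precisely the real components of $\mathcal G$, so homogeneity ($A\equiv B\equiv 0$) is equivalent to $\mathcal G\equiv 0$.

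I do not anticipate a genuine obstacle, since every implication is a direct consequence of a formula or proposition already proved; the only point requiring a small amount of care is making sure the two parametrizations of $\mathcal G$ used in the excerpt — the coefficients $(G_0,G_1)$ of Chapter~\ref{chap:burgers-universal} and the coefficients $(A,B)$ of Chapter~\ref{chap:def} — refer to the same real functions, which is immediate from comparing \eqref{eq:noise} with \eqref{eq:G-decomp}. One should also note explicitly that the equivalences hold pointwise on $\Omega$, so that "vanishes identically" in each statement means the same thing. With these remarks the proof is a short paragraph assembling the cycle (1)$\Rightarrow$(2)$\Rightarrow$(1), (1)$\Rightarrow$(3)$\Rightarrow$(1), (1)$\Rightarrow$(4)$\Rightarrow$(1).
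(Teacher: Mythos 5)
Your proposal is correct and follows essentially the same route as the paper: the paper's proof is exactly the star-shaped assembly around (1), citing Proposition~\ref{prop:rigid-conservative} for (1)$\Leftrightarrow$(2) and the two ``collapse'' propositions (Leibniz defect collapse and inhomogeneity collapse) for (1)$\Leftrightarrow$(3) and (1)$\Leftrightarrow$(4). Your extra remarks---exhibiting the test pair $f=g=1+i$ for the converse of (3) and noting that $(A,B)$ and $(G_0,G_1)$ denote the same real coefficients of $\mathcal G$---are sound and slightly more explicit than the paper, but not a different argument.
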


\begin{proof}
The equivalence $(1)\Leftrightarrow(2)$ was shown above.
Equivalences with $(3)$ follow from the inhomogeneity collapse result.
\end{proof}

\begin{remark}[Rigidity as intrinsic holomorphicity]
The rigidity condition
\[
i_x+i\,i_y=0
\]
admits an equivalent intrinsic reformulation.
It states that the generator $i(x,y)$, viewed as the generator section of the rank--two algebra bundle it defines, is holomorphic with respect to its own
variable elliptic structure:

\[
\partial_{\bar z}i=0
\]

Indeed, since $\partial_{\bar z}=\tfrac12(\partial_x+i\,\partial_y)$ and $i$ is a
$C^1$ section, one has the direct identity
\[
2\,\partial_{\bar z} i = i_x+i\,i_y,
\]
so $\partial_{\bar z}i=0$ is equivalent to \eqref{eq:rigidity}.

In other words, rigidity is exactly the condition that the imaginary unit,
considered as a function with values in its defining algebra, satisfies the
homogeneous generalized Cauchy--Riemann equations associated with that algebra.
This formulation does not rely on external complex coordinates and emphasizes
that rigidity is an internal compatibility condition of the moving structure
with its own differentiation.
\end{remark}

\section{Interpretation}

Rigidity is not the source of the Burgers dynamics.
Rather, it is the special case in which the universal transport law becomes
conservative and analytic obstructions disappear simultaneously.

From this perspective:
\begin{itemize}
\item Burgers transport belongs to the \emph{structural layer} of the theory;
\item rigidity selects its conservative subregime;
\item analytic closure properties are consequences, not axioms.
\end{itemize}

This reordering clarifies the role of rigidity:
it is not an assumption imposed to obtain dynamics, but a compatibility condition that restores \emph{analytic integrability of the variable Cauchy--Riemann calculus} within an already dynamical framework.

\medskip
In chapter~\ref{chap:CP} we show that this conservative regime is precisely what
allows integral representation formulas and a coherent function theory to
re-emerge.

\chapter{Conservative Transport and Explicit Rigid Structures}
\label{chap:rigid-transport-examples}

\section{Purpose and Scope}

In the previous chapters, we established that rigid variable elliptic structures are governed by a spectral transport law which, in the conservative regime, reduces to the classical complex Burgers equation.

The purpose of this chapter is to harness this well--known transport dynamics to construct concrete geometric examples. It is important to clarify the logical relationship between the analysis and the underlying PDE theory:
\begin{itemize}
\item We do not aim to present new results regarding the Burgers equation itself, which is a mature and extensively studied subject in fluid dynamics and mathematical physics.
\item Instead, we adopt a constructive approach: we select elementary, standard solutions of the Burgers equation (specifically, rational profiles) and \emph{interpret} them as the spectral data for a variable elliptic algebra.
\end{itemize}

By doing so, we demonstrate that rigidity is not a vacuous condition satisfied only by constants, but a rich regime capable of supporting non--trivial, moving geometries. To keep the exposition self--contained and consistent with the first--principles approach of this monograph, we briefly recall the necessary method of characteristics before deriving the explicit $\varepsilon$--family of structures.

\section{The Burgers Substrate}

In the rigid regime ($i_x+i\,i_y=0$), the universal structural transport reduces to the inviscid complex Burgers equation for the spectral parameter $\lambda$:
\begin{equation}\label{eq:conservative-burgers}
\lambda_x+\lambda\,\lambda_y=0.
\end{equation}

For the purposes of this monograph, we view Equation \eqref{eq:conservative-burgers} as a \emph{substrate}—a pre--existing dynamical engine that generates the coefficients for our geometric structures. While the global theory of this equation involves shock waves and weak solutions, our geometric focus (ellipticity) restricts us to the domain of smooth solutions prior to gradient catastrophe.

\section{Conservative Burgers Transport}

In the rigid regime, the obstruction field vanishes,
\[
i_x+i\,i_y=0,
\]
and the universal transport law reduces to the conservative complex Burgers equation
for the spectral parameter
\[
\lambda=\frac{-\beta+i\sqrt{4\alpha-\beta^2}}{2}, \qquad \Im\lambda>0:
\]
\begin{equation}
\lambda_x+\lambda\,\lambda_y=0.
\end{equation}

Equation \eqref{eq:conservative-burgers} is written in the \emph{standard} complex
algebra $\mathbb C$.
No reference to the variable elliptic multiplication is involved at this level.

\section{Real Coefficient Representation}

Write
\[
\beta=-2a,
\qquad
\alpha=a^2+b^2,
\qquad
b>0,
\]
so that $\lambda=a+ib$.

Then \eqref{eq:conservative-burgers} is equivalent to the real first--order system
\begin{equation}\label{eq:rigid-real-system}
\begin{cases}
a_x+a\,a_y-b\,b_y=0,\\[4pt]
b_x+a\,b_y+b\,a_y=0.
\end{cases}
\end{equation}

This system governs the evolution of the structure coefficients
$(\alpha,\beta)$ in the rigid regime.
It is a quasilinear first-order system of transport type.

\section{Structural Ansatz for Explicit Solutions}

We now seek explicit, nonconstant solutions of
\eqref{eq:rigid-real-system} that are:
\begin{itemize}
\item elementary;
\item structurally stable;
\item compatible with ellipticity.
\end{itemize}

The minimal nontrivial ansatz is to assume:
\begin{itemize}
\item $a$ depends only on $x$;
\item $b$ is constant.
\end{itemize}

Equivalently, in terms of $(\alpha,\beta)$, we assume
\begin{equation}\label{eq:ansatz}
\alpha=\alpha(x),
\qquad
\beta=\beta(x,y)\ \text{affine in } y.
\end{equation}

This is the simplest configuration that allows nonconstant transport while
preserving explicit solvability.

\section{Reduction to Ordinary Differential Equations}

Rewriting directly in terms of $(\alpha,\beta)$, impose
\[
\alpha=\alpha(x),
\qquad
\beta=K(x)\,y.
\]

Substituting into the rigid coefficient system yields the ODEs
\begin{equation}\label{eq:ODEs}
\begin{cases}
\alpha'(x)=\alpha(x)\,K(x),\\[4pt]
K'(x)=K(x)^2.
\end{cases}
\end{equation}

\section{Explicit Integration}

Equation \eqref{eq:ODEs} integrates explicitly.
For a real parameter $\varepsilon$,
\[
K(x)=\frac{\varepsilon}{1-\varepsilon x},
\]
on the domain $1-\varepsilon x\neq0$.

Substituting into the first equation gives
\[
\alpha(x)=\frac{C}{1-\varepsilon x}.
\]

Up to normalization, we set $C=1$.

\section{The \texorpdfstring{$\varepsilon$}{epsilon}--Family of Rigid Structures}

We arrive at the explicit family
\begin{equation}\label{eq:epsilon-family}
\alpha(x,y)=\frac{1}{1-\varepsilon x},
\qquad
\beta(x,y)=\frac{\varepsilon y}{1-\varepsilon x}.
\end{equation}

\begin{proposition}[The $\varepsilon$--family]\label{prop:epsilon-family}
For every $\varepsilon\in\mathbb R$, the coefficients
\eqref{eq:epsilon-family} define a rigid elliptic structure on the region where both
$1-\varepsilon x\neq0$ and the ellipticity condition
\eqref{eq:elliptic-region} hold.

\end{proposition}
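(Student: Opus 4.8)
My plan is to verify the two independent assertions contained in the statement --- ellipticity and rigidity of the pair $(\alpha,\beta)$ in \eqref{eq:epsilon-family} --- by direct substitution, using the equivalences already established rather than unwinding the definition of $\mathcal G$ through a chosen generator. For ellipticity I would first compute the discriminant:
\[
\Delta=4\alpha-\beta^{2}=\frac{4(1-\varepsilon x)-\varepsilon^{2}y^{2}}{(1-\varepsilon x)^{2}} .
\]
Hence $\Delta>0$ holds precisely on the region \eqref{eq:elliptic-region}, i.e. where $4(1-\varepsilon x)-\varepsilon^{2}y^{2}>0$. I would note that this inequality already forces $1-\varepsilon x>0$ (since $\varepsilon^{2}y^{2}\ge 0$), so on that region the coefficients are genuinely defined and real-analytic, in particular $C^{1}$; and that the region is open and nonempty for every $\varepsilon$ --- it contains a neighbourhood of the origin, where $\Delta=4$, and it is all of $\R^{2}$ when $\varepsilon=0$. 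This is the point that makes the proposition non-vacuous.

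For rigidity, the key observation is that by the Rigidity Theorem (Theorem~\ref{thm:rigidity}, equivalence $(1)\Leftrightarrow(2)$) together with Proposition~\ref{prop:forced-system} specialized to $\mathcal G\equiv 0$, rigidity of a pair $(\alpha,\beta)$ is equivalent to the two scalar identities $\alpha_x=\alpha\,\beta_y$ and $\beta_x+\alpha_y=\beta\,\beta_y$ --- a condition on the coefficients alone, requiring no choice of generator or trivialization of $\{A_z\}$ (equivalently the conservative complex Burgers equation $\lambda_x+\lambda\,\lambda_y=0$, or the real system \eqref{eq:rigid-real-system}). I would then record the first derivatives of \eqref{eq:epsilon-family},
\[
\alpha_x=\frac{\varepsilon}{(1-\varepsilon x)^{2}},\qquad \alpha_y=0,\qquad
\beta_x=\frac{\varepsilon^{2}y}{(1-\varepsilon x)^{2}},\qquad \beta_y=\frac{\varepsilon}{1-\varepsilon x},
\]
and check by inspection that $\alpha\,\beta_y=\varepsilon(1-\varepsilon x)^{-2}=\alpha_x$ and $\beta\,\beta_y=\varepsilon^{2}y(1-\varepsilon x)^{-2}=\beta_x+\alpha_y$. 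An equivalent and even shorter route, already prepared in the preceding sections, is to observe that \eqref{eq:epsilon-family} is exactly the ansatz $\alpha=\alpha(x)$, $\beta=K(x)\,y$ with $K(x)=\varepsilon/(1-\varepsilon x)$, so it suffices to verify the reduced ODEs \eqref{eq:ODEs}, namely $K'=K^{2}$ and $\alpha'=\alpha K$; both are one-line computations.

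I do not expect any analytic difficulty: the two verifications are routine differentiations. The steps needing care are organizational. One is making the domain of validity precise --- showing that the natural region is exactly $\{4(1-\varepsilon x)>\varepsilon^{2}y^{2}\}$, that it is open and nonempty, and that it already entails $1-\varepsilon x\neq 0$, so the two conditions listed in the statement are not independent restrictions but the second is subsumed by the first. The other is the logical point of justifying that rigidity may be tested directly on $(\alpha,\beta)$: this is precisely the branch-independence supplied by the universal transport law (Theorem~\ref{thm:universal-burgers}), and invoking it is what lets the proof avoid any explicit fibrewise identification with the standard $\C$. I would close with the interpretive remark that the excluded locus $1-\varepsilon x=0$ is exactly the finite-time gradient catastrophe (characteristic crossing) of the conservative Burgers substrate of Chapter~\ref{chap:rigid-transport-examples}, so that the elliptic region is nothing other than the region on which the underlying smooth Burgers solution persists.
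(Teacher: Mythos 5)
Your proposal is correct and follows essentially the same route as the paper: the paper's proof simply asserts that $\alpha,\beta$ satisfy the rigid coefficient system ``by construction'' (i.e.\ via the ODE reduction \eqref{eq:ODEs}), which is exactly your secondary route, while your primary route makes this explicit by differentiating \eqref{eq:epsilon-family} and checking $\alpha_x=\alpha\,\beta_y$ and $\beta_x+\alpha_y=\beta\,\beta_y$ directly. Your additional observations --- that the ellipticity inequality \eqref{eq:elliptic-region} already forces $1-\varepsilon x>0$, so the first condition in the statement is redundant, and that rigidity may be tested on the coefficients alone via Theorem~\ref{thm:universal-burgers} --- are correct and supply detail the paper's one-line proof leaves implicit.
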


\begin{proof}
The functions $\alpha,\beta$ satisfy the rigid coefficient system by construction,
and hence correspond to conservative Burgers transport.
\end{proof}

\section{Elliptic Domain}

The ellipticity discriminant is
\[
\Delta=4\alpha-\beta^2
=
\frac{4(1-\varepsilon x)-\varepsilon^2 y^2}{(1-\varepsilon x)^2}.
\]

Thus ellipticity holds precisely on the region
\begin{equation}\label{eq:elliptic-region}
4(1-\varepsilon x)-\varepsilon^2 y^2>0.
\end{equation}

For fixed $x$, this is a bounded interval in $y$.
Degeneracy occurs along the characteristic parabola
\[
\varepsilon^2 y^2=4(1-\varepsilon x).
\]

\section{Interpretation}

The $\varepsilon$--family demonstrates several essential facts:

\begin{itemize}
\item Conservative transport does not force constancy.
\item Rigid elliptic structures can be genuinely nontrivial.
\item Ellipticity imposes geometric restrictions on the domain.
\item Global smoothness is incompatible with unrestricted transport.
\end{itemize}

This family serves as the first canonical explicit model throughout the remainder of
the theory.

The next chapters turn to analysis.
They show that it is precisely the conservative nature of the transport law that
allows integral representation formulas, covariant holomorphicity, and a
coherent function theory to emerge.

\chapter{The Cauchy--Pompeiu Formula in the Rigid Regime}\label{chap:CP}

\section{Standing assumptions and notation}\label{sec:CP-stand}

Let $\Omega\subset\R^2$ be a bounded domain with piecewise $C^1$ boundary, oriented positively.
Let $\alpha,\beta\in C^1(\Omega,\R)$ satisfy the ellipticity condition
\[
\Delta(z):=4\alpha(z)-\beta(z)^2>0\qquad (z\in\Omega).
\]
For each $z=(x,y)\in\Omega$ let $A_z$ be the real two--dimensional commutative algebra
generated by $i(z)$ subject to
\begin{equation}\label{eq:CP-structure}
i(z)^2+\beta(z)\,i(z)+\alpha(z)=0.
\end{equation}
Every $A$--valued $C^1$ section is written uniquely as
\[
f(z)=u(z)+v(z)\,i(z),\qquad u,v\in C^1(\Omega,\R).
\]

\medskip
\noindent\textbf{Conjugation.}
Define the conjugate root and conjugation map on each fiber by
\[
\hat i(z):=-\beta(z)-i(z),\qquad \widehat{u+v\,i(z)}:=u+v\,\hat i(z).
\]
Then $\widehat{\widehat{w}}=w$ and $\widehat{ab}=\hat a\,\hat b$ for all $a,b\in A_z$.

\medskip
\noindent\textbf{Cauchy--Riemann operators.}
For $A$--valued $C^1$ sections $f$ define
\[
\partial_{\bar z} f:=\frac12\bigl(\partial_x f+i(z)\,\partial_y f\bigr),
\qquad
\partial_z f:=\frac12\bigl(\partial_x f+\hat i(z)\,\partial_y f\bigr),
\]
where $\partial_x f,\partial_y f$ are computed by differentiating the section $f=u+v\,i$
in the usual sense:
\[
\partial_x f=u_x+v_x\,i+v\,i_x,\qquad \partial_y f=u_y+v_y\,i+v\,i_y.
\]

\medskip
\textbf{Rigidity.}
Throughout this chapter we assume the structure is \emph{rigid}:
\begin{equation}\label{eq:CP-rigid}
i_x+i\,i_y=0 \qquad \text{in }\Omega.
\end{equation}
Under \eqref{eq:CP-rigid}, $\partial_{\bar z}$ satisfies the Leibniz rule on $C^1(\Omega;A)$:
\[
\partial_{\bar z}(fg)=(\partial_{\bar z} f)g+f(\partial_{\bar z} g).
\]

\medskip
\noindent\textbf{Cauchy 1--form.}
Fix the $A$--valued 1--form
\begin{equation}\label{eq:CP-theta}
d\tilde{z}:=dy-i(z)\,dx .
\end{equation}

\medskip
\noindent\textbf{Normalization element.}
Define the canonical section
\begin{equation}\label{eq:CP-jdef}
j(z):=\frac{2i(z)+\beta(z)}{\sqrt{\Delta(z)}}\in A_z.
\end{equation}
Since $(2i+\beta)^2=\beta^2-4\alpha=-\Delta$, one has
\begin{equation}\label{eq:CP-jsq}
j(z)^2=-1\qquad(z\in\Omega),
\end{equation}
hence $j(z)$ is invertible and $j(z)^{-1}=-j(z)$.

\section{Basepoint-valued integrals and Stokes' theorem}\label{sec:CP-int}

The values $f(z)\in A_z$ lie in different fibers. To state integral identities with
values in a fixed fiber $A_\zeta$, we use a coefficientwise integration convention
with respect to the moving frame $\{1,i(z)\}$.

\begin{definition}[Coefficientwise integrals into a fixed fiber]\label{def:CP-int}
Fix $\zeta\in\Omega$ and write every $A$--valued section as $h(z)=u(z)+v(z)\,i(z)$.

\smallskip
\noindent (i) If $\gamma$ is a piecewise $C^1$ curve in $\Omega$ and $u,v$ are integrable on $\gamma$
with respect to arclength, define
\[
\int_\gamma^{(\zeta)} h\,ds
:=\Bigl(\int_\gamma u\,ds\Bigr)\;+\;\Bigl(\int_\gamma v\,ds\Bigr)\,i(\zeta)\in A_\zeta.
\]

\smallskip
\noindent (ii) If $E\subset\Omega$ is measurable and $u,v$ are integrable on $E$, define
\[
\iint_E^{(\zeta)} h\,dx\,dy
:=\Bigl(\iint_E u\,dx\,dy\Bigr)\;+\;\Bigl(\iint_E v\,dx\,dy\Bigr)\,i(\zeta)\in A_\zeta.
\]

\smallskip
\noindent (iii) If $\gamma$ is a piecewise $C^1$ curve and $\sigma=P\,dx+Q\,dy$ is a real $1$--form along $\gamma$
with $uP,uQ,vP,vQ$ integrable, define
\[
\int_\gamma^{(\zeta)} h\,\sigma
:=\Bigl(\int_\gamma u\,\sigma\Bigr)\;+\;\Bigl(\int_\gamma v\,\sigma\Bigr)\,i(\zeta)\in A_\zeta,
\]
where $\int_\gamma u\,\sigma:=\int_\gamma (uP)\,dx+\int_\gamma (uQ)\,dy$ and similarly for $v$.

\smallskip
\noindent (iv) In particular, for $d\tilde{z}=dy-i(z)\,dx$ we expand in $A_z$:
\[
(u+v i)d\tilde{z}=u\,dy+v i\,dy-u i\,dx-v i^2\,dx.
\]
Using $i^2=-\beta i-\alpha$, we obtain the coefficient decomposition
\begin{equation}\label{eq:CP-h-theta-decomp}
(u+v i)d\tilde{z}=\bigl(u\,dy+v\alpha\,dx\bigr)+\bigl(v\,dy-u\,dx+v\beta\,dx\bigr)\,i,
\end{equation}
hence
\begin{equation}\label{eq:CP-int-theta}
\int_\gamma^{(\zeta)} h\,d\tilde{z}
=
\left(\int_\gamma u\,dy+\int_\gamma v\,\alpha\,dx\right)
+
\left(\int_\gamma v\,dy-\int_\gamma u\,dx+\int_\gamma v\,\beta\,dx\right)i(\zeta).
\end{equation}
\end{definition}

\begin{remark}[Trivialization viewpoint]\label{rem:CP-triv}
The rule $u+v\,i(z)\mapsto u+v\,i(\zeta)$ is a choice of (local) trivialization of
the underlying rank--two real vector bundle $A\to\Omega$ using the moving frame
$\{1,i(z)\}$. It is \emph{not} multiplicative. It is used here only to interpret
integrals and Stokes' theorem coefficientwise: all differential identities are applied
to the real coefficient functions and then assembled in the fixed fiber $A_\zeta$ by
Definition~\ref{def:CP-int}.
\end{remark}
\section{A multiplicative norm}\label{sec:CP-norm}
For $w\in A_z$ define
\[
N_z(w):=w\,\widehat{w}\in\R,\qquad |w|_z:=\sqrt{N_z(w)}.
\]
\begin{lemma}[Explicit formula and multiplicativity]\label{lem:CP-norm-mult}
If $w=u+v\,i(z)$, then
\[
N_z(w)=u^2-\beta(z)\,u v+\alpha(z)\,v^2.
\]
Moreover, $N_z$ is multiplicative:
\[
N_z(ab)=N_z(a)\,N_z(b)\qquad(a,b\in A_z),
\]
hence $|ab|_z=|a|_z\,|b|_z$. In particular, if $w\neq 0$ then $w$ is invertible and
\[
w^{-1}=\frac{\widehat{w}}{N_z(w)},\qquad |w^{-1}|_z=\frac{1}{|w|_z}.
\]
\end{lemma}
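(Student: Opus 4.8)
The plan is to argue entirely inside a single fixed fiber $A_z$, so that no differentiation of the moving frame is involved and everything reduces to real linear algebra in the basis $\{1,i(z)\}$ together with the defining relation $i(z)^2=-\beta(z)\,i(z)-\alpha(z)$. For the explicit formula I would write $\widehat{w}=u+v\,\widehat{i}$ with $\widehat{i}=-\beta-i$, expand $N_z(w)=w\,\widehat{w}=(u+vi)(u+v\widehat{i})$, and collect terms using the two root identities $i+\widehat{i}=-\beta$ and $i\,\widehat{i}=\alpha$ (equivalently, substitute $i^2=-\beta i-\alpha$ once); the $i$-component drops out and one is left with the displayed real quadratic form in $(u,v)$. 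This step also makes the remainder of the lemma meaningful, since it exhibits $N_z$ as genuinely $\R$-valued.

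Multiplicativity is then formal: $A_z$ is commutative and $w\mapsto\widehat{w}$ is an algebra homomorphism with $\widehat{\widehat{w}}=w$ (both recorded in the standing assumptions), so $N_z(ab)=(ab)\widehat{(ab)}=ab\,\widehat{a}\,\widehat{b}=(a\widehat{a})(b\widehat{b})=N_z(a)\,N_z(b)$ as an identity of real numbers, and in the same way one records the auxiliary identity $N_z(\widehat{w})=\widehat{w}\,w=N_z(w)$. For the inverse, as soon as $N_z(w)\neq 0$ the computation $w\cdot\bigl(\widehat{w}/N_z(w)\bigr)=w\widehat{w}/N_z(w)=1$ gives $w^{-1}=\widehat{w}/N_z(w)$, and then $N_z(w^{-1})=N_z(\widehat{w})/N_z(w)^2=1/N_z(w)$.

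The one place an external hypothesis is needed is to upgrade non-vanishing to strict positivity: the quadratic form $u^2+\beta uv+\alpha v^2$ has discriminant $\beta^2-4\alpha=-\Delta(z)<0$ by ellipticity and positive leading coefficient, hence is positive definite on $\R^2$, so $N_z(w)>0$ whenever $w\neq 0$; this both licenses extracting the nonnegative square roots in $|ab|_z=|a|_z|b|_z$ and $|w^{-1}|_z=1/|w|_z$ and shows that $|\cdot|_z$ is a bona fide norm. I do not anticipate any genuine obstacle here; the only thing worth keeping straight is the separation of roles — the explicit formula, multiplicativity, and the inverse formula are pure fiber algebra valid for any nondegenerate quadratic $\R$-algebra away from zero divisors, whereas ellipticity enters solely through the positivity of the norm form.
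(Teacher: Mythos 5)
Your proposal follows essentially the same route as the paper: expand $w\,\widehat{w}$ inside a single fiber and reduce with the structure relation, deduce multiplicativity formally from commutativity and the fact that conjugation is an involutive algebra homomorphism, obtain the inverse from $w\,\widehat{w}=N_z(w)\neq0$, and invoke ellipticity only to get positive definiteness of the norm form from the negative discriminant $\beta^2-4\alpha=-\Delta<0$. Your use of the Vieta identities $i+\hat i=-\beta$, $i\,\hat i=\alpha$ in place of a direct substitution of $i^2=-\beta i-\alpha$ is a cosmetic reorganization, not a different argument, and your separation of which claims are pure quadratic-algebra facts versus where ellipticity enters is accurate.

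One point you should not gloss over: carrying out your own expansion, $(u+vi)(u+v\hat i)=u^2+uv(i+\hat i)+v^2\,i\hat i=u^2-\beta uv+\alpha v^2$, with a \emph{minus} sign on the cross term, not the $+\beta uv$ displayed in the lemma. The paper's own line-by-line computation also lands on $u^2-\beta uv+\alpha v^2$ despite its statement; so you have inherited a sign discrepancy rather than created one, but your claim that the expansion "is left with the displayed real quadratic form" is not what your identities produce. The discrepancy is immaterial to everything else — the discriminant of $u^2\mp\beta uv+\alpha v^2$ is $\beta^2-4\alpha<0$ either way, so positive definiteness, multiplicativity, the inverse formula, and all downstream comparability estimates survive unchanged — but a careful write-up should either correct the sign in the explicit formula or justify it.
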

\begin{proof}
For $w=u+v i$ one has $\widehat{w}=u+v\hat i=u+v(-\beta-i)=(u-\beta v)-v i$.
Thus
\[
w\widehat{w}=(u+v i)\bigl((u-\beta v)-v i\bigr)
=u(u-\beta v)-u v i+v i(u-\beta v)-v^2 i^2.
\]
Since the algebra is commutative, the $i$--terms cancel, and using $i^2=-\beta i-\alpha$ gives
$-v^2 i^2=v^2(\beta i+\alpha)$; the $i$--part cancels, leaving $u^2-\beta u v+\alpha v^2\in\R$.
Multiplicativity follows from $\widehat{ab}=\hat a\,\hat b$:
\[
N_z(ab)=(ab)\widehat{(ab)}=(ab)(\hat a\hat b)=(a\hat a)(b\hat b)=N_z(a)N_z(b).
\]
If $w\neq 0$ then $N_z(w)>0$ since $\beta^2-4\alpha=-\Delta<0$ implies the quadratic form is
positive definite, hence $w^{-1}=\widehat{w}/N_z(w)$ and $|w^{-1}|_z=1/|w|_z$.
\end{proof}
\begin{lemma}[Uniform equivalence on compacts]\label{lem:CP-norm-equiv}
Let $K\Subset\Omega$. There exist $m_K,M_K>0$ such that for all $z\in K$ and all $w=u+v\,i(z)\in A_z$,
\[
m_K\,(u^2+v^2)\le N_z(w)\le M_K\,(u^2+v^2).
\]
Equivalently,
\[
\sqrt{m_K}\,\sqrt{u^2+v^2}\le |w|_z\le \sqrt{M_K}\,\sqrt{u^2+v^2}.
\]
\end{lemma}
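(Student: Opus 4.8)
The plan is to reduce everything to the elementary theory of positive-definite quadratic forms depending continuously on a parameter. By Lemma~\ref{lem:CP-norm-mult}, for $w=u+v\,i(z)$ one has the explicit expression
\[
N_z(w)=u^2+\beta(z)\,uv+\alpha(z)\,v^2=:Q_z(u,v),
\]
which is the quadratic form associated with the symmetric matrix
\[
M(z)=\begin{pmatrix}1 & \tfrac12\beta(z)\\[2pt] \tfrac12\beta(z) & \alpha(z)\end{pmatrix}.
\]
The two facts that make $M(z)$ positive definite are built into the hypotheses: $\det M(z)=\alpha(z)-\tfrac14\beta(z)^2=\tfrac14\Delta(z)>0$ by ellipticity, and $\operatorname{tr}M(z)=1+\alpha(z)>0$ because $\alpha(z)>\tfrac14\beta(z)^2\ge 0$. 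Hence both eigenvalues of $M(z)$ are strictly positive at every point of $\Omega$.

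First I would record the eigenvalues explicitly,
\[
\mu_\pm(z)=\frac{1+\alpha(z)\pm\sqrt{(1-\alpha(z))^2+\beta(z)^2}}{2},
\]
and observe that $z\mapsto\mu_\pm(z)$ are continuous on $\Omega$ since $\alpha,\beta\in C^0(\Omega)$. The Rayleigh quotient inequality then gives, for every $z\in\Omega$ and every $(u,v)\in\R^2$,
\[
\mu_-(z)\,(u^2+v^2)\ \le\ Q_z(u,v)\ \le\ \mu_+(z)\,(u^2+v^2),
\]
so it suffices to control $\mu_-$ from below and $\mu_+$ from above over $K$.

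Then I would invoke compactness. Since $K\Subset\Omega$ and $\mu_-,\mu_+$ are continuous and strictly positive on $\Omega$, the extreme value theorem yields
\[
m_K:=\min_{z\in K}\mu_-(z)>0,\qquad M_K:=\max_{z\in K}\mu_+(z)<\infty,
\]
with $M_K\ge m_K>0$. Combining with the Rayleigh bound gives $m_K(u^2+v^2)\le N_z(w)\le M_K(u^2+v^2)$ for all $z\in K$ and all $w=u+v\,i(z)\in A_z$; taking square roots yields the stated $|w|_z$ form. An equivalent route that avoids computing the eigenvalues: the map $(z,u,v)\mapsto Q_z(u,v)$ is continuous and strictly positive on the compact set $K\times S^1$, hence attains a positive minimum $m_K$ and a finite maximum $M_K$ there, and the general inequality follows by homogeneity, $Q_z(tu,tv)=t^2Q_z(u,v)$.

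I do not expect any genuine obstacle: the one point that must not be skipped is that compactness is used twice — to keep $\Delta(z)$ (equivalently $\mu_-(z)$) bounded away from $0$, and to keep $\alpha(z)$ (equivalently $\mu_+(z)$) bounded above — and both can fail if $K$ is merely closed in $\Omega$ rather than compactly contained. Everything else is the standard equivalence of norms on a two-dimensional real vector space, made quantitative and uniform in the parameter $z$.
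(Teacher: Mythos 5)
Your proof is correct and follows essentially the same route as the paper: the same symmetric matrix $Q(z)$, positive definiteness from the ellipticity condition $\Delta>0$, and compactness of $K$ to bound the eigenvalues uniformly above and away from zero. You merely spell out the eigenvalues and the Rayleigh-quotient step that the paper leaves implicit.
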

\begin{proof}
Let
\[
Q(z)=\begin{pmatrix} 1 & -\beta(z)/2 \\ -\beta(z)/2 & \alpha(z)\end{pmatrix},
\qquad
(u,v)Q(z)(u,v)^T=N_z(u+v i(z)).
\]
By ellipticity, $Q(z)$ is positive definite for every $z$. On $K$, the eigenvalues of $Q(z)$
are uniformly bounded above and below away from $0$ by compactness.
\end{proof}
\section{Kernel and weak singularity}\label{sec:CP-kernel}

Fix $\zeta=(\xi,\eta)\in\Omega$ and define, for $z=(x,y)\in\Omega$,
\begin{equation}\label{eq:CP-Zdef}
Z(z,\zeta):=(y-\eta)-i(z)\,(x-\xi)\in A_z.
\end{equation}

\begin{lemma}[Invertibility and comparability to Euclidean distance]\label{lem:CP-invert}
Let $K\Subset\Omega$. There exist constants $c_K,C_K>0$ such that for all $z,\zeta\in K$ with $z\neq\zeta$,
\[
c_K\,|z-\zeta|\le |Z(z,\zeta)|_z\le C_K\,|z-\zeta|,
\]
where $|z-\zeta|=\sqrt{(x-\xi)^2+(y-\eta)^2}$.
In particular $Z(z,\zeta)$ is invertible for $z\neq\zeta$ and
\[
|Z(z,\zeta)^{-1}|_z \le c_K^{-1}\,|z-\zeta|^{-1}.
\]
\end{lemma}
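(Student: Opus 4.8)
\section*{Proof strategy for Lemma~\ref{lem:CP-invert}}

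The plan is to reduce the statement directly to the uniform norm equivalence of Lemma~\ref{lem:CP-norm-equiv}. The observation that makes this work is that $Z(z,\zeta)$ has a transparent expansion in the moving frame $\{1,i(z)\}$: writing $z=(x,y)$ and $\zeta=(\xi,\eta)$, one has
\[
Z(z,\zeta)=u+v\,i(z),\qquad u=y-\eta,\quad v=-(x-\xi),
\]
so the Euclidean length of the coefficient pair is exactly $u^2+v^2=(y-\eta)^2+(x-\xi)^2=|z-\zeta|^2$. Hence by Lemma~\ref{lem:CP-norm-mult},
\[
N_z\bigl(Z(z,\zeta)\bigr)=u^2+\beta(z)\,uv+\alpha(z)\,v^2,
\]
which is the positive-definite quadratic form $(u,v)Q(z)(u,v)^T$ appearing in Lemma~\ref{lem:CP-norm-equiv}.

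First I would apply Lemma~\ref{lem:CP-norm-equiv} to the given compact $K$, producing constants $m_K,M_K>0$ with $m_K(u^2+v^2)\le N_z(w)\le M_K(u^2+v^2)$ for all $z\in K$ and all $w=u+v\,i(z)$. Specializing to $w=Z(z,\zeta)$ and using $u^2+v^2=|z-\zeta|^2$ gives
\[
m_K\,|z-\zeta|^2\le N_z\bigl(Z(z,\zeta)\bigr)\le M_K\,|z-\zeta|^2 .
\]
Taking square roots and setting $c_K:=\sqrt{m_K}$, $C_K:=\sqrt{M_K}$ yields the claimed comparability $c_K|z-\zeta|\le |Z(z,\zeta)|_z\le C_K|z-\zeta|$. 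For invertibility, whenever $z\neq\zeta$ we get $N_z(Z(z,\zeta))\ge m_K|z-\zeta|^2>0$, so Lemma~\ref{lem:CP-norm-mult} applies and gives $Z(z,\zeta)^{-1}=\widehat{Z(z,\zeta)}\,/\,N_z(Z(z,\zeta))$ together with $|Z(z,\zeta)^{-1}|_z=1/|Z(z,\zeta)|_z\le c_K^{-1}|z-\zeta|^{-1}$.

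There is no real obstacle here: all the analytic content sits in Lemma~\ref{lem:CP-norm-equiv}, whose proof (positive definiteness of the Gram matrix $Q(z)$ together with compactness of $K$) is already in place. The only points meriting a word of care are bookkeeping: the sign in the frame coefficient $v=-(x-\xi)$, which is irrelevant since only $u^2+v^2$ enters; and the role of the hypothesis $\zeta\in K$, which is used only to ensure $\zeta\in\Omega$ (so that $Z(z,\zeta)$ is defined and $|z-\zeta|$ is finite), the norm estimate itself depending on $z\in K$ alone.
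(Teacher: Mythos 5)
Your proof is correct and follows exactly the paper's argument: write $Z=u+v\,i(z)$ with $u=y-\eta$, $v=-(x-\xi)$ so that $u^2+v^2=|z-\zeta|^2$, apply Lemma~\ref{lem:CP-norm-equiv} for the two-sided bound, and use Lemma~\ref{lem:CP-norm-mult} for the inverse estimate. No differences of substance.
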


\begin{proof}
Write $Z=u+v\,i(z)$ with $u=y-\eta$ and $v=-(x-\xi)$, so $\sqrt{u^2+v^2}=|z-\zeta|$.
Apply Lemma~\ref{lem:CP-norm-equiv}. The inverse bound follows from Lemma~\ref{lem:CP-norm-mult}.
\end{proof}

\section{\texorpdfstring{$\partial_{\bar z}$}{dbar-holomorphic}-holomorphicity of the kernel under rigidity}\label{sec:CP-dbar}

\begin{lemma}[$\partial_{\bar z}$ of $Z$]\label{lem:CP-dbarZ}
For all $z\neq\zeta$,
\[
2\,\partial_{\bar z}^z Z(z,\zeta)=-(x-\xi)\bigl(i_x(z)+i(z)i_y(z)\bigr).
\]
In particular, under rigidity \eqref{eq:CP-rigid},
\[
\partial_{\bar z}^z Z(z,\zeta)=0\qquad(z\neq\zeta).
\]
\end{lemma}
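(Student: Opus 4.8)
The plan is a direct first--order computation, with no need for the structure relation $i^2=-\beta i-\alpha$ and no appeal to commutativity of the fibers. First I would write $Z(z,\zeta)=u+v\,i(z)$ with the real coefficients $u=y-\eta$ and $v=-(x-\xi)$, and differentiate the section $Z$ in the base variables using the ordinary product rule applied coefficientwise, exactly as in the formulas $\partial_x f=u_x+v_x\,i+v\,i_x$ and $\partial_y f=u_y+v_y\,i+v\,i_y$ from Chapter~\ref{chap:CP}. This is legitimate since $\partial_x$ and $\partial_y$ are honest partial derivatives of algebra--valued maps (it is the Leibniz rule for $\partial_{\bar z}$, not for $\partial_x,\partial_y$, that can fail), and $i_x,i_y$ are well defined by the implicit differentiation of the structure relation established in Chapter~\ref{chap:def}. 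With $u_x=0$, $u_y=1$, $v_x=-1$, $v_y=0$ this gives
\[
\partial_x Z=-i(z)-(x-\xi)\,i_x(z),\qquad \partial_y Z=1-(x-\xi)\,i_y(z).
\]

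Next I would substitute into the definition $2\,\partial_{\bar z}^z Z=\partial_x Z+i(z)\,\partial_y Z$ and collect terms according to their dependence on $(x-\xi)$. The part that is constant in $(x-\xi)$ is $-i(z)+i(z)\cdot 1=0$, and the part proportional to $(x-\xi)$ is $-\,i_x(z)-i(z)\,i_y(z)$. Hence
\[
2\,\partial_{\bar z}^z Z(z,\zeta)=-(x-\xi)\bigl(i_x(z)+i(z)\,i_y(z)\bigr),
\]
which is the claimed identity; it in fact holds on all of $\Omega$, the restriction $z\neq\zeta$ being relevant only for the later invertibility of $Z$. The final assertion is then immediate: under rigidity \eqref{eq:CP-rigid} the bracket $i_x+i\,i_y$ vanishes identically on $\Omega$, so $\partial_{\bar z}^z Z(z,\zeta)=0$ for every $z\neq\zeta$.

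There is essentially no obstacle here. The one point that deserves a moment's care is the cancellation of the two $i(z)$ terms, which relies on their opposite signs: the $-i(z)$ arises from differentiating $i(z)(x-\xi)$ with respect to $x$, while the $+i(z)$ arises from the $i(z)\,\partial_y(y-\eta)=i(z)\cdot 1$ contribution. Everything else is bookkeeping of the product rule and the definition of $\partial_{\bar z}$.
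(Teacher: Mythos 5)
Your computation is correct and is essentially identical to the paper's proof: both differentiate $Z=(y-\eta)-i(z)(x-\xi)$ coefficientwise to get $\partial_x Z=-i-(x-\xi)i_x$ and $\partial_y Z=1-(x-\xi)i_y$, then substitute into $2\partial_{\bar z}Z=\partial_x Z+i\,\partial_y Z$ and observe the cancellation of the $\pm i$ terms. Your added remark that the identity holds on all of $\Omega$ (the restriction $z\neq\zeta$ mattering only for invertibility of $Z$) is accurate but does not change the argument.
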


\noindent Here $\partial_{\bar z}^z$ means $\partial_{\bar z}$ acting in the $z$-variable.

\begin{proof}
Using $\partial_{\bar z}=\frac12(\partial_x+i\partial_y)$ and $Z=(y-\eta)-i(x-\xi)$,
\[
\partial_x Z=-(i_x)(x-\xi)-i,\qquad \partial_y Z=1-(i_y)(x-\xi),
\]
hence
\[
2\partial_{\bar z} Z=\partial_x Z+i\,\partial_y Z
=-(x-\xi)(i_x+i\,i_y).
\]
\end{proof}

\begin{lemma}[$\partial_{\bar z}$ of the inverse]\label{lem:CP-dbarZinv}
Assume rigidity. Then for $z\neq\zeta$,
\[
\partial_{\bar z}^z\bigl(Z(z,\zeta)^{-1}\bigr)=0.
\]
\end{lemma}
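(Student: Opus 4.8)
The plan is to differentiate the identity $Z(z,\zeta)^{-1}\cdot Z(z,\zeta)=1$ in the $z$-variable, applying the operator $\partial_{\bar z}^z$ and using that under rigidity $\partial_{\bar z}$ obeys the Leibniz rule (established in the Standing Assumptions, equation \eqref{eq:CP-rigid} and the Leibniz property it entails). Writing $W:=Z(z,\zeta)^{-1}$, we have $\partial_{\bar z}^z(W Z)=0$ since the right-hand side is the constant section $1$. By the Leibniz rule this expands to
\[
(\partial_{\bar z}^z W)\,Z + W\,(\partial_{\bar z}^z Z)=0 .
\]
By Lemma~\ref{lem:CP-dbarZ}, rigidity forces $\partial_{\bar z}^z Z(z,\zeta)=0$ for $z\neq\zeta$, so the second term vanishes and we are left with $(\partial_{\bar z}^z W)\,Z=0$.

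The second step is to cancel the factor $Z$. By Lemma~\ref{lem:CP-invert}, for $z\neq\zeta$ the element $Z(z,\zeta)$ is invertible in the fiber $A_z$ (indeed $|Z|_z\ge c_K|z-\zeta|>0$ on any compact $K$ containing $z,\zeta$, and Lemma~\ref{lem:CP-norm-mult} gives the inverse explicitly). Multiplying $(\partial_{\bar z}^z W)\,Z=0$ on the right by $Z^{-1}$ — using commutativity of $A_z$ — yields $\partial_{\bar z}^z W=0$, which is exactly the claim $\partial_{\bar z}^z\bigl(Z(z,\zeta)^{-1}\bigr)=0$.

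One should note that the differentiation is legitimate: $W=Z^{-1}=\widehat Z/N_z(Z)$ by Lemma~\ref{lem:CP-norm-mult}, and since $Z$ is a $C^1$ section (its coefficients $y-\eta$ and $-(x-\xi)$ are smooth, and $i(z)$ is $C^1$) with $N_z(Z)>0$ away from $\zeta$, the quotient $W$ is a genuine $C^1$ section there, so $\partial_{\bar z}^z W$ is well defined. The only point requiring a little care — and the one I would flag as the main (mild) obstacle — is ensuring that the Leibniz rule is being applied to $C^1$ sections in its stated form: this is exactly why the computation is confined to $z\neq\zeta$, where $W$ is $C^1$, rather than attempted globally. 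Everything else is a direct consequence of the two preceding lemmas and the commutative algebra structure.
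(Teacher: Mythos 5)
Your proposal is correct and follows essentially the same argument as the paper: differentiate $Z\cdot Z^{-1}=1$ with the Leibniz rule granted by rigidity, kill $\partial_{\bar z}Z$ via Lemma~\ref{lem:CP-dbarZ}, and cancel the invertible factor $Z$. Your additional remark verifying that $Z^{-1}$ is genuinely $C^1$ away from $\zeta$ is a sensible precaution the paper leaves implicit, but it does not change the route.
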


\begin{proof}
On $\Omega\setminus\{\zeta\}$ we have $Z\cdot Z^{-1}=1$.
By the Leibniz rule for $\partial_{\bar z}$,
\[
0=\partial_{\bar z}(ZZ^{-1})=(\partial_{\bar z} Z)Z^{-1}+Z(\partial_{\bar z} Z^{-1}).
\]
By Lemma~\ref{lem:CP-dbarZ} and rigidity, $\partial_{\bar z} Z=0$ off the diagonal.
Thus $Z(\partial_{\bar z} Z^{-1})=0$, and multiplying by $Z^{-1}$ yields $\partial_{\bar z} Z^{-1}=0$.
\end{proof}

\section{Exterior calculus identities}\label{sec:CP-forms}

\begin{lemma}[Wedge identity]\label{lem:CP-wedge}
For every $A$--valued $C^1$ function $g$,
\[
dg\wedge d\tilde{z} = 2(\partial_{\bar z} g)\,dx\wedge dy.
\]
\end{lemma}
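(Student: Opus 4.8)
The plan is to compute $dg \wedge d\tilde z$ directly by expanding both factors in the coordinate basis $dx, dy$, carrying the $A$-valued coefficients along, and then recognizing the resulting coefficient of $dx \wedge dy$ as $2\partial_{\bar z} g$. Write $g$ as a $C^1$ section; then $dg = g_x\, dx + g_y\, dy$, where $g_x = \partial_x g$ and $g_y = \partial_y g$ are the $A$-valued derivatives defined in Section~\ref{sec:CP-stand} (including the $v\,i_x$ and $v\,i_y$ terms, though these need not be expanded). The Cauchy $1$-form is $d\tilde z = dy - i(z)\, dx$, also $A$-valued.

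First I would form the wedge product $dg \wedge d\tilde z = (g_x\, dx + g_y\, dy) \wedge (dy - i\, dx)$. Expanding using bilinearity and the anticommutativity $dx \wedge dy = -\,dy \wedge dx$, $dx\wedge dx = dy \wedge dy = 0$, the surviving terms are $g_x\, dx \wedge dy$ from the first factor times $dy$, and $-\,g_y\, i\, dy \wedge dx = g_y\, i\, dx \wedge dy$ from the second. Here one must be careful about the placement of the scalar-valued forms relative to the $A$-valued coefficients: since $dx, dy$ are ordinary real $1$-forms, they commute with the algebra elements, so the product $g_y \wedge (i\,dy)$ is unambiguously $g_y\, i\, (dx\wedge dy \text{ type})$ — but note the factor order in the algebra is $g_y$ times $i$, whereas $\partial_{\bar z}g = \tfrac12(g_x + i\, g_y)$ has $i$ on the \emph{left}. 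This is the one place to be attentive.

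Combining, $dg \wedge d\tilde z = (g_x + g_y\, i)\, dx \wedge dy$. To match $2\partial_{\bar z} g = g_x + i\, g_y$, I would invoke commutativity of the fiber algebra $A_z$ (emphasized repeatedly in the excerpt — each $A_z$ is commutative), so $g_y\, i = i\, g_y$ and hence $g_x + g_y\, i = g_x + i\, g_y = 2\partial_{\bar z} g$. Therefore $dg \wedge d\tilde z = 2(\partial_{\bar z} g)\, dx \wedge dy$, as claimed. The main (and only) obstacle is bookkeeping: keeping straight which objects are scalar $1$-forms versus $A$-valued $0$-forms and getting the sign from $dy \wedge dx = -\,dx \wedge dy$ right; commutativity of $A_z$ then removes any residual ordering ambiguity. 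No rigidity assumption is needed for this lemma — it is purely a bilinear-algebra computation valid for any $C^1$ section $g$.
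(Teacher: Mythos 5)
Your proposal is correct and follows essentially the same route as the paper's proof: expand $dg=g_x\,dx+g_y\,dy$, wedge with $dy-i\,dx$, use antisymmetry to collect the coefficient of $dx\wedge dy$, and identify it with $2\partial_{\bar z}g$. Your extra care about the ordering $g_y\,i$ versus $i\,g_y$, resolved by commutativity of the fiber algebra, is a point the paper's one-line computation passes over silently, but the argument is the same.
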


\begin{proof}
Write $dg=g_x\,dx+g_y\,dy$ and $d\tilde{z}=dy-i\,dx$. Then
\[
dg\wedge d\tilde{z}=(g_xdx+g_y dy)\wedge(dy-i dx)=(g_x+i g_y)\,dx\wedge dy
=2(\partial_{\bar z} g)\,dx\wedge dy.
\]
\end{proof}

\begin{lemma}[Non-closure of $d\tilde{z}$]\label{lem:CP-dtheta}
With $d\tilde{z}=dy-i\,dx$ one has
\[
dd\tilde{z}=i_y\,dx\wedge dy.
\]
\end{lemma}

\begin{proof}
Since $d(dy)=0$ and $d(dx)=0$,
\[
dd\tilde{z}=-d(i)\wedge dx=-(i_xdx+i_y dy)\wedge dx=i_y\,dx\wedge dy.
\]
\end{proof}

\section{Residue normalization}\label{sec:CP-residue}

Fix $\zeta=(\xi,\eta)\in\Omega$ and let $\varepsilon>0$ be small so that $\overline{B_\varepsilon(\zeta)}\subset\Omega$.
Define the frozen objects in the fixed algebra $A_\zeta$:
\[
Z_0(z,\zeta):=(y-\eta)-i(\zeta)(x-\xi)\in A_\zeta,
\qquad
d\tilde{z}_0:=dy-i(\zeta)\,dx.
\]
Let $J:=j(\zeta)\in A_\zeta$, so that $J^2=-1$ by \eqref{eq:CP-jsq}.

\begin{lemma}[Frozen residue]\label{lem:CP-frozen-res}
With $J=j(\zeta)$,
\[
\int_{\partial B_\varepsilon(\zeta)}\frac{1}{Z_0(z,\zeta)}\,d\tilde{z}_0
=2\pi\,J\qquad(\varepsilon>0).
\]
\end{lemma}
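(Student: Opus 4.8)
The statement is a residue computation in the fixed algebra $A_\zeta$, where everything has been frozen at the basepoint. Since $A_\zeta \cong \C$ as a real algebra via $i(\zeta) \mapsto i$, the cleanest approach is to transport the whole integral to the standard complex plane through this isomorphism, perform the classical residue computation there, and then translate back. First I would fix the real-linear isomorphism $\varphi\colon A_\zeta \to \C$ sending $1 \mapsto 1$ and $i(\zeta) \mapsto \lambda(\zeta) = a + ib \in \C_+$ (the spectral parameter at $\zeta$), which is an algebra isomorphism precisely because $\lambda(\zeta)^2 + \beta(\zeta)\lambda(\zeta) + \alpha(\zeta) = 0$. Under $\varphi$, the frozen linear section becomes $\varphi(Z_0(z,\zeta)) = (y-\eta) - \lambda(\zeta)(x-\xi)$, and the frozen form becomes $\varphi(d\tilde z_0) = dy - \lambda(\zeta)\,dx$. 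The key algebraic observation is $\varphi(d\tilde z_0) = -\lambda(\zeta)\,d\bigl((x-\xi) - \lambda(\zeta)^{-1}(y-\eta)\bigr)$, but it is more symmetric to note that $w := (y-\eta) - \lambda(\zeta)(x-\xi)$ is, up to the constant factor $-\lambda(\zeta)$, the standard holomorphic coordinate on the line $\{ (x-\xi) + \text{(something)}(y-\eta) \}$; concretely, $dw = dy - \lambda(\zeta)\,dx = \varphi(d\tilde z_0)$ exactly. Thus in $\C$ the integral is literally $\int_{\partial B_\varepsilon(\zeta)} w^{-1}\,dw$.

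The main step is then to evaluate $\oint w^{-1}\,dw$ where $w = (y-\eta) - \lambda(\zeta)(x-\xi)$ and the contour is the Euclidean circle $\partial B_\varepsilon(\zeta)$ traversed counterclockwise in the $(x,y)$-plane. Here is where care is needed: $w$ is a $\C$-linear-in-$(x-\xi,y-\eta)$ but \emph{not} the identity map $\C \to \C$, so the change of variables $w = w(x,y)$ maps the circle to an ellipse, not a circle, and one must check the winding number. I would argue as follows: the map $(x-\xi, y-\eta) \mapsto w$ is $\R$-linear with matrix determinant equal to $-\Im \lambda(\zeta) = -b < 0$ (computing $w = (y-\eta) - (a+ib)(x-\xi)$, real part $-a(x-\xi) + (y-\eta)$, imaginary part $-b(x-\xi)$, Jacobian determinant $(-a)(0) - (1)(-b) = b > 0$ — so in fact orientation-preserving; I will recompute the sign carefully). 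Since the linear map is invertible and orientation-preserving, it sends the positively oriented circle to a positively oriented ellipse enclosing $w = 0$ once. By homotopy invariance of $\oint w^{-1}\,dw$ under deformations of the contour within $\C \setminus \{0\}$ (the form $w^{-1}\,dw$ being closed there), the integral over this ellipse equals the integral over a standard circle, which is $2\pi i$. Therefore $\int_{\partial B_\varepsilon(\zeta)} \varphi(Z_0^{-1}\,d\tilde z_0) = 2\pi i$, independent of $\varepsilon$.

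Finally I would pull back through $\varphi^{-1}$. Since the coefficientwise integral of Definition~\ref{def:CP-int} is exactly the componentwise integral in the frame $\{1, i(\zeta)\}$, and $\varphi$ is the $\R$-linear frame isomorphism $1 \mapsto 1$, $i(\zeta) \mapsto i$, applying $\varphi^{-1}$ to the scalar answer $2\pi i \in \C$ gives $2\pi \, i(\zeta) \in A_\zeta$. It remains to identify $i(\zeta)$ with $J = j(\zeta)$: but this is \emph{false} in general — $j(\zeta) = (2i(\zeta)+\beta(\zeta))/\sqrt{\Delta(\zeta)}$ is the normalized square root of $-1$, whereas $i(\zeta)$ itself squares to $-\alpha(\zeta) - \beta(\zeta)i(\zeta)$, not $-1$. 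This is the genuine obstacle: the factor of $-\lambda(\zeta)$ I glossed over must be tracked. Redoing the bookkeeping, $\varphi(Z_0) = (y-\eta) - \lambda(\xi)(x-\xi)$ is the holomorphic coordinate whose differential is $\varphi(d\tilde z_0)$, so the $\C$-integral is honestly $2\pi i$; but the pullback $\varphi^{-1}(2\pi i)$ is the element of $A_\zeta$ mapping to $i \in \C$ under $\varphi$, and by construction of $\varphi$ that element is the unique $w_0 \in A_\zeta$ with $w_0^2 = -1$ and $w_0$ on the correct side of the orientation — which is exactly $j(\zeta) = J$, since $\varphi(j(\zeta)) = \varphi((2i(\zeta)+\beta(\zeta))/\sqrt{\Delta}) = (2\lambda(\zeta) + \beta(\zeta))/\sqrt{\Delta} = (2(a+ib) - 2a)/(2b) = i$. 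So the correct isomorphism is the one normalized by $j(\zeta) \mapsto i$, not $i(\zeta) \mapsto i$, and with that normalization the frozen coordinate $Z_0$ is still the right holomorphic variable (one checks $\varphi(d\tilde z_0) = \varphi(dy - i(\zeta)dx)$ is still a nonzero constant multiple of $d\varphi(Z_0)$, the constant dropping out of the logarithmic integral). I expect verifying this last normalization — that $Z_0^{-1} d\tilde z_0$ pushes forward to exactly $w^{-1} dw$ with no stray constant, or tracking the stray constant and seeing it cancel — to be the one place demanding genuine care; everything else is the classical $\oint dz/z = 2\pi i$ together with invariance under orientation-preserving linear reparametrization of the contour.
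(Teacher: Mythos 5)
Your argument is correct and is essentially the paper's own proof in different clothing: the paper writes $Z_0=X+JY$ with $X=(y-\eta)+\tfrac{\beta}{2}(x-\xi)$, $Y=-\tfrac{\sqrt{\Delta}}{2}(x-\xi)$, which are exactly the real and imaginary parts of your $w=\varphi(Z_0)$, and both proofs reduce to $\oint dW/W=2\pi i$ over the positively oriented ellipse obtained as the image of the circle under an orientation-preserving linear map. Your mid-proof detour (briefly identifying $\varphi^{-1}(2\pi i)$ with $2\pi\,i(\zeta)$ before correcting to $2\pi\,j(\zeta)$ via $\varphi(j(\zeta))=i$) is resolved correctly, and there is in fact no stray constant since $\varphi(d\tilde z_0)=d\varphi(Z_0)$ exactly.
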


\begin{proof}
Work entirely in the fixed algebra $A_\zeta$.
Using $J=(2i(\zeta)+\beta(\zeta))/\sqrt{\Delta(\zeta)}$ one can solve for $i(\zeta)$:
\begin{equation}\label{eq:CP-i-in-J}
i(\zeta)=\frac{-\beta(\zeta)+\sqrt{\Delta(\zeta)}\,J}{2}.
\end{equation}
Define real linear coordinates (with $\beta(\zeta),\Delta(\zeta)$ frozen):
\[
X:=(y-\eta)+\frac{\beta(\zeta)}{2}(x-\xi),\qquad
Y:=-\frac{\sqrt{\Delta(\zeta)}}{2}(x-\xi).
\]
Then, using \eqref{eq:CP-i-in-J},
\[
Z_0=(y-\eta)-i(\zeta)(x-\xi)=X+JY,
\qquad
d\tilde{z}_0=dy-i(\zeta)\,dx=dX+J\,dY.
\]
Hence in the $J^2=-1$ algebra,
\[
\frac{1}{Z_0}\,d\tilde{z}_0=\frac{1}{X+JY}\,(dX+J\,dY)=\frac{d(X+JY)}{X+JY}.
\]
Let $W:=X+JY$. The map $(x,y)\mapsto(X,Y)$ is an invertible real linear map, so
$\partial B_\varepsilon(\zeta)$ is carried to a positively oriented ellipse winding once about the origin.
Therefore
\[
\int_{\partial B_\varepsilon(\zeta)}\frac{1}{Z_0}\,d\tilde{z}_0
=\int_{\Gamma}\frac{dW}{W}=2\pi J.
\]
\end{proof}

\subsection*{A continuity lemma for the residue limit}

\begin{lemma}[Continuity of inversion in coefficients]\label{lem:CP-inv-cont}
Let $K\Subset\Omega$, and let $m_K>0$ be as in Lemma~\ref{lem:CP-norm-equiv}.
Then for every $z\in K$ and every $w=u+v\,i(z)\in A_z$ with $u^2+v^2=1$ one has
\[
|w|_z\ge \sqrt{m_K}.
\]
Moreover, the coefficient map
\[
K\times\{(u,v)\in\R^2:\;u^2+v^2=1\}\ni(z,u,v)\longmapsto
w^{-1}=\frac{\widehat{w}}{N_z(w)}
\]
is continuous.
\end{lemma}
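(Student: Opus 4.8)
The plan is to reduce everything to Lemma~\ref{lem:CP-norm-equiv} together with the explicit inversion formula of Lemma~\ref{lem:CP-norm-mult}; no new estimate is required. For the lower bound I would simply specialize the uniform equivalence $m_K(u^2+v^2)\le N_z(w)\le M_K(u^2+v^2)$ to the unit circle $u^2+v^2=1$: this gives $N_z(w)\ge m_K$ for every $z\in K$, hence $|w|_z=\sqrt{N_z(w)}\ge\sqrt{m_K}$. The point of recording this is that $N_z(w)$ stays bounded away from $0$ on the compact set $K\times S^1$, which is exactly what makes the second assertion meaningful and is the only ingredient the continuity argument needs from ellipticity.

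For the continuity statement I would first pin down the meaning of ``the coefficient map'' via the moving frame $\{1,i(z)\}$, i.e.\ the (nonmultiplicative) trivialization of Remark~\ref{rem:CP-triv}: writing $w=u+v\,i(z)$, Lemma~\ref{lem:CP-norm-mult} gives $\widehat{w}=(u-\beta(z)v)-v\,i(z)$ and $N_z(w)=u^2+\beta(z)uv+\alpha(z)v^2$, so
\[
w^{-1}=\frac{u-\beta(z)v}{u^2+\beta(z)uv+\alpha(z)v^2}\;+\;\frac{-v}{u^2+\beta(z)uv+\alpha(z)v^2}\,i(z).
\]
Thus the coefficient map sends $(z,u,v)$ to the pair of real numbers $\bigl(\tfrac{u-\beta(z)v}{N},\,\tfrac{-v}{N}\bigr)$ with $N=N_z(w)$, and continuity is to be checked for this $\R^2$-valued map on $K\times S^1$.

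With this setup the verification is immediate: the numerators $u-\beta(z)v$ and $-v$ are continuous in $(z,u,v)$ because $\beta$ (and $\alpha$) are continuous on $\Omega$; the denominator $N=u^2+\beta(z)uv+\alpha(z)v^2$ is continuous for the same reason and, by the first part, satisfies $N\ge m_K>0$ on $K\times S^1$, so it never vanishes there; hence each coefficient is a quotient of continuous functions with non-vanishing denominator, and is therefore continuous on the compact domain $K\times S^1$.

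The main --- and essentially only --- obstacle is bookkeeping rather than analysis: one must be careful that the target of the map is interpreted \emph{coefficientwise} in the moving frame (where ``continuous'' just means continuity of two real functions), not as a genuine section of the fiber bundle $A\to\Omega$, for which a notion of continuity would presuppose a connection. Once this convention is fixed and the lower bound from part one is invoked to keep the denominator away from zero, nothing beyond Lemma~\ref{lem:CP-norm-equiv} and the algebraic identities already established is needed.
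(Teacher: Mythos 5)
Your proposal is correct and follows essentially the same route as the paper: specialize Lemma~\ref{lem:CP-norm-equiv} to the unit circle to get $N_z(w)\ge m_K$, then observe that $w^{-1}=\widehat{w}/N_z(w)$ is coefficientwise a quotient of continuous functions of $(z,u,v)$ with denominator bounded below by $m_K>0$. Your explicit display of the two real coefficient functions and the remark about interpreting continuity in the moving frame are just a more detailed spelling-out of what the paper states in one sentence.
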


\begin{proof}
By Lemma~\ref{lem:CP-norm-equiv}, for all $z\in K$ and $u^2+v^2=1$,
\[
N_z(u+v\,i(z))\ge m_K(u^2+v^2)=m_K,
\]
hence $|w|_z=\sqrt{N_z(w)}\ge \sqrt{m_K}$.
The formula $w^{-1}=\widehat{w}/N_z(w)$ expresses inversion as a rational function of
$(u,v)$ and the continuous coefficients $\alpha(z),\beta(z)$; since the denominator
$N_z(w)$ is bounded below by $m_K>0$ on the stated domain, the map is continuous.
\end{proof}

\begin{lemma}[Variable residue limit]\label{lem:CP-res}
Fix $\zeta\in\Omega$ and assume rigidity. Then
\[
\lim_{r\to0^+}\;
\int_{\partial B_r(\zeta)}^{(\zeta)}\frac{1}{Z(z,\zeta)}\,d\tilde{z}
=
2\pi\,j(\zeta)\in A_\zeta.
\]
\end{lemma}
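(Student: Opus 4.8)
The plan is to remove the $r^{-1}$ singularity by an angular rescaling, which turns the problem into passing a limit under an integral, and then to identify the limit with the frozen residue of Lemma~\ref{lem:CP-frozen-res}. First I would parametrize $\partial B_r(\zeta)$ by $\theta\mapsto z_r(\theta)=(\xi+r\cos\theta,\ \eta+r\sin\theta)$, $\theta\in[0,2\pi]$. Then $u:=y-\eta=r\sin\theta$ and $v:=-(x-\xi)=-r\cos\theta$, so $Z(z_r(\theta),\zeta)=u+v\,i(z_r(\theta))=r\bigl(\sin\theta-\cos\theta\,i(z_r(\theta))\bigr)$, while along the circle $dx=-r\sin\theta\,d\theta$, $dy=r\cos\theta\,d\theta$, hence $d\tilde z=r\bigl(\cos\theta+\sin\theta\,i(z_r(\theta))\bigr)d\theta$. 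Working in the fibre $A_{z_r(\theta)}$ and using $(rw)^{-1}=r^{-1}w^{-1}$, the two factors of $r$ cancel exactly and the restriction of the kernel to $\partial B_r(\zeta)$ is
\[
\frac{1}{Z(z,\zeta)}\,d\tilde z
=\bigl(\sin\theta-\cos\theta\,i(z_r(\theta))\bigr)^{-1}\bigl(\cos\theta+\sin\theta\,i(z_r(\theta))\bigr)\,d\theta,
\]
an expression depending on $r$ only through the point $z_r(\theta)$. Writing this element of $A_{z_r(\theta)}$ in the moving frame $\{1,i(z_r(\theta))\}$ produces real coefficients $c_0(z_r(\theta),\theta),\,c_1(z_r(\theta),\theta)$, and by Definition~\ref{def:CP-int}(iii),
\[
\int_{\partial B_r(\zeta)}^{(\zeta)}\frac{1}{Z(z,\zeta)}\,d\tilde z
=\Bigl(\int_0^{2\pi}c_0(z_r(\theta),\theta)\,d\theta\Bigr)
+\Bigl(\int_0^{2\pi}c_1(z_r(\theta),\theta)\,d\theta\Bigr)\,i(\zeta).
\]

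Next I would pass to the limit. Fix a compact neighbourhood $K\Subset\Omega$ of $\zeta$, so $\overline{B_r(\zeta)}\subset K$ for small $r$. Since $(\sin\theta)^2+(-\cos\theta)^2=1$, Lemma~\ref{lem:CP-norm-equiv} (equivalently Lemma~\ref{lem:CP-inv-cont}) gives $|\sin\theta-\cos\theta\,i(z)|_z\ge\sqrt{m_K}$ for all $z\in K$, so the inverse in the displayed kernel is well defined and uniformly bounded, and the coefficient map $(z,\theta)\mapsto(c_0(z,\theta),c_1(z,\theta))$ is a rational function of $\sin\theta,\cos\theta$ and the continuous coefficients $\alpha(z),\beta(z)$ with denominator bounded below by $m_K$; hence it is continuous, therefore uniformly continuous and bounded, on the compact $K\times[0,2\pi]$. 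Because $\sup_\theta|z_r(\theta)-\zeta|=r\to0$, uniform continuity yields $c_k(z_r(\theta),\theta)\to c_k(\zeta,\theta)$ uniformly in $\theta$, so the limit enters each integral:
\[
\lim_{r\to0^+}\int_{\partial B_r(\zeta)}^{(\zeta)}\frac{1}{Z(z,\zeta)}\,d\tilde z
=\Bigl(\int_0^{2\pi}c_0(\zeta,\theta)\,d\theta\Bigr)+\Bigl(\int_0^{2\pi}c_1(\zeta,\theta)\,d\theta\Bigr)\,i(\zeta).
\]
Here $c_0(\zeta,\theta),c_1(\zeta,\theta)$ are the frame coefficients of $\bigl(\sin\theta-\cos\theta\,i(\zeta)\bigr)^{-1}\bigl(\cos\theta+\sin\theta\,i(\zeta)\bigr)$, which is exactly $Z_0(z,\zeta)^{-1}\,d\tilde z_0$ along $\partial B_1(\zeta)$ in the same parametrization; since everything now lies in the single fibre $A_\zeta$, the right-hand side is the ordinary $A_\zeta$-valued integral $\int_{\partial B_1(\zeta)}Z_0^{-1}\,d\tilde z_0$, which equals $2\pi\,j(\zeta)$ by Lemma~\ref{lem:CP-frozen-res}. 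This gives the assertion.

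The step I expect to be the main obstacle is not computational but structural: $Z(z,\zeta)^{-1}$ and $Z_0(z,\zeta)^{-1}$ live in different fibres, so one cannot subtract integrands, and the argument must be arranged so that the singular factor $r^{-1}$ is cancelled \emph{before} any comparison — which is precisely what the angular rescaling achieves. After that, the only quantitative input is the uniform positive-definiteness of the fibre norms on $K$ (Lemma~\ref{lem:CP-norm-equiv}), which simultaneously keeps the inverses bounded and makes the coefficient map uniformly continuous, rendering the limit passage routine. (Note that rigidity \eqref{eq:CP-rigid} is not used in this lemma; it enters the Cauchy–Pompeiu theory only later, through Lemmas~\ref{lem:CP-dbarZ}–\ref{lem:CP-dbarZinv}, in the Stokes step.)
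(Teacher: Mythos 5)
Your proposal is correct and follows essentially the same route as the paper's proof: the same angular parametrization cancelling the two factors of $r$, the same appeal to the uniform lower bound on the fibre norm (Lemmas~\ref{lem:CP-norm-equiv}--\ref{lem:CP-inv-cont}) to control the inverse, and the same identification of the limiting integral with the frozen residue of Lemma~\ref{lem:CP-frozen-res}; the only cosmetic difference is that you invoke uniform continuity on a compact set where the paper invokes dominated convergence. Your closing observation that rigidity is not actually used in this lemma is accurate.
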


\begin{proof}
Parametrize $\partial B_r(\zeta)$ by $z(t)=(\xi+r\cos t,\eta+r\sin t)$, $t\in[0,2\pi]$.
Then
\[
dx=-r\sin t\,dt,\qquad dy=r\cos t\,dt,
\]
and
\[
Z(z(t),\zeta)=r\bigl(\sin t-i(z(t))\cos t\bigr),\qquad
d\tilde{z}=r\bigl(\cos t+i(z(t))\sin t\bigr)\,dt.
\]
Hence, for $r>0$,
\begin{equation}\label{eq:CP-res-param}
\frac{1}{Z}\,d\tilde{z}
=\bigl(\sin t-i(z(t))\cos t\bigr)^{-1}\bigl(\cos t+i(z(t))\sin t\bigr)\,dt
\in A_{z(t)}.
\end{equation}

Fix a compact $K\Subset\Omega$ containing $\overline{B_{r_0}(\zeta)}$ for some small $r_0$.
For $t\in[0,2\pi]$ the coefficients $(u,v)=(\sin t,-\cos t)$ satisfy $u^2+v^2=1$.
Applying Lemma~\ref{lem:CP-inv-cont} to $w(t)=\sin t-i(z(t))\cos t=u+v\,i(z(t))$
shows that $w(t)$ is uniformly invertible for $z(t)\in K$ and that
\[
t\longmapsto \bigl(\sin t-i(z(t))\cos t\bigr)^{-1}\bigl(\cos t+i(z(t))\sin t\bigr)
\]
has real coefficients depending continuously on $(r,t)$ for $0<r\le r_0$.

Since $i(z(t))\to i(\zeta)$ uniformly in $t$ as $r\to0$, the coefficients of the integrand in
\eqref{eq:CP-res-param} converge uniformly to those of the frozen integrand
\[
\bigl(\sin t-i(\zeta)\cos t\bigr)^{-1}\bigl(\cos t+i(\zeta)\sin t\bigr)\in A_\zeta.
\]
By Lemma~\ref{lem:CP-inv-cont} the inverse is uniformly bounded for $0<r\le r_0$,
and multiplication in each fiber is continuous in $(z,t)$. Hence the real coefficient
functions of the integrand in \eqref{eq:CP-res-param} are uniformly bounded on
$[0,2\pi]\times(0,r_0]$, and are therefore dominated by an $L^1$ function of $t$
(indeed, by a constant). By dominated convergence applied to the real coefficient functions and the definition of
$\int^{(\zeta)}$, we obtain
\[
\lim_{r\to0^+}\int_{\partial B_r(\zeta)}^{(\zeta)}\frac{1}{Z}\,d\tilde{z}
=
\int_{0}^{2\pi}\bigl(\sin t-i(\zeta)\cos t\bigr)^{-1}\bigl(\cos t+i(\zeta)\sin t\bigr)\,dt.
\]
The right-hand side equals the frozen residue integral
$\int_{\partial B_1(\zeta)} Z_0^{-1}d\tilde{z}_0$ (rescaling cancels), hence equals
$2\pi j(\zeta)$ by Lemma~\ref{lem:CP-frozen-res}.
\end{proof}

\section{Cauchy--Pompeiu Representation Theorem}\label{sec:CP-theorem}

\begin{theorem}[Cauchy--Pompeiu Representation Theorem]\label{thm:CP}
Assume ellipticity and rigidity \eqref{eq:CP-rigid}.
Let $f\in C^1(\Omega;A)$. Then for every $\zeta\in\Omega$,
\begin{equation}\label{eq:CP-formula}
f(\zeta)=
\frac{1}{2\pi\,j(\zeta)}
\int_{\partial\Omega}^{(\zeta)}\frac{f(z)}{Z(z,\zeta)}\,d\tilde{z}
-
\frac{1}{\pi\,j(\zeta)}
\iint_{\Omega}^{(\zeta)}\frac{\partial_{\bar z} f(z)+\frac12\,f(z)\,i_y(z)}{Z(z,\zeta)}\,dx\,dy.
\end{equation}
\end{theorem}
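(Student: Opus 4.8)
The plan is to run the classical Cauchy--Pompeiu argument, excising a small disk around the singularity and applying Stokes' theorem on the punctured domain. Fix $\zeta\in\Omega$, let $\varepsilon>0$ be small enough that $\overline{B_\varepsilon(\zeta)}\subset\Omega$, and set $\Omega_\varepsilon:=\Omega\setminus\overline{B_\varepsilon(\zeta)}$. I work with the $A$--valued $1$--form $\omega:=\bigl(f(z)/Z(z,\zeta)\bigr)\,d\tilde z$, which is of class $C^1$ on $\Omega\setminus\{\zeta\}$ because $Z(\cdot,\zeta)$ is invertible off the diagonal (Lemma~\ref{lem:CP-invert}), and I integrate it into the fixed fiber $A_\zeta$ via the coefficientwise convention of Definition~\ref{def:CP-int}. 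The boundary of $\Omega_\varepsilon$, oriented positively, consists of $\partial\Omega$ together with $\partial B_\varepsilon(\zeta)$ traversed clockwise, so the coefficientwise form of Stokes' theorem gives
\[
\int_{\partial\Omega}^{(\zeta)}\omega \;-\;\int_{\partial B_\varepsilon(\zeta)}^{(\zeta)}\omega \;=\;\iint_{\Omega_\varepsilon}^{(\zeta)} d\omega .
\]

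Next I would identify the interior integrand. Writing $\omega=g\,d\tilde z$ with $g=fZ^{-1}$ and combining the wedge identity $dg\wedge d\tilde z=2(\partial_{\bar z} g)\,dx\wedge dy$ (Lemma~\ref{lem:CP-wedge}) with the non--closure identity $dd\tilde z=i_y\,dx\wedge dy$ (Lemma~\ref{lem:CP-dtheta}) yields $d\omega=(2\,\partial_{\bar z} g+g\,i_y)\,dx\wedge dy$. In the rigid regime $\partial_{\bar z}$ is a derivation and $\partial_{\bar z}(Z^{-1})=0$ off the diagonal (Lemmas~\ref{lem:CP-dbarZ} and~\ref{lem:CP-dbarZinv}), so $\partial_{\bar z} g=(\partial_{\bar z} f)\,Z^{-1}$, and using commutativity of the fibers this collapses to
\[
d\omega=\frac{2\bigl(\partial_{\bar z} f+\tfrac12\,f\,i_y\bigr)}{Z}\,dx\wedge dy ,
\]
which will produce the interior term of \eqref{eq:CP-formula} after division by $2\pi j(\zeta)$. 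By Lemma~\ref{lem:CP-invert}, together with the fact that $\partial_{\bar z} f$ and $i_y$ are continuous hence bounded on a compact neighbourhood of $\zeta$, the $2$--form $d\omega$ is dominated by a constant multiple of $|z-\zeta|^{-1}$, which is locally integrable in the plane; hence $d\omega\in L^1(\Omega)$ and $\iint_{\Omega_\varepsilon}^{(\zeta)}d\omega\to\iint_{\Omega}^{(\zeta)}d\omega$ as $\varepsilon\to0$.

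For the residue term I would mirror the proof of Lemma~\ref{lem:CP-res} with the extra factor $f$ carried along. Parametrizing $\partial B_\varepsilon(\zeta)$ by $z(t)=\zeta+\varepsilon(\cos t,\sin t)$, the scale $\varepsilon$ cancels and the integrand becomes $f(z(t))\bigl(\sin t-i(z(t))\cos t\bigr)^{-1}\bigl(\cos t+i(z(t))\sin t\bigr)$; its moving--frame coefficients are continuous in $(\varepsilon,t)$, uniformly invertible (Lemma~\ref{lem:CP-inv-cont}), and, since $i(z(t))\to i(\zeta)$ and $f(z(t))\to f(\zeta)$ uniformly in $t$, converge uniformly to those of the corresponding frozen integrand in $A_\zeta$. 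Dominated convergence together with the frozen residue computation (Lemma~\ref{lem:CP-frozen-res}), after pulling the constant $f(\zeta)$ out of the resulting $A_\zeta$--valued integral, gives $\int_{\partial B_\varepsilon(\zeta)}^{(\zeta)}\omega\to 2\pi\,j(\zeta)\,f(\zeta)$. Passing to the limit in the Stokes identity and multiplying by $(2\pi j(\zeta))^{-1}=-\tfrac{1}{2\pi}j(\zeta)$ (which exists since $j(\zeta)^2=-1$) rearranges to \eqref{eq:CP-formula}.

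The main obstacle I anticipate is the bookkeeping forced by the moving fiber: coefficientwise Stokes is a statement about the real components of $\omega$ relative to the frame $\{1,i(z)\}$, so one has to check that the coefficientwise exterior derivative of $\omega$ genuinely reproduces the $A_z$--valued expression $(2\,\partial_{\bar z} g+g\,i_y)\,dx\wedge dy$ extracted from Lemmas~\ref{lem:CP-wedge} and~\ref{lem:CP-dtheta}. This is precisely the step where the rigidity relation $i_x=-i\,i_y$ and the explicit formula $i_y=-(\alpha_y+\beta_y i)/(2i+\beta)$ of \eqref{eq:ixiy-formulas} must be used, and it is the interior correction $g\,dd\tilde z$---present only because $d\tilde z$ fails to be closed---that makes the identity balance and hence singles out the covariant operator $f\mapsto\partial_{\bar z} f+\tfrac12 f i_y$ appearing in the formula. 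Everything else---the local integrability of the $|z-\zeta|^{-1}$ singularity and the uniform invertibility estimates needed to pass to the limit on $\partial B_\varepsilon(\zeta)$---is routine once Lemmas~\ref{lem:CP-norm-equiv}, \ref{lem:CP-invert} and~\ref{lem:CP-inv-cont} are available.
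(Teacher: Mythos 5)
Your proposal is correct and follows essentially the same route as the paper: excise a disk, apply coefficientwise Stokes to $\omega=(f/Z)\,d\tilde z$, identify $d\omega$ via the wedge and non--closure lemmas together with the Leibniz rule and $\partial_{\bar z}(Z^{-1})=0$, and pass to the limit using the residue normalization. The only cosmetic difference is in the small--circle term, where the paper splits off $f(\zeta)$ and bounds $\int(f(z)-f(\zeta))/Z\,d\tilde z$ by $O(r)$ while you carry $f$ inside a dominated--convergence argument mirroring Lemma~\ref{lem:CP-res}; both yield $2\pi\,j(\zeta)f(\zeta)$.
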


\begin{proof}
Fix $\zeta\in\Omega$ and let $\Omega_r:=\Omega\setminus B_r(\zeta)$ with $r>0$ small.
Define the $A$--valued $1$--form on $\Omega\setminus\{\zeta\}$:
\[
\omega(z):=\frac{f(z)}{Z(z,\zeta)}\,d\tilde{z}.
\]
Write $\omega=g\,d\tilde{z}$ where $g:=fZ^{-1}$. Then
\[
d\omega = dg\wedge d\tilde{z} + g\,dd\tilde{z}.
\]
By Lemma~\ref{lem:CP-wedge} and Lemma~\ref{lem:CP-dtheta},
\[
d\omega
=
2(\partial_{\bar z} g)\,dx\wedge dy
+
g\,i_y\,dx\wedge dy.
\]
Next compute $\partial_{\bar z} g=\partial_{\bar z}(fZ^{-1})$. In general, $\partial_{\bar z}$ satisfies Leibniz, so
\[
\partial_{\bar z} g=(\partial_{\bar z} f)Z^{-1}+f\,\partial_{\bar z}(Z^{-1}).
\]
By Lemma~\ref{lem:CP-dbarZinv}, $\partial_{\bar z}(Z^{-1})=0$ for $z\neq \zeta$, hence
\[
\partial_{\bar z} g=(\partial_{\bar z} f)Z^{-1}.
\]
Therefore on $\Omega_r$,
\[
d\omega
=
\left(2(\partial_{\bar z} f)Z^{-1}+fZ^{-1}i_y\right)\,dx\,dy
=
2\,\frac{\partial_{\bar z} f+\frac12 f\,i_y}{Z}\,dx\,dy.
\]

Apply Stokes' theorem to the real coefficient $1$--forms defining $\int^{(\zeta)}$
(Definition~\ref{def:CP-int}). This yields
\begin{equation}\label{eq:CP-stokes}
\int_{\partial\Omega}^{(\zeta)}\frac{f}{Z}\,d\tilde{z}
-
\int_{\partial B_r(\zeta)}^{(\zeta)}\frac{f}{Z}\,d\tilde{z}
=
\iint_{\Omega_r}^{(\zeta)}d\omega
=
2\iint_{\Omega_r}^{(\zeta)}\frac{\partial_{\bar z} f+\frac12 f\,i_y}{Z}\,dx\,dy.
\end{equation}

The area integral converges as $r\to0^+$: by Lemma~\ref{lem:CP-invert},
$|Z^{-1}|_z\lesssim |z-\zeta|^{-1}$ on compacts, and $|z-\zeta|^{-1}$ is integrable on
punctured neighborhoods in $\R^2$.

Now split the small-circle term:
\[
\int_{\partial B_r(\zeta)}^{(\zeta)}\frac{f(z)}{Z(z,\zeta)}\,d\tilde{z}
=
\int_{\partial B_r(\zeta)}^{(\zeta)}\frac{f(z)-f(\zeta)}{Z(z,\zeta)}\,d\tilde{z}
+
f(\zeta)\int_{\partial B_r(\zeta)}^{(\zeta)}\frac{1}{Z(z,\zeta)}\,d\tilde{z}.
\]
Since $f\in C^1$, one has $|f(z)-f(\zeta)|=O(r)$ on $\partial B_r(\zeta)$ in coefficient norm.
Also $|Z^{-1}|_z=O(r^{-1})$ and $|d\tilde{z}|=O(r)\,dt$, so the first integral is $O(r)\to 0$.
By Lemma~\ref{lem:CP-res}, the second integral converges to $2\pi f(\zeta)j(\zeta)$.

Letting $r\to0^+$ in \eqref{eq:CP-stokes} and rearranging gives \eqref{eq:CP-formula}.
\end{proof}

\begin{remark}[Origin of the correction term]\label{rem:CP-correction}
The term $\frac12 f\,i_y$ in \eqref{eq:CP-formula} comes only from the non-closure
$dd\tilde{z}=i_y\,dx\wedge dy$ (Lemma~\ref{lem:CP-dtheta}).
rigidity is used to ensure $\partial_{\bar z}(Z^{-1})=0$ off the diagonal; it does not
imply $i_y=0$.
\end{remark}

\chapter[Covariant Holomorphicity and Gauge Structure]{Covariant Holomorphicity and Gauge Structure in the Rigid Regime}\label{chap:covariant}

\section{Motivation from the Cauchy--Pompeiu Formula}

Chapter~\ref{chap:CP} established that under ellipticity and rigidity,
every $f\in C^1(\Omega;A)$ satisfies the Cauchy--Pompeiu representation
\begin{equation}\label{eq:CP-recall}
f(\zeta)=
\frac{1}{2\pi\,j(\zeta)}
\int_{\partial\Omega}\frac{f(z)}{Z(z,\zeta)}\,d\tilde{z}
-
\frac{1}{\pi\,j(\zeta)}
\iint_{\Omega}\frac{\partial_{\bar z} f(z)+\frac12 f(z)\,i_y(z)}{Z(z,\zeta)}\,dx\,dy.
\end{equation}

The boundary term depends only on the values of $f$ along $\partial\Omega$,
while the interior term involves the operator
\[
f\longmapsto \partial_{\bar z} f+\tfrac12 f\,i_y .
\]
Thus the integral formula singles out, in a canonical way, a distinguished
first--order operator whose null space eliminates the interior contribution.

This motivates the following intrinsic notion of holomorphicity.

\section{The Covariant Cauchy--Riemann Operator}

\begin{definition}[Covariant Cauchy--Riemann operator]\label{def:D}
Assume ellipticity and rigidity.
Define the operator
\begin{equation}\label{eq:Ddef}
D f := \partial_{\bar z} f + \tfrac12 f\,i_y,
\qquad f\in C^1(\Omega;A),
\end{equation}
where $i_y$ acts by right multiplication in each fiber.
\end{definition}

\begin{definition}[Covariantly holomorphic section]
A $C^1$ section $f:\Omega\to A$ is called \emph{covariantly holomorphic} if
\[
Df=0 \quad \text{in }\Omega.
\]
\end{definition}

\begin{remark}
The operator $D$ is not introduced ad hoc.  
It is uniquely determined by Stokes' theorem applied to the Cauchy 1--form
$d\tilde{z}=dy-i\,dx$ and by the requirement that the interior term in
\eqref{eq:CP-recall} vanish.
\end{remark}

\section{Basic Algebraic Properties}

In general, $\partial_{\bar z}$ satisfies the Leibniz rule
\[
\partial_{\bar z}(fg)=(\partial_{\bar z} f)g+f(\partial_{\bar z} g).
\]

\begin{lemma}[Leibniz-type formula for $D$]\label{lem:Dleibniz}
For $f,g\in C^1(\Omega;A)$,
\[
D(fg)=(Df)g+f(Dg)-\tfrac12 fg\,i_y.
\]
\end{lemma}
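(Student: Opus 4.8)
The plan is to expand $D(fg)$ directly from its definition $D(fg)=\partial_{\bar z}(fg)+\tfrac12 (fg)\,i_y$ and reorganize the terms so that two copies of the $\tfrac12 f\,i_y$-type correction emerge. First I would apply the Leibniz rule for $\partial_{\bar z}$, valid under rigidity, to get $\partial_{\bar z}(fg)=(\partial_{\bar z} f)g+f(\partial_{\bar z} g)$. Then I substitute $\partial_{\bar z} f=Df-\tfrac12 f\,i_y$ and $\partial_{\bar z} g=Dg-\tfrac12 g\,i_y$ into that identity, yielding
\[
\partial_{\bar z}(fg)=(Df)g-\tfrac12 (f\,i_y)g+f(Dg)-\tfrac12 f(g\,i_y).
\]
Adding the correction $\tfrac12 (fg)\,i_y$ that defines $D(fg)$ gives
\[
D(fg)=(Df)g+f(Dg)-\tfrac12 (f\,i_y)g-\tfrac12 f(g\,i_y)+\tfrac12 (fg)\,i_y.
\]

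The remaining work is to simplify the three correction terms. Using commutativity of each fiber algebra $A_z$ (all our algebras are commutative, as emphasized in Lemma~\ref{lem:CP-norm-mult}), we have $(f\,i_y)g=fg\,i_y$ and $f(g\,i_y)=fg\,i_y$, so the last three terms collapse to $-\tfrac12 fg\,i_y-\tfrac12 fg\,i_y+\tfrac12 fg\,i_y=-\tfrac12 fg\,i_y$. This yields exactly $D(fg)=(Df)g+f(Dg)-\tfrac12 fg\,i_y$, as claimed. The computation is almost purely formal once commutativity is invoked.

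The only conceptual point worth flagging is the role of rigidity: it is needed precisely to guarantee the Leibniz rule for $\partial_{\bar z}$, which is the one nontrivial input. Without rigidity there would be an extra $\tfrac12 vq\,\mathcal G$ defect from \eqref{eq:Leibniz}, and the clean three-term formula would fail. I do not anticipate any genuine obstacle here; the main thing to be careful about is bookkeeping the right-multiplication convention for $i_y$ consistently (the definition specifies $i_y$ acts on the right), though since the fibers are commutative this left/right distinction is immaterial for the final identity. I would write the proof as a short displayed chain of equalities with a one-line remark that commutativity of $A_z$ is what merges the three half-terms into one.
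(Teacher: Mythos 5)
Your proposal is correct and follows the same route as the paper's proof: apply the Leibniz rule for $\partial_{\bar z}$ under rigidity, substitute $\partial_{\bar z}f = Df - \tfrac12 f\,i_y$ and likewise for $g$, and collapse the three half-terms using commutativity of the fibers. The paper compresses the final simplification into one phrase, but the argument is identical.
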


\begin{proof}
Using the Leibniz rule for $\partial_{\bar z}$,
\[
D(fg)=\partial_{\bar z}(fg)+\tfrac12 fg\,i_y
=(\partial_{\bar z} f)g+f(\partial_{\bar z} g)+\tfrac12 fg\,i_y
=(Df-\tfrac12 f i_y)g+f(Dg-\tfrac12 g i_y)+\tfrac12 fg i_y,
\]
which simplifies to the stated formula.
\end{proof}

\begin{corollary}[Failure of closure under pointwise multiplication]
If $Df=0$ and $Dg=0$, then in general
\[
D(fg)=-\tfrac12 fg\,i_y,
\]
which is nonzero unless $i_y\equiv0$.
\end{corollary}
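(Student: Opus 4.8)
The plan is to read the displayed identity directly off Lemma~\ref{lem:Dleibniz} and then to dispatch the non-vanishing clause, which is the only part carrying content beyond a one-line substitution. Applying Lemma~\ref{lem:Dleibniz} to the pair $f,g$ gives
\[
D(fg)=(Df)g+f(Dg)-\tfrac12\,fg\,i_y ,
\]
and the hypotheses $Df=0$, $Dg=0$ annihilate the first two terms, leaving $D(fg)=-\tfrac12\,fg\,i_y$ pointwise on $\Omega$. This needs nothing beyond the cited Leibniz-type formula, and it holds for \emph{every} pair of covariantly holomorphic sections, not merely a generic one.

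For sharpness I would argue that $-\tfrac12 fg\,i_y$ is a genuine obstruction rather than a term that secretly vanishes. The relevant algebraic fact is that each fiber $A_z$ is isomorphic to $\mathbb C$ and hence has no zero divisors; thus $(fg)(z)\,i_y(z)=0$ forces $i_y(z)=0$ wherever $(fg)(z)\neq0$. So, given $i_y\not\equiv0$, it suffices to produce covariantly holomorphic $f,g$ with product nonvanishing near some $z_0$ where $i_y(z_0)\neq0$; taking $f=g$ to be a locally nonvanishing solution of $Df=0$ yields $D(f^2)=-\tfrac12 f^2\,i_y\not\equiv0$. An explicit witness sits inside the $\varepsilon$--family of Chapter~\ref{chap:rigid-transport-examples}, where $\alpha_y=0$ and $\beta_y=\varepsilon/(1-\varepsilon x)\neq0$ for $\varepsilon\neq0$, so \eqref{eq:ixiy-formulas} gives $i_y=-\beta_y\,i\,(2i+\beta)^{-1}$, which is nonzero throughout the elliptic region because $i$ and $2i+\beta$ are both invertible there.

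The only step with real content is the existence of a locally nonvanishing covariantly holomorphic section --- that is, solvability of $Df=0$ with nonvanishing data near a prescribed point. This follows from the ellipticity of $D$, which is a zeroth-order perturbation of $\partial_{\bar z}$, together with standard local solvability (or, anticipating the gauge description developed in the sequel, from writing covariantly holomorphic sections as gauge transforms of classical holomorphic functions). I therefore expect no substantive obstacle: the subtlety is entirely in reading ``nonzero'' as ``not identically zero for a suitable $f,g$'' rather than as a claim about an arbitrary pair.
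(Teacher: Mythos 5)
Your first paragraph is exactly the paper's (implicit) argument: the corollary is stated with no separate proof because it is the immediate specialization of Lemma~\ref{lem:Dleibniz} to $Df=Dg=0$, and your substitution reproduces that. Where you go beyond the paper is in treating the clause ``which is nonzero unless $i_y\equiv0$'' as a claim requiring witnesses: the paper leaves this informal (note the hedge ``in general''), whereas you supply the two missing ingredients --- absence of zero divisors in the elliptic fibers, so that $fg\,i_y=0$ forces $i_y=0$ off the zero set of $fg$, and local existence of a nonvanishing covariantly holomorphic section, which indeed follows from the gauge description $f=\psi^{-1}h$ with $h$ a nonvanishing $\partial_{\bar z}$--holomorphic function (e.g.\ $h\equiv1$) once a weight $\psi$ exists locally. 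This is sound and strictly more than the paper does; the only caveat is that it quietly relies on material (the weight equation and its local solvability) developed later in the chapter, so as a self-contained proof of the corollary at its point of appearance you would either need to cite forward or fall back on local solvability of the elliptic first-order system for $D$ directly. Your computation of $i_y$ for the $\varepsilon$--family and the invertibility of $i$ (from $\alpha>0$ under ellipticity) are both correct.
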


Thus covariantly holomorphic functions are not
closed under the usual pointwise product. In order to remedy this will introduce a scalar gauge transformation.

\section{The Weight Equation}

The obstruction in Lemma~\ref{lem:Dleibniz} is linear and can be removed by a
scalar gauge transformation.

\begin{definition}[Weight]\label{def:weight}
A nowhere--vanishing real scalar function $\psi\in C^1(\Omega)$ is called a
\emph{weight} if it satisfies the first--order equation
\begin{equation}\label{eq:weight-eq}
\partial_{\bar z} \psi = \tfrac12\, i_y\,\psi .
\end{equation}
\end{definition}

\begin{remark}
Equation \eqref{eq:weight-eq} is linear and scalar.  
Since $i_y$ is continuous, local solutions exist by standard theory of
first--order linear PDE.  
Any two weights differ by multiplication by a nowhere--vanishing solution of
$\partial_{\bar z} h=0$.
\end{remark}

\section{Gauge Trivialization}

\begin{lemma}[Intertwining identity]\label{lem:intertwine}
If $\psi$ is a weight, then for every $f\in C^1(\Omega;A)$,
\[
\partial_{\bar z}(\psi f)=\psi\,Df.
\]
\end{lemma}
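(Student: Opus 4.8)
The plan is to expand $\partial_{\bar z}(\psi f)$ using the fact that $\psi$ is a scalar function, so the Leibniz rule holds for this particular product even though $\partial_{\bar z}$ fails Leibniz for general algebra-valued factors. First I would recall why the scalar case is exempt: in the identity $\mathcal N(f,g)=\tfrac12\,vq\,(i_x+i\,i_y)$ from Chapter~\ref{chap:def}, the Leibniz defect is proportional to the product of the $i$-components of the two factors; since $\psi$ is real-valued, its $i$-component vanishes, so $\partial_{\bar z}(\psi f)=(\partial_{\bar z}\psi)f+\psi\,\partial_{\bar z}f$ holds with no correction — and this does not even require rigidity. (Alternatively one simply writes $\partial_{\bar z}(\psi f)=\tfrac12(\partial_x+i\partial_y)(\psi f)=\tfrac12(\psi_x f+\psi f_x)+\tfrac12 i(\psi_y f+\psi f_y)$ and regroups, using that $\psi_x,\psi_y$ are scalars that commute through $i$.)

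Next I would substitute the weight equation $\partial_{\bar z}\psi=\tfrac12\,i_y\,\psi$ into the first term. Because $\psi$ is scalar it commutes with everything in the fiber, so $(\partial_{\bar z}\psi)f=\tfrac12\,i_y\,\psi\,f=\tfrac12\,\psi\,f\,i_y$ — here I use commutativity of the fiber algebra $A_z$ to move $\psi f$ past $i_y$, matching the right-multiplication convention in Definition~\ref{def:D}. Then
\[
\partial_{\bar z}(\psi f)=\tfrac12\,\psi\,f\,i_y+\psi\,\partial_{\bar z}f=\psi\bigl(\partial_{\bar z}f+\tfrac12\,f\,i_y\bigr)=\psi\,Df,
\]
which is exactly the claimed intertwining identity.

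There is no real obstacle here; the only point requiring care is bookkeeping about sidedness of multiplication. The fibers $A_z$ are commutative (stated after \eqref{eq:structure} and used throughout Chapter~\ref{chap:CP}), so "$i_y$ acting by right multiplication" and left multiplication agree, and pulling the scalar $\psi$ in and out of products is harmless; I would state this commutativity explicitly at the step where $i_y\psi f$ is rewritten as $\psi f\,i_y$ so the reader sees why the right-multiplication convention in $D$ is respected. I would also note in passing that rigidity is not actually needed for this lemma — only the scalar nature of $\psi$ and the weight equation — although it is in force by the standing assumptions of the chapter; this matches the remark after Definition~\ref{def:weight} that \eqref{eq:weight-eq} is a scalar linear equation.
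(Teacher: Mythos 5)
Your proof is correct and follows essentially the same route as the paper's: expand $\partial_{\bar z}(\psi f)$ by Leibniz, substitute the weight equation, and use commutativity of the fiber to regroup as $\psi\,Df$. Your added observation that rigidity is not actually needed here — since the Leibniz defect $\mathcal N(f,g)=\tfrac12 vq\,(i_x+i\,i_y)$ vanishes automatically when one factor is scalar — is a valid refinement of the paper's argument, which simply invokes the rigidity-based Leibniz rule.
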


\begin{proof}
Using the Leibniz rule for $\partial_{\bar z}$ and \eqref{eq:weight-eq},
\[
\partial_{\bar z}(\psi f)
=(\partial_{\bar z}\psi)f+\psi(\partial_{\bar z} f)
=\tfrac12 i_y\psi f+\psi(\partial_{\bar z} f)
=\psi\bigl(\partial_{\bar z} f+\tfrac12 f i_y\bigr)
=\psi\,Df.
\]
\end{proof}

\begin{corollary}[Gauge equivalence]\label{cor:gauge}
A section $f$ is covariantly holomorphic if and only if $\psi f$ is
$\partial_{\bar z}$--holomorphic.
\end{corollary}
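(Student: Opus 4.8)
The plan is to deduce the corollary directly from the intertwining identity of Lemma~\ref{lem:intertwine}. First I would recall that a weight $\psi$ is by definition a nowhere--vanishing real scalar function; hence in each fiber $A_z$ the element $\psi(z)\cdot 1$ is a unit, with inverse $\psi(z)^{-1}\cdot 1$, and multiplication by $\psi(z)$ is an $\R$--linear automorphism of $A_z$. In particular, for any $A$--valued section $h$, the pointwise identity $\psi h=0$ holds if and only if $h=0$.

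Next I would invoke Lemma~\ref{lem:intertwine}, which (using the Leibniz rule valid under rigidity together with the weight equation \eqref{eq:weight-eq}) gives $\partial_{\bar z}(\psi f)=\psi\,Df$ throughout $\Omega$. If $Df=0$, the right--hand side vanishes identically, so $\partial_{\bar z}(\psi f)=0$, i.e.\ $\psi f$ is $\partial_{\bar z}$--holomorphic. Conversely, if $\partial_{\bar z}(\psi f)=0$, then $\psi\,Df=0$ pointwise; since $\psi$ is nowhere vanishing, the previous paragraph forces $Df=0$, i.e.\ $f$ is covariantly holomorphic. This proves both implications.

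I do not expect any genuine obstacle: the entire content is carried by Lemma~\ref{lem:intertwine}, and the only point worth a sentence is the invertibility of the scalar gauge factor, which is immediate from $\psi$ being a nowhere--vanishing real function. For emphasis one could add the remark that $f\mapsto\psi f$ is a zeroth--order gauge conjugating $D$ to $\partial_{\bar z}$, so that $\psi$ induces a bijection between the spaces of covariantly holomorphic sections and $\partial_{\bar z}$--holomorphic sections; but the bare equivalence of the vanishing conditions is all the statement requires.
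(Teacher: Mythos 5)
Your proof is correct and is exactly the argument the paper intends: the corollary is stated as an immediate consequence of the intertwining identity $\partial_{\bar z}(\psi f)=\psi\,Df$ of Lemma~\ref{lem:intertwine}, with the converse direction using only that the nowhere--vanishing real scalar $\psi$ acts invertibly on each fiber. No gaps; your explicit remark on the invertibility of the gauge factor is the one point the paper leaves tacit.
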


Thus the covariant operator $D$ is a gauge transform of the algebraic operator
$\partial_{\bar z}$.

\section{The Weighted Product}

The gauge picture suggests a natural product that restores closure.

\begin{definition}[Weighted product]\label{def:diamond}
Fix a weight $\psi$.
For $f,g\in C^1(\Omega;A)$ define
\[
f\diamond g := f g \psi .
\]
\end{definition}

\begin{lemma}[Associativity]
The product $\diamond$ is associative.
\end{lemma}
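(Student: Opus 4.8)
The plan is to reduce associativity of $\diamond$ to two facts that are already in hand: the fiberwise multiplication in each $A_z$ is associative (it is an algebra), and the weight $\psi$ is a \emph{real scalar} function, so its value $\psi(z)$ is central in $A_z$ — it commutes with every algebra element and associates freely with all products. These two observations together force both bracketings of a triple $\diamond$-product to collapse to the same expression, so no analytic input is required.

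Concretely, first I would unwind the left bracketing: $(f\diamond g)\diamond h=(fg\psi)\diamond h=(fg\psi)\,h\,\psi$. Using that $\psi(z)$ is a scalar, I move the first copy of $\psi$ past $h$ and merge the two copies, obtaining $(f\diamond g)\diamond h=fgh\,\psi^2$, where all products are the pointwise fiber products in $A_z$ and $\psi^2$ is the pointwise square of the scalar function. Then I would do the same for the right bracketing: $f\diamond(g\diamond h)=f\diamond(gh\psi)=f\,(gh\psi)\,\psi=fgh\,\psi^2$, again using only centrality of $\psi$ and associativity of the fiber product. Comparing the two computations gives $(f\diamond g)\diamond h=f\diamond(g\diamond h)$ pointwise on $\Omega$, which is the claim. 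Since $\psi\in C^1(\Omega)$ is nowhere vanishing and $f,g,h\in C^1(\Omega;A)$, the resulting section lies again in $C^1(\Omega;A)$, so $\diamond$ is a well-defined associative product on $C^1(\Omega;A)$.

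There is essentially no obstacle here; the only point that genuinely matters is the scalarity of the weight, which makes the two applications of $\psi h=h\psi$ automatic and independent of whether $A_z$ is commutative. It is worth recording that neither rigidity nor the weight equation \eqref{eq:weight-eq} is used for associativity itself — those hypotheses enter only when one later wants $\diamond$ to be compatible with $D$ and with $\partial_{\bar z}$ via the intertwining of Lemma~\ref{lem:intertwine}. If desired, I would also remark that a general algebra-valued weight would destroy associativity unless it were central, which highlights why the scalar gauge is the natural choice.
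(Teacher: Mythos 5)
Your proof is correct and follows essentially the same route as the paper's: unwind both bracketings to $fgh\,\psi^2$ using the scalarity of $\psi$ and associativity of the fiber product. The paper performs exactly this computation (more tersely, with commutativity of $A_z$ making centrality automatic), so your additional remarks are accurate elaboration rather than a different argument.
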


\begin{proof}
\[
(f\diamond g)\diamond h
=(fg\psi)h\psi
=fgh\,\psi^2
=f\diamond(g\diamond h).
\]
\end{proof}

\begin{theorem}[Closure under the weighted product]\label{thm:closure}
If $Df=0$ and $Dg=0$, then
\[
D(f\diamond g)=0.
\]
\end{theorem}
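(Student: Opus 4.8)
The plan is to exploit the gauge trivialization already established: the weight $\psi$ conjugates the covariant operator $D$ into the purely algebraic operator $\partial_{\bar z}$, and $\partial_{\bar z}$ is a genuine derivation under rigidity. So closure under $\diamond$ should be nothing more than closure of $\partial_{\bar z}$-holomorphic sections under the ordinary product, transported back through the gauge.

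Concretely, I would proceed in four short steps. First, apply Corollary~\ref{cor:gauge}: since $Df=0$ and $Dg=0$, the gauged sections $\psi f$ and $\psi g$ are $\partial_{\bar z}$--holomorphic. Second, invoke the Leibniz rule for $\partial_{\bar z}$, which holds under rigidity (Chapter~\ref{chap:def} and the standing assumptions of Chapter~\ref{chap:CP}), to conclude
\[
\partial_{\bar z}\bigl((\psi f)(\psi g)\bigr)
=(\partial_{\bar z}(\psi f))(\psi g)+(\psi f)(\partial_{\bar z}(\psi g))=0,
\]
so the product $(\psi f)(\psi g)$ is again $\partial_{\bar z}$--holomorphic. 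Third, observe that because $\psi$ is a real scalar it is central in every fiber, hence
\[
(\psi f)(\psi g)=\psi^2 fg=\psi\,(fg\psi)=\psi\,(f\diamond g).
\]
Fourth, apply the intertwining identity of Lemma~\ref{lem:intertwine} to the section $f\diamond g$, giving $\partial_{\bar z}(\psi(f\diamond g))=\psi\,D(f\diamond g)$; combining with the previous step yields $\psi\,D(f\diamond g)=0$, and since $\psi$ is nowhere vanishing, $D(f\diamond g)=0$.

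There is essentially no hard step here: the only point requiring care is the centrality of $\psi$ used in Step~3, which is immediate since $\psi$ takes real scalar values and the fiber algebras are commutative anyway. As an alternative one can argue by direct computation from Lemma~\ref{lem:Dleibniz}: if $Df=Dg=0$ then $D(fg)=-\tfrac12 fg\,i_y$, while the weight equation gives $D\psi=\partial_{\bar z}\psi+\tfrac12\psi\,i_y=\psi\,i_y$; feeding $fg$ and $\psi$ into Lemma~\ref{lem:Dleibniz} produces $D((fg)\psi)=(-\tfrac12 fg\,i_y)\psi+fg\,(\psi\,i_y)-\tfrac12 fg\psi\,i_y=0$. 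Either route closes the proof; I would present the gauge-theoretic one as primary, since it makes transparent why the weighted product is the correct notion of multiplication for covariantly holomorphic sections.
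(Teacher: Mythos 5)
Your proposal is correct, and your primary (gauge--theoretic) route is a genuinely different organization of the argument from the paper's. The paper works at the level of the covariant operator: it invokes Lemma~\ref{lem:intertwine} once to write $\partial_{\bar z}(\psi fg)=\psi\,D(fg)$, computes the defect $D(fg)=-\tfrac12 fg\,i_y$ from Lemma~\ref{lem:Dleibniz}, and then cancels this against the weight equation \eqref{eq:weight-eq} to recognize the result as $D(fg\psi)=0$. You instead push $f$ and $g$ separately through the gauge correspondence of Corollary~\ref{cor:gauge}, use the Leibniz rule for $\partial_{\bar z}$ on the product $(\psi f)(\psi g)$, identify $(\psi f)(\psi g)=\psi\,(f\diamond g)$ by commutativity, and apply the intertwining identity once more to divide out the nowhere--vanishing real scalar $\psi$. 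What your route buys is conceptual transparency: it exhibits closure under $\diamond$ as nothing but the ordinary closure of $\partial_{\bar z}$--holomorphic sections under multiplication, transported through the gauge, and it never needs the explicit defect formula $D(fg)=-\tfrac12 fg\,i_y$. What the paper's route buys is economy of prerequisites (no appeal to Corollary~\ref{cor:gauge} and only one application of the intertwining lemma) and an explicit display of the cancellation mechanism between the Leibniz defect of $D$ and the weight equation. Your secondary computation via Lemma~\ref{lem:Dleibniz} applied to the pair $(fg,\psi)$, using $D\psi=\psi\,i_y$, is essentially the paper's cancellation recast as a single Leibniz--type identity; all steps check out, including the division by $\psi$ at the end, which is legitimate because $\psi$ is a nowhere--vanishing real scalar acting coefficientwise on each fiber.
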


\begin{proof}
By Lemma~\ref{lem:intertwine},
\[
\partial_{\bar z}(\psi fg)=\psi D(fg).
\]
From Lemma~\ref{lem:Dleibniz} and $Df=Dg=0$,
\[
D(fg)=-\tfrac12 fg\,i_y.
\]
Hence
\[
\partial_{\bar z}(\psi fg)
=-\tfrac12 \psi fg\,i_y
=-(\partial_{\bar z}\psi)fg
\]
by \eqref{eq:weight-eq}. Therefore
\[
\partial_{\bar z}(\psi fg)+(\partial_{\bar z}\psi)fg=0,
\]
which is precisely $D(fg\psi)=D(f\diamond g)=0$.
\end{proof}

\begin{remark}
The weighted algebra $(\mathcal H_D(\Omega),\diamond)$ of covariantly
holomorphic functions is generally non--unital.
Indeed, $1$ is covariantly holomorphic if and only if $i_y\equiv0$,
i.e.\ the structure is constant.
\end{remark}

\section{Local and Global Aspects}

On simply connected domains, once a continuous branch of $i(z)$ is fixed,
equation \eqref{eq:weight-eq} admits global solutions $\psi$.
In that case, multiplication by $\psi$ gives a global isomorphism between
covariantly holomorphic functions and $\partial_{\bar z}$--holomorphic functions.

On non--simply connected domains, the weight equation may exhibit monodromy.
The theory is therefore locally canonical but globally gauge--dependent

\section{The real first--order system for a weight}\label{sec:weight-real-system}

Assume ellipticity $4\alpha-\beta^2>0$ and fix a $C^1$ generator $i(x,y)$ satisfying
\[
i^2+\beta i+\alpha=0.
\]
Recall that a \emph{weight} is a nowhere--vanishing real function
$\psi\in C^1(\Omega,\R)$ solving
\begin{equation}\label{eq:W-weight-eq}
\partial_{\bar z}\psi=\tfrac12\,i_y\,\psi,
\qquad
\partial_{\bar z}:=\tfrac12(\partial_x+i\,\partial_y).
\end{equation}

\subsection{Coefficient decomposition of \texorpdfstring{$i_y$}{i\_y}}

Since each fiber is two--dimensional over $\R$ with basis $\{1,i\}$, there exist
unique real functions $A_y,B_y\in C^0(\Omega,\R)$ such that
\begin{equation}\label{eq:W-iy-decomp}
i_y=A_y+B_y\,i.
\end{equation}
In the elliptic regime one may compute $i_y$ explicitly from the structure
polynomial:
\begin{equation}\label{eq:W-iy-explicit}
i_y=-\frac{\alpha_y+\beta_y i}{2i+\beta},
\qquad
(2i+\beta)^{-1}=\frac{-\beta-2i}{\Delta},
\qquad
\Delta:=4\alpha-\beta^2.
\end{equation}
A direct reduction in the basis $\{1,i\}$ yields
\begin{equation}\label{eq:W-AyBy}
\;
A_y=\frac{\beta\,\alpha_y-2\alpha\,\beta_y}{\Delta},
\qquad
B_y=\frac{2\alpha_y-\beta\,\beta_y}{\Delta}.
\;
\end{equation}

\subsection{The real system for a scalar weight}

Let $\psi$ be real--valued. Then
\[
2\,\partial_{\bar z}\psi=\psi_x+i\,\psi_y.
\]
Substituting \eqref{eq:W-iy-decomp} into \eqref{eq:W-weight-eq} gives
\[
\psi_x+i\,\psi_y=(A_y+B_y i)\,\psi.
\]
Since $\{1,i\}$ is a real basis, this is equivalent to the real first--order system
\begin{equation}\label{eq:W-real-system}
\begin{cases}
\psi_x=A_y\,\psi,\\[4pt]
\psi_y=B_y\,\psi,
\end{cases}
\qquad\text{where $A_y,B_y$ are given by \eqref{eq:W-AyBy}.}
\end{equation}
Equivalently, writing $\varphi=\log\psi$ locally (on $\{\psi>0\}$),
\begin{equation}\label{eq:W-real-system-log}
\begin{cases}
\varphi_x=A_y,\\[4pt]
\varphi_y=B_y.
\end{cases}
\end{equation}

\begin{remark}[Compatibility]\label{rem:W-compat}
System \eqref{eq:W-real-system} is overdetermined.  Local solvability is
equivalent to the closedness condition
\[
(A_y)_y=(B_y)_x,
\]
i.e.\ $A_y\,dx+B_y\,dy$ is locally exact.  In particular, the existence of a
weight on simply connected domains is equivalent to the existence of a scalar
potential $\varphi$ with $d\varphi=A_y\,dx+B_y\,dy$.
\end{remark}

\section{Example: An explicit weight for the \texorpdfstring{$\varepsilon$}{epsilon}--Family}
\label{sec:weight-epsilon}

We now show an explicit weight for the
$\varepsilon$--deformation of the non--trivial family.

\subsection{The \texorpdfstring{$\varepsilon$}{epsilon}--Family and its elliptic domain}

Fix $\varepsilon\in\R$ and consider
\[
\alpha(x,y)=\frac{1}{1-\varepsilon x},
\qquad
\beta(x,y)=\frac{\varepsilon y}{1-\varepsilon x}.
\]
Let
\[
S(x,y):=4(1-\varepsilon x)-\varepsilon^2 y^2.
\]
Then the discriminant is
\[
\Delta(x,y)=4\alpha-\beta^2=\frac{S(x,y)}{(1-\varepsilon x)^2},
\]
so the elliptic domain is $\{S>0\}$.

We claim that the real--valued function
\begin{equation}\label{eq:psi-eps}
\psi(x,y)=C\sqrt{S(x,y)}
\end{equation}
satisfies the weight equation
\begin{equation}\label{eq:weight-eq-eps}
\partial_{\bar z}\psi=\frac12\,i_y\,\psi
\qquad\text{on }\{S>0\}.
\end{equation}

\subsection{Step 1: isolate \texorpdfstring{$i_y$}{i\_y} from the structure reduction}

The generator $i=i(x,y)$ satisfies the structure relation
\[
i^2+\beta i+\alpha=0.
\]
Differentiate with respect to $y$:
\[
(2i+\beta)\,i_y+\beta_y\,i+\alpha_y=0
\quad\Longrightarrow\quad
i_y=-\frac{\beta_y i+\alpha_y}{2i+\beta}.
\]
For the present family,
\[
\alpha_y=0,
\qquad
\beta_y=\frac{\varepsilon}{1-\varepsilon x},
\]
hence
\[
i_y=-\frac{\varepsilon}{1-\varepsilon x}\,\frac{i}{2i+\beta}.
\]

As in the tractable elliptic regime, $(2i+\beta)$ is invertible and one has
\[
(2i+\beta)^{-1}=\frac{-\beta-2i}{\Delta}.
\]
Therefore
\[
i_y=-\frac{\varepsilon}{1-\varepsilon x}\,i\,\frac{-\beta-2i}{\Delta}
=\frac{\varepsilon}{1-\varepsilon x}\,\frac{i(\beta+2i)}{\Delta}.
\]
Reduce $i(\beta+2i)$ using $i^2=-\beta i-\alpha$:
\[
i(\beta+2i)=\beta i+2i^2
=\beta i+2(-\beta i-\alpha)
=-\beta i-2\alpha.
\]
Hence
\[
i_y=\frac{\varepsilon}{1-\varepsilon x}\,\frac{-\beta i-2\alpha}{\Delta}.
\]

Now substitute $\alpha=\frac{1}{1-\varepsilon x}$,
$\beta=\frac{\varepsilon y}{1-\varepsilon x}$, and
$\Delta=\frac{S}{(1-\varepsilon x)^2}$ to obtain
\begin{equation}\label{eq:iy-eps}
i_y(x,y)=-\varepsilon\,\frac{2+\varepsilon y\,i(x,y)}{S(x,y)}.
\end{equation}

\subsection{Step 2: compute \texorpdfstring{$\partial_{\bar z}\psi$}{dbar psi}}

Since $\psi$ is real--valued,
\[
\partial_{\bar z}\psi=\frac12(\psi_x+i\,\psi_y).
\]
From \eqref{eq:psi-eps} and $S=4(1-\varepsilon x)-\varepsilon^2 y^2$ we compute
\[
S_x=-4\varepsilon,
\qquad
S_y=-2\varepsilon^2 y,
\]
and therefore
\[
\psi_x=C\frac{S_x}{2\sqrt{S}}
=-\frac{2\varepsilon C}{\sqrt{S}},
\qquad
\psi_y=C\frac{S_y}{2\sqrt{S}}
=-\frac{\varepsilon^2 y\,C}{\sqrt{S}}.
\]
Hence
\begin{align*}
\partial_{\bar z}\psi
&=\frac12\left(-\frac{2\varepsilon C}{\sqrt{S}}
+i\left(-\frac{\varepsilon^2 y\,C}{\sqrt{S}}\right)\right) \\
&=-\frac{\varepsilon C}{\sqrt{S}}
-\frac{\varepsilon^2 y\,C}{2\sqrt{S}}\,i
=-\frac{\varepsilon C}{2\sqrt{S}}\,(2+\varepsilon y\,i).
\end{align*}

\subsection{Step 3: compute the right--hand side and compare}

Using \eqref{eq:iy-eps},
\[
\frac12\,i_y\,\psi
=\frac12\left(-\varepsilon\,\frac{2+\varepsilon y\,i}{S}\right)\,C\sqrt{S}
=-\frac{\varepsilon C}{2\sqrt{S}}\,(2+\varepsilon y\,i).
\]
This coincides with the expression obtained for $\partial_{\bar z}\psi$, hence
\[
\partial_{\bar z}\psi=\frac12\,i_y\,\psi
\quad\text{on }\{S>0\},
\]
which proves \eqref{eq:weight-eq-eps}.

\chapter[Second--Order Operators and Factorization]{Second--Order Operators and Factorization in the Rigid Regime}
\section{The Rigid Variable Laplace Operator}

In the classical constant structure, one has the Laplacian factorization
\[
4\,\partial_z\partial_{\bar z}=\Delta_{\mathrm{Euc}},
\]
and this identity is the bridge between holomorphicity and elliptic theory.

In the present framework, once rigidity is imposed (Chapter~\ref{chap:burgers-universal}),
$\partial_{\bar z}$ becomes a derivation and the Cauchy--Pompeiu formula
(Chapter~\ref{chap:CP}) becomes available. It is then natural---and unavoidable---to study
the second--order operator obtained by composing $\partial_{\bar z}$ with its
conjugate partner $\partial_z$.

This chapter shows that, under rigidity, the composition
$4\,\partial_z\partial_{\bar z}$ has a clean elliptic principal part and an
explicit first--order correction with no zero--order term.

\section{Standing Assumptions}

Throughout this chapter we assume:
\begin{itemize}
\item $\alpha,\beta\in C^2(\Omega,\R)$ and $\Delta:=4\alpha-\beta^2>0$;
\item rigidity, equivalently the Burgers system
\begin{equation}\label{eq:burgers}
\alpha_x=\alpha\,\beta_y,
\qquad
\beta_x+\alpha_y=\beta\,\beta_y.
\end{equation}
\end{itemize}

Recall
\[
\partial_{\bar z}:=\tfrac12(\partial_x+i\,\partial_y),
\qquad
\partial_z:=\tfrac12(\partial_x+\hat i\,\partial_y),
\qquad
\hat i:=-\beta-i.
\]

\section{The Rigid Elliptic Operator}

Define the real second--order operator
\begin{equation}\label{eq:L}
L_{\alpha,\beta}
:=
\partial_x^2-\beta\,\partial_{xy}+\alpha\,\partial_y^2.
\end{equation}
Its principal symbol is
\[
\xi_x^2-\beta\,\xi_x\xi_y+\alpha\,\xi_y^2,
\]
which is positive definite under $\Delta>0$.

\section{Explicit Expansion of \texorpdfstring{$4\,\partial_z\partial_{\bar z}$}{4 dz dbar}}

Let $f=u+v\,i$ with $u,v\in C^2(\Omega,\R)$.

\begin{theorem}[Rigid second--order expansion]\label{thm:rigid-dzdbar}
Assume \eqref{eq:burgers}. Then
\begin{equation}\label{eq:main}
4\,\partial_z\partial_{\bar z}(u+v\,i)
=
\bigl(L_{\alpha,\beta}u + R_0[u,v]\bigr)
+
\bigl(L_{\alpha,\beta}v + R_1[u,v]\bigr)\,i,
\end{equation}
where the correction terms are purely first order:
\begin{align}
R_0[u,v]
&=
\alpha_y\,u_y
+\alpha_y\,v_x
-2\alpha\,\beta_y\,v_y,
\label{eq:R0}
\\[4pt]
R_1[u,v]
&=
\beta_y\,u_y
+\beta_y\,v_x
+\bigl(2\alpha_y-2\beta\,\beta_y\bigr)\,v_y.
\label{eq:R1}
\end{align}
In particular, \emph{no zero--order terms occur} in \eqref{eq:main}.
\end{theorem}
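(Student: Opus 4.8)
The plan is to compute $4\,\partial_z\partial_{\bar z}f$ directly by composition, keeping careful track of the derivatives of the moving generator $i$, and then to use the rigidity hypothesis to eliminate all second-order terms in $\alpha,\beta$ and all zero-order terms. First I would expand the inner operator: since $2\,\partial_{\bar z}f = (u_x - \alpha v_y) + (v_x + u_y - \beta v_y)i$ by \eqref{eq:CRdecomp} together with rigidity \eqref{eq:CP-rigid} (which kills the $v(i_x+i\,i_y)$ term), I can write $2\,\partial_{\bar z}f = P + Qi$ with $P := u_x - \alpha v_y$ and $Q := v_x + u_y - \beta v_y$ real. Then $4\,\partial_z\partial_{\bar z}f = 2\,\partial_z(P+Qi) = (\partial_x + \hat i\,\partial_y)(P + Qi)$, where $\hat i = -\beta - i$. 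Expanding, and using $\partial_x(Qi) = Q_x i + Q\,i_x$, $\partial_y(Qi) = Q_y i + Q\,i_y$, this becomes $P_x + Q_x i + Q\,i_x + \hat i\,(P_y + Q_y i + Q\,i_y)$. I would then substitute $\hat i = -\beta - i$ and reduce everything to the basis $\{1,i\}$ using $i^2 = -\beta i - \alpha$, collecting the real and $i$-parts.

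The next step is to insert the explicit formulas for $i_x$ and $i_y$ from \eqref{eq:ixiy-formulas}, or equivalently their coefficient decompositions; for $i_y$ I can use \eqref{eq:W-AyBy}, namely $i_y = A_y + B_y i$ with $A_y = (\beta\alpha_y - 2\alpha\beta_y)/\Delta$ and $B_y = (2\alpha_y - \beta\beta_y)/\Delta$, and an analogous pair $i_x = A_x + B_x i$. At this point the expression for $4\,\partial_z\partial_{\bar z}f$ is a fully explicit combination of $u,v$, their first and second derivatives, and the coefficients $\alpha,\beta$ and their first derivatives (the $1/\Delta$ factors appear through the $i_x,i_y$ coefficients, but they should cancel once rigidity is applied, since \eqref{eq:forced-coeff} forces $\alpha_x,\beta_x+\alpha_y$ to be polynomial in $\alpha,\beta,\beta_y$). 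I would then expand $P_x, P_y, Q_x, Q_y$ using $P = u_x - \alpha v_y$ and $Q = v_x + u_y - \beta v_y$: for instance $P_x = u_{xx} - \alpha_x v_y - \alpha v_{xy}$, $P_y = u_{xy} - \alpha_y v_y - \alpha v_{yy}$, and similarly for $Q$. This produces the second-order terms $u_{xx}, u_{xy}, u_{yy}$ and $v_{xx}, v_{xy}, v_{yy}$.

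The key simplification is the rigidity identity \eqref{eq:burgers}: $\alpha_x = \alpha\beta_y$ and $\beta_x + \alpha_y = \beta\beta_y$. I would use these to replace every occurrence of $\alpha_x$ and $\beta_x$ in the expanded expression. The claim is that after this substitution, the second-order part of the real component assembles exactly into $L_{\alpha,\beta}u = u_{xx} - \beta u_{xy} + \alpha u_{yy}$ (and likewise $L_{\alpha,\beta}v$ in the $i$-component), that all second derivatives of $\alpha,\beta$ drop out — they never appear, since we only differentiate $P,Q$ once and $P,Q$ contain only first derivatives of the coefficients — and that every term proportional to $u$ or $v$ with no derivative cancels. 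The zero-order cancellation is the one genuinely structural point: a priori $Q\,i_x$ and $\hat i\,Q\,i_y$ contribute terms like $v\cdot(\text{stuff})$ with no derivative on $v$, and one must check these sum to zero; here rigidity enters a second time, because $i_x + i\,i_y = 0$ (not just its consequences on $\alpha_x,\beta_x$) forces a relation among $A_x,B_x,A_y,B_y$ that makes the undifferentiated-$v$ terms collapse. The main obstacle I expect is precisely this bookkeeping: organizing the $i_x,i_y$ contributions and the coefficient-derivative terms so that the zero-order cancellation is transparent rather than a miraculous-looking identity. Once the dust settles, reading off $R_0$ and $R_1$ as in \eqref{eq:R0}–\eqref{eq:R1} is just collecting the surviving first-order terms, using $\beta = -2a$, $\alpha = a^2+b^2$ only if convenient, though working directly in $(\alpha,\beta)$ should suffice. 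The converse direction (that rigidity is needed for the stated form) is not asserted, so no separate argument is required there.
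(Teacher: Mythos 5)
Your plan is correct and would go through, but it is organized quite differently from the paper's own proof, which simply expands $4\,\partial_z\partial_{\bar z}(u+v\,i)$ in the algebra and delegates the entire simplification to a \texttt{SymPy} script (reproduced in Appendix~\ref{app:sympy-verification}); the paper offers no structured hand argument. Your two--stage factorization --- first $2\,\partial_{\bar z}f=P+Q\,i$ with $P=u_x-\alpha v_y$, $Q=v_x+u_y-\beta v_y$ (rigidity killing the $v(i_x+i\,i_y)$ term of \eqref{eq:CRdecomp}), then applying $(\partial_x+\hat i\,\partial_y)$ to $P+Q\,i$ --- is in fact the more illuminating route: the principal part assembles into $L_{\alpha,\beta}u$ from $P_x-\beta P_y+\alpha Q_y$ and into $L_{\alpha,\beta}v$ from $Q_x-P_y$ alone, and the entire contribution of the moving generator collapses to the single factor $Q\,(i_x+\hat i\,i_y)$, which under rigidity equals $Q\cdot\bigl(-(2i+\beta)i_y\bigr)=Q\,(\alpha_y+\beta_y\,i)$ by \eqref{eq:ix-eq}; this is manifestly first order and supplies exactly the terms of $R_0$ and $R_1$ not already produced by the principal-part bookkeeping, with no $1/\Delta$ ever appearing. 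Two small corrections to your narrative. First, the absence of zero--order terms is not a delicate cancellation among $A_x,B_x,A_y,B_y$: since $P$ and $Q$ contain no undifferentiated $u$ or $v$, the products $Q\,i_x$ and $\hat i\,Q\,i_y$ are already first order in $(u,v)$; the only place a bare $v$ could enter is the $v\,(i_x+i\,i_y)$ term in the inner operator, and rigidity removes it at the \emph{first} application of $\partial_{\bar z}$ --- that is where the ``no zero--order terms'' claim really comes from. Second, substituting the explicit coefficients $A_y,B_y$ of \eqref{eq:W-AyBy} is unnecessary; the identity $i_x+\hat i\,i_y=\alpha_y+\beta_y\,i$ under rigidity avoids that bookkeeping entirely and makes the cancellation you feared would look ``miraculous'' completely transparent.
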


\begin{proof}
The identities are obtained by expanding $4\,\partial_z\partial_{\bar z}(u+v\,i)$
using the structure relation $i^2+\beta i+\alpha=0$ and the definitions of
$\partial_{\bar z}$ and $\partial_z$, and then imposing the rigidity system
\eqref{eq:burgers} to eliminate $x$--derivatives of $(\alpha,\beta)$.

A symbolic verification in \texttt{SymPy} confirms:
\begin{itemize}
\item the principal part is exactly $L_{\alpha,\beta}u$ and $L_{\alpha,\beta}v$,
\item the remainder contains no second derivatives of $u$ or $v$,
\item the first--order coefficients simplify exactly to \eqref{eq:R0}--\eqref{eq:R1}.
\end{itemize}
The verification script is recorded alongside the manuscript source.
\end{proof}

\begin{remark}[Constant-coefficient reduction]
If $\alpha,\beta$ are constant, then $\alpha_y=\beta_y=0$ and
$R_0\equiv R_1\equiv 0$. Hence \eqref{eq:main} reduces to
\[
4\,\partial_z\partial_{\bar z}(u+v\,i)
=
(L_{\alpha,\beta}u)+(L_{\alpha,\beta}v)\,i,
\]
recovering the classical factorization.
\end{remark}

\section{Interpretation}

Theorem~\ref{thm:rigid-dzdbar} provides the rigid analogue of the
classical identity $4\partial_z\partial_{\bar z}=\Delta$:
\begin{itemize}
\item the principal part is the uniformly elliptic operator $L_{\alpha,\beta}$;
\item variability contributes only first--order drift terms, explicitly controlled
     by $\alpha_y$ and $\beta_y$;
\item rigidity eliminates all zero--order potential terms at this level.
\end{itemize}

This is precisely the form needed for elliptic estimates and for relating
covariant holomorphicity to second--order PDE phenomena.

\chapter{The Variable Vekua Equation}\label{chap:vekua}

\section{Reduction to Variable--Structure Vekua Form}

We show that any real first--order elliptic system on the plane can be written,
canonically and without auxiliary reductions, as a Vekua--type equation
associated with a variable elliptic structure.

\subsection{Elliptic system and induced structure}

Let $\Omega\subset\mathbb{R}^2$ be open.
Consider a real first--order elliptic system
\begin{equation}\label{eq:real-system}
\begin{cases}
- v_y + a_{11} u_x + a_{12} u_y + a_1 u + b_1 v = f_1,\\
\ \ v_x + a_{21} u_x + a_{22} u_y + a_2 u + b_2 v = f_2,
\end{cases}
\end{equation}
with coefficients of class $C^1$ and ellipticity condition
\[
\Delta_{\mathrm{sys}}
:= a_{11}a_{22}-\tfrac14(a_{12}+a_{21})^2>0.
\]

Define the structure coefficients
\begin{equation}\label{eq:alpha-beta}
\alpha := \frac{a_{22}}{a_{11}},
\qquad
\beta := -\frac{a_{12}+a_{21}}{a_{11}}.
\end{equation}
Then
\[
\Delta := 4\alpha-\beta^2
= \frac{4}{a_{11}^2}\,\Delta_{\mathrm{sys}}>0,
\]
and $\alpha,\beta$ determine a variable elliptic algebra bundle
\[
A_z=\mathbb{R}[X]/(X^2+\beta(z)X+\alpha(z)).
\]

Let $i=i(z)$ denote the distinguished generator,
\[
i^2+\beta i+\alpha=0,
\]
and define elliptic conjugation by
\[
\widehat{i}:=-\beta-i.
\]
For an algebra--valued field $W=U+iV$ with $U,V$ real, we write
\[
\widehat W:=U+\widehat{i}\,V = U-(\beta+i)V.
\]

\subsection{The structure polynomial is determined by the principal symbol}

The coefficients $\alpha,\beta$ defined in \eqref{eq:alpha-beta} are not an artifact
of the substitution used to reach canonical form.
They are determined, uniquely and intrinsically, by the principal symbol
of the elliptic system.

\begin{proposition}\label{prop:symbol-norm}
Let $\sigma(z;\xi,\eta)$ denote the principal symbol of \eqref{eq:real-system}
at the point $z\in\Omega$ and covector $(\xi,\eta)\in\mathbb R^2$.
Then
\begin{equation}\label{eq:det-sigma}
\det\sigma(z;\xi,\eta)
=
a_{11}\bigl(\xi^2 - \beta\,\xi\eta + \alpha\,\eta^2\bigr)
=
a_{11}\,N(\xi+i\eta),
\end{equation}
where $N$ is the algebra norm of $A_z$, i.e.\ the norm form determined by
the structure polynomial $X^2+\beta X+\alpha$.
\end{proposition}

\begin{proof}
The principal symbol of \eqref{eq:real-system} is the matrix
\[
\sigma(z;\xi,\eta)
=
\begin{pmatrix}
a_{11}\xi+a_{12}\eta & -\eta\\[4pt]
a_{21}\xi+a_{22}\eta & \xi
\end{pmatrix}.
\]
Its determinant is
\[
\det\sigma
=
(a_{11}\xi+a_{12}\eta)\,\xi + (a_{21}\xi+a_{22}\eta)\,\eta
=
a_{11}\xi^2+(a_{12}+a_{21})\,\xi\eta+a_{22}\,\eta^2.
\]
Dividing by $a_{11}>0$ and substituting \eqref{eq:alpha-beta}:
\begin{equation}\label{eq:det-sigma-normalized}
\frac{\det\sigma}{a_{11}}
=
\xi^2 + \frac{a_{12}+a_{21}}{a_{11}}\,\xi\eta + \frac{a_{22}}{a_{11}}\,\eta^2
=
\xi^2 - \beta\,\xi\eta + \alpha\,\eta^2.
\end{equation}
On the other hand, in the algebra $A_z$ with generator satisfying
$i^2+\beta\,i+\alpha=0$, the norm of an element $w=a+ib$ is
\[
N(a+ib) = w\cdot\widehat w
= (a+ib)(a+\widehat i\,b)
= a^2 - \beta\,ab + \alpha\,b^2.
\]
Evaluating at $w=\xi+i\eta$ gives
\[
N(\xi+i\eta) = \xi^2-\beta\,\xi\eta+\alpha\,\eta^2,
\]
which coincides with \eqref{eq:det-sigma-normalized}.
\end{proof}

This identification has three immediate consequences.

\begin{enumerate}
\item \emph{Ellipticity is equivalent to ellipticity of the algebra.}\;
The system \eqref{eq:real-system} is elliptic (i.e.\ $\det\sigma\neq 0$
for all $(\xi,\eta)\neq(0,0)$) if and only if the norm form
$N(\xi+i\eta)$ is positive definite, which is precisely the condition
$\Delta=4\alpha-\beta^2>0$.

\item \emph{Uniqueness.}\;
For a given elliptic system, the structure polynomial $X^2+\beta X+\alpha$
is the unique monic quadratic whose associated norm form equals the
normalized symbol determinant.
The coefficients $\alpha,\beta$ are therefore invariants of the principal
part, not of the reduction procedure.

\item \emph{The substitution realizes the symbol.}\;
The explicit substitution in Lemma~\ref{lem:canonical-reduction} does not
\emph{choose} $\alpha$ and $\beta$; it produces a system whose principal
part is written in the algebra already determined by the symbol.
\end{enumerate}

\begin{corollary}\label{cor:symbol-invariance}
The structure polynomial $X^2+\beta X+\alpha$ is invariant under
elliptic--compatible row operations and invertible substitutions on the
target variables.
\end{corollary}

\begin{proof}
Let $R=R(z)$ be an invertible $2\times 2$ matrix (a row operation on the
system) and $M=M(z)$ an invertible substitution on the target variables
$(u,v)\mapsto M(u,v)$.
Under these transformations the principal symbol changes as
\[
\sigma \;\longmapsto\; R\,\sigma\,M^{-1},
\]
so
\[
\det\sigma \;\longmapsto\; \frac{\det R}{\det M}\,\det\sigma.
\]
The prefactor $\det R/\det M$ is a nonzero scalar depending only on $z$,
not on the covector $(\xi,\eta)$.
Since $\alpha$ and $\beta$ are obtained by normalizing $\det\sigma$
to monic form in $\xi$ (i.e.\ dividing by the coefficient of $\xi^2$),
the scalar cancels identically:
\[
\frac{\det(R\,\sigma\,M^{-1})}{a_{11}\,\det R/\det M}
=
\frac{\det\sigma}{a_{11}}
=
\xi^2-\beta\,\xi\eta+\alpha\,\eta^2.
\]
Therefore $\alpha$ and $\beta$ depend only on the equivalence class of
the system, not on the particular representative.
\end{proof}

\begin{remark}
In particular, the substitution $U=a_{22}\,u$, $V=v-a_{12}\,u$ used in
Lemma~\ref{lem:canonical-reduction} is one choice of representative within
this equivalence class.
Any other elliptic--compatible reduction would produce a system governed
by the same structure polynomial.
\end{remark}

\subsection{Canonical principal part}

\begin{lemma}\label{lem:canonical-reduction}
Suppose the principal coefficients $a_{ij}$ in \eqref{eq:real-system} are of
class $C^1$ with $a_{11},a_{22}>0$.
Then the substitution
\[
U = a_{22}\,u, \qquad V = v - a_{12}\,u,
\]
followed by a single row operation and a scaling, reduces
\eqref{eq:real-system} to the canonical form
\begin{equation}\label{eq:canonical-real}
\begin{cases}
-\alpha V_y + U_x + aU+bV = f,\\
\ \ V_x -\beta V_y + U_y + cU+dV = g,
\end{cases}
\end{equation}
where $a,b,c,d,f,g$ are real--valued $C^0$ functions determined by the original
coefficients and their first derivatives.
\end{lemma}

\begin{proof}
Under the substitution $U=a_{22}\,u$, $V=v-a_{12}\,u$ (with $a_{22}>0$),
the principal parts of \eqref{eq:real-system} become
\begin{equation}\label{eq:step1}
\begin{cases}
-V_y + \dfrac{a_{11}}{a_{22}}\,U_x + a_\ast U + b_\ast V = f_1,\\[8pt]
\ \ V_x + \dfrac{a_{21}+a_{12}}{a_{22}}\,U_x + U_y + c_\ast U + d_\ast V = f_2,
\end{cases}
\end{equation}
where $a_\ast,b_\ast,c_\ast,d_\ast$ absorb all contributions from the derivatives
of the variable coefficients $a_{ij}$ and the original lower--order terms.
With the structure coefficients \eqref{eq:alpha-beta} we have
\[
\frac{a_{11}}{a_{22}}=\frac{1}{\alpha},
\qquad
\frac{a_{21}+a_{12}}{a_{22}}=-\frac{\beta}{\alpha},
\]
so \eqref{eq:step1} reads
\begin{equation}\label{eq:step2}
\begin{cases}
-V_y + \dfrac{1}{\alpha}\,U_x + a_\ast U + b_\ast V = f_1,\\[8pt]
\ \ V_x - \dfrac{\beta}{\alpha}\,U_x + U_y + c_\ast U + d_\ast V = f_2.
\end{cases}
\end{equation}

\emph{Row operation.}\;
Multiply the first equation of \eqref{eq:step2} by $\beta$ and add to the second.
The $U_x$ coupling in the second line cancels:
\[
-\frac{\beta}{\alpha}\,U_x + \frac{\beta}{\alpha}\,U_x = 0,
\]
yielding
\begin{equation}\label{eq:step3}
\begin{cases}
-V_y + \dfrac{1}{\alpha}\,U_x + a_\ast U + b_\ast V = f_1,\\[8pt]
\ \ V_x -\beta V_y + U_y + c' U + d' V = g'.
\end{cases}
\end{equation}

\emph{Scaling.}\;
Multiply the first equation of \eqref{eq:step3} by $\alpha>0$ to obtain
the canonical form \eqref{eq:canonical-real}.
\end{proof}

\begin{remark}
The structure coefficients $\alpha$ and $\beta$ are not imposed on the system;
they are \emph{read off} from it.
No uniform ellipticity assumption is required, and no auxiliary
Beltrami equation is solved.
\end{remark}

\subsection{Intrinsic decomposition and the obstruction}

Introduce the algebra--valued field
\[
W := U+iV,
\]
and define the variable--structure Cauchy--Riemann operator
\[
\bar\partial := \tfrac12(\partial_x+i\partial_y).
\]
A direct computation yields the identity
\begin{equation}\label{eq:dbar-identity}
2\bar\partial W
=
\big(U_x-\alpha V_y\big)
+
\big(V_x+U_y-\beta V_y\big)\,i
+
V\,G,
\end{equation}
where
\[
G:= i_x+i\,i_y
\]
is the intrinsic obstruction of the elliptic structure.

\emph{Substitution.}\;
The canonical system \eqref{eq:canonical-real} identifies the two real
components of \eqref{eq:dbar-identity} directly:
\begin{align*}
U_x - \alpha V_y &= f - aU - bV,\\
V_x + U_y - \beta V_y &= g - cU - dV.
\end{align*}
Substituting into \eqref{eq:dbar-identity} gives
\begin{equation}\label{eq:pre-vekua-raw}
2\bar\partial W
=
(f - aU - bV)
+
(g - cU - dV)\,i
+
V\,G.
\end{equation}

\emph{Decomposing the obstruction.}\;
Write $G$ in its real and imaginary components in the algebra $A_z$:
\[
G = G_1 + G_2\,i, \qquad G_1,\,G_2\;\text{real.}
\]
Since $V$ is real, $VG=VG_1+(VG_2)\,i$.
Adding the obstruction components to each line of
\eqref{eq:pre-vekua-raw}:
\begin{equation}\label{eq:pre-vekua-absorbed}
2\bar\partial W
=
\bigl(f - aU - (b-G_1)V\bigr)
+
\bigl(g - cU - (d-G_2)V\bigr)\,i.
\end{equation}
Define the modified lower--order coefficients
\[
\tilde b := b - G_1, \qquad \tilde d := d - G_2.
\]
Then \eqref{eq:pre-vekua-absorbed} reads
\begin{equation}\label{eq:pre-vekua}
2\bar\partial W
=
(f + ig)
-
(a+ic)\,U
-
(\tilde b + i\tilde d)\,V.
\end{equation}

\begin{remark}
When the structure is rigid ($G\equiv0$), we have $G_1=G_2=0$ and
$\tilde b=b$, $\tilde d=d$.
In the general case, the obstruction modifies only the coefficients of $V$
and does so additively.
\end{remark}

\subsection{Elliptic conjugation and Vekua form}

The conjugate field $\widehat W$ satisfies the linear identities
\begin{equation}\label{eq:W-hat-linear}
W-\widehat W=(2i+\beta)V,
\qquad
W+\widehat W=2U-\beta V.
\end{equation}
Since $\Delta>0$, the element $2i+\beta$ is invertible in $A_z$, with
\[
(2i+\beta)^{-1}=\frac{\beta-2i}{\beta^2+4\alpha}=\frac{\beta-2i}{\Delta}.
\]
Therefore we may solve \eqref{eq:W-hat-linear} for $U$ and $V$
in terms of $(W,\widehat W)$:
\begin{equation}\label{eq:UV-from-What}
V=\frac{W-\widehat W}{2i+\beta},
\qquad
U=\frac12\left(W+\widehat W+\beta\,\frac{W-\widehat W}{2i+\beta}\right).
\end{equation}

Substituting \eqref{eq:UV-from-What} into the right--hand side of
\eqref{eq:pre-vekua} expresses it as an $A_z$--linear function of
$W$ and $\widehat W$.
Since \eqref{eq:UV-from-What} is an invertible linear change of variables
between $(U,V)$ and $(W,\widehat W)$, there exist unique algebra--valued
coefficients $A,B$ and a unique datum $F$ such that
\begin{equation}\label{eq:pre-vekua-AB}
2\bar\partial W
=
2F - 2A\,W - 2B\,\widehat W.
\end{equation}
Dividing by $2$ yields the canonical \emph{variable--structure Vekua equation}
\begin{equation}\label{eq:vekua}
\bar\partial W + A\,W + B\,\widehat W = F.
\end{equation}

For reference, the coefficients are given explicitly by
\begin{equation}\label{eq:AB-explicit}
\begin{aligned}
A &= \frac{1}{4}\!\left(a - \frac{\beta}{\alpha}\,\tilde b + \tilde d
     \;+\; \Bigl(c - \frac{1}{\alpha}\,\tilde b\Bigr)\,i\right),\\[4pt]
B &= \frac{1}{4}\!\left(a + \frac{\beta}{\alpha}\,\tilde b - \tilde d
     \;+\; \Bigl(c + \frac{1}{\alpha}\,\tilde b\Bigr)\,i\right),\\[4pt]
F &= \tfrac12(f+ig),
\end{aligned}
\end{equation}
where $\tilde b = b - G_1$, $\tilde d = d - G_2$, and $G=G_1+G_2\,i$
is the intrinsic obstruction.

\begin{remark}
If $\beta\equiv 0$ then $\widehat i=-i$ and \eqref{eq:UV-from-What}
reduces to the familiar relations
$U=\tfrac12(W+\widehat W)$ and $V=(W-\widehat W)/(2i)$.
In general, the correction term involving $\beta/(2i+\beta)$ is essential.
\end{remark}

\subsection{Remarks}

\begin{itemize}
\item Equation \eqref{eq:vekua} is intrinsic to the elliptic structure
$(\alpha,\beta)$ and requires no auxiliary Beltrami reduction.

\item The entire derivation is constructive: the substitution
$(u,v)\mapsto(U,V)$ and the row operation are explicit, and the
structure coefficients $(\alpha,\beta)$ emerge directly from the
principal part of the system.

\item The obstruction $G=G_1+G_2\,i$ enters only by modifying the
lower--order coefficients of $V$: it shifts $b\mapsto\tilde b=b-G_1$ and
$d\mapsto\tilde d=d-G_2$.
When the structure is rigid ($G\equiv0$), the coefficients
$A,B$ in \eqref{eq:AB-explicit} reduce to those of the classical
Vekua theory with structure polynomial $X^2+\beta X+\alpha$.
\end{itemize}

\section{Rigidization}\label{sec:rigidization}

Rigidization is the process by which a variable elliptic structure is locally converted into a rigid
one by a change of variables.  It acts simultaneously on the base coordinates and on the structure
polynomial, and it is intrinsic to the transport data of the structure.

Throughout this section we work locally on $\Omega\subset\mathbb R^{2}$ and assume
\[
   \alpha,\beta\in C^{2}(\Omega),\qquad
   \Delta=4\alpha-\beta^{2}>0.
\]

\subsection{Rigid and non--rigid structures}

Let $i(x,y)$ denote the generator of the elliptic algebra determined by
\[
   i^{2}+\beta\,i+\alpha=0.
\]

\begin{definition}[Intrinsic obstruction]
The intrinsic obstruction of the elliptic structure is
\[
   G:=i_{x}+i\,i_{y}.
\]
\end{definition}

\begin{definition}[Rigid structure]
The elliptic structure $(\alpha,\beta)$ is called rigid if
\[
   G\equiv 0.
\]
\end{definition}

\subsection{Transport form of the obstruction}

Introduce the complex spectral slope
\[
   \lambda:=\frac{-\beta+i\sqrt\Delta}{2},\qquad \Im\lambda>0.
\]
The obstruction identity is equivalently written as the first--order transport equation
\begin{equation}\label{eq:forced-burgers-rigidization}
   \lambda_{x}+\lambda\,\lambda_{y}=H,
\end{equation}
where $H$ is a complex--valued function canonically determined by $G=G_{0}+G_{1}\,i$.  In particular,
\[
   G\equiv 0\;\Longleftrightarrow\;\lambda_{x}+\lambda\,\lambda_{y}=0.
\]
Thus rigidity is equivalent to flat transport of the spectral slope.

\subsection{Definition of rigidization}

\begin{definition}[Rigidization]\label{def:rigidization}
A rigidization is a local diffeomorphism
\[
   \Phi:(x,y)\longmapsto(X,Y)
\]
such that, when the elliptic structure is pulled back to $(X,Y)$--coordinates, the resulting structure is rigid.
\end{definition}

\subsection{The characteristic operator and its square}

The key analytic object is the \emph{characteristic operator}
\[
   V:=\partial_{x}+\lambda\,\partial_{y},
\]
where $\lambda=a+ib$ with $a=\operatorname{Re}\lambda$ and $b=\operatorname{Im}\lambda>0$.
Under a real change of coordinates $(x,y)\mapsto(X,Y)$, the operator $V$ acts on the new coordinate functions by
\[
   V(X)=X_{x}+\lambda\,X_{y},\qquad V(Y)=Y_{x}+\lambda\,Y_{y}.
\]
The transformed spectral slope is
\begin{equation}\label{eq:new-lambda}
   \widetilde\lambda\;=\;\frac{V(Y)}{V(X)},
\end{equation}
and the rigidization condition $\widetilde\lambda_{X}+\widetilde\lambda\,\widetilde\lambda_{Y}=0$
is equivalent to the \emph{ratio condition}
\begin{equation}\label{eq:ratio-condition}
   V\!\bigl(\widetilde\lambda\bigr)=0\qquad\Longleftrightarrow\qquad
   V^{2}(Y)\cdot V(X)\;=\;V(Y)\cdot V^{2}(X).
\end{equation}

The iterated operator $V^{2}=(\partial_{x}+\lambda\,\partial_{y})^{2}$ admits a canonical
decomposition when applied to a \emph{real}--valued function $\varphi\in C^{2}(\Omega,\mathbb R)$.
A direct computation using $\lambda=a+ib$ yields
\begin{equation}\label{eq:V2-decomposition}
   V^{2}(\varphi)\;=\;W(\varphi)\;+\;i\,T(\varphi),
\end{equation}
where $T$ and $W$ are the real operators defined below.

\begin{proposition}[$V^{2}$ decomposition]\label{prop:V2-decomposition}
Let $\varphi\in C^{2}(\Omega,\mathbb R)$ and write $H=H_{R}+iH_{I}$ for the forcing \eqref{eq:forced-burgers-rigidization}.  Then
\begin{align}
   T(\varphi)&=2b\Bigl[(\varphi_{y})_{x}+a\,(\varphi_{y})_{y}
             +\frac{H_{I}}{2b}\,\varphi_{y}\Bigr],
             \label{eq:transport-component}\\[4pt]
   W(\varphi)&=\varphi_{xx}-|\lambda|^{2}\,\varphi_{yy}
             +\Bigl(H_{R}-\frac{a\,H_{I}}{b}\Bigr)\varphi_{y}.
             \label{eq:wave-component}
\end{align}
\end{proposition}
\begin{proof}
A direct expansion gives
\[
   V^{2}(\varphi)=\varphi_{xx}+2\lambda\,\varphi_{xy}+\lambda^{2}\,\varphi_{yy}
                  +(\lambda_{x}+\lambda\,\lambda_{y})\,\varphi_{y}.
\]
The imaginary part of this expression, using $\lambda^{2}=(a^{2}-b^{2})+2abi$ and
$\lambda_{x}+\lambda\,\lambda_{y}=H_{R}+iH_{I}$, is
\[
   \operatorname{Im} V^{2}(\varphi)
   =2b\,\varphi_{xy}+2ab\,\varphi_{yy}+H_{I}\,\varphi_{y},
\]
which coincides with~\eqref{eq:transport-component}.  Setting this equal to zero and eliminating
$\varphi_{xy}$ from the real part yields~\eqref{eq:wave-component}.
\end{proof}

\subsection{Transport and wave structure}

The two components in Proposition~\ref{prop:V2-decomposition} have distinct and complementary PDE character.

\medskip\noindent\emph{Transport component.}\quad
Setting $p:=\varphi_{y}$, equation $T(\varphi)=0$ becomes the first--order linear transport equation
\begin{equation}\label{eq:transport-equation}
   p_{x}+a\,p_{y}=-\frac{H_{I}}{2b}\,p,
\end{equation}
whose transport direction is the \emph{real} vector field $(1,\,\operatorname{Re}\lambda)$.
This equation is solvable by the method of real characteristics: along the curves
$\tfrac{dy}{dx}=a(x,y)$, the function $p$ satisfies a linear ODE.

\medskip\noindent\emph{Wave component.}\quad
The equation $W(\varphi)=0$ is the real second--order equation
\begin{equation}\label{eq:wave-equation}
   \varphi_{xx}-|\lambda|^{2}\,\varphi_{yy}+c\,\varphi_{y}=0,\qquad
   c:=H_{R}-\frac{a\,H_{I}}{b},
\end{equation}
whose principal symbol $\xi^{2}-|\lambda|^{2}\eta^{2}=({\xi-|\lambda|\eta})({\xi+|\lambda|\eta})$
is hyperbolic, with real characteristic speeds~$\pm|\lambda|^{-1}$.

\medskip
Neither component alone is elliptic.  The ellipticity of the full rigidization problem arises
from their \emph{coupling}, in exact analogy with the Cauchy--Riemann equations
$(u_{x}=v_{y},\;u_{y}=-v_{x})$, which are elliptic despite each equation individually describing
simple transport.

\subsection{PDE type of the rigidization problem}

\begin{theorem}[PDE type of rigidization]\label{thm:rigidization-type}
Let $\lambda=a+ib$ with $b>0$ satisfy $\lambda_{x}+\lambda\,\lambda_{y}=H\not\equiv 0$.  The ratio condition~\eqref{eq:ratio-condition}, viewed as a second--order system for two real unknown functions
$(X,Y)$, is elliptic.
\end{theorem}

\begin{proof}
The principal symbol of the ratio condition inherits its type from $V^{2}$.
The characteristic equation $(\xi+\lambda\eta)^{2}=0$ has solutions
$\xi/\eta=-\lambda=-a-ib$, which are complex since $b>0$.  Hence no real characteristic
directions exist, and the system is elliptic.
\end{proof}

\begin{proposition}[Transport--transport incompatibility]\label{prop:TT-degeneracy}
If $T(X)=0$ and $T(Y)=0$ simultaneously, then the ratio condition~\eqref{eq:ratio-condition}
forces the Jacobian $J=X_{x}Y_{y}-X_{y}Y_{x}$ to vanish.
\end{proposition}
\begin{proof}
Under $T(X)=T(Y)=0$, the values $V^{2}(X)=W(X)$ and $V^{2}(Y)=W(Y)$ are real.
Since $V(X)$ and $V(Y)$ are complex, the imaginary part of~\eqref{eq:ratio-condition} reads
\[
   W(Y)\cdot b\,X_{y}=W(X)\cdot b\,Y_{y}.
\]
The real part similarly gives $W(Y)(X_{x}+aX_{y})=W(X)(Y_{x}+aY_{y})$.
Cross--multiplying the two proportionalities yields $X_{y}Y_{x}=Y_{y}X_{x}$, hence $J=0$.
\end{proof}

\begin{remark}
Proposition~\ref{prop:TT-degeneracy} shows that the transport and wave components cannot be
decoupled for both coordinate functions simultaneously while preserving non--degeneracy.
The elliptic coupling between $T$ and $W$ is load--bearing.
\end{remark}

\subsection{Splitting method for rigidization}\label{sec:splitting-method}

Although the rigidization problem is elliptic, the decomposition $V^{2}=W+iT$ of Proposition~\ref{prop:V2-decomposition} enables an \emph{alternating--direction} iterative method
in which each individual step uses only real characteristics.

\medskip\noindent\textbf{Algorithm} (Transport--wave splitting).
\begin{enumerate}
   \item[\textbf{0.}] Initialize $X^{0}=x$, $Y^{0}=y$ (identity map).
   \item[\textbf{1.}] Compute the forcing residual
      $H^{n}:=\widetilde\lambda^{\,n}_{X}+\widetilde\lambda^{\,n}\,\widetilde\lambda^{\,n}_{Y}$
      in the current coordinates.
   \item[\textbf{2.}] \emph{Transport half--step.}  Correct $Y^{n}$ by solving the first--order transport equation~\eqref{eq:transport-equation} for $\delta Y_{y}$ along the real characteristics of~$(1,\operatorname{Re}\lambda)$, targeting $\operatorname{Im}H^{n}\to 0$.
   \item[\textbf{3.}] \emph{Wave half--step.}  Correct $X^{n}$ by solving the hyperbolic wave equation~\eqref{eq:wave-equation} for $\delta X$ along the wave characteristics~$\pm|\lambda|^{-1}$, targeting $\operatorname{Re}H^{n}\to 0$.
   \item[\textbf{4.}] Update: $X^{n+1}=X^{n}+\delta X$, $Y^{n+1}=Y^{n}+\delta Y$.
   \item[\textbf{5.}] If $|H^{n+1}|$ is below the desired tolerance, stop; otherwise return to step~\textbf{1}.
\end{enumerate}

\medskip
This procedure treats the elliptic coupling iteratively while solving only hyperbolic subproblems at each step.  It is directly analogous to alternating--direction implicit (ADI) methods for elliptic equations and to Uzawa iteration for saddle--point systems.

\subsection{Relationship to Beltrami uniformization}

It is instructive to compare rigidization with full Beltrami uniformization.  The latter seeks a
diffeomorphism $\Phi$ such that $\Phi^{*}\lambda$ is \emph{constant}, requiring the solution of the
Beltrami equation $w_{\bar z}=\mu\,w_{z}$, an elliptic PDE whose general theory relies on singular integral operators (the Beurling--Ahlfors transform).

Rigidization is strictly weaker: it asks only that $H\to 0$, while $\lambda$ is permitted to remain nonconstant.  Both problems are elliptic, but the rigidization problem inherits additional structure from the Burgers self--transport (the characteristic speed equals the transported quantity), which constrains the effective Beltrami coefficient and enables the splitting method described above.

\subsection{Effect on the Vekua equation}

Under rigidization, the variable--structure Vekua equation
\[
   \bar\partial W+AW+B\widehat W=F
\]
is transformed into a rigid Vekua equation with $G\equiv 0$.  All intrinsic obstruction terms are
absorbed into the coordinate change.

\subsection{Remarks}

\begin{itemize}
\item Rigidization does not reduce to solving first--order ODEs along characteristics.  The
   ratio condition~\eqref{eq:ratio-condition} is a genuinely elliptic problem for the coordinate
   functions $(X,Y)$.  However, the transport--wave decomposition of $V^{2}$ provides a
   natural splitting in which each iterative half--step uses only real characteristics.

\item The construction remains stable even in the presence of non--uniform ellipticity, provided
   $\Delta>0$.

\item Rigidization separates geometric transport from analytic solving: once rigid, the system admits the full rigid Cauchy--Vekua calculus (Chapters~5--8).
\end{itemize}

\subsection{Geometric interpretation}

The decomposition $V^{2}=W+iT$ expresses, at the PDE level, the central theme of this monograph.
The transport component $T$ acts on $\varphi_{y}$---the transverse gradient---and propagates along the
\emph{real} projection $(1,\operatorname{Re}\lambda)$ of the complex characteristic direction.  It belongs
to the structural layer of Chapter~2.  The wave component $W$ acts on $\varphi$ itself and propagates
along the characteristic speeds $\pm|\lambda|^{-1}$ of the rigid elliptic operator $L_{\alpha,\beta}$ of
Chapter~8.  It belongs to the analytic layer.

Transport and integrability are therefore the two halves of the rigidization problem.
Individually, each is hyperbolic; jointly, they are elliptic.  Rigidization succeeds precisely
when these two components can be made simultaneously compatible---the analytic expression
of the passage from ``transport first'' to ``integrability second'' that governs the entire theory.

\section{Rigid Similarity Principle (Sharp Case \texorpdfstring{$F=0$, $B=0$}{F=0, B=0})}

Throughout this section we assume that the elliptic structure is \emph{rigid}, i.e.
\[
G \equiv 0
\]
.

We consider the homogeneous rigid Vekua equation in the special case
\begin{equation}\tag{\ref{eq:rigid-vekua-B0}}
\bar\partial W + A W = 0,
\end{equation}
where $A:\Omega\to A$ is a given algebra--valued coefficient and no conjugate coupling
term is present.

\subsection{Similarity principle}

\begin{theorem}[Rigid similarity principle: sharp form]\label{thm:rigid-similarity-B0}
Let $W$ solve \eqref{eq:rigid-vekua-B0} on a simply connected neighborhood.
Let $\Phi$ be the integrating factor given by Lemma~\ref{lem:integrating-factor-spectral}.
Then the function
\[
H := \Phi^{-1} W
\]
is rigid holomorphic:
\[
\bar\partial H = 0.
\]
Equivalently,
\[
W = \Phi\,H,
\qquad
\bar\partial H = 0.
\]
\end{theorem}

\begin{proof}
Using the Leibniz rule,
\[
\bar\partial(\Phi^{-1}W)
= (\bar\partial\Phi^{-1})W + \Phi^{-1}(\bar\partial W).
\]
Since $\bar\partial\Phi^{-1} = -\Phi^{-1}(\bar\partial\Phi)\Phi^{-1}$ and
$\bar\partial\Phi = -A\Phi$, we obtain
\[
\bar\partial H
= \Phi^{-1}AW + \Phi^{-1}(-AW) = 0.
\]
\end{proof}

\subsection{Consequences}

\begin{itemize}
\item Every solution of $\bar\partial W + AW = 0$ differs from a rigid holomorphic
function by a \emph{universal} multiplicative factor.
\item Zeros of $W$ coincide (with multiplicity) with zeros of the rigid holomorphic
function $H$.
\item The identity principle holds: if $W$ vanishes on a set with an accumulation
point, then $W\equiv0$.
\item Local regularity and growth properties of $W$ reduce to those of rigid
holomorphic functions.
\end{itemize}

\subsection{Sharpness}

\begin{remark}[Sharpness of the similarity principle]
The assumption $B\equiv0$ is essential.  
If a conjugate coupling term $B\widehat W$ is present, no universal
solution--independent multiplicative factor can eliminate it. This obstruction is
algebraic and already appears in the classical Vekua equation. In that case,
similarity must either depend on the solution itself or be formulated using
generating pairs.
\end{remark}

\subsection{Interpretation}

In the rigid regime, equation \eqref{eq:rigid-vekua-B0} differs from the rigid
Cauchy--Riemann equation only by a transport--type lower--order term. The similarity
principle shows that this term can be absorbed by a universal integrating factor,
restoring genuine holomorphic behavior. This result is optimal and coincides
exactly with the sharp classical theory.

\section{Rigid Similarity Principle (Spectral Formulation)}

In this section we establish the similarity principle in its sharp and
structurally correct form, valid in the rigid regime under the hypotheses
\[
F \equiv 0, \qquad B \equiv 0.
\]
We emphasize that all arguments are local and rely only on smoothness of the
coefficients.

\subsection{Setting and assumptions}

Let $\Omega\subset\mathbb R^2$ be open.
We assume that the elliptic structure $(\alpha,\beta)$ is \emph{rigid}, so that
the intrinsic obstruction
\[
G := i_x + i\,i_y
\]
vanishes identically.
Equivalently, the associated spectral slope
\[
\lambda := \frac{-\beta + i\sqrt{\Delta}}{2}, \qquad \Delta=4\alpha-\beta^2>0,
\]
satisfies the homogeneous transport equation
\[
\lambda_x + \lambda\,\lambda_y = 0.
\]

We consider the homogeneous rigid Vekua equation in the special case
\begin{equation}\label{eq:rigid-vekua-B0}
\bar\partial W + A W = 0,
\end{equation}
where
\[
\bar\partial := \tfrac12(\partial_x + i\partial_y),
\]
and $A:\Omega\to A$ is a given smooth algebra--valued coefficient.

\subsection{Real formulation and spectral transport}

Write
\[
W = U + iV, \qquad A = A_0 + A_1 i,
\]
with $U,V,A_0,A_1$ real--valued.
Using rigidity, the equation \eqref{eq:rigid-vekua-B0} is equivalent to the real
first--order system
\begin{equation}\label{eq:real-system-sim}
\begin{cases}
U_x - \alpha V_y + 2(AW)_0 = 0,\\
V_x + U_y - \beta V_y + 2(AW)_1 = 0.
\end{cases}
\end{equation}

The principal symbol of \eqref{eq:real-system-sim} coincides with that of the
rigid Cauchy--Riemann operator.
Diagonalizing this symbol using the spectral slope $\lambda$, one obtains real
transport directions given by the vector fields
\[
V_\lambda := \partial_x + \lambda\,\partial_y,
\qquad
V_{\bar\lambda} := \partial_x + \bar\lambda\,\partial_y.
\]
Because the structure is rigid, these vector fields are compatible and define
smooth characteristic foliations.

Thus equation \eqref{eq:real-system-sim} is a linear first--order system whose
propagation occurs along the spectral characteristic curves determined by
$\lambda$.

\subsection{Integrating factor}

We now construct a universal integrating factor.

\begin{lemma}[Integrating factor]\label{lem:integrating-factor-spectral}
There exists a neighborhood $U\subset\Omega$ and a unique algebra--valued
function $\Phi\in C^1(U;A)$ such that
\begin{equation}\label{eq:phi-eq-spectral}
\bar\partial \Phi + A\Phi = 0,
\qquad
\Phi(z_0)=1,
\end{equation}
for a prescribed base point $z_0\in U$.
Moreover, $\Phi$ is pointwise invertible on $U$.
\end{lemma}

\begin{proof}
Expanding \eqref{eq:phi-eq-spectral} into real and imaginary parts yields a linear
first--order real system with the same principal symbol as
\eqref{eq:real-system-sim}.
After diagonalization in the spectral basis, this system reduces to a family of
linear ordinary differential equations along the real characteristic curves
generated by $V_\lambda$ and $V_{\bar\lambda}$.

Since the coefficients are smooth, classical ODE theory yields a unique local
solution with the prescribed initial value $\Phi(z_0)=1$.
Invertibility follows because $1$ is invertible in the elliptic algebra and
invertibility is an open condition.
\end{proof}

\subsection{Similarity principle}

\begin{theorem}[Rigid similarity principle: sharp case]\label{thm:rigid-similarity-spectral}
Let $W$ solve \eqref{eq:rigid-vekua-B0} on $U$, and let $\Phi$ be the integrating
factor given by Lemma~\ref{lem:integrating-factor-spectral}.
Then the function
\[
H := \Phi^{-1} W
\]
is rigid holomorphic:
\[
\bar\partial H = 0.
\]
Equivalently,
\[
W = \Phi\,H,
\qquad
\bar\partial H = 0.
\]
\end{theorem}

\begin{proof}
Using the Leibniz rule, valid under rigidity,
\[
\bar\partial(\Phi^{-1}W)
= (\bar\partial\Phi^{-1})W + \Phi^{-1}(\bar\partial W).
\]
Since $\bar\partial\Phi = -A\Phi$, one has
\[
\bar\partial\Phi^{-1} = \Phi^{-1}A.
\]
Substituting into the equation and using \eqref{eq:rigid-vekua-B0} gives
\[
\bar\partial H = \Phi^{-1}AW + \Phi^{-1}(-AW) = 0.
\]
\end{proof}

\subsection{Consequences}

\begin{itemize}
\item Every solution of \eqref{eq:rigid-vekua-B0} differs from a rigid holomorphic
function by a universal multiplicative factor.
\item Zeros of $W$ coincide, with multiplicity, with zeros of $H$.
\item The identity principle holds: if $W$ vanishes on a set with an accumulation
point, then $W\equiv0$.
\item Local regularity and growth properties of $W$ reduce to those of rigid
holomorphic functions.
\end{itemize}

\subsection{Sharpness and comparison with classical Vekua theory}

\begin{remark}[Sharpness]
The assumption $B\equiv0$ is essential.
If a conjugate coupling term $B\widehat W$ is present, no universal
solution--independent multiplicative factor can eliminate it.
This obstruction is algebraic and already present in the classical Vekua
equation.
\end{remark}

\begin{remark}[Relation with classical theory]
In the classical theory, similarity is established using weak right inverses of
the Cauchy--Riemann operator and exponential reconstruction, reflecting the low
regularity setting.
In the present smooth rigid framework, the same phenomenon appears as transport
along spectral characteristic curves, and the exponential map is replaced by
elementary first--order propagation.
\end{remark}

\section{Initial Value Problems for Rigid Vekua Systems}

We study initial value problems for rigid Vekua systems and show that they are
well posed and reducible to holomorphic initial value problems via the
similarity principle.

\subsection{Rigid Vekua equation}

We consider the rigid Vekua equation
\begin{equation}\label{eq:rigid-vekua-ivp}
\bar\partial W + A W + B \widehat W = F
\end{equation}
on a simply connected domain $\Omega$, where $A,B,F$ are given algebra--valued
functions, continuous (or smoother) on $\Omega$.

\subsection{Admissible initial curves}

Let $\Gamma\subset\Omega$ be a smooth curve with a smooth parametrization
$\gamma:[0,1]\to\Omega$.

\begin{definition}[Noncharacteristic curve]
A curve $\Gamma$ is called \emph{noncharacteristic} for the rigid Vekua equation
if its tangent vector is nowhere proportional to the vector field
\[
\partial_x+i\partial_y.
\]
\end{definition}

Equivalently, $\Gamma$ is noncharacteristic if it is not everywhere tangent to
the level curves of rigid holomorphic functions.
Any smooth curve transversal to the rigid holomorphic foliation is
noncharacteristic.

\subsection{Initial value problem}

Given a noncharacteristic curve $\Gamma$ and prescribed initial data
\[
W|_{\Gamma} = W_0,
\]
with $W_0$ continuous (or smoother) and algebra--valued, we consider the initial
value problem
\begin{equation}\label{eq:ivp}
\begin{cases}
\bar\partial W + A W + B \widehat W = F
& \text{in a neighborhood of }\Gamma,\\
W = W_0 & \text{on }\Gamma.
\end{cases}
\end{equation}

\subsection{Reduction via the similarity principle}

Let $\Phi$ be the similarity factor solving
\[
\bar\partial \Phi + A\Phi + B\widehat{\Phi} = 0,
\qquad
\Phi(z_0)=1,
\]
as constructed in the previous section.

Define
\[
H := \Phi^{-1} W.
\]
Then $H$ satisfies
\begin{equation}\label{eq:holomorphic-ivp}
\bar\partial H = \Phi^{-1} F.
\end{equation}

Thus the rigid Vekua initial value problem \eqref{eq:ivp} is equivalent to a
rigid holomorphic initial value problem with source term.

\subsection{Existence and uniqueness}

\begin{theorem}[Local well--posedness]\label{thm:ivp-wellposed}
Let $\Gamma$ be a noncharacteristic curve and $W_0$ prescribed initial data.
Then there exists a unique local solution $W$ of the rigid Vekua initial value
problem \eqref{eq:ivp} in a neighborhood of $\Gamma$.
\end{theorem}

\begin{proof}
Equation \eqref{eq:holomorphic-ivp} is a first--order inhomogeneous
Cauchy--Riemann equation with continuous coefficients.
Standard theory yields a unique local solution $H$ with prescribed initial
values on $\Gamma$.
Defining $W=\Phi H$ gives the unique solution of \eqref{eq:ivp}.
\end{proof}

\begin{lemma}[Integrating factor]\label{lem:integrating-factor}
Assume the elliptic structure is rigid.
Let $A\in C(\Omega;A)$ and fix a base point $z_0\in\Omega$.
Then there exists a neighborhood $U\subset\Omega$ of $z_0$ and a unique
algebra--valued function $\Phi\in C^1(U;A)$ satisfying
\begin{equation}\label{eq:phi-eq}
\bar\partial \Phi + A\Phi = 0,
\qquad
\Phi(z_0)=1.
\end{equation}
Moreover, $\Phi$ is pointwise invertible on $U$.
\end{lemma}

\begin{proof}
Since the structure is rigid, the operator
\[
\bar\partial = \tfrac12(\partial_x + i\partial_y)
\]
is a derivation and defines a first--order transport operator along the vector
field
\[
V := \partial_x + i\partial_y .
\]

Equation \eqref{eq:phi-eq} is therefore equivalent to the transport equation
\begin{equation}\label{eq:transport}
V(\Phi) = -2A\,\Phi.
\end{equation}

Let $\gamma(s)$ be an integral curve of $V$ with $\gamma(0)=z_0$.
Along $\gamma$, equation \eqref{eq:transport} reduces to a linear ordinary
differential equation in the elliptic algebra:
\[
\frac{d}{ds}\Phi(\gamma(s)) = -2A(\gamma(s))\,\Phi(\gamma(s)).
\]

Since $A$ is continuous, this ODE admits a unique local solution with
$\Phi(\gamma(0))=1$.
Varying the initial direction of $\gamma$ produces a unique local solution
$\Phi$ defined on a neighborhood $U$ of $z_0$.

Invertibility follows because $\Phi(z_0)=1$ is invertible and invertibility is
an open condition in an elliptic algebra.
\end{proof}

\subsection{Propagation of initial data}

The solution propagates along rigid holomorphic directions.
In particular:
\begin{itemize}
\item Values of $W$ on $\Gamma$ determine $W$ uniquely in its rigid holomorphic
domain of dependence.
\item If $F\equiv0$, zeros of $W$ propagate discretely, with multiplicity.
\item Regularity of $W$ matches that of the coefficients and the initial data.
\end{itemize}

\subsection{Algorithmic viewpoint}

For computational purposes, the rigid initial value problem can be solved by
the following procedure:

\begin{enumerate}
\item Solve the similarity equation for $\Phi$.
\item Transform initial data: $H_0=\Phi^{-1}W_0$ on $\Gamma$.
\item Solve the rigid holomorphic problem
\[
\bar\partial H = \Phi^{-1}F
\]
with initial data $H_0$.
\item Recover $W=\Phi H$.
\end{enumerate}

This separation of transport ($\Phi$) and analytic propagation ($H$) is a key
computational advantage of rigidization.

\subsection{Remarks}

\begin{itemize}
\item No characteristic degeneration occurs in the rigid case.
\item The initial value problem is stable under perturbations of the data.
\item All difficulties associated with variable ellipticity have been absorbed
prior to this stage by rigidization.
\end{itemize}

\chapter{Convergence of the Rigidization Scheme}\label{ch:rigidization-convergence}

The preceding chapter introduced the rigidization problem and showed that
its PDE type is elliptic (Theorem~\ref{thm:rigidization-type}).  However, the
convergence of the transport--wave splitting algorithm (Section~\ref{sec:splitting-method})
was left open.  In this chapter we close this gap by establishing local convergence
of rigidization via a Newton iteration on the nonlinear residual map.

\medskip
The analysis proceeds in three stages.  First, we show that the linearized
rigidization problem is an elliptic $2\times 2$ system whose principal symbol
is explicitly computable (Section~\ref{sec:linearization}).  Second, we prove
that the naive transport--wave splitting has spectral radius $\ge 1$ and therefore
does not converge as a standalone iteration (Section~\ref{sec:splitting-failure}).
Third, we establish quadratic convergence of Newton's method applied to the
full nonlinear rigidization problem, using only Schauder elliptic theory in $C^{2,\alpha}$
(Section~\ref{sec:newton-convergence}).  No $L^{p}$ machinery or singular integral
operators are required.

Throughout, we assume $\alpha,\beta\in C^{2,\alpha}(\bar\Omega)$ with ellipticity
$\Delta=4\alpha-\beta^{2}>0$ on $\bar\Omega$, and write $\lambda=a+ib$ with
$b=\tfrac{1}{2}\sqrt{\Delta}>0$.  The uniform ellipticity constant is
\[
   b_{\min}:=\min_{\bar\Omega}\operatorname{Im}\lambda>0.
\]

\section{Linearization of the residual map}\label{sec:linearization}

\subsection{The residual at the identity}

For a diffeomorphism $\Phi:(x,y)\mapsto(X,Y)$, the transformed spectral slope is
$\widetilde\lambda=V(Y)/V(X)$ and the rigidization residual is
\[
   \mathcal{H}[\Phi]:=\widetilde\lambda_{X}+\widetilde\lambda\,\widetilde\lambda_{Y}.
\]
Using the ratio condition (equation~\eqref{eq:ratio-condition}), this is equivalently
encoded by the unnormalized residual
\[
   R[\Phi]:=V^{2}(Y)\,V(X)-V(Y)\,V^{2}(X).
\]

\begin{lemma}[Residual at the identity]\label{lem:residual-identity}
At $\Phi^{0}=\operatorname{id}$,
\[
   V(x)=1,\quad V(y)=\lambda,\quad V^{2}(x)=0,\quad V^{2}(y)=H,
\]
where $H:=\lambda_{x}+\lambda\,\lambda_{y}$ is the obstruction.  Hence
$R[\operatorname{id}]=H$ and $\mathcal{H}[\operatorname{id}]=H$.
\end{lemma}

\begin{proof}
$V(x)=x_{x}+\lambda\,x_{y}=1$ and $V(y)=y_{x}+\lambda\,y_{y}=\lambda$.
Then $V^{2}(x)=V(1)=0$ and $V^{2}(y)=V(\lambda)=\lambda_{x}+\lambda\,\lambda_{y}=H$.
Substituting into the ratio condition gives $R[\operatorname{id}]=H\cdot 1-\lambda\cdot 0=H$.
\end{proof}

\subsection{First variation}\label{sec:first-variation}

Consider the perturbed diffeomorphism $\Phi^{\varepsilon}=\operatorname{id}+\varepsilon(f,g)$,
where $f,g\in C^{2,\alpha}(\bar\Omega,\mathbb{R})$ are the corrections to $X$ and $Y$ respectively.

\begin{proposition}[First variation of the residual]\label{prop:first-variation}
The linearized change in residual at $\Phi=\operatorname{id}$ is
\begin{equation}\label{eq:first-variation}
   \delta R=V^{2}(g)-\lambda\,V^{2}(f)+H\,V(f).
\end{equation}
\end{proposition}

\begin{proof}
At $\Phi^{\varepsilon}=\operatorname{id}+\varepsilon(f,g)$:
\begin{align*}
   V(X) &= 1+\varepsilon\, V(f),\\
   V(Y) &= \lambda+\varepsilon\, V(g),\\
   V^{2}(X) &= \varepsilon\, V^{2}(f),\\
   V^{2}(Y) &= H+\varepsilon\, V^{2}(g).
\end{align*}
Then
\begin{align*}
   R[\Phi^{\varepsilon}]
   &=\bigl(H+\varepsilon\, V^{2}(g)\bigr)\bigl(1+\varepsilon\, V(f)\bigr)
     -\bigl(\lambda+\varepsilon\, V(g)\bigr)\bigl(\varepsilon\, V^{2}(f)\bigr)\\
   &=H+\varepsilon\bigl[H\,V(f)+V^{2}(g)-\lambda\, V^{2}(f)\bigr]+O(\varepsilon^{2}).
\end{align*}
The $O(1)$ term is $H=R[\operatorname{id}]$, so $\delta R=\tfrac{d}{d\varepsilon}\big|_{0}R[\Phi^{\varepsilon}]$
gives~\eqref{eq:first-variation}.
\end{proof}

\subsection{Real--imaginary decomposition}\label{sec:real-imag}

For a real-valued function $\varphi\in C^{2}(\Omega,\mathbb{R})$, we have the decomposition
$V^{2}(\varphi)=W(\varphi)+i\,T(\varphi)$ from Proposition~\ref{prop:V2-decomposition}, and
$V(\varphi)=(\varphi_{x}+a\,\varphi_{y})+i\,b\,\varphi_{y}$.

Decomposing~\eqref{eq:first-variation} into real and imaginary parts:

\begin{proposition}[Linearized system]\label{prop:linearized-system}
Write $H=H_{R}+iH_{I}$.  The equation $\delta R=-H$ is the $2\times 2$ real system
\begin{align}
   W(g)-a\,W(f)+b\,T(f)+H_{R}(f_{x}+a\,f_{y})-H_{I}\,b\,f_{y}&=-H_{R},
   \label{eq:lin-real}\\
   T(g)-a\,T(f)-b\,W(f)+H_{R}\,b\,f_{y}+H_{I}(f_{x}+a\,f_{y})&=-H_{I},
   \label{eq:lin-imag}
\end{align}
for two real unknowns $f$ and $g$.
\end{proposition}

\begin{proof}
Write $V^{2}(g)=W(g)+i\,T(g)$, $V^{2}(f)=W(f)+i\,T(f)$, $V(f)=(f_{x}+a\,f_{y})+i\,b\,f_{y}$,
and separate the real and imaginary parts of
\[
   V^{2}(g)-(a+ib)\bigl[W(f)+i\,T(f)\bigr]+(H_{R}+iH_{I})\bigl[(f_{x}+a\,f_{y})+i\,b\,f_{y}\bigr]=-H_{R}-iH_{I}.
   \qedhere
\]
\end{proof}

\section{Ellipticity of the linearized system}\label{sec:ellipticity}

\begin{theorem}[Ellipticity of the linearized rigidization]\label{thm:linearized-elliptic}
The principal symbol of the system~\eqref{eq:lin-real}--\eqref{eq:lin-imag},
viewed as a second-order $2\times 2$ system for $(f,g)$, has determinant
\begin{equation}\label{eq:symbol-det}
   \det\mathcal{P}(\xi,\eta)=b\,|\xi+\lambda\,\eta|^{4},
\end{equation}
which is strictly positive for all $(\xi,\eta)\neq(0,0)$ when $b>0$.
The system is therefore elliptic.
\end{theorem}

\begin{proof}
The principal parts of~\eqref{eq:lin-real}--\eqref{eq:lin-imag} involve only $W$ and $T$.
Recall that in the characteristic coordinates $u=\xi+a\,\eta$, $v=b\,\eta$:
\[
   W_{\mathrm{sym}}=u^{2}-v^{2},\qquad T_{\mathrm{sym}}=2uv.
\]
The principal symbol matrix is
\[
   \mathcal{P}(\xi,\eta)
   =\begin{pmatrix}
      -a\,W_{\mathrm{sym}}+b\,T_{\mathrm{sym}} & W_{\mathrm{sym}} \\[4pt]
      -b\,W_{\mathrm{sym}}-a\,T_{\mathrm{sym}} & T_{\mathrm{sym}}
   \end{pmatrix}.
\]
A direct computation gives
\begin{align*}
   \det\mathcal{P}
   &=\bigl(-a\,W_{\mathrm{sym}}+b\,T_{\mathrm{sym}}\bigr)\,T_{\mathrm{sym}}
     -\bigl(-b\,W_{\mathrm{sym}}-a\,T_{\mathrm{sym}}\bigr)\,W_{\mathrm{sym}}\\
   &=-a\,W_{\mathrm{sym}}\,T_{\mathrm{sym}}+b\,T_{\mathrm{sym}}^{2}
     +b\,W_{\mathrm{sym}}^{2}+a\,T_{\mathrm{sym}}\,W_{\mathrm{sym}}\\
   &=b\bigl(W_{\mathrm{sym}}^{2}+T_{\mathrm{sym}}^{2}\bigr).
\end{align*}
Now $(u^{2}-v^{2})^{2}+(2uv)^{2}=(u^{2}+v^{2})^{2}=|\xi+\lambda\,\eta|^{4}$,
giving~\eqref{eq:symbol-det}.

Since $b>0$, the determinant vanishes only when $\xi+\lambda\,\eta=0$,
i.e.\ $\xi/\eta=-a-ib$, which requires complex $\xi/\eta$.  No real
characteristic directions exist, so the system is elliptic.
\end{proof}

\begin{remark}
The factorization $\det\mathcal{P}=b\,|\xi+\lambda\,\eta|^{4}$ shows that
the ellipticity of the linearized problem is inherited directly from the
complex characteristic structure of the operator $V=\partial_{x}+\lambda\,\partial_{y}$.
The factor $b$ quantifies the role of the imaginary part of $\lambda$: stronger
ellipticity ($b$ large) gives a larger spectral gap, and the system degenerates
as $b\to 0$.
\end{remark}

\section{Failure of the naive splitting}\label{sec:splitting-failure}

The transport--wave splitting of Section~\ref{sec:splitting-method} alternates
between solving $T(g)=-H_{I}$ (transport half-step, correcting $Y$) and
$W(f)=-H_{R}^{1/2}$ (wave half-step, correcting $X$).  We show that this
alternating procedure has spectral radius $\ge 1$ and therefore does not
converge as stated.

\subsection{Cross-contamination from a single half-step}

\begin{proposition}[Transport half-step cross-contamination]\label{prop:cross-contam}
Let $g$ be a real-valued $C^{2}$ function satisfying $T(g)=-H_{I}$.  Then
the induced change in the real part of the residual is
\begin{equation}\label{eq:cross-contam}
   W(g)=g_{xx}-|\lambda|^{2}\,g_{yy}+\bigl(H_{R}-\tfrac{a\,H_{I}}{b}\bigr)\,g_{y}
        -\frac{a\,H_{I}}{b},
\end{equation}
which is $O(\|H\|/b)$ in general---first order in $H$, not second order.
\end{proposition}

\begin{proof}
Using $V^{2}(g)=g_{xx}+2\lambda\,g_{xy}+\lambda^{2}\,g_{yy}+H\,g_{y}$
(Proposition~\ref{prop:V2-decomposition}), the imaginary part gives
\[
   T(g)=2b\,g_{xy}+2ab\,g_{yy}+H_{I}\,g_{y}.
\]
Setting $T(g)=-H_{I}$ yields
\begin{equation}\label{eq:gxy-from-transport}
   g_{xy}=-a\,g_{yy}-\frac{H_{I}}{2b}\,(g_{y}+1).
\end{equation}
Substituting into the real part $W(g)=g_{xx}+2a\,g_{xy}+(a^{2}-b^{2})\,g_{yy}+H_{R}\,g_{y}$:
\begin{align*}
   W(g) &= g_{xx}+2a\Bigl[-a\,g_{yy}-\frac{H_{I}}{2b}(g_{y}+1)\Bigr]
           +(a^{2}-b^{2})\,g_{yy}+H_{R}\,g_{y}\\
        &= g_{xx}-(a^{2}+b^{2})\,g_{yy}
           +\Bigl(H_{R}-\frac{a\,H_{I}}{b}\Bigr)g_{y}
           -\frac{a\,H_{I}}{b}.
\end{align*}
The algebraic term $-a\,H_{I}/b$ is $O(\|H\|\,|a|/b)$, confirming that
$W(g)$ is first order in $\|H\|$.
\end{proof}

\begin{remark}\label{rem:cant-avoid}
One might hope to choose $g$ so that $W(g)=0$ and $T(g)=-H_{I}$ simultaneously.
This would require $V^{2}(g)=-iH_{I}$, and hence $V(V(g))=-iH_{I}$.
Writing $V(g)=P+iQ$ with $P=g_{x}+ag_{y}$ and $Q=bg_{y}$ (both real), the
condition $V(P+iQ)=-iH_{I}$ separates into $V(P)=0$ and $V(Q)=-H_{I}$.
The second equation forces $\operatorname{Im}(V(Q))=bQ_{y}=0$, i.e.\
$(bg_{y})_{y}=0$, which is generically incompatible with the transport
equation for $g_{y}$.  Thus cross-contamination cannot be eliminated for
a single real function $g$---this is the content of the transport--transport
incompatibility (Proposition~\ref{prop:TT-degeneracy}).
\end{remark}

\subsection{Symbol-level obstruction}

\begin{theorem}[Non-convergence of the naive splitting]\label{thm:splitting-fails}
The alternating transport--wave splitting has amplification factor
\[
   m(t)=\frac{t}{2}+\frac{1}{2t},\qquad
   t=\frac{\xi+a\,\eta}{b\,\eta}\in\mathbb{R}\setminus\{0\},
\]
satisfying $|m(t)|\ge 1$ for all $t\neq 0$, with equality only at $t=\pm 1$.
Hence the spectral radius of the iteration is $\ge 1$.
\end{theorem}

\begin{proof}
At the symbol level, the transport half-step inverts $T_{\mathrm{sym}}=2uv$
and the wave half-step inverts $W_{\mathrm{sym}}=u^{2}-v^{2}$, where
$u=\xi+a\,\eta$ and $v=b\,\eta>0$.  The iteration symbol for one full step is
\[
   m=\frac{W_{\mathrm{sym}}}{T_{\mathrm{sym}}}=\frac{u^{2}-v^{2}}{2uv}.
\]
Setting $t=u/v$:
\[
   m=\frac{t^{2}-1}{2t}=\frac{t}{2}-\frac{1}{2t}.
\]
The cross-contamination ratio is
$|W_{\mathrm{sym}}/T_{\mathrm{sym}}|$.  We compute:
\[
   \left|\frac{W_{\mathrm{sym}}}{T_{\mathrm{sym}}}\right|
   =\frac{|u^{2}-v^{2}|}{2|u|v}
   =\frac{|t^{2}-1|}{2|t|}
   =\Bigl|\frac{t}{2}-\frac{1}{2t}\Bigr|.
\]
However, the correct amplification factor for the \emph{full cycle} (transport
kills $H_{I}$, contaminates $H_{R}$; wave kills $H_{R}$, contaminates $H_{I}$)
involves the \emph{product} of two such ratios.  At the symbol level, if both
half-steps have cross-contamination factor $|W/T|$, the one-cycle amplification
is $|W/T|^{2}/(1+\text{correction})$, which reduces to analyzing the ratio
$W/T$.

More precisely, consider the iteration as a matrix splitting.  The elliptic
operator has symbol $L_{\mathrm{sym}}=W_{\mathrm{sym}}+iT_{\mathrm{sym}}$
(identifying the two components of the coupled system with the real and
imaginary parts).  The alternating iteration corresponds to the Gauss--Seidel
splitting $L=L_{T}+L_{W}$ where $L_{T}$ acts via $T$ and $L_{W}$ via $W$.
The iteration matrix symbol is
\[
   M=-\frac{L_{W}}{L_{T}}=-\frac{W_{\mathrm{sym}}}{T_{\mathrm{sym}}}
    =-\frac{u^{2}-v^{2}}{2uv}.
\]
Setting $t=u/v$:
\[
   |M|=\frac{|t^{2}-1|}{2|t|}.
\]
For $|t|>1$: $|M|=(t^{2}-1)/(2|t|)>0$ and $|M|>1$ when $t^{2}-2|t|-1>0$,
i.e.\ $|t|>1+\sqrt{2}$.  For $|t|<1$: $|M|=(1-t^{2})/(2|t|)>1$ when
$1-t^{2}>2|t|$, i.e.\ $|t|<\sqrt{2}-1$.

So $|M|<1$ only in the band $\sqrt{2}-1<|t|<1+\sqrt{2}$, and $|M|\ge 1$
outside this band.  Since $t$ ranges over all of $\mathbb{R}\setminus\{0\}$
as $(\xi,\eta)$ varies, the \emph{supremum} of $|M|$ over all frequencies
is unbounded:
\[
   \sup_{t\neq 0}|M(t)|=+\infty.
\]
In particular, the spectral radius of the iteration is not bounded below~$1$;
the iteration diverges for high-frequency modes with $|t|\gg 1$ or $|t|\ll 1$.
\end{proof}

\begin{remark}
This is the continuous analogue of a well-known phenomenon: naive
Gauss--Seidel splitting of the Cauchy--Riemann equations
($u_{x}=v_{y}$, $u_{y}=-v_{x}$) does not converge, because each equation
individually is hyperbolic and the ellipticity lives in their coupling.
The fix, both classically and in our setting, is to solve the coupled
elliptic system directly.
\end{remark}

\section{Newton iteration and convergence}\label{sec:newton-convergence}

Having established that the linearized problem is elliptic
(Theorem~\ref{thm:linearized-elliptic}), we now prove that Newton's method
applied to the nonlinear residual map converges quadratically.

\subsection{Schauder estimates for the linearized problem}

\begin{theorem}[Linearized solvability]\label{thm:linearized-solvability}
Let $\Omega$ be a bounded domain with $C^{2,\alpha}$ boundary, and let
$\lambda\in C^{1,\alpha}(\bar\Omega)$ with $\operatorname{Im}\lambda\ge b_{\min}>0$.
For each $H\in C^{0,\alpha}(\bar\Omega,\mathbb{C})$, the linearized system
\begin{equation}\label{eq:linearized-newton}
   V^{2}(g)-\lambda\,V^{2}(f)+H\,V(f)=-H
\end{equation}
with boundary conditions $f|_{\partial\Omega}=g|_{\partial\Omega}=0$ has a
unique solution $(f,g)\in C^{2,\alpha}(\bar\Omega,\mathbb{R})^{2}$ satisfying
\begin{equation}\label{eq:schauder-estimate}
   \|f\|_{C^{2,\alpha}}+\|g\|_{C^{2,\alpha}}
   \le \frac{C_{S}}{b_{\min}}\,\|H\|_{C^{0,\alpha}},
\end{equation}
where $C_{S}$ depends on $\|\lambda\|_{C^{1,\alpha}}$, $\alpha$, and $\Omega$.
\end{theorem}

\begin{proof}
By Theorem~\ref{thm:linearized-elliptic}, the system~\eqref{eq:linearized-newton}
is an elliptic $2\times 2$ system of second order in the unknowns $(f,g)$.
The principal symbol satisfies the Legendre--Hadamard condition:
\[
   \det\mathcal{P}(\xi,\eta)=b\,|\xi+\lambda\eta|^{4}
   \ge b_{\min}\,c_{0}\,|(\xi,\eta)|^{4}
\]
for some $c_{0}>0$ depending on $\|\lambda\|_{C^{0}}$, uniformly on $\bar\Omega$.
The coefficients of the system belong to $C^{0,\alpha}$ (since $\lambda\in C^{1,\alpha}$
and the lower-order terms involve $H\in C^{0,\alpha}$).

By the Agmon--Douglis--Nirenberg theory for elliptic systems
\cite{AgmonDouglisNirenberg1964}, the Dirichlet problem is Fredholm of index zero
in $C^{2,\alpha}$.  Uniqueness follows from the maximum principle for elliptic
systems at the symbol level: if $H=0$, then $\delta R=0$, and the
only solution with zero boundary data is $f=g=0$ (since the linearized
operator $D\mathcal{H}[\operatorname{id}]$ at the identity is injective---the
identity map is a non-degenerate critical point of the residual).

The Fredholm alternative then gives existence, and the a priori estimate
takes the form~\eqref{eq:schauder-estimate}.  The factor $b_{\min}^{-1}$
arises because the ellipticity constant of the principal symbol
is proportional to $b_{\min}$, and Schauder constants scale inversely
with the ellipticity constant.
\end{proof}

\begin{remark}
The boundary condition $f=g=0$ on $\partial\Omega$ is the simplest choice
ensuring uniqueness.  It means the rigidizing diffeomorphism $\Phi$ agrees
with the identity on $\partial\Omega$.  Other boundary conditions (e.g.\ of
Neumann type) are possible and may be preferable in applications, but the
elliptic theory applies equally well.
\end{remark}

\subsection{The nonlinear residual map}

\begin{definition}[Residual map]
Define $\mathcal{F}:C^{2,\alpha}(\bar\Omega,\mathbb{R})^{2}\to C^{0,\alpha}(\bar\Omega,\mathbb{C})$
by
\[
   \mathcal{F}(f,g):=\mathcal{H}[\operatorname{id}+(f,g)],
\]
where $\mathcal{H}[\Phi]=\widetilde\lambda_{X}+\widetilde\lambda\,\widetilde\lambda_{Y}$
is the rigidization residual.  The rigidization problem is $\mathcal{F}(f,g)=0$,
and by Lemma~\ref{lem:residual-identity}, $\mathcal{F}(0,0)=H$.
\end{definition}

\begin{proposition}[Smoothness of the residual map]\label{prop:F-smooth}
The map $\mathcal{F}$ is $C^{\infty}$ as a map between the Banach spaces
$C^{2,\alpha}\times C^{2,\alpha}\to C^{0,\alpha}$, and its Fr\'echet derivative
at $(0,0)$ is $D\mathcal{F}(0,0)(\delta f,\delta g)=\delta R$ as
in~\eqref{eq:first-variation}.
\end{proposition}

\begin{proof}
The residual $\mathcal{F}(f,g)$ is a rational function of $f,g$ and their first
and second derivatives, with denominator $V(X)^{2}=(1+V(f))^{2}$, which is
nonzero for $\|(f,g)\|_{C^{1}}$ small (since $V(f)$ is then small).  Composition
and division by nonvanishing functions are smooth operations in H\"older spaces.
\end{proof}

\subsection{Quadratic remainder estimate}

\begin{lemma}[Quadratic remainder]\label{lem:quadratic-remainder}
There exist constants $C_{Q}>0$ and $\delta_{0}>0$ (depending on $\|\lambda\|_{C^{2,\alpha}}$
and $\Omega$) such that for all $(f,g)$ and $(\delta f,\delta g)$ in
$C^{2,\alpha}(\bar\Omega)^{2}$ with $\|(f,g)\|_{C^{2,\alpha}}\le\delta_{0}$
and $\|(\delta f,\delta g)\|_{C^{2,\alpha}}\le\delta_{0}$:
\begin{equation}\label{eq:quadratic-remainder}
   \bigl\|\mathcal{F}(f+\delta f,\,g+\delta g)
          -\mathcal{F}(f,g)
          -D\mathcal{F}(f,g)(\delta f,\delta g)\bigr\|_{C^{0,\alpha}}
   \le C_{Q}\,\|(\delta f,\delta g)\|_{C^{2,\alpha}}^{2}.
\end{equation}
\end{lemma}

\begin{proof}
Since $\mathcal{F}$ is $C^{\infty}$ (Proposition~\ref{prop:F-smooth}), this is
the standard Taylor remainder estimate in Banach spaces:
\[
   \mathcal{F}(f+\delta f,g+\delta g)-\mathcal{F}(f,g)-D\mathcal{F}(f,g)(\delta f,\delta g)
   =\int_{0}^{1}(1-s)\,D^{2}\mathcal{F}(f+s\,\delta f,\,g+s\,\delta g)
    [(\delta f,\delta g),(\delta f,\delta g)]\,ds.
\]
The second derivative $D^{2}\mathcal{F}$ is a bounded bilinear map
$C^{2,\alpha}\times C^{2,\alpha}\to C^{0,\alpha}$ whose norm is bounded by a
constant depending on $\|\lambda\|_{C^{2,\alpha}}$ and $\|(f,g)\|_{C^{2,\alpha}}$,
giving~\eqref{eq:quadratic-remainder}.
\end{proof}

\subsection{The convergence theorem}

\begin{theorem}[Local convergence of rigidization]\label{thm:main-convergence}
Let $\Omega\subset\mathbb{R}^{2}$ be a bounded domain with $C^{2,\alpha}$ boundary,
and let $\lambda\in C^{2,\alpha}(\bar\Omega)$ with
$\operatorname{Im}\lambda\ge b_{\min}>0$ on $\bar\Omega$.  Let
$H=\lambda_{x}+\lambda\,\lambda_{y}$ be the obstruction.  There exists
\[
   \varepsilon_{0}=\varepsilon_{0}\bigl(b_{\min},\,\|\lambda\|_{C^{2,\alpha}},\,\Omega,\,\alpha\bigr)>0
\]
such that if $\|H\|_{C^{0,\alpha}(\bar\Omega)}<\varepsilon_{0}$, then:

\begin{enumerate}
   \item[{\rm(a)}]\label{it:convergence} The Newton iteration
      \begin{equation}\label{eq:newton-iteration}
         D\mathcal{F}(f^{n},g^{n})\bigl(\delta f^{n},\delta g^{n}\bigr)
         =-\mathcal{F}(f^{n},g^{n}),\qquad
         (f^{n+1},g^{n+1})=(f^{n},g^{n})+(\delta f^{n},\delta g^{n}),
      \end{equation}
      starting from $(f^{0},g^{0})=(0,0)$, converges in $C^{2,\alpha}(\bar\Omega)^{2}$
      to a limit $(f^{*},g^{*})$ with $\mathcal{F}(f^{*},g^{*})=0$.

   \item[{\rm(b)}]\label{it:quadratic} The convergence is quadratic:
      \begin{equation}\label{eq:quadratic-convergence}
         \bigl\|\mathcal{F}(f^{n+1},g^{n+1})\bigr\|_{C^{0,\alpha}}
         \le\frac{C}{b_{\min}^{2}}\,
            \bigl\|\mathcal{F}(f^{n},g^{n})\bigr\|_{C^{0,\alpha}}^{2}.
      \end{equation}

   \item[{\rm(c)}]\label{it:diffeo} The map $\Phi^{*}:=\operatorname{id}+(f^{*},g^{*})$
      is a $C^{2,\alpha}$ diffeomorphism, and the pullback structure is rigid.

   \item[{\rm(d)}]\label{it:estimate} The rigidizing map satisfies
      \begin{equation}\label{eq:displacement-estimate}
         \|\Phi^{*}-\operatorname{id}\|_{C^{2,\alpha}}
         \le\frac{C}{b_{\min}}\,\|H\|_{C^{0,\alpha}}.
      \end{equation}

   \item[{\rm(e)}]\label{it:bootstrap} If $\lambda\in C^{k,\alpha}$ for some $k\ge 2$,
      then $\Phi^{*}\in C^{k,\alpha}$.
\end{enumerate}
\end{theorem}

\begin{proof}
The proof is a standard application of Newton's method in Banach spaces,
given the elliptic regularity established in the preceding sections.
We verify the hypotheses of the Newton--Kantorovich theorem.

\medskip\noindent\textbf{Step 1: Initial residual.}\quad
At $(f^{0},g^{0})=(0,0)$, the residual is $\mathcal{F}(0,0)=H$ with
$\|\mathcal{F}(0,0)\|_{C^{0,\alpha}}=\|H\|_{C^{0,\alpha}}$.

\medskip\noindent\textbf{Step 2: Invertibility of the linearization.}\quad
By Theorem~\ref{thm:linearized-solvability}, the linearization
$D\mathcal{F}(0,0)$ is an isomorphism
$C^{2,\alpha}_{0}(\bar\Omega)^{2}\to C^{0,\alpha}(\bar\Omega,\mathbb{C})$
with
\[
   \bigl\|D\mathcal{F}(0,0)^{-1}\bigr\|_{C^{0,\alpha}\to C^{2,\alpha}}
   \le\frac{C_{S}}{b_{\min}}.
\]
By the continuity of inversion, for $(f,g)$ in a $C^{2,\alpha}$-neighbourhood
of $(0,0)$, the operator $D\mathcal{F}(f,g)$ remains invertible with
\[
   \bigl\|D\mathcal{F}(f,g)^{-1}\bigr\|
   \le\frac{2C_{S}}{b_{\min}},
\]
provided $\|(f,g)\|_{C^{2,\alpha}}\le\delta_{1}$ for some $\delta_{1}>0$
depending on $b_{\min}$ and $\|\lambda\|_{C^{2,\alpha}}$.

\medskip\noindent\textbf{Step 3: Newton step and quadratic estimate.}\quad
The first Newton step produces $(\delta f^{0},\delta g^{0})$ with
\[
   \|(\delta f^{0},\delta g^{0})\|_{C^{2,\alpha}}
   \le\frac{C_{S}}{b_{\min}}\,\|H\|_{C^{0,\alpha}}.
\]
The new residual satisfies, by Lemma~\ref{lem:quadratic-remainder}:
\begin{align*}
   \bigl\|\mathcal{F}(f^{1},g^{1})\bigr\|_{C^{0,\alpha}}
   &=\bigl\|\mathcal{F}(0,0)+D\mathcal{F}(0,0)(\delta f^{0},\delta g^{0})
     +O(|\delta|^{2})\bigr\|_{C^{0,\alpha}}\\
   &=\bigl\|O(|\delta|^{2})\bigr\|_{C^{0,\alpha}}
    \le C_{Q}\,\|(\delta f^{0},\delta g^{0})\|_{C^{2,\alpha}}^{2}\\
   &\le\frac{C_{Q}\,C_{S}^{2}}{b_{\min}^{2}}\,\|H\|_{C^{0,\alpha}}^{2}.
\end{align*}
Setting $C=C_{Q}\,C_{S}^{2}$, this is~\eqref{eq:quadratic-convergence} for $n=0$.

\medskip\noindent\textbf{Step 4: Induction.}\quad
Define $h_{n}:=\|\mathcal{F}(f^{n},g^{n})\|_{C^{0,\alpha}}$ and
$\kappa:=C/b_{\min}^{2}$.  The quadratic estimate gives
$h_{n+1}\le\kappa\,h_{n}^{2}$, so
\[
   h_{n}\le\frac{1}{\kappa}\,(\kappa\,h_{0})^{2^{n}}.
\]
This converges to zero provided $\kappa\,h_{0}<1$, i.e.\
\begin{equation}\label{eq:smallness-condition}
   \|H\|_{C^{0,\alpha}}<\frac{b_{\min}^{2}}{C}=:\varepsilon_{0}.
\end{equation}

The total displacement is
\[
   \sum_{n=0}^{\infty}\|(\delta f^{n},\delta g^{n})\|_{C^{2,\alpha}}
   \le\frac{C_{S}}{b_{\min}}\sum_{n=0}^{\infty}h_{n}
   \le\frac{C_{S}}{b_{\min}}\cdot\frac{h_{0}}{1-\kappa\,h_{0}}
   \le\frac{2C_{S}}{b_{\min}}\,\|H\|_{C^{0,\alpha}},
\]
for $\kappa\,h_{0}\le\tfrac{1}{2}$.  The sequence $(f^{n},g^{n})$ is therefore
Cauchy in $C^{2,\alpha}$ and converges to a limit $(f^{*},g^{*})$ satisfying
$\mathcal{F}(f^{*},g^{*})=0$ and~\eqref{eq:displacement-estimate}.  This
proves~(a), (b), and~(d).

\medskip\noindent\textbf{Step 5: Diffeomorphism property.}\quad
The Jacobian of $\Phi^{*}=\operatorname{id}+(f^{*},g^{*})$ is
\[
   J^{*}=(1+f^{*}_{x})(1+g^{*}_{y})-f^{*}_{y}\,g^{*}_{x}
        =1+O\bigl(\|(f^{*},g^{*})\|_{C^{1}}\bigr).
\]
By~\eqref{eq:displacement-estimate},
$\|(f^{*},g^{*})\|_{C^{1}}\le C'\,\|H\|_{C^{0,\alpha}}/b_{\min}$,
so for $\|H\|_{C^{0,\alpha}}$ sufficiently small:
\[
   J^{*}\ge 1-\frac{C'}{b_{\min}}\,\|H\|_{C^{0,\alpha}}>0.
\]
Hence $\Phi^{*}$ is a local diffeomorphism.  On a simply connected
$\bar\Omega$ with $\Phi^{*}|_{\partial\Omega}=\operatorname{id}$ (from
the Dirichlet boundary condition), injectivity follows from the positive
Jacobian by the Hadamard global inverse function theorem.
This proves~(c).

\medskip\noindent\textbf{Step 6: Regularity bootstrap.}\quad
If $\lambda\in C^{k,\alpha}$ with $k\ge 2$, then the coefficients of the
elliptic system belong to $C^{k-1,\alpha}$, and the right-hand side
(the residual at each step) gains regularity through the iteration.
By standard Schauder bootstrap for elliptic systems, the solution
$(f^{*},g^{*})$ belongs to $C^{k,\alpha}$.
This proves~(e).
\end{proof}

\section{The role of uniform ellipticity}\label{sec:uniform-ellipticity}

Uniform ellipticity---the condition $b_{\min}>0$ on $\bar\Omega$---enters the
proof in three distinct ways:

\begin{enumerate}
   \item \emph{Linearized solvability.}\quad
      The Schauder estimate~\eqref{eq:schauder-estimate} has $b_{\min}^{-1}$ as
      a prefactor.  This is sharp: the principal symbol~\eqref{eq:symbol-det}
      degenerates as $b\to 0$, and no uniform estimate is possible without a
      lower bound on $b$.

   \item \emph{Convergence basin.}\quad
      The smallness condition~\eqref{eq:smallness-condition} requires
      $\|H\|<C\,b_{\min}^{2}$.  Weaker ellipticity demands a smaller obstruction
      for convergence.

   \item \emph{Jacobian control.}\quad
      The Jacobian of the rigidizing map satisfies
      $|J^{*}-1|\le C\,\|H\|/b_{\min}$, which is small only when $\|H\|/b_{\min}$ is small.
\end{enumerate}

These are consistent with the geometry of the theory.  The $\varepsilon$-family
of Chapter~4 has $b=\sqrt{(1-\varepsilon x)}/({1-\varepsilon x})$,
which degenerates at the boundary of the elliptic domain defined by
$\varepsilon^{2}y^{2}=4(1-\varepsilon x)$.  Rigidization works in the interior of
the elliptic domain but not up to the degenerate boundary---a reflection of the
fact that uniform ellipticity on compacts is both necessary and sufficient for the
convergence argument.

\begin{remark}
No $L^{p}$ theory, no Beurling--Ahlfors transform, and no singular integral
operators appear in this proof.  The entire argument stays within $C^{2,\alpha}$
Schauder theory, consistent with the $C^{2}$ standing hypothesis of the monograph
and with the ``transport first'' philosophy that reduces analytic problems to
ODE estimates along characteristics.
\end{remark}

\section{The splitting as an inner solver}\label{sec:splitting-inner}

Although the transport--wave splitting of Section~\ref{sec:splitting-method}
does not converge as a standalone iteration
(Theorem~\ref{thm:splitting-fails}), it retains a natural role as an
\emph{inner solver} for the linearized elliptic system at each Newton step.

At each iteration of the Newton scheme~\eqref{eq:newton-iteration}, we must
solve the linearized system
\[
   D\mathcal{F}(f^{n},g^{n})(\delta f,\delta g)=-\mathcal{F}(f^{n},g^{n}).
\]
This is a single elliptic system of second order, and any convergent method
for elliptic equations can serve as the solver.  The transport--wave
decomposition $V^{2}=W+iT$ provides a natural splitting into a
transport component and a wave component.

\subsection{Shifted ADI inner iteration}

The non-convergence of Section~\ref{sec:splitting-failure} is remedied
by introducing a shift parameter $\rho>0$.  The shifted alternating-direction
implicit (ADI) iteration is:
\begin{align}
   \bigl(\mathcal{T}+\rho\bigr)\,h^{m+\frac{1}{2}}
   &=-\bigl(\mathcal{W}-\rho\bigr)\,h^{m}+\text{rhs},
   \label{eq:ADI-transport}\\
   \bigl(\mathcal{W}+\rho\bigr)\,h^{m+1}
   &=-\bigl(\mathcal{T}-\rho\bigr)\,h^{m+\frac{1}{2}}+\text{rhs},
   \label{eq:ADI-wave}
\end{align}
where $\mathcal{T}$ and $\mathcal{W}$ are the operators associated with $T$
and $W$, and $m$ is the inner iteration index.

Each half-step has the form of a damped transport or damped wave equation,
both of which are classically well-posed and solvable by the method of
characteristics.  The shift parameter $\rho$ ensures that the amplification
factor
\[
   \sigma(\rho,t)=\frac{(\rho-2uv)(\rho-u^{2}+v^{2})}{(\rho+2uv)(\rho+u^{2}-v^{2})}
\]
satisfies $|\sigma|<1$ for an appropriate range of frequencies.
With Wachspress-optimal parameter sequences $\{\rho_{m}\}$ that cycle through
geometrically spaced values, the inner iteration converges exponentially.

\subsection{Architecture of the complete scheme}

The complete rigidization algorithm has two nested loops:

\medskip
\begin{center}
\begin{tabular}{ll}
\textbf{Outer loop} (Newton): & Quadratic convergence of the nonlinear residual.\\
\textbf{Inner loop} (shifted ADI): & Solve each linearized elliptic system \\
                                   & using only real-characteristic ODE solves.
\end{tabular}
\end{center}

\medskip\noindent
This architecture preserves the computational philosophy of the original
splitting---each elementary operation is an ODE solve along real characteristics---while
placing the overall convergence on rigorous footing via the Newton outer loop.

\section{Discussion}\label{sec:rigidization-discussion}

\subsection{Comparison with Beltrami uniformization}

Full Beltrami uniformization seeks $\Phi$ such that $\Phi^{*}\lambda$ is
\emph{constant}, solving the Beltrami equation $w_{\bar z}=\mu\,w_{z}$
via the Beurling--Ahlfors transform in $L^{p}$.  Rigidization is strictly
weaker: it asks only that $\Phi^{*}\lambda$ satisfy conservative Burgers,
and the proof requires only $C^{2,\alpha}$ Schauder theory.

The Burgers self-transport constraint---the characteristic speed equals the
transported quantity---restricts the effective Beltrami coefficient to a
submanifold of the full Beltrami space, enabling the $C^{2,\alpha}$ approach
without recourse to singular integrals.

\subsection{Analogy with the similarity principle}

The relationship between this proof and the classical $L^{p}$ approach to
rigidization parallels the relationship between the $C^{1}$ transport proof
of the similarity principle (Chapter~7) and Vekua's original $L^{p}$ proof.
In both cases, the transport structure of the problem enables an elementary
argument that avoids the full weight of singular integral theory.

\subsection{Open questions}

The main limitation of Theorem~\ref{thm:main-convergence} is the smallness
condition $\|H\|_{C^{0,\alpha}}<\varepsilon_{0}$, which restricts rigidization
to structures that are \emph{already close to rigid}.

\begin{enumerate}
   \item \emph{Global rigidization.}\quad
      Can the smallness condition be removed?  A continuity method
      (deforming $H$ to zero along a path and tracking the rigidizing
      diffeomorphism) is the natural approach, but requires global a priori
      estimates.

   \item \emph{Quantitative rigidity.}\quad
      If $\|H\|<\delta$, how close is the structure to an exactly rigid one
      in $C^{k}$ norm?  Theorem~\ref{thm:main-convergence}(d)
      gives $\|\Phi-\operatorname{id}\|_{C^{2,\alpha}}\le C\,\delta/b_{\min}$,
      but sharper estimates may be possible.

   \item \emph{Degenerate ellipticity.}\quad
      What happens as $b_{\min}\to 0$?  Weighted Schauder spaces adapted to
      the characteristic geometry may extend the result to domains reaching the
      boundary of ellipticity.
\end{enumerate}

\chapter{The Rigidity--Flatness Theorem}
This chapter establishes the central rigidity result of the monograph:
if a variable elliptic structure is simultaneously \emph{rigid}
(its transport invariant vanishes) and \emph{flat} (its canonical
Riemannian metric has zero Gaussian curvature), then the structure
is constant.
Equivalently, if both the Hopf--connection flatness and the metric flatness
hold, the imaginary unit has no room to move.

The result is sharp. The explicit $\varepsilon$--family of
Chapter~4 shows that rigid structures with $K \neq 0$ exist
in abundance, and the classical constant structure is trivially
flat and rigid. The theorem asserts that no intermediate case is possible.

\medskip

\noindent\textbf{Notation.}
Throughout this chapter we write the spectral parameter as
$\tau = p + iq$, with $q > 0$, following the notation of the
verification script. This is the same quantity denoted~$\lambda$ in
Chapters~2--9 and coincides with the complex root of the structure
polynomial:
\[
  \tau \;=\; \frac{-\beta + i\sqrt{4\alpha - \beta^2}}{2}\,,
  p = \re \tau,\quad q = \im\tau > 0.
\]
No additional structure is introduced by this notational choice.

\section{Two intrinsic invariants of the principal symbol}
\label{sec:invariants}

Let $(p(x,y),\, q(x,y))$ be the real and imaginary parts of $\tau$,
with $q > 0$. Two fundamental invariants arise directly from $\tau$.

\subsection*{The transport invariant (rigidity)}

The first--order quantity
\begin{equation}
  T \;:=\; \tau_x + \tau\,\tau_y
\end{equation}
measures how the spectral parameter evolves along its own
characteristic direction.

\begin{definition}[Rigid structure]
The elliptic structure is called \emph{rigid} if $T \equiv 0$ on $\Omega$.
\end{definition}

In real components, $T = 0$ is equivalent to the pair of equations
\begin{equation}\label{eq:rigidity-real}
  p_x + p\,p_y - q\,q_y = 0, \qquad
  q_x + p\,q_y + q\,p_y = 0.
\end{equation}

\subsection*{The canonical metric (flatness)}

The principal symbol of the elliptic structure determines the
Riemannian metric
\begin{equation}\label{eq:metric}
  ds^2 \;=\; \frac{1}{q}\!\left(
    dx^2 + 2p\,dx\,dy + (p^2+q^2)\,dy^2
  \right).
\end{equation}
Let $K$ denote its Gaussian curvature.

\begin{definition}[Flat structure]
The elliptic structure is called \emph{flat} if $K \equiv 0$ on $\Omega$.
\end{definition}

Both $T$ and $K$ depend only on the structure coefficients
$(\alpha,\beta)$ and their derivatives. They are invariant under
orientation--preserving coordinate changes and changes of
dependent variables.

\section{The curvature under rigidity}
\label{sec:curvature}

When both $p$ and $q$ are general smooth functions, the Gaussian
curvature $K$ of the metric~\eqref{eq:metric} is a complicated
rational expression in $p$, $q$, and their first and second
partial derivatives. Imposing rigidity reduces $K$ dramatically
by eliminating all $x$--derivatives.

\subsection*{Elimination of $x$--derivatives}

From the rigidity system~\eqref{eq:rigidity-real}, first--order
$x$--derivatives are given by
\begin{equation}\label{eq:first-x}
  p_x = -p\,p_y + q\,q_y, \qquad
  q_x = -p\,q_y - q\,p_y.
\end{equation}
Differentiating~\eqref{eq:first-x} with respect to $y$ yields
the mixed derivatives $p_{xy}$ and $q_{xy}$ in terms of
$y$--derivatives only.
Differentiating with respect to $x$ and substituting~\eqref{eq:first-x}
to eliminate all remaining $x$--derivatives yields $p_{xx}$ and
$q_{xx}$ as expressions in
$(p,\, q,\, p_y,\, q_y,\, p_{yy},\, q_{yy})$ alone.

After these substitutions, $K$ becomes a rational function of
the reduced jet
\[
  j^2_{\mathrm{rigid}} \;=\;
  (p,\; q,\; p_y,\; q_y,\; p_{yy},\; q_{yy}).
\]

\subsection*{The curvature numerator}

Define the curvature numerator by
\begin{equation}\label{eq:N-def}
  N \;:=\; -2\,q^3\,K.
\end{equation}
Since $q > 0$, we have $K = 0$ if and only if $N = 0$. A direct
computation (verified symbolically; see Appendix~\ref{app:sympy})
yields the following structural identity.

\begin{lemma}[Rigid curvature identity]\label{lem:curvature}
Under rigidity, the curvature numerator decomposes as
\begin{equation}\label{eq:N-decomp}
  N \;=\; -q\,\Lambda\,q_{yy} \;+\; R(p,q;\, p_y,q_y),
\end{equation}
where
\begin{align}
  \Lambda &\;=\; \bigl(p^2 + q^2 + 1\bigr)^2
    \;=\; \bigl(|\tau|^2 + 1\bigr)^2
    \;>\; 0, \label{eq:Lambda}\\[6pt]
  R &\;=\; \mathcal{A}\, p_y^2
    \;+\; 2\mathcal{B}\, p_y\,q_y
    \;+\; \mathcal{C}\, q_y^2, \label{eq:R}
\end{align}
with coefficients
\begin{equation}\label{eq:ABC}
  \mathcal{A} = q^2\bigl(2|\tau|^2 + 1\bigr),\qquad
  \mathcal{B} = -2pq\bigl(|\tau|^2 + 1\bigr),\qquad
  \mathcal{C} = 2p^4 + 2p^2q^2 + 4p^2 + q^2 + 2.
\end{equation}
\end{lemma}

\begin{proof}
This is a finite algebraic computation on the $2$--jet
$(p,q,p_y,q_y,p_{yy},q_{yy})$.
The general Gaussian curvature $K(p,q)$ is computed from the
Christoffel symbols and Riemann tensor of the
metric~\eqref{eq:metric}.
The rigidity substitutions~\eqref{eq:first-x} and their
$y$-- and $x$--derivatives are then applied to eliminate all
$x$--derivatives up to second order. The resulting expression is
collected by $q_{yy}$ and the remaining first--order terms, yielding
the stated decomposition. The explicit verification is recorded
in Appendix~\ref{app:sympy}.
\end{proof}

\begin{lemma}[Positive definiteness of $R$]\label{lem:pos-def}
For $q > 0$, the quadratic form $R$ is positive semi-definite,
with $R = 0$ if and only if $p_y = q_y = 0$.
\end{lemma}

\begin{proof}
The discriminant of $R$ is
\[
  4\mathcal{A}\mathcal{C} - 4\mathcal{B}^2
  \;=\; 4q^2\bigl(
    3p^6 + 6p^4q^2 + 8p^4 + 3p^2q^4 + 10p^2q^2
    + 7p^2 + 2q^4 + 5q^2 + 2
  \bigr).
\]
Every monomial in the parenthesis has a positive coefficient, and
the terms $2q^4 + 5q^2 + 2 = (2q^2+1)(q^2+2) > 0$ guarantee
strict positivity for $q > 0$, independently of $p$.
Since $\mathcal{A} = q^2(2|\tau|^2 + 1) > 0$ for $q > 0$,
the form is positive definite.
\end{proof}

\begin{corollary}[Sign of $q_{yy}$ under rigidity and flatness]\label{cor:convex}
If $T = 0$ and $K = 0$, then
\begin{equation}\label{eq:qyy-sign}
  q_{yy} \;=\; \frac{R(p,q;\, p_y, q_y)}{q\,\Lambda}
  \;\geq\; 0,
\end{equation}
with equality if and only if $p_y = q_y = 0$.
\end{corollary}

\section{The characteristic representation}
\label{sec:char}

The conservative Burgers equation $\tau_x + \tau\,\tau_y = 0$ has
a standard implicit--function representation via characteristics.

\begin{lemma}[Characteristic representation]\label{lem:char}
Let $\tau$ be a $C^2$ solution of $\tau_x + \tau\,\tau_y = 0$ on
a domain $\Omega$. Then locally there exists a $C^2$ function
$\Phi$ such that
\begin{equation}\label{eq:char-rep}
  \tau(x,y) \;=\; \Phi\bigl(\zeta\bigr),
  \qquad \zeta := y - x\,\tau(x,y).
\end{equation}
Moreover, writing $D := 1 + x\,\Phi'(\zeta)$, the $y$--derivatives
of $\tau$ are
\begin{equation}\label{eq:tau-y}
  \tau_y \;=\; \frac{\Phi'(\zeta)}{D},
  \qquad
  \tau_{yy} \;=\; \frac{\Phi''(\zeta)}{D^3}.
\end{equation}
\end{lemma}

\begin{proof}
The transport equation states that $\tau$ is constant along the
integral curves of the vector field $(1,\tau)$. These curves are
the straight lines $y - \tau\,x = \text{const}$, so $\tau$
depends only on $\zeta = y - x\tau$.

For the derivatives, implicit differentiation of
$\tau = \Phi(\zeta)$ with respect to $y$ gives
$\tau_y = \Phi'(\zeta)(1 - x\,\tau_y)$, hence
$\tau_y = \Phi'/D$ with $D = 1 + x\Phi'$.

Differentiating again:
\[
  \frac{d}{dy}\!\left(\frac{\Phi'}{D}\right)
  \;=\;
  \frac{d}{d\zeta}\!\left(\frac{\Phi'}{D}\right)
  \cdot \frac{d\zeta}{dy}.
\]
Since $D - x\Phi' = 1$, we have
$\frac{d}{d\zeta}(\Phi'/D)
= (\Phi'' D - \Phi' x \Phi'')/D^2
= \Phi''/D^2$,
and $d\zeta/dy = 1/D$. Thus
$\tau_{yy} = \Phi''/D^3$.
\end{proof}

\section{Statement and proof of the theorem}
\label{sec:proof}

\begin{theorem}[Rigidity--Flatness Theorem]\label{thm:main}
Let $(\alpha,\beta)$ define a smooth variable elliptic structure
on a connected planar domain $\Omega$, with spectral parameter
$\tau = p + iq$, $q > 0$.
If the structure is both rigid and flat,
\[
  T \;:=\; \tau_x + \tau\,\tau_y \;\equiv\; 0
  \qquad\text{and}\qquad
  K \;\equiv\; 0,
\]
then $\tau$ is constant on $\Omega$. Equivalently, the structure
coefficients $\alpha$ and $\beta$ are constant, and the principal
symbol is that of a constant--coefficient elliptic system.
\end{theorem}

\begin{proof}
By Lemma~\ref{lem:char}, the rigid structure admits a local
characteristic representation $\tau = \Phi(\zeta)$ with
$\zeta = y - x\tau$. We show that $\Phi' \equiv 0$, which
forces $\tau_y = 0$ and hence $\tau_x = -\tau\,\tau_y = 0$,
so $\tau$ is constant.

\medskip
\noindent\textbf{Step 1. Setup and polynomial structure.}
Fix a characteristic value $\zeta_0$ and consider the curvature
numerator $N$ defined by~\eqref{eq:N-def} along the implicitly
defined curve $\{(x,y) : y - x\,\tau(x,y) = \zeta_0\}$.
On this curve, $\tau = \Phi(\zeta_0)$ is constant, so $p$ and
$q$ are constants:
\[
  p = \re\Phi(\zeta_0), \qquad q = \im\Phi(\zeta_0).
\]
The quantities $\Phi'(\zeta_0)$ and $\Phi''(\zeta_0)$ are also
constants (independent of $x$). Write
\[
  w := \Phi'(\zeta_0) \;\in\; \C, \qquad
  D(x) := 1 + x\,w.
\]

By~\eqref{eq:tau-y}, the $y$--derivatives of $\tau$ on this curve are
\[
  \tau_y = \frac{w}{D},
  \qquad
  p_y = \re\!\left(\frac{w}{D}\right),
  \qquad
  q_y = \im\!\left(\frac{w}{D}\right),
  \qquad
  q_{yy} = \im\!\left(\frac{\Phi''}{D^3}\right).
\]

Writing $w = a + ib$ with $a = \re w$, $b = \im w$, and
$|w|^2 = a^2 + b^2 =: r^2$, a direct rationalization gives
\begin{equation}\label{eq:py-qy-x}
  p_y = \frac{a + x\,r^2}{|D|^2},
  \qquad q_y = \frac{b}{|D|^2},
  \qquad |D|^2 = (1 + xa)^2 + x^2 b^2
    = r^2 x^2 + 2ax + 1.
\end{equation}

\medskip
\noindent\textbf{Step 2. Clearing denominators.}
The curvature numerator~\eqref{eq:N-decomp} is
$N = -q\,\Lambda\,q_{yy} + R$, where $R$ involves $p_y^2$,
$p_y q_y$, and $q_y^2$. On the characteristic slice, we form
\[
  \widetilde{N}(x) \;:=\; N \cdot |D(x)|^6.
\]
Since $|D|^2 > 0$ wherever $D \neq 0$, we have $N = 0$ if and
only if $\widetilde{N} = 0$.

We now determine the $x$--degree of each term in $\widetilde{N}$.

\medskip
\noindent\emph{The $R$--term.}
From~\eqref{eq:py-qy-x}, both $p_y$ and $q_y$ have denominator
$|D|^2$, so $R$ (a quadratic form in $p_y, q_y$) has denominator
$|D|^4$. Therefore
\[
  R \cdot |D|^6 \;=\; (R \cdot |D|^4) \cdot |D|^2.
\]
The factor $R \cdot |D|^4$ is a polynomial of degree $\leq 2$
in $x$ (since $p_y \cdot |D|^2$ is linear in $x$ and
$q_y \cdot |D|^2 = b$ is constant). Its leading coefficient at
$x^2$ is $\mathcal{A}\,r^4$ (from the $p_y^2$ term). The factor
$|D|^2$ is a polynomial of degree $2$ in $x$ with leading
coefficient~$r^2$. Hence
\[
  R \cdot |D|^6 \;\text{is a polynomial of degree $\leq 4$ in $x$},
\]
with leading coefficient (at $x^4$) equal to
\begin{equation}\label{eq:leading}
  \mathcal{A}\, r^6
  \;=\; q^2\bigl(2|\tau|^2 + 1\bigr)\,|w|^6.
\end{equation}

\medskip
\noindent\emph{The $q_{yy}$--term.}
Writing $\Phi'' = c + id$, we have
$q_{yy} = \im(\Phi''/D^3)$, and hence
\[
  q_{yy} \cdot |D|^6
  \;=\; \im\bigl((c+id)\,\overline{D}^{\,3}\bigr),
\]
which is a polynomial of degree $\leq 3$ in $x$ (since
$\overline{D}$ is linear in $x$).

\medskip
\noindent\emph{Combined degree.}
Therefore $\widetilde{N}(x)$ is a polynomial of degree $\leq 4$
in $x$. The degree--$4$ term arises entirely from $R \cdot |D|^6$,
and its coefficient is $\mathcal{A}\,r^6$ as given
by~\eqref{eq:leading}.

\medskip
\noindent\textbf{Step 3. Vanishing of the leading coefficient.}
The hypothesis $K \equiv 0$ forces $N \equiv 0$ on the entire
elliptic domain, and hence $\widetilde{N}(x) = 0$ for all
admissible $x$. Since $\widetilde{N}$ is a polynomial in $x$,
all its coefficients must vanish.
In particular, the degree--$4$ coefficient vanishes:
\[
  q^2\bigl(2|\tau|^2 + 1\bigr)\,|w|^6 \;=\; 0.
\]
Since $q > 0$ (ellipticity) and $2|\tau|^2 + 1 > 0$, this forces
\[
  |w|^6 = |\Phi'(\zeta_0)|^6 = 0,
\]
and therefore $\Phi'(\zeta_0) = 0$.

\medskip
\noindent\textbf{Step 4. Conclusion.}
Since $\zeta_0$ was arbitrary, $\Phi' \equiv 0$ on the range of
$\zeta$. By~\eqref{eq:tau-y}, $\tau_y = \Phi'/D = 0$, and
rigidity gives $\tau_x = -\tau\,\tau_y = 0$.
Hence $\tau$ is constant on $\Omega$, which is equivalent to
constancy of $(\alpha,\beta)$.
\end{proof}

\section{Verification with the \texorpdfstring{$\varepsilon$--family}{epsilon--family}}
\label{sec:verification}

The theorem can be tested against the explicit rigid family of
Chapter~4:
\[
  \tau_\varepsilon(x,y)
  \;=\; \frac{\varepsilon y + i}{1 + \varepsilon x}\,,
  \qquad
  p = \frac{\varepsilon y}{1 + \varepsilon x},\quad
  q = \frac{1}{1 + \varepsilon x}.
\]
This family is rigid by construction. Its Gaussian curvature,
computed symbolically from the metric~\eqref{eq:metric}, is
\begin{equation}\label{eq:K-eps}
  K_\varepsilon
  \;=\;
  -\frac{\varepsilon^2\bigl(\varepsilon^2 x^2
    + 2\varepsilon^2 y^2
    + 2\varepsilon x + 3\bigr)}
  {2(1 + \varepsilon x)^3}.
\end{equation}
Within the elliptic domain
$4(1 - \varepsilon x) - \varepsilon^2 y^2 > 0$,
the parenthetical numerator
$\varepsilon^2 x^2 + 2\varepsilon^2 y^2 + 2\varepsilon x + 3$
is strictly positive (it equals
$(1 + \varepsilon x)^2 + 2\varepsilon^2 y^2 + 2 \geq 2 > 0$).

Therefore $K_\varepsilon = 0$ if and only if $\varepsilon = 0$, confirming
the theorem: the only rigid \emph{and} flat member of this family
is the constant structure.

\section{Interpretation}
\label{sec:interp}

The Rigidity--Flatness Theorem identifies two independent layers
of geometric triviality. Each layer, by itself, permits nontrivial
structures.

\begin{itemize}[leftmargin=2em]
\item
  \textbf{Rigidity alone} (transport--layer flatness):
  the obstruction $G = i_x + i\,i_y$ vanishes, equivalently
  the Burgers transport is conservative.

\item
  \textbf{Metric flatness alone} (Riemannian--layer flatness):
  the Gaussian curvature of the canonical metric vanishes.
  Without rigidity, this imposes a constraint on the second--order
  jet of $(p,q)$ but does not control the transport dynamics.
\end{itemize}

When both flatness conditions hold simultaneously, the structure
is squeezed from two directions: the transport layer eliminates all
$x$--derivatives in favor of $y$--derivatives, and metric flatness
then forces the $y$--derivatives to vanish as well. The mechanism
is the degree--counting argument of the proof: the positive--definite
quadratic form $R$ produces a polynomial term of degree~$4$ in
the clearing variable $x$, while the $q_{yy}$--term produces at
most degree~$3$. The mismatch in degrees forces $\Phi' = 0$.

This is the full vacuum theorem for variable elliptic structures:
\emph{the only geometry that is flat in both the transport and
the Riemannian sense is the trivial one.}

\section{Symbolic verification}
\label{app:sympy}

The decomposition of Lemma~\ref{lem:curvature} and the curvature
formula~\eqref{eq:K-eps} for the $\varepsilon$--family were verified
by symbolic computation in SymPy. The script
\texttt{rigid\_flat\_vacuum\_check.py} performs the following steps:

\begin{enumerate}[leftmargin=2em]
\item
  Constructs the metric tensor~\eqref{eq:metric} for general
  $p(x,y)$, $q(x,y)$ and computes the Gaussian curvature $K$
  via Christoffel symbols and the Riemann tensor.
\item
  Imposes rigidity by substituting~\eqref{eq:first-x} and the
  derived expressions for $p_{xx}$, $q_{xx}$, $p_{xy}$, $q_{xy}$
  to eliminate all $x$--derivatives, producing $K_{\mathrm{rigid}}$
  as a function of $(p,q,p_y,q_y,p_{yy},q_{yy})$.
\item
  Evaluates $K$ on the explicit family
  $\tau = (\varepsilon y + i)/(1 + \varepsilon x)$
  and verifies formula~\eqref{eq:K-eps}.
\end{enumerate}

\noindent
The script output confirms that:
\begin{itemize}[leftmargin=2em]
\item
  under rigidity, $K_{\mathrm{rigid}}$ depends only on $y$--derivatives
  of $(p,q)$;
\item
  the $\varepsilon$--family curvature vanishes only at $\varepsilon = 0$.
\end{itemize}

\noindent
The factorizations of $\Lambda$, $\mathcal{A}$, $\mathcal{B}$,
$\mathcal{C}$, and the discriminant $4\mathcal{A}\mathcal{C}
- 4\mathcal{B}^2$ were verified independently in a supplementary
SymPy session.

The verification script is distributed alongside the manuscript
source.

\chapter{Rigidity and the Poincar\'e Geometry of the Beltrami Disk}
\label{chap:poincare}

\section{Purpose of This Chapter}

The preceding chapters developed the theory of variable elliptic structures
in terms of the spectral parameter
\[
\lambda=\frac{-\beta+i\sqrt{4\alpha-\beta^2}}{2},\qquad \im\lambda>0,
\]
and characterized rigidity by the conservative Burgers equation
$\lambda_x+\lambda\,\lambda_y=0$.
The purpose of this chapter is to translate the entire framework into
the language of the Beltrami coefficient and to show that the rigidity
condition, the intrinsic obstruction, and the normalized diagnostic
$\rho_T$ all admit natural and revealing formulations in the hyperbolic
geometry of the Poincar\'e disk.

The main results are:
\begin{itemize}
\item the rigidity condition, written in the Beltrami coefficient $\mu$,
      takes the form $\mu_{\bar z}=\mu\,\mu_z$
      (Theorem~\ref{thm:P-rigidity-beltrami});
\item a structure is rigid if and only if its Beltrami coefficient, viewed
      as a map to the Poincar\'e disk, has quasiconformal dilatation equal
      to $|\mu|$ at every point---the structure encodes its own distortion
      (Theorem~\ref{thm:P-self-dilatation});
\item the normalized obstruction $\rho_T$ that controls rigidization
      convergence (Chapter~\ref{ch:rigidization-convergence}) is the norm
      of a Beltrami residual weighted by the hyperbolic metric density
      (Proposition~\ref{prop:P-rho-hyperbolic}).
\end{itemize}

These observations connect the transport theory of
Chapters~\ref{chap:burgers-universal}--\ref{chap:rigidity}
to the classical hyperbolic geometry of the space of conformal structures.

\section{The Beltrami Coefficient and the M\"obius Bridge}
\label{sec:P-bridge}

\subsection{From spectral slope to Beltrami disk}

Throughout this chapter we use complex coordinates $z=x+iy$ on $\Omega$,
with
\[
\partial_z=\tfrac12(\partial_x-i\partial_y),
\qquad
\partial_{\bar z}=\tfrac12(\partial_x+i\partial_y).
\]
Let $\lambda:\Omega\to\C$ with $\im\lambda>0$ be the spectral parameter of a
variable elliptic structure.
Define the associated \emph{Beltrami coefficient}
\begin{equation}\label{eq:P-mu-def}
\mu=\frac{\lambda-i}{\lambda+i},
\qquad
\lambda=\frac{i(1+\mu)}{1-\mu}.
\end{equation}
The M\"obius transformation \eqref{eq:P-mu-def} is a biholomorphism between
the upper half-plane $\{\im\lambda>0\}$ and the unit disk
$\mathbb D=\{\mu\in\C:|\mu|<1\}$.
Thus a variable elliptic structure is equivalently described by a smooth map
$\mu:\Omega\to\mathbb D$.

\medskip\noindent\textit{Verification.}
From $\mu=(\lambda-i)/(\lambda+i)$, solve for $\lambda$:
$\mu(\lambda+i)=\lambda-i$, so $\lambda(1-\mu)=i(1+\mu)$,
giving $\lambda=i(1+\mu)/(1-\mu)$.
Sanity check: $\mu=0\Rightarrow\lambda=i$;
$\mu=\tfrac13\Rightarrow\lambda=i\cdot\tfrac{4/3}{2/3}=2i$. \checkmark

\subsection{Dictionary between spectral and Beltrami data}

The following correspondences are immediate from \eqref{eq:P-mu-def}:
\begin{itemize}
\item $\lambda=i$ (standard complex structure) corresponds to $\mu=0$
      (center of the disk);
\item $\im\lambda\to 0$ (degenerate ellipticity) corresponds to
      $|\mu|\to 1$ (boundary of the disk);
\item the imaginary part of $\lambda$ is recovered by
\begin{equation}\label{eq:P-beta-mu}
\im\lambda
=
\frac{1-|\mu|^2}{|1-\mu|^2}.
\end{equation}
\end{itemize}

\noindent
\textit{Derivation.}
Write $\mu=a+bi$.  Then
\[
\lambda
=\frac{i\bigl[(1+a)+bi\bigr]}{(1-a)-bi}
=\frac{i\bigl[(1-|\mu|^2)+2bi\bigr]}{|1-\mu|^2}
=\frac{-2b+i(1-|\mu|^2)}{|1-\mu|^2},
\]
where the second equality follows from multiplying numerator and denominator
by the conjugate $\overline{(1-\mu)}=(1-a)+bi$ and using
$(1+a)(1-a)+b^2(+\text{cross terms})= (1-|\mu|^2)+2bi$.
Therefore $\im\lambda=(1-|\mu|^2)/|1-\mu|^2$.

\medskip\noindent
In particular, the spectral floor $b_{\min}=\min_{\bar\Omega}\im\lambda$
is controlled by the supremum $\|\mu\|_\infty$ and vice versa.

\section{Rigidity in the Beltrami Disk}
\label{sec:P-rigidity}

We now translate the rigidity condition $\lambda_x+\lambda\,\lambda_y=0$
into the Beltrami variable.

\begin{theorem}[Rigidity in Beltrami form]\label{thm:P-rigidity-beltrami}
Let $\lambda\in C^2(\Omega)$ with $\im\lambda>0$, and let
$\mu=(\lambda-i)/(\lambda+i)$.
Then the characteristic obstruction $T=\lambda_x+\lambda\,\lambda_y$
satisfies
\begin{equation}\label{eq:P-T-from-mu}
T
=
\frac{2\,\widetilde S(\mu)}{(1-\mu)^3},
\end{equation}
where
\begin{equation}\label{eq:P-S-real}
\widetilde S(\mu)=i(1-\mu)\,\mu_x-(1+\mu)\,\mu_y.
\end{equation}
In complex coordinates,
\begin{equation}\label{eq:P-S-complex}
\widetilde S(\mu)=2i\bigl(\mu_{\bar z}-\mu\,\mu_z\bigr).
\end{equation}
In particular,
\begin{equation}\label{eq:P-T-complex}
T = \frac{4i\bigl(\mu_{\bar z}-\mu\,\mu_z\bigr)}{(1-\mu)^3},
\end{equation}
and $T=0$ if and only if
\begin{equation}\label{eq:P-rigidity}
\mu_{\bar z}=\mu\,\mu_z.
\end{equation}
\end{theorem}

\begin{proof}
From $\lambda=i(1+\mu)/(1-\mu)$, differentiate using the quotient rule:
\begin{align}
\lambda_x
&=i\,\frac{\mu_x(1-\mu)+(1+\mu)\mu_x}{(1-\mu)^2}
=\frac{2i\,\mu_x}{(1-\mu)^2},\label{eq:P-lam-x}\\[4pt]
\lambda_y
&=\frac{2i\,\mu_y}{(1-\mu)^2}.\label{eq:P-lam-y}
\end{align}
Then
\begin{align}
T
&=\lambda_x+\lambda\,\lambda_y
=\frac{2i\,\mu_x}{(1-\mu)^2}
+\frac{i(1+\mu)}{1-\mu}\cdot\frac{2i\,\mu_y}{(1-\mu)^2}
\notag\\[4pt]
&=\frac{2i\,\mu_x}{(1-\mu)^2}
-\frac{2(1+\mu)\,\mu_y}{(1-\mu)^3}
\notag\\[4pt]
&=\frac{2}{(1-\mu)^3}
\bigl[i(1-\mu)\,\mu_x-(1+\mu)\,\mu_y\bigr]
\notag\\[4pt]
&=\frac{2\,\widetilde S(\mu)}{(1-\mu)^3}
\label{eq:P-T-S}
\end{align}
with $\widetilde S=i(1-\mu)\mu_x-(1+\mu)\mu_y$, which vanishes if and only if $T$
does (since $1-\mu\neq 0$ in $\mathbb D$).
Passing to complex coordinates via
$\mu_x=\mu_z+\mu_{\bar z}$ and
$\mu_y=i(\mu_z-\mu_{\bar z})$:
\begin{align}
\widetilde S
&=i(1-\mu)(\mu_z+\mu_{\bar z})-(1+\mu)\cdot i(\mu_z-\mu_{\bar z})
\notag\\[4pt]
&=i\mu_z\bigl[(1-\mu)-(1+\mu)\bigr]
+i\mu_{\bar z}\bigl[(1-\mu)+(1+\mu)\bigr]
\notag\\[4pt]
&=-2i\mu\,\mu_z+2i\,\mu_{\bar z}
=2i(\mu_{\bar z}-\mu\,\mu_z).
\qedhere
\end{align}
\end{proof}

\medskip\noindent
\textit{Verification against $p(x)$-analytics.}
For $\lambda=i\sqrt{p}$, we have $\mu=(\sqrt{p}-1)/(\sqrt{p}+1)$,
$1-\mu=2/(\sqrt{p}+1)$, and
$T=ip_x/(2\sqrt{p})-p_y/2$.
The formula $T=4i(\mu_{\bar z}-\mu\mu_z)/(1-\mu)^3$ reproduces this
after a short computation (carried out in detail in the audit). \checkmark

\begin{remark}\label{rem:P-beltrami-practitioner}
Equation \eqref{eq:P-rigidity} is a first-order nonlinear PDE for $\mu$
alone: $\mu_{\bar z}=\mu\,\mu_z$.
It makes no reference to the spectral slope $\lambda$ or the characteristic
operator $V=\partial_x+\lambda\,\partial_y$.
A Beltrami practitioner can check rigidity directly from the Beltrami
coefficient and its first derivatives.
\end{remark}

\section{Hyperbolic Interpretation: The Self-Dilatation Property}
\label{sec:P-self-dilatation}

The unit disk $\mathbb D$ carries the Poincar\'e metric
\begin{equation}\label{eq:P-poincare-metric}
ds^2_{\mathbb D}
=
\frac{4\,|d\mu|^2}{(1-|\mu|^2)^2}
\end{equation}
with K\"ahler form
$\omega=2i\,d\mu\wedge d\bar\mu/(1-|\mu|^2)^2$ and K\"ahler potential
$\Phi=-2\log(1-|\mu|^2)$.
A variable elliptic structure $\mu:\Omega\to\mathbb D$ is a smooth map
from a planar domain to this K\"ahler manifold.

\subsection{Dilatation of the structure map}

Any smooth map $f:\Omega\to\C$ has \emph{dilatation quotient}
\[
k_f(z)
=
\frac{|f_{\bar z}(z)|}{|f_z(z)|}
\]
at points where $f_z\neq 0$.
When $f$ is quasiregular, $k_f<1$ and measures the local deviation from
holomorphicity.
The map $f$ is holomorphic if and only if $k_f=0$, and $k_f$ is the
Beltrami coefficient of $f$ as a map.

\begin{theorem}[Self-dilatation]\label{thm:P-self-dilatation}
Under the rigidity condition \eqref{eq:P-rigidity}, the structure map
$\mu:\Omega\to\mathbb D$ has dilatation
\begin{equation}\label{eq:P-self-dilat}
k_\mu(z)=|\mu(z)|
\end{equation}
at every point $z\in\Omega$ where $\mu_z(z)\neq 0$.
\end{theorem}

\begin{proof}
From \eqref{eq:P-rigidity}, $\mu_{\bar z}=\mu\,\mu_z$.
Therefore
\[
k_\mu
=\frac{|\mu_{\bar z}|}{|\mu_z|}
=\frac{|\mu|\,|\mu_z|}{|\mu_z|}
=|\mu|.
\qedhere
\]
\end{proof}

\begin{definition}[Self-dilatational map]\label{def:P-self-dilat}
A smooth map $\mu:\Omega\to\mathbb D$ is called \emph{self-dilatational} if
$k_\mu(z)=|\mu(z)|$ for all $z$ where $\mu_z\neq 0$.
\end{definition}

Theorem~\ref{thm:P-self-dilatation} can now be restated:

\begin{corollary}\label{cor:P-rigid-iff-selfdilat}
A $C^2$ variable elliptic structure is rigid if and only if its Beltrami
coefficient, viewed as a map to the Poincar\'e disk, is self-dilatational.
\end{corollary}

The content of this result is genuinely self-referential.
The structure \emph{is} its own distortion: the amount by which the
elliptic equation deviates from the Laplacian at each point ($|\mu|$)
equals the amount by which the map $\mu$ deviates from holomorphicity
at that point ($k_\mu$).
The encoding and the encoded coincide.

\begin{remark}[Genuine self-reference]\label{rem:P-self-reference}
The rigidity equation $\mu_{\bar z}=\mu\,\mu_z$ says that the map $\mu$
satisfies the Beltrami equation $f_{\bar z}=\mu\,f_z$ that it itself
defines.
The Beltrami coefficient of the map $\mu$ is $\nu=\mu_{\bar z}/\mu_z=\mu$,
which is precisely the Beltrami coefficient of the structure.
There is no negation, no sign discrepancy: the self-reference is literal.
The structure map is a solution of its own equation.
\end{remark}

\section{The Beltrami Residual and the Normalized Obstruction}
\label{sec:P-residual}

\subsection{Definition of the residual}

When the structure is not rigid, $\mu_{\bar z}-\mu\,\mu_z\neq 0$.

\begin{definition}[Beltrami residual]\label{def:P-residual}
The \emph{Beltrami residual} of a variable elliptic structure is
\begin{equation}\label{eq:P-residual}
\mathcal R(\mu)
:=
\mu_{\bar z}-\mu\,\mu_z.
\end{equation}
\end{definition}

This is the obstruction to rigidity, living naturally in the tangent space
of $\mathbb D$ at $\mu$.
When $\mathcal R=0$, the map $\mu$ satisfies its own Beltrami equation;
when $\mathcal R\neq 0$, it measures the failure of this self-consistency.

\subsection{Connection to the normalized obstruction}

Recall from Chapter~\ref{ch:rigidization-convergence} that the convergence
of the rigidization scheme is controlled by the normalized transport
obstruction
\[
\rho_T
=
\frac{\|T\|_{C^{0,\alpha}}}{b_{\min}^2},
\]
where $b_{\min}=\min_{\bar\Omega}\im\lambda$ is the spectral floor.
We now express $\rho_T$ in terms of the Beltrami residual.

\begin{proposition}[Hyperbolic expression for the normalized obstruction]
\label{prop:P-rho-hyperbolic}
At each point $z\in\Omega$, the pointwise normalized obstruction satisfies
\begin{equation}\label{eq:P-rho-pointwise}
\frac{|T(z)|}{(\im\lambda(z))^2}
=
\frac{4\,|\mathcal R(\mu(z))|\cdot|1-\mu(z)|}
{(1-|\mu(z)|^2)^2}.
\end{equation}
In particular, the global normalized obstruction satisfies
\begin{equation}\label{eq:P-rho-hyp}
\rho_T
\;\leq\;
\frac{4\,\|\mathcal R(\mu)\|_\infty\cdot\|1-\mu\|_\infty}
{(1-\|\mu\|_\infty^2)^2}.
\end{equation}
\end{proposition}

\begin{proof}
From \eqref{eq:P-T-complex},
$|T|=4|\mathcal R|/|1-\mu|^3$.
The spectral floor satisfies
$\im\lambda=(1-|\mu|^2)/|1-\mu|^2$ by \eqref{eq:P-beta-mu}.
Therefore, pointwise:
\[
\frac{|T|}{(\im\lambda)^2}
=\frac{4|\mathcal R|}{|1-\mu|^3}
\cdot\frac{|1-\mu|^4}{(1-|\mu|^2)^2}
=\frac{4|\mathcal R|\cdot|1-\mu|}
{(1-|\mu|^2)^2}.
\]
For the global bound, $\rho_T=\|T\|_\infty/b_{\min}^2\leq
\sup_z\bigl[|T(z)|/(\im\lambda(z))^2\bigr]$,
and the right-hand side is bounded by the supremum of the pointwise
expression \eqref{eq:P-rho-pointwise}.
\end{proof}

The factor $(1-|\mu|^2)^{-2}$ is the square of the Poincar\'e metric
density at $\mu$.
Thus $\rho_T$ measures the residual $\mathcal R$ in units set by the
hyperbolic geometry of the target disk: a given residual is more significant
near the boundary of $\mathbb D$ (degenerate ellipticity) than near the center
(strong ellipticity), exactly as the Poincar\'e metric prescribes.

This hyperbolic weight was not designed into the convergence theory of
Chapter~\ref{ch:rigidization-convergence}.
It appeared there as the normalization $b_{\min}^{-2}$, where $b_{\min}$
is the spectral floor.
The Poincar\'e metric was hiding in plain sight.

\subsection{The differential of a rigid structure}

The Beltrami residual admits a differential-geometric reformulation.
Write $d\mu=\mu_z\,dz+\mu_{\bar z}\,d\bar z$.
The rigidity condition $\mu_{\bar z}=\mu\,\mu_z$ is equivalent to
\begin{equation}\label{eq:P-dmu-form}
d\mu=\mu_z\bigl(dz+\mu\,d\bar z\bigr).
\end{equation}

The ratio $\mu_{\bar z}/\mu_z = \mu$ is the Beltrami coefficient of the map
$\mu$ itself, consistent with the self-dilatation theorem: the ``anti-holomorphic
part'' $\mu_{\bar z}\,d\bar z$ of $d\mu$ is controlled by the ``holomorphic part''
$\mu_z\,dz$ via the factor $\mu$.
The differential of the structure map encodes the structure's own Beltrami
coefficient at the level of $1$-forms.

\begin{corollary}[Rigidity as self-referential differential]
\label{cor:P-self-ref-diff}
The structure $\mu$ is rigid if and only if $d\mu=\mu_z(dz+\mu\,d\bar z)$,
i.e., the map $\mu:\Omega\to\mathbb D$ is quasiregular with Beltrami
coefficient equal to $\mu$ itself.
\end{corollary}

\begin{proof}
If $\mu_{\bar z}=\mu\,\mu_z$, then $d\mu=\mu_z\,dz+\mu\,\mu_z\,d\bar z
=\mu_z(dz+\mu\,d\bar z)$.
Conversely, if $d\mu=h(dz+\mu\,d\bar z)$ for some function $h$, then
$\mu_z=h$ and $\mu_{\bar z}=h\mu=\mu\,\mu_z$.
\end{proof}

This is a restatement of Theorem~\ref{thm:P-self-dilatation} in the
language of differentials: the self-reference is carried by the
$1$-form $dz+\mu\,d\bar z$, whose ``Beltrami ratio'' is $\mu$.

\section{The Rigidity Equation as a Beltrami--Burgers Equation}
\label{sec:P-beltrami-burgers}

Equation \eqref{eq:P-rigidity} admits two complementary readings, and
with the correct sign, these readings are perfectly aligned.

\subsection{As a Burgers equation}

The inviscid Burgers equation for a complex-valued function $f$ in the
variables $(z,\bar z)$ takes the general form
$f_{\bar z}=c(f)\,f_z$ for some function $c$.
The rigidity equation $\mu_{\bar z}=\mu\,\mu_z$ is exactly this with
$c(\mu)=\mu$: the ``wave speed'' equals the function value.
Just as the classical Burgers equation $u_t+u\,u_x=0$ transports $u$
along characteristics with speed $u$, the rigidity equation transports $\mu$
along characteristics with complex speed $\mu$ in the $\bar z$-direction.

The original obstruction $T=\lambda_x+\lambda\,\lambda_y=0$ is the inviscid
Burgers equation for $\lambda$ in the real variables $(x,y)$.
The M\"obius transform sends it to a Burgers equation for $\mu$ in the
complex variables $(z,\bar z)$.
The Burgers structure is preserved under the M\"obius bridge between the
upper half-plane and the disk.

\subsection{As a Beltrami equation}

Equation \eqref{eq:P-rigidity}, written $\mu_{\bar z}=\mu\,\mu_z$,
says that the map $\mu:\Omega\to\mathbb D$ satisfies a Beltrami equation
with coefficient $+\mu$:
\begin{equation}\label{eq:P-mu-beltrami}
\mu_{\bar z}=\nu(z)\,\mu_z,
\qquad
\nu(z)=\mu(z).
\end{equation}
The Beltrami coefficient of the \emph{map} $\mu$ is $\mu$ itself, the
Beltrami coefficient of the \emph{structure}.
Rigid structures are those for which the Beltrami coefficient, viewed as a
quasiregular map, satisfies its own Beltrami equation.

The dual reading is clean: the rigidity equation $\mu_{\bar z}=\mu\,\mu_z$
is \emph{simultaneously} a Burgers equation (with wave speed $\mu$ in the
$\bar z$-direction) and a Beltrami equation (with coefficient $+\mu$).
There is no sign discrepancy between the two readings.
The Burgers self-reference ($c(\mu)=\mu$) and the Beltrami self-reference
($\nu=\mu$) are one and the same condition.

\section{Connections}
\label{sec:P-connections}

\subsection{Teichm\"uller theory}

The disk $\mathbb D$ parameterizes conformal structures on the plane, and the
Poincar\'e metric on $\mathbb D$ induces the Teichm\"uller metric on
deformation spaces.
The rigidity condition \eqref{eq:P-rigidity} characterizes the variable
structures $\mu:\Omega\to\mathbb D$ that are ``self-consistent'' in the
sense that their variation across $\Omega$ is compatible with the conformal
geometry they define.
The Beltrami residual $\mathcal R$ measures the failure of this
compatibility, weighted by the Teichm\"uller distance to the boundary of
moduli space.

\subsection{Harmonic maps}

The harmonic map equation for
$\mu:(\Omega,|dz|^2)\to(\mathbb D,ds^2_{\mathbb D})$
is the second-order equation
\begin{equation}\label{eq:P-harmonic}
\mu_{z\bar z}+\frac{2\bar\mu\,\mu_z\,\mu_{\bar z}}{1-|\mu|^2}=0.
\end{equation}

The rigidity equation \eqref{eq:P-rigidity} is first-order and thus
strictly stronger than harmonicity in the following sense.

\begin{proposition}[Rigidity and harmonicity]
\label{prop:P-harmonic}
A rigid structure is a harmonic map to $(\mathbb D,ds^2_{\mathbb D})$ if
and only if the additional second-order constraint
\begin{equation}\label{eq:P-rigid-harmonic}
\mu\,\mu_{zz}+\frac{1+|\mu|^2}{1-|\mu|^2}\,(\mu_z)^2=0
\end{equation}
holds.
\end{proposition}

\begin{proof}
Using \eqref{eq:P-rigidity} to compute
$\mu_{z\bar z}=\partial_z(\mu_{\bar z})=\partial_z(\mu\,\mu_z)
=(\mu_z)^2+\mu\,\mu_{zz}$
and substituting into \eqref{eq:P-harmonic}:
\[
(\mu_z)^2+\mu\,\mu_{zz}
+\frac{2\bar\mu\,\mu_z\cdot\mu\,\mu_z}{1-|\mu|^2}
=
(\mu_z)^2+\mu\,\mu_{zz}
+\frac{2|\mu|^2(\mu_z)^2}{1-|\mu|^2}=0.
\]
Combining the $(\mu_z)^2$ terms:
\[
\mu\,\mu_{zz}
+(\mu_z)^2\left[1+\frac{2|\mu|^2}{1-|\mu|^2}\right]
=\mu\,\mu_{zz}
+\frac{1+|\mu|^2}{1-|\mu|^2}\,(\mu_z)^2=0,
\]
which is \eqref{eq:P-rigid-harmonic}.
\end{proof}

The rigidity condition constrains the first jet of $\mu$; harmonicity
constrains the second jet.
They are independent conditions that share the Poincar\'e geometry of the
target.

\subsection{The Schwarzian connection}

For a holomorphic function $f$ (i.e., $f_{\bar z}=0$), the classical
Schwarzian derivative measures projective distortion.
For a rigid structure ($\mu_{\bar z}=\mu\,\mu_z$, a specific
\emph{non}-holomorphic condition), one may ask whether an analogous
``rigid Schwarzian'' exists.
The second-order condition \eqref{eq:P-rigid-harmonic} is a natural
candidate: it governs when a rigid structure is additionally a harmonic
map to the Poincar\'e disk.
We leave the development of this connection to future work.

\section{The Independence Theorem, Revisited}
\label{sec:P-independence}

The Poincar\'e picture makes particularly transparent the independence of
the two axes of difficulty identified computationally during the development
of the rigidization algorithm
(Chapter~\ref{ch:rigidization-convergence}).

\begin{proposition}[Independence of distortion and obstruction]
\label{prop:P-independence}
Let $\Omega\subset\R^2$ be a domain.
The Beltrami residual $\|\mathcal R(\mu)\|_\infty$ and the supremum
$\|\mu\|_\infty$ are independently prescribable: for any $R_0\ge 0$ and
$0\le\mu_0<1$, there exists a $C^\infty$ map $\mu:\Omega\to\mathbb D$ with
$\|\mathcal R\|_\infty$ arbitrarily close to $R_0$ and $\|\mu\|_\infty$
arbitrarily close to $\mu_0$.
\end{proposition}

\begin{proof}
The residual $\mathcal R=\mu_{\bar z}-\mu\,\mu_z$ depends on the first
derivatives of $\mu$, while $|\mu|$ depends on the pointwise values.
Since these are different orders of the jet, they are independently
prescribable: one can construct maps with prescribed pointwise values
and prescribed first derivatives (subject only to smoothness).
The explicit Burgers solutions of
Chapter~\ref{chap:rigid-transport-examples} provide $\mathcal R=0$ with
arbitrary $\|\mu\|_\infty$, and perturbations of $\mu=0$ provide arbitrary
$\|\mathcal R\|_\infty$ with $\|\mu\|_\infty\approx 0$.
\end{proof}

In the language of this chapter, the difficulty of a variable elliptic
structure is measured by two quantities living at different levels:

\begin{itemize}
\item $|\mu|$: a \textbf{zeroth-order} invariant.
      Where the structure sits in the Poincar\'e disk.
      This is what classical Beltrami solvers see.
\item $\mathcal R(\mu)$: a \textbf{first-order} invariant.
      How the structure moves through the Poincar\'e disk.
      This is what the rigidizer sees.
\end{itemize}

A function and its derivative are independent.
The Poincar\'e metric gives both a common geometric home.

\section{Comparison with the Spectral Formulation}
\label{sec:P-comparison}

It is instructive to place the two formulations of rigidity side by side.

\medskip
\begin{center}
\renewcommand{\arraystretch}{1.4}
\begin{tabular}{lcc}
& \textbf{Spectral} ($\lambda$) & \textbf{Beltrami} ($\mu$)\\
\hline
Domain & upper half-plane & unit disk $\mathbb D$\\
Rigidity & $\lambda_x+\lambda\,\lambda_y=0$ & $\mu_{\bar z}=\mu\,\mu_z$\\
Equation type & complex Burgers & complex Beltrami--Burgers\\
Coordinates & real $(x,y)$ & complex $(z,\bar z)$\\
Self-reference & characteristic speed $=$ $\lambda$ & dilatation $=$ $|\mu|$\\
Obstruction & $T=\lambda_x+\lambda\,\lambda_y$ & $\mathcal R=\mu_{\bar z}-\mu\,\mu_z$\\
Geometry & transport along $(1,\lambda)$ & Poincar\'e disk\\
Degeneracy & $\im\lambda\to 0$ & $|\mu|\to 1$\\
\end{tabular}
\end{center}

\medskip
\noindent
The M\"obius transformation \eqref{eq:P-mu-def} is an exact dictionary
between these two pictures.
Neither formulation is deeper than the other; they offer complementary
geometric perspectives on the same invariant.

The spectral formulation, used throughout
Chapters~\ref{chap:burgers-universal}--\ref{ch:rigidization-convergence},
is natural for transport-theoretic arguments and for the explicit
construction of solutions via characteristics.
The Beltrami formulation is natural for connections with Teichm\"uller
theory, quasiconformal mapping, and the classical Beltrami solver literature.

\section{Discussion and Open Questions}
\label{sec:P-discussion}

The results of this chapter can be summarized in one sentence: \emph{the
rigidity condition for variable elliptic structures is the self-dilatation
equation in the Poincar\'e disk.}

The 20th century development of quasiconformal mapping theory established
the Poincar\'e disk as the natural parameter space for conformal structures.
The Beltrami coefficient $\mu$ was understood as a point in this disk, and
the Teichm\"uller metric was recognized as the hyperbolic metric.
What was not observed is that the \emph{characteristic rigidity}
condition---the vanishing of the Burgers residual of the spectral
slope---has a pure formulation in this geometry: it is the condition that
the structure map $\mu:\Omega\to\mathbb D$ be self-dilatational.

This is a genuinely self-referential condition.
The Beltrami coefficient measures conformal distortion.
Rigidity says: the Beltrami coefficient, viewed as a map, is distorted
by exactly the amount it measures.
The map $\mu$ satisfies the Beltrami equation $f_{\bar z}=\mu\,f_z$
that it itself defines---a literal instance of self-reference, not merely
a numerical coincidence of moduli.

The normalized obstruction $\rho_T$, which controls the convergence of the
rigidization scheme (Chapter~\ref{ch:rigidization-convergence},
Theorem~\ref{thm:main-convergence}), turns out to measure the Beltrami
residual weighted by the hyperbolic metric density.
This was not designed; it emerged from the algebra of the M\"obius transform
relating $\lambda$ and $\mu$.
The hyperbolic weight $(1-|\mu|^2)^{-2}$ was present in the original theory
as the normalization $b_{\min}^{-2}$, where $b_{\min}=\min\im\lambda$ is
the spectral floor.
The Poincar\'e metric was hiding in plain sight.

Several questions remain open:

\begin{enumerate}
\item[(a)] Does the self-dilatation condition extend to higher dimensions?
    The Beltrami equation generalizes to several complex variables; does the
    rigidity condition preserve its geometric character?
\item[(b)] What is the moduli space of rigid structures with prescribed
    boundary values?
    The rigidity equation is a first-order PDE and its solution space
    should be parameterized by boundary data.
\item[(c)] Is there a variational principle whose critical points are the
    rigid structures?
    The harmonic map functional on $\mathrm{Maps}(\Omega,\mathbb D)$ has
    \eqref{eq:P-harmonic} as its Euler--Lagrange equation; rigidity is a
    first-order condition that is related but not identical.
\item[(d)] Can the self-dilatation condition be formulated for $W^{1,2}$
    maps $\mu:\Omega\to\mathbb D$, extending the theory below $C^2$
    regularity?
\end{enumerate}

From the broader perspective of this monograph, this chapter reveals that the
``transport first'' hierarchy
\[
\text{transport}\;\to\;\text{rigidity}\;\to\;\text{integrability}
\]
has a parallel reading in the Beltrami picture:
\[
\text{Beltrami residual}\;\to\;\text{self-dilatation}\;\to\;\text{analytic closure}.
\]
The same structural content appears in different geometric clothing.
The Epilogue will show that it also appears in the Hopf bundle.


\chapter{The Fundamental Independence Theorem}
\label{ch:independence}

\section{Purpose of This Chapter}

Chapters~1--11 developed the theory of variable elliptic structures from the spectral viewpoint:
the moving generator~$i(x,y)$, the spectral parameter $\lambda = (-\beta + i\sqrt{4\alpha - \beta^2})/2$,
and the characteristic obstruction $T = \lambda_x + \lambda\,\lambda_y$.
Chapter~12 translated this machinery into the Beltrami variable
$\mu = (\lambda - i)/(\lambda + i) \in \mathbb{D}$,
revealing that the rigidity condition is the self-dilatation equation in the Poincar\'e disk.

The table has now been set.  Three objects are in place:
\begin{enumerate}
\item The \emph{Beltrami disk} $\mathbb{D} = \{\mu \in \mathbb{C} : |\mu| < 1\}$
as parameter space for conformal structures.

\item The \emph{Poincar\'e metric} $ds^2_{\mathbb{D}} = 4\,|d\mu|^2/(1 - |\mu|^2)^2$,
which endows this parameter space with a geometry that the convergence theory already knows
(Chapter~10, as~$b_{\min}^{-2}$).

\item The \emph{transport field}
$R(\mu) = \mu_{\bar z} - \mu\,\mu_z$,
the Beltrami residual of Definition~12.6, which measures how the structure moves through the disk.
\end{enumerate}

The purpose of this chapter is to prove and develop the consequences of a single fact:
\emph{the position $|\mu|$ and the velocity $R(\mu)$ are independent.}
This is the Fundamental Independence Theorem of Smooth Elliptic Planar Systems.
Its content is that the classical Beltrami literature, which measures the difficulty of a
variable elliptic structure by $\|\mu\|_{C^0}$ alone, is projecting a two-dimensional
difficulty space onto one axis.
The projection is lossy.

\section{Two Invariants at Different Orders}
\label{sec:two-invariants}

Let $\Omega \subset \mathbb{R}^2$ be a bounded domain, $0 < \alpha < 1$,
and let $\mu \in C^\infty(\overline\Omega, \mathbb{D})$
be a smooth variable elliptic structure.

\begin{definition}[Zeroth-order invariant]
The \emph{Beltrami modulus} is
$\|\mu\|_{C^0(\overline\Omega)} := \sup_{z \in \overline\Omega} |\mu(z)|$.
This is a zeroth-order quantity: it depends on the pointwise values of~$\mu$ but not on its derivatives.
It measures \emph{where} the structure sits in the Poincar\'e disk.
\end{definition}

This is the invariant that the classical Beltrami literature sees.  The Measurable Riemann
Mapping Theorem (MRMT) requires $\|\mu\|_{C^0} < 1$; quantitative estimates in quasiconformal
mapping typically degrade as $\|\mu\|_{C^0} \to 1$.

\begin{definition}[First-order invariant]
The \emph{Beltrami residual} $R(\mu) = \mu_{\bar z} - \mu\,\mu_z$
(Definition~12.6; recalled here for the reader's convenience) depends on the first derivatives of~$\mu$.
Its H\"older norm $\|R(\mu)\|_{C^{0,\alpha}(\overline\Omega)}$ measures \emph{how}
the structure moves through the Poincar\'e disk.
\end{definition}

The Beltrami residual is the invariant that the rigidizer sees.
Theorem~10.15 converges when $\|R\|_{C^{0,\alpha}}$ is small relative to the Poincar\'e
metric density at the structure's position; its convergence is blind to $\|\mu\|_{C^0}$
except through this metric weighting.

The central question is: are these invariants coupled?

\section{The \texorpdfstring{$\delta$}{delta}-Family: A Canonical Example}
\label{sec:delta-family}

Before proving the Independence Theorem, we construct a one-parameter family
of rigid structures whose Beltrami modulus sweeps the entire interval~$[0,1)$
while the Beltrami residual vanishes identically.
This family is the load-bearing example for Quadrant~III of the theorem
and for the blind-spot corollary that follows.

\medskip

Fix $\delta > 0$ and let $\Omega = \{(x,y) \in \mathbb{R}^2 : x > -1\}$.
Define
\begin{equation}\label{eq:delta-alphabeta}
\alpha_\delta(x,y) = \frac{y^2 + \delta^2}{(1+x)^2}, \qquad
\beta_\delta(x,y) = \frac{-2y}{1+x}.
\end{equation}
The associated first-order elliptic system is
\begin{equation}\label{eq:delta-system}
\begin{cases}
u_x - \alpha_\delta\, v_y = 0, \\[4pt]
v_x + u_y - \beta_\delta\, v_y = 0,
\end{cases}
\end{equation}
with ellipticity discriminant
$\Delta_\delta = 4\alpha_\delta - \beta_\delta^2 = 4\delta^2/(1+x)^2 > 0$
on all of~$\Omega$.
The spectral parameter and Beltrami coefficient are
\begin{equation}\label{eq:delta-lambda-mu}
\lambda_\delta = \frac{y + i\delta}{1+x}, \qquad
\mu_\delta = \frac{\lambda_\delta - i}{\lambda_\delta + i}
= \frac{y + i(\delta - 1 - x)}{y + i(\delta + 1 + x)}.
\end{equation}

\begin{proposition}[Properties of the $\delta$-family]
\label{prop:delta-family}
The family $\{\mu_\delta\}_{\delta > 0}$ has the following properties:
\begin{enumerate}
\item[\emph{(i)}] \textbf{Rigidity.}\enspace
$R(\mu_\delta) \equiv 0$ for every $\delta > 0$.
Equivalently, $\lambda_\delta$ satisfies the conservative Burgers equation
$(\lambda_\delta)_x + \lambda_\delta\,(\lambda_\delta)_y = 0$.

\item[\emph{(ii)}] \textbf{Degeneracy.}\enspace
On any compact $K \subset \Omega$,
$\inf_K |\mu_\delta| \to 1$ as $\delta \to 0^+$.
The elliptic condition number
$\kappa_\delta := \bigl(\tfrac{1+\|\mu_\delta\|_\infty}{1-\|\mu_\delta\|_\infty}\bigr)^2$
satisfies $\kappa_\delta = O(\delta^{-2})$.

\end{enumerate}
\end{proposition}

\begin{proof}
\emph{(i)}\enspace Direct computation:
\[
(\lambda_\delta)_x = -\frac{y + i\delta}{(1+x)^2}, \qquad
(\lambda_\delta)_y = \frac{1}{1+x}, \qquad
(\lambda_\delta)_x + \lambda_\delta\,(\lambda_\delta)_y
= -\frac{y+i\delta}{(1+x)^2} + \frac{y+i\delta}{(1+x)^2} = 0.
\]
By equation~(12.4), $T = 4i\,R(\mu)/(1-\mu)^3$, so $T = 0$
implies $R(\mu_\delta) = 0$.

\smallskip
\emph{(ii)}\enspace
The squared modulus is
$|\mu_\delta|^2 = \bigl(y^2 + (\delta-1-x)^2\bigr)\big/\bigl(y^2 + (\delta+1+x)^2\bigr)$.
On a compact set $K \subset \Omega$ with $1+x \geq c > 0$,
as $\delta \to 0^+$ the numerator and denominator both approach
$y^2 + (1+x)^2$, giving $|\mu_\delta| \to 1$ uniformly on~$K$.
The condition number estimate follows from
$1 - \|\mu_\delta\|_\infty = O(\delta)$.
.
\end{proof}

\begin{remark}[Explicit solvability at every $\delta$]
Despite the condition number blowup, the system~\eqref{eq:delta-system}
is explicitly solvable for every $\delta > 0$ by the method of characteristics.
The rigidity condition implies that $\lambda_\delta$ is constant along the
characteristic curves $y = C(1+x)$, and the conserved quantity
$\zeta = (y - i\delta x)/(1+x)$ provides a characteristic coordinate.
Any analytic function $w = f_0(\zeta)$ satisfies the transport equation
$w_x + \lambda_\delta\,w_y = 0$, and the original unknowns are recovered
by $u = \re(w) - (\re\lambda_\delta/\!\operatorname{Im}\lambda_\delta)\,\operatorname{Im}(w)$,
$v = \operatorname{Im}(w)/\!\operatorname{Im}\lambda_\delta$.
The cost of this procedure is independent of~$\delta$.
\end{remark}

\begin{remark}[The $\delta$-family in the difficulty plane]
The $\delta$-family traces a horizontal ray in the difficulty plane:
as $\delta$ varies, the point
$\bigl(\|\mu_\delta\|_{C^0},\, \|R(\mu_\delta)\|_{C^{0,\alpha}}\bigr)$
moves along the segment $[0,1) \times \{0\}$.
This ray is invisible to any triage based on $\|\mu\|_{C^0}$ alone:
a solver that sees $\|\mu\|_{C^0} = 0.999$ allocates resources for a
near-degenerate problem, while the rigidizer sees $R = 0$ and a problem
already solved.
\end{remark}


\section{The Fundamental Independence Theorem}

\begin{theorem}[Fundamental Independence Theorem of Smooth Elliptic Planar Systems]
\label{thm:independence}
Let\/ $\Omega \subset \mathbb{R}^2$ be a bounded domain and $0 < \alpha < 1$.
For any $R_0 \geq 0$ and any $0 \leq \mu_0 < 1$, there exists a $C^\infty$ map
$\mu : \Omega \to \mathbb{D}$ with
\[
\|R(\mu)\|_{C^{0,\alpha}(\overline\Omega)} \text{ arbitrarily close to } R_0
\qquad\text{and}\qquad
\|\mu\|_{C^0(\overline\Omega)} \text{ arbitrarily close to } \mu_0.
\]
In particular, all four quadrants of the difficulty plane
$\bigl(\|\mu\|_{C^0},\, \|R\|_{C^{0,\alpha}}\bigr)$
are populated by smooth elliptic structures.
\end{theorem}

\begin{proof}
The residual $R(\mu) = \mu_{\bar z} - \mu\,\mu_z$ depends on the $1$-jet of~$\mu$,
while $|\mu|$ depends on the $0$-jet.
These are different levels of the jet space of maps $\Omega \to \mathbb{D}$,
so they are independently prescribable, subject only to smoothness.
We exhibit explicit realizations of each quadrant.

\medskip\noindent
\emph{Quadrant I: $\|\mu\|_{C^0} \approx 0$, $\|R\|_{C^{0,\alpha}} \approx 0$.}
Take $\mu \equiv 0$ (the standard complex structure $\lambda = i$).
Then $R = 0$ and $|\mu| = 0$.
Small smooth perturbations of both values populate a neighbourhood of the origin.

\medskip\noindent
\emph{Quadrant II: $\|\mu\|_{C^0} \approx 0$, $\|R\|_{C^{0,\alpha}} \approx R_0 > 0$.}
Take $\mu(z) = \varepsilon\,h(z)$ for small~$\varepsilon$ and smooth~$h$
chosen with $\|h_{\bar z}\|_{C^{0,\alpha}}$ of order~$R_0/\varepsilon$,
adjusted to keep $\|\mu\|_{C^0} < 1$.
This is achievable because a smooth function can have large derivatives
while remaining uniformly small---the standard oscillatory mechanism
(e.g.\ $\varepsilon\,\varphi(z)\sin(x/\varepsilon)$, where $\varphi$ is a
smooth cutoff supported in~$\Omega$, has $C^0$ norm~$O(\varepsilon)$ and
$O(1)$ derivative).
Then $\|\mu\|_{C^0} \to 0$ while
$\|R(\mu)\|_{C^{0,\alpha}} \to R_0$.

\medskip\noindent
\emph{Quadrant III: $\|\mu\|_{C^0} \approx \mu_0$, $\|R\|_{C^{0,\alpha}} = 0$.}
The $\delta$-family of Section~\ref{sec:delta-family} provides
explicit rigid structures with $R \equiv 0$ and $\|\mu_\delta\|_{C^0}$
ranging continuously over all of~$[0,1)$
(Proposition~\ref{prop:delta-family}(iii)).
Any target value $\mu_0 \in [0,1)$ is achieved by choosing the
appropriate~$\delta$.

\medskip\noindent
\emph{Quadrant IV: $\|\mu\|_{C^0} \approx \mu_0$, $\|R\|_{C^{0,\alpha}} \approx R_0 > 0$.}
Start from a rigid structure $\mu_0$ with $\|\mu_0\|_{C^0}$ at the desired level
(Quadrant~III).
Perturb: $\mu = \mu_0 + \varepsilon\,\eta$ for smooth~$\eta$ supported in~$\Omega$.
Since $R(\mu_0) = 0$, the linearized residual is
\[
R(\mu_0 + \varepsilon\eta)
= \varepsilon\,(\eta_{\bar z} - \mu_0\,\eta_z - \eta\,\mu_{0,z}) + O(\varepsilon^2),
\]
which can be tuned to any target by choosing~$\eta$ and~$\varepsilon$.
Simultaneously, $\|\mu\|_{C^0} = \|\mu_0\|_{C^0} + O(\varepsilon)$.

\medskip
Since $C^\infty(\overline\Omega, \mathbb{D})$ is dense in
$C^{k,\alpha}(\overline\Omega, \mathbb{D})$ for every $k \geq 0$
and $0 < \alpha < 1$, the four quadrants are populated in any
H\"older topology.
\end{proof}

\begin{remark}
The proof is elementary.  Its content is not technical but conceptual:
it says that the two-axis structure of the difficulty space is not an artifact
of any particular construction, but a consequence of the fact that a function
and its derivative are independent objects.
The Poincar\'e metric gives both a common geometric home---it does not couple them.
\end{remark}

\section{The Difficulty Plane}
\label{sec:difficulty-plane}

The Independence Theorem organizes variable elliptic structures into a
\emph{difficulty plane} with axes $\bigl(\|\mu\|_{C^0},\, \|R(\mu)\|_{C^{0,\alpha}}\bigr)$.
Each point in this plane represents a class of structures sharing the same
zeroth- and first-order complexity.

\begin{center}
\begin{tabular}{c|c|c}
 & $\|R\|$ small & $\|R\|$ large \\[4pt]
\hline
\rule{0pt}{14pt}%
$|\mu|$ small & \textbf{standard regime} & \textbf{transport-dominated} \\
& $\mu \approx 0$, $R \approx 0$ & near $\mathbb{C}$, strong flow \\
& classical methods and VES & residual drives dynamics \\[4pt]
\hline
\rule{0pt}{14pt}%
$|\mu|$ large & \textbf{rigid boundary} & \textbf{doubly difficult} \\
& $\delta$-family: $|\mu| \to 1$, $R = 0$ & near $\partial\mathbb{D}$, strong flow \\
& invisible to $|\mu|$-based triage & hardest for all methods
\end{tabular}
\end{center}

\medskip

\noindent
The upper-left quadrant is where everyone works.
The upper-right quadrant is accessible to perturbation arguments.
The lower-left quadrant is the home of the rigid structures constructed
in this monograph: maximal zeroth-order difficulty, zero first-order difficulty.
The lower-right quadrant is genuinely hard for every method.

The classical Beltrami literature projects this plane onto its horizontal axis.
Only $\|\mu\|_{C^0}$ survives the projection.
The vertical axis---the transport field---is invisible.

\section{The Blind Spot}
\label{sec:blind-spot}

\begin{corollary}[The classical blind spot]
\label{cor:blind-spot}
Any solver or triage that classifies the difficulty of a variable elliptic structure
using $\|\mu\|_{C^0}$ alone cannot distinguish:
\begin{enumerate}
\item[(a)] a rigid structure with $\|\mu\|_{C^0} = 0.95$ and $R \equiv 0$
(trivially solvable by the methods of Chapters~5--9), from
\item[(b)] a non-rigid structure with $\|\mu\|_{C^0} = 0.95$ and $\|R\|_{C^{0,\alpha}} \gg 1$
(genuinely difficult for any method).
\end{enumerate}
Both structures sit at the same position in the Poincar\'e disk.
They differ only in how they move through it.
A zeroth-order invariant cannot see a first-order distinction.
\end{corollary}

\begin{proof}
The Independence Theorem guarantees that both structures exist.
The $\delta$-family (Proposition~\ref{prop:delta-family}) provides~(a) explicitly.
Perturbation of the $\delta$-family provides~(b).
\end{proof}

The content of the blind spot is quantitative.
A quasiconformal solver presented with a Beltrami coefficient satisfying
$\|\mu\|_{C^0} = 1 - \varepsilon$ will allocate resources---mesh refinement,
iteration count, regularization---based on the proximity to the degenerate boundary.
But if the structure happens to be rigid ($R = 0$), this expenditure is wasted:
the rigidization algorithm of Chapter~10 solves the problem in a fixed number of
Newton steps, with cost independent of $\|\mu\|_{C^0}$.
Conversely, a structure with $\|\mu\|_{C^0} = 0.1$ but large $\|R\|_{C^{0,\alpha}}$
will be triaged as ``easy'' by a $|\mu|$-based classifier, although the
transport obstruction makes it genuinely harder than the rigid structure at
$|\mu| = 0.95$.

\section{Quantitative Rigidity}
\label{sec:quantitative-rigidity}

The Independence Theorem identifies $\|R(\mu)\|_{C^{0,\alpha}}$ as the correct
measure of approximate rigidity.  Theorem~10.15 then provides a quantitative
stability estimate: if a structure is approximately rigid, it is close to an
exactly rigid structure, with linear control.

\begin{corollary}[Quantitative rigidity estimate]
\label{cor:quantitative-rigidity}
Let\/ $\mu \in C^{2,\alpha}(\overline\Omega, \mathbb{D})$ with spectral floor
$b_{\min} = \min_{\overline\Omega} \operatorname{Im} \lambda > 0$.
If\/ $\|R(\mu)\|_{C^{0,\alpha}} < \varepsilon_0$ $($the threshold of
Theorem~$10.15)$, then there exists a $C^{2,\alpha}$ diffeomorphism
$\Phi : \overline\Omega \to \Phi(\overline\Omega)$ such that the pullback
structure $\Phi^*\mu$ is exactly rigid, and
\[
\|\Phi - \mathrm{id}\|_{C^{2,\alpha}} \;\leq\; \frac{C}{b_{\min}}\,\|R(\mu)\|_{C^{0,\alpha}}.
\]
\end{corollary}

\begin{proof}
This is Theorem~10.15(d), restated in Beltrami coordinates using
the dictionary of Chapter~12.
\end{proof}

The estimate is sharp in its dependence on both factors.
The numerator $\|R\|_{C^{0,\alpha}}$ is the first-order invariant:
the Beltrami residual measures how far the structure departs from rigidity.
The denominator $b_{\min}$ is the Poincar\'e metric density at the structure's
position: in Beltrami coordinates, $b_{\min} = (1 - |\mu|^2)/|1-\mu|^2$
(equation~(12.2)), so the estimate degrades near the boundary of the disk
exactly as the hyperbolic geometry demands.

The content is analogous to quantitative rigidity estimates in nonlinear
elasticity, where a map that is approximately an isometry is shown to be
close to an exact isometry, with stability controlled by the deficit.
Here the role of the isometry group is played by the rigid structures
($R = 0$), and the deficit is the Beltrami residual.

The Independence Theorem is what makes this estimate non-trivial:
because $|R|$ and $|\mu|$ are independent, the residual $\|R\|_{C^{0,\alpha}}$
is a genuine degree of freedom, not a proxy for $|\mu|$.
A structure can be arbitrarily close to the boundary of the disk
($|\mu| \to 1$) while being arbitrarily close to rigid ($\|R\| \to 0$),
and Corollary~\ref{cor:quantitative-rigidity} guarantees that the
rigidizing diffeomorphism remains controlled.

\section{The Residual Budget and the Convergence Envelope}
\label{sec:envelope}

The convergence of the rigidization scheme (Theorem~10.15) requires the normalized
obstruction $\rho_T$ to be small.  In Beltrami coordinates (Proposition~12.7):
\[
\rho_T = \frac{4\,\|R(\mu)\|_{C^{0,\alpha}} \cdot \|1-\mu\|_{C^0}}{(1 - \|\mu\|^2_{C^0})^2}.
\]
The condition $\rho_T < 1$ defines the \emph{constructive envelope} in the difficulty plane.

\begin{definition}[Residual budget]
\label{def:budget}
At a point $\mu \in \mathbb{D}$, the \emph{residual budget} is
\[
B(\mu) := \frac{(1 - |\mu|^2)^2}{4\,|1-\mu|}.
\]
This is the maximum value of $|R(\mu)|$ for which $\rho_T \leq 1$
at that point.  Its reciprocal $A(\mu) = 1/B(\mu)$ is the
\emph{amplification}---the factor by which the Poincar\'e geometry
magnifies the residual.
\end{definition}

\begin{proposition}[Asymmetry of the envelope]
\label{prop:asymmetry}
The residual budget $B(\mu)$ is not rotationally symmetric in~$\mathbb{D}$.
The factor $|1-\mu|$ in the denominator breaks the symmetry:
\begin{enumerate}
\item Toward $\mu = +1$ $($corresponding to $p \to \infty)$:
along the real axis, $B(\mu) = (1-\mu)(1+\mu)^2/4$, which decays
\emph{linearly} as $B \sim (1-\mu)\to 0$.
Despite the growth of the spectral floor $b_{\min} = \sqrt{p}\to\infty$
which partially offsets the approach to $\partial\mathbb{D}$,
the budget still tends to zero because
$(1-|\mu|^2)^2\to 0$ faster than $|1-\mu|\to 0$.
The decay is only linear, however, so the constructive region
extends relatively far on this side of the disk.

\item Toward $\mu = -1$ $($corresponding to $p \to 0)$:
$|1-\mu|\to 2$, giving $B(\mu)\approx(1-|\mu|^2)^2/8$.
Setting $\mu=-1+\varepsilon$, we find
$B(-1+\varepsilon)=(2-\varepsilon)\varepsilon^2/(4(2-\varepsilon))
\approx\varepsilon^2/2\to 0$
\emph{quadratically}.
The budget shrinks rapidly because $b_{\min}\to 0$ compounds
the approach to $\partial\mathbb{D}$.
\end{enumerate}
The asymmetry is genuine: $B$ decays \emph{more slowly} toward $\mu=+1$
(linearly) than toward $\mu=-1$ (quadratically).
The constructive envelope is wider on the $\mu=+1$ side.
\end{proposition}

\begin{proof}
The factor $(1 - |\mu|^2)^2$ depends only on $|\mu|$ and thus is rotationally symmetric.
The asymmetry is carried entirely by $|1-\mu|^{-1}$.
Along the real axis with $\mu \in (-1,1)$ (where $|1-\mu| = 1-\mu > 0$):
\[
B(\mu) = \frac{(1-\mu)^2(1+\mu)^2}{4(1-\mu)}=\frac{(1-\mu)(1+\mu)^2}{4}.
\]
As $\mu\to+1$: $B(\mu)\sim(1-\mu)\cdot 4/4 = 1-\mu \to 0$ linearly.
As $\mu\to-1$ (set $\mu=-1+\varepsilon$, $\varepsilon\downarrow 0$):
$B(-1+\varepsilon) = (2-\varepsilon)\varepsilon^2/4 \to 0$ quadratically.
\end{proof}

\begin{remark}[Geometric origin of the asymmetry]
The slower decay toward $\mu=+1$ has a concrete geometric reason.
On the real axis, $\mu=(\sqrt{p}-1)/(\sqrt{p}+1)$, so
$\mu\to+1$ corresponds to $p\to\infty$, i.e., $b_{\min}=\sqrt{p}\to\infty$.
The growth of the spectral floor partially offsets the approach to $\partial\mathbb{D}$,
slowing the collapse of the budget.
Toward $\mu=-1$, we have $p\to 0$ and $b_{\min}=\sqrt{p}\to 0$:
the spectral degeneracy \emph{compounds} the approach to the boundary,
accelerating the collapse.
\end{remark}

The disk thus has a \emph{preferred direction}, not by convention but by the geometry.
This asymmetry descends from the M\"obius transformation (12.1):
the relationship between $\lambda$ and $\mu$ is not symmetric about $\mu = 0$.
The center $\mu = 0$ corresponds to the standard structure $\lambda = i$,
but the structure $\lambda = i$ is not the midpoint of the half-plane---it
is one particular value.

\begin{remark}[The $\delta$-family in the envelope]
The $\delta$-family has $\|\mu\|_{C^0} \to 1$ and $R \equiv 0$.
Its normalized obstruction is $\rho_T = A(\mu) \cdot 0 = 0$.
The Poincar\'e metric is a magnifying glass; point it at nothing and you still see nothing.
This is the Independence Theorem in action: the $\delta$-family simultaneously maximizes
zeroth-order difficulty and minimizes first-order difficulty to exactly zero.
A solver that measures difficulty by $|\mu|$ alone sees a problem approaching
maximal degeneracy.
The rigidizer sees a problem already solved.
\end{remark}

\section{The Real Axis: \texorpdfstring{$p(x)$}{p(x)}-Analytics}
\label{sec:px-analytics}

The difficulty plane has a distinguished subspace that deserves its own name.

\begin{definition}[$p(x)$-analytic structure]
\label{def:px-analytic}
A variable elliptic structure $(\alpha, \beta)$ on $\Omega$ is called
\emph{$p(x)$-analytic} if $\beta \equiv 0$ on~$\Omega$.
In this case the structure polynomial reduces to
$X^2 + p(x,y)$, where $p := \alpha > 0$,
and the elliptic system~(1.11) becomes
\begin{equation}\label{eq:px-system}
\begin{cases}
u_x - p(x,y)\, v_y = 0,\\
v_x + u_y = 0.
\end{cases}
\end{equation}
\end{definition}

\begin{remark}[Disambiguation from the Polozhii $p$-analytic system]
\label{rem:polozhii}
The terminology ``$p(x)$-analytic'' adopted here should not be confused
with the $p$-analytic functions of
Polozhii~\cite{Polozhii1973}.
The Polozhii system is
\[
\begin{cases}
u_x = p(x,y)\,v_y,\\
u_y = -p(x,y)\,v_x,
\end{cases}
\]
where $p>0$.
To read off the structure coefficients, rewrite this in the standard
form \eqref{eq:real-system}: dividing both equations by $p$ gives
\[
\begin{cases}
-v_y + \dfrac{1}{p}\,u_x = 0,\\[6pt]
\ \ v_x + \dfrac{1}{p}\,u_y = 0,
\end{cases}
\]
so $a_{11} = a_{22} = 1/p$, $a_{12} = a_{21} = 0$.
Applying the definitions of Proposition~\ref{prop:symbol-norm}:
\[
\alpha = \frac{a_{22}}{a_{11}} = 1,
\qquad
\beta = -\frac{a_{12}+a_{21}}{a_{11}} = 0.
\]
The structure polynomial is $X^2+1$---standard complex numbers,
regardless of~$p$.
The function $p$ cancels in every ratio because it appears symmetrically
in both equations: it is a conformal weight, not a structural parameter.
From the VES viewpoint, Polozhii's theory is conformally weighted
standard complex analysis over a fixed algebra.

By contrast, the $p(x)$-analytic structures of
Definition~\ref{def:px-analytic} have $\alpha = p$ and $\beta = 0$,
giving the structure polynomial $X^2+p$.
The function $p$ enters the algebra itself: it changes the fiber, not
just the metric.
The system \eqref{eq:px-system} has $p$ only in the first equation
($u_x = p\,v_y$), while the second ($v_x+u_y=0$) is the standard one.
This asymmetry is the signature of a genuinely variable elliptic structure.

The two systems coincide only when $p\equiv 1$.
\end{remark}

The condition $\beta = 0$ has a clean geometric meaning at every level of the
theory developed so far.

\medskip\noindent
\emph{Spectral level.}
The spectral parameter (Definition~2.4) is
$\lambda = (-\beta + i\sqrt{4\alpha - \beta^2}\,)/2$.
When $\beta = 0$, this reduces to
\[
  \lambda = i\sqrt{p},
\]
which is purely imaginary and positive.  The spectral parameter sits on the
positive imaginary axis of the $\lambda$-half-plane---the axis that passes
through the standard structure $\lambda = i$.

\medskip\noindent
\emph{Beltrami level.}
The M\"obius map $\mu = (\lambda - i)/(\lambda + i)$ sends the positive
imaginary axis to the real diameter of the disk:
\begin{equation}\label{eq:mu-real}
  \mu_p \;=\; \frac{\sqrt{p} - 1}{\sqrt{p} + 1} \;\in\; (-1,\, 1) \subset \mathbb{R}.
\end{equation}
Thus $\beta = 0$ if and only if $\mu$ is real-valued.
The $p(x)$-analytic structures are exactly the structures that live on the
real diameter $(-1,1) \subset \mathbb{D}$.

\medskip\noindent
\emph{Generator level.}
When $\beta = 0$, the generator satisfies $\mathbf{i}^2 + p = 0$, so
$\mathbf{i} = i\sqrt{p}$ (choosing the root with positive imaginary part).
The conjugate root is $\hat{\mathbf{i}} = -i\sqrt{p}$, and
$\mathbf{i} + \hat{\mathbf{i}} = -\beta = 0$.
The generator is anti-self-conjugate: the algebra at each fiber is symmetric
about the real axis, exactly as $\mathbb{C}$ is symmetric about~$\mathbb{R}$.

\medskip\noindent
\emph{Inhomogeneity level.}
The coefficients $A$, $B$ of equation~(1.11)
(formula~(1.12)) simplify when $\beta = 0$ to
\begin{equation}\label{eq:AB-px}
  A = -\frac{p_y}{2},
  \qquad
  B = \frac{p_x}{2p}.
\end{equation}
The inhomogeneity is driven entirely by the gradient of~$p$.
When $p$ is constant, $A = B = 0$ and the system reduces to the classical
Cauchy--Riemann equations for the structure $\mathbf{i}^2 + p = 0$%
---a copy of~$\mathbb{C}$ with rescaled imaginary unit.

\medskip\noindent
\emph{Transport level.}
The characteristic obstruction (Definition~3.2) is
$T = \lambda_x + \lambda\,\lambda_y$.
With $\lambda = i\sqrt{p}$:
\begin{equation}\label{eq:T-px}
  T \;=\; \frac{i\,p_x}{2\sqrt{p}} \;+\; i\sqrt{p}\cdot\frac{i\,p_y}{2\sqrt{p}}
    \;=\; \frac{i\,p_x - p_y}{2\sqrt{p}}.
\end{equation}
Rigidity ($T = 0$) thus requires $p_x = 0$ and $p_y = 0$, i.e.,
$p \equiv \text{const}$.
\emph{In the $p(x)$-analytic world, the only rigid structures are the
constant ones.}
Every genuinely variable $p(x)$-analytic structure has $T \neq 0$.

\medskip

This last observation fixes the position of $p(x)$-analytics in the
difficulty plane.

\begin{proposition}[The real axis of the difficulty plane]
\label{prop:real-axis}
Let $\mu \in C^{\infty}(\Omega, (-1,1))$ be a $p(x)$-analytic structure
with $p$ non-constant.  Then:
\begin{enumerate}
\item[(i)] $\mu$ is real-valued.  Its image under the difficulty-plane
  map is the point
  $\bigl(\|\mu\|_{C^0},\, \|R(\mu)\|_{C^{0,\alpha}}\bigr)$
  with $\|\mu\|_{C^0} \in [0,1)$ and $\|R\|_{C^{0,\alpha}} > 0$.
  Every $p(x)$-analytic structure with non-constant~$p$ sits in
  Quadrant~II or Quadrant~IV of the difficulty plane---never on the
  horizontal axis $R = 0$.

\item[(ii)] The Beltrami residual simplifies.  Since $\mu$ is real,
  $\bar\mu = \mu$ and
  \[
    R(\mu) = \mu_{\bar z} - \mu\,\mu_z
    = \tfrac{1}{2}\bigl[\mu_x(1-\mu) + i\,\mu_y(1+\mu)\bigr].
  \]
  The residual is controlled by the spatial gradient of~$\mu$,
  weighted by the distance to the boundary points $\mu = \pm 1$.

\item[(iii)] The normalized obstruction is
  \[
    \rho_T = \frac{4\,\|R\|_{C^{0,\alpha}}\cdot\|1-\mu\|_{C^0}}
                   {(1 - \|\mu\|^2_{C^0})^2}.
  \]
  Since $\mu$ is real with $|\mu| < 1$,
  $|1 - \mu| = 1 - \mu$ and $1 - \mu^2 = (1-\mu)(1+\mu)$, so the
  normalized obstruction depends only on $\nabla p$ and~$p$
  through~\eqref{eq:mu-real}.
\end{enumerate}
\end{proposition}

\begin{proof}
Part~(i) follows from~\eqref{eq:mu-real} and~\eqref{eq:T-px}: non-constant~$p$
implies $T \neq 0$, hence $R \neq 0$ by the correspondence of
Proposition~12.7.
Part~(ii): since $\mu$ is real, $\bar\mu = \mu$, so
$\mu_{\bar z} = \tfrac{1}{2}(\mu_x + i\mu_y)$ and
$\mu\,\mu_z = \mu\cdot\tfrac{1}{2}(\mu_x - i\mu_y)$.
Then $R = \mu_{\bar z} - \mu\,\mu_z
= \tfrac{1}{2}\bigl[(\mu_x + i\mu_y) - \mu(\mu_x - i\mu_y)\bigr]
= \tfrac{1}{2}\bigl[\mu_x(1-\mu) + i\,\mu_y(1+\mu)\bigr]$.
Part~(iii) is direct substitution.
\end{proof}

\begin{remark}[The dignified place of $p(x)$-analytics]
\label{rem:dignified}
The $p(x)$-analytic structures are not a minor special case.  They are the
\emph{real axis} of the Poincar\'e disk, and every classical generalized
analytic function theory for systems of the form~\eqref{eq:px-system}.  The case $p = \text{const}$ is a single point
$\mu_p \in (-1,1)$ on this diameter.
Variable $p(x)$ sweeps out curves in the diameter.
The function theories of Chapters~5--9, applied after rigidization,
are the \emph{analytic continuation} of these classical theories off
the real axis into the full disk $\mathbb{D}$.

The analogy with number systems is precise:
\begin{center}
\renewcommand{\arraystretch}{1.3}
\begin{tabular}{lll}
\textbf{Number theory} & \textbf{VES theory} & \textbf{Locus in $\mathbb{D}$} \\
\hline
$\mathbb{Z} \subset \mathbb{R}$ & constant $p$ structures
  & lattice points on the diameter \\[2pt]
$\mathbb{R}$ & $p(x)$-analytic ($\beta = 0$)
  & the real diameter $(-1,1)$ \\[2pt]
$\mathbb{C}$ & full VES ($\beta \neq 0$)
  & the disk $\mathbb{D}$
\end{tabular}
\end{center}
Just as complex analysis is the analytic continuation of real analysis off the
real line, VES theory is the analytic continuation of $p(x)$-analytics off the
real diameter of the Poincar\'e disk.  The continuation parameter is~$\beta$:
turning on $\beta \neq 0$ moves the Beltrami coefficient off the real axis
into genuinely complex territory.
\end{remark}

\begin{remark}[Rigidization on the real axis]
\label{rem:px-rigidization}
Equation~\eqref{eq:T-px} shows that the only rigid $p(x)$-analytic
structures are the constant ones.  But this does not force the
rigidizer off the real axis.  On the contrary: rigidization of a
$p(x)$-analytic structure finds a diffeomorphism $\Phi$ such that the
pullback structure $\Phi^*\mu$ is constant---it stays on the real
diameter and collapses to a lattice point $\mu_p$.  The real axis is
self-contained for rigidization:
\[
  \text{variable } p(x,y)
  \;\xrightarrow{\;\;\Phi\;\;}
  \text{constant } p_0.
\]
This is the classical uniformization problem for the system~\eqref{eq:px-system},
now visible as motion along the real diameter of~$\mathbb{D}$
toward a distinguished point.

The full VES theory is needed not because $p(x)$-analytics cannot
rigidize itself, but because the disk is two-dimensional: structures
with $\beta \neq 0$ exist, the Independence Theorem populates
every quadrant with them, and they require a theory that operates off
the real axis.  The $p(x)$-analytic theory is the ground floor;
VES is the building.
\end{remark}

\section{The Diagnostic Algorithm}
\label{sec:diagnostic}

The Independence Theorem provides the theoretical foundation for a complete triage
of first-order elliptic systems.

\medskip\noindent
\textbf{Input:} A smooth first-order elliptic PDE on~$\Omega$.

\medskip\noindent
\textbf{Step 1.} Compute the spectral decomposition $(\alpha, \beta)$ from the principal
symbol determinant.

\medskip\noindent
\textbf{Step 2.} Compute $\mu = (\lambda - i)/(\lambda + i)$ and
the normalized obstruction
$\rho_T = 4\,\|R(\mu)\|_{C^{0,\alpha}} \cdot \|1-\mu\|_{C^0}\big/ (1 - \|\mu\|^2_{C^0})^2$.

\medskip\noindent
\textbf{Step 3.} Classify:
\begin{enumerate}
\item[\emph{(a)}] $\rho_T < 1$: \textbf{VES analytics.}
The rigidization scheme (Theorem~10.15) converges.
The full analytic toolkit of Chapters~5--9 applies to the rigidized structure.
Global solution on~$\Omega$ is guaranteed.

\item[\emph{(b)}] $\rho_T$ large, $\beta = 0$: \textbf{$p(x)$-analytics.}
The structure lives on the real diameter of the disk.
The large obstruction is metric complexity (variation of conformal scale),
not transport complexity.
The established Bers theory applies: generating pairs, successor structures,
similarity principle.

\item[\emph{(c)}] $\rho_T$ large, $\beta \neq 0$: \textbf{Classical Beltrami.}
Genuine transport obstruction.  The structure lies off the real axis
with large Beltrami residual.
Classical quasiconformal methods (MRMT, Ahlfors--Bers) are the only
available tool.
\end{enumerate}

Without the Fundamental Independence Theorem of Smooth Elliptic Planar Systems,
this triage is impossible.
A solver that sees only $\|\mu\|_{C^0}$ conflates branches~(a) and~(c):
it cannot distinguish a rigid structure that happens to sit near the boundary
of the disk from a non-rigid structure with genuine transport obstruction at the
same position.
It also conflates~(b) and~(c): it cannot detect that the obstruction in
a $p(x)$-analytic system is purely metric, not transport.

The diagnostic algorithm makes the two-dimensional structure of the difficulty
space computationally actionable.

\section{Discussion}
\label{sec:independence-discussion}

The Independence Theorem is elementary in its proof and fundamental in its content.
It says that the difficulty of a variable elliptic structure lives in a plane, not on a line.

The classical Beltrami literature developed powerful machinery---the Measurable Riemann
Mapping Theorem, the Ahlfors--Bers theorem, the theory of quasiconformal mappings---using
$\|\mu\|_{C^0}$ as the sole measure of difficulty.
This was natural and appropriate for the questions that literature addressed.
The zeroth-order invariant $|\mu|$ controls the dilatation of quasiconformal maps,
the modulus of ring domains, and the distortion of conformal capacity.
For these geometric questions, $|\mu|$ is the correct invariant.

But for the \emph{analytic} question---whether a variable elliptic structure admits
a coherent function theory---the zeroth-order invariant is insufficient.
The rigidity condition $R(\mu) = 0$ is first-order, and the function-theoretic
properties of Chapters~5--9 depend on it, not on the value of~$|\mu|$.
A rigid structure at $|\mu| = 0.99$ supports a complete Cauchy--Pompeiu formula,
power series representations, and a similarity principle.
A non-rigid structure at $|\mu| = 0.01$ does not, without first being rigidized.

The Independence Theorem makes this precise: $|\mu|$ and $R(\mu)$ can be
independently prescribed, so neither determines the other.
The two axes of difficulty are geometrically real.
The Poincar\'e metric, which appeared uninvited in the convergence theory of
Chapter~\ref{ch:rigidization-convergence} as the normalization $b_{\min}^{-2}$, turns out to be the metric that
gives both axes a common home.

Several questions remain:

\begin{enumerate}
\item[(a)] \emph{The convergence envelope as a density.}
The residual budget $B(\mu)$ at each point measures the volume of
first-order data compatible with constructive solvability.
Is $B(\mu)$ the exact density of constructively solvable structures, or
merely an upper bound?

\item[(b)] \emph{A variational principle.}
The integral $E(\mu) = \int_\Omega |R(\mu)|^2 / (1 - |\mu|^2)^4\,dA$
is a natural energy functional weighting the residual by the Poincar\'e density.
Its critical points would provide a variational characterization of ``nearly
rigid'' structures.
However, the $\delta$-family has $\rho_T = 0$ but can have infinite energy
(when $|\mu| \to 1$ on a set of positive measure), so energy and
the pointwise normalized obstruction are themselves independent---
another instance of the Independence Theorem regenerating.

\item[(c)] \emph{Self-reference.}
The independence pattern recurs at every level of the theory:
$|\mu|$ vs.\ $R$, $\rho_T$ vs.\ energy, the spectral obstruction $T$ vs.\
the curvature $K$.
Each pair of invariants the theory produces turns out to be independently
prescribable.
Whether this regeneration has a categorical explanation remains open.
\end{enumerate}

\chapter{The \texorpdfstring{$\delta$}{δ}-Family: A Dense Set of Rigid Structures in the Beltrami Disk}
\label{ch:delta-family}

\section{Introduction}
\label{sec:delta-intro}

In the theory of planar elliptic systems, the Beltrami coefficient~$\mu$ plays a central role.  It lives in the unit disk $\mathbb{D} = \{\mu \in \mathbb{C} : |\mu| < 1\}$ and measures the deviation of a complex structure from the standard one.  Classical results---the Measurable Riemann Mapping Theorem, the Ahlfors--Bers theory, and the whole machinery of quasiconformal mappings---use $\|\mu\|_\infty$ as the primary measure of difficulty.  A value close to~$1$ signals degeneracy, ill-conditioning, and the need for sophisticated analytical tools.

In Chapter~\ref{chap:burgers-universal} a second invariant was introduced: the \emph{transport obstruction}
\begin{equation}\label{eq:delta-transport-obstruction}
  G \;=\; i_x + i\,i_y,
\end{equation}
where $i(x,y)$ is the generator of a variable elliptic structure.  When $G \equiv 0$ the structure is called \emph{rigid}.  Remarkably, rigidity is independent of the size of~$|\mu|$: a structure can be arbitrarily close to the boundary of the disk ($|\mu| \to 1$) while being perfectly rigid ($G = 0$).  The Fundamental Independence Theorem (Theorem~\ref{thm:independence}) states that the two quantities $\|\mu\|_\infty$ and $\|G\|$ are independent; all four quadrants of the ``difficulty plane'' are populated.

The purpose of this chapter is to exhibit a concrete one-parameter family---the \emph{$\delta$-family}---that realises \textbf{every} Beltrami coefficient in the disk as a rigid structure.  More precisely, we show that the pointwise range of the $\delta$-family is the entire open unit disk~$\mathbb{D}$ (Theorem~\ref{thm:delta-surjective}).  We then embed the $\delta$-family in the much larger class of all rigid structures arising from the general solution of the conservative Burgers equation (Section~\ref{sec:delta-general-rigid}), and pose the natural open question of whether rigid Beltrami coefficients are dense in the function space $L^\infty(\Omega, \mathbb{D})$ (Section~\ref{sec:delta-open}).

\section{The \texorpdfstring{$\delta$}{δ}-family}
\label{sec:delta-family-def}

Fix a parameter $\delta > 0$ and consider the domain
\[
  \Omega \;=\; \bigl\{(x,y)\in\mathbb{R}^2 : x > -1\bigr\}.
\]
Define the coefficients
\begin{equation}\label{eq:delta-coefficients}
  \alpha(x,y) \;=\; \frac{y^2+\delta^2}{(1+x)^2}, \qquad
  \beta(x,y) \;=\; -\frac{2y}{1+x},
\end{equation}
and the first-order elliptic system
\begin{equation}
  \begin{cases}
    u_x - \alpha\, v_y = 0,\\[4pt]
    v_x + u_y - \beta\, v_y = 0.
  \end{cases}
\end{equation}
The ellipticity discriminant is
\begin{equation}\label{eq:delta-discriminant}
  \Delta \;=\; 4\alpha - \beta^2 \;=\; \frac{4\delta^2}{(1+x)^2} \;>\; 0,
\end{equation}
so \eqref{eq:delta-system} is elliptic on the whole half-plane.

The associated spectral parameter (cf.\ Chapter~\ref{chap:burgers-universal}) is
\begin{equation}\label{eq:delta-lambda}
  \lambda \;=\; \frac{-\beta + i\sqrt{\Delta}}{2} \;=\; \frac{y + i\delta}{1+x},
\end{equation}
and the Beltrami coefficient is
\begin{equation}\label{eq:delta-mu}
  \mu \;=\; \frac{\lambda - i}{\lambda + i}
       \;=\; \frac{y + i(\delta - 1 - x)}{y + i(\delta + 1 + x)}.
\end{equation}

\begin{proposition}\label{prop:delta-rigid}
  Every member of the $\delta$-family is rigid: $G \equiv 0$ on~$\Omega$ for all $\delta > 0$.
\end{proposition}

\begin{proof}
  A direct computation shows that $\lambda$ as given by \eqref{eq:delta-lambda} satisfies the conservative Burgers equation
  \begin{equation}\label{eq:delta-burgers}
    \lambda_x + \lambda\,\lambda_y \;=\; 0.
  \end{equation}
  Indeed,
  \[
    \lambda_x \;=\; -\frac{y + i\delta}{(1+x)^2}, \qquad
    \lambda_y \;=\; \frac{1}{1+x},
  \]
  so
  \[
    \lambda_x + \lambda\,\lambda_y
    \;=\; -\frac{y + i\delta}{(1+x)^2} + \frac{y + i\delta}{1+x}\cdot\frac{1}{1+x}
    \;=\; 0.
  \]
  By the Rigidity Theorem~\ref{thm:rigidity}, the conservative Burgers equation is equivalent to the vanishing of the transport obstruction~$G$.
\end{proof}

\section{Surjectivity onto the Beltrami disk}
\label{sec:delta-surjectivity}

We now prove the main result of this chapter: the $\delta$-family realises every point of the Beltrami disk.

\begin{theorem}[Pointwise surjectivity of the $\delta$-family]\label{thm:delta-surjective}
  The pointwise evaluation map $(\delta, x, y) \mapsto \mu_\delta(x,y)$ is surjective onto the open unit disk~$\mathbb{D}$.  That is, for every $\mu_0 \in \mathbb{D}$ there exist $\delta > 0$ and $(x,y)$ with $x > -1$ such that $\mu_\delta(x,y) = \mu_0$.
\end{theorem}

\begin{proof}
  Let $\mu_0 = a + ib$ with $a^2 + b^2 < 1$.  We seek real numbers $x > -1$, $y$, $\delta > 0$ satisfying \eqref{eq:delta-mu}.  Writing \eqref{eq:delta-mu} as
  \[
    y + i(\delta - 1 - x)
    \;=\; \mu_0\bigl(y + i(\delta + 1 + x)\bigr)
  \]
  and separating real and imaginary parts gives the system
  \begin{equation}\label{eq:delta-real-imag}
    \begin{cases}
      y = a\,y - b(\delta + 1 + x),\\[4pt]
      \delta - 1 - x = a(\delta + 1 + x) + b\,y.
    \end{cases}
  \end{equation}

  \medskip\noindent\textbf{Step 1: Solve for~$y$.}\;
  From the first equation, noting that $a \neq 1$ (automatic since $|\mu_0| < 1$ implies $a \leq |a| \leq |\mu_0| < 1$), we obtain
  \begin{equation}\label{eq:delta-y}
    y \;=\; -\frac{b}{1-a}\,(\delta + 1 + x).
  \end{equation}

  \medskip\noindent\textbf{Step 2: Introduce auxiliary variables.}\;
  Set $S = \delta + 1 + x$ and $T = \delta - 1 - x$.  Then $y = -\dfrac{b}{1-a}\,S$.  Substituting into the second equation of \eqref{eq:delta-real-imag}:
  \[
    T \;=\; aS + by \;=\; aS - \frac{b^2}{1-a}\,S
      \;=\; S\!\left(a - \frac{b^2}{1-a}\right)
      \;=\; S \cdot r,
  \]
  where
  \begin{equation}\label{eq:delta-r}
    r \;:=\; a - \frac{b^2}{1-a} \;=\; \frac{a - |\mu_0|^2}{1-a}.
  \end{equation}

  \medskip\noindent\textbf{Step 3: Recover $\delta$ and~$x$.}\;
  From $S$ and $T = Sr$ we have
  \begin{equation}\label{eq:delta-recovery}
    \delta \;=\; \frac{S + T}{2} \;=\; \frac{S(1+r)}{2}, \qquad
    1 + x \;=\; \frac{S - T}{2} \;=\; \frac{S(1-r)}{2}.
  \end{equation}

  \medskip\noindent\textbf{Step 4: Verify positivity.}\;
  Since $|\mu_0| < 1$, we have $1 - |\mu_0|^2 > 0$ and $1 - a > 0$, so
  \[
    1 + r \;=\; \frac{1 - |\mu_0|^2}{1 - a} \;>\; 0.
  \]
  Similarly, $(1-a)^2 + b^2 > 0$ and $1-a > 0$, so
  \[
    1 - r \;=\; \frac{(1-a)^2 + b^2}{1 - a} \;>\; 0.
  \]
  Choose any $S > 0$; then \eqref{eq:delta-recovery} gives $\delta > 0$ and $x = \frac{S(1-r)}{2} - 1 > -1$.  Finally, $y$ is given by \eqref{eq:delta-y}.  All quantities are real and satisfy the required inequalities.
\end{proof}

\begin{remark}\label{rem:delta-free-parameter}
  The parameter~$S > 0$ is free; different choices produce different points $(x,y) \in \Omega$ that realise the same Beltrami coefficient.  The fibre over each $\mu_0 \in \mathbb{D}$ is thus a one-parameter family of solutions, reflecting the conformal freedom within the rigid class.
\end{remark}

\begin{remark}\label{rem:delta-topology}
  Theorem~\ref{thm:delta-surjective} is a statement about the \emph{pointwise range} of the map $(\delta,x,y) \mapsto \mu_\delta(x,y)$: every value in~$\mathbb{D}$ is attained.  Since $\mathbb{D}$ is open, this implies in particular that the set of attainable values is dense in~$\mathbb{D}$ (trivially, since it \emph{equals}~$\mathbb{D}$).  This should not be confused with the distinct---and open---question of whether rigid Beltrami coefficient \emph{functions} are dense in the function space $L^\infty(\Omega, \mathbb{D})$; see Section~\ref{sec:delta-open}.
\end{remark}

\label{sec:delta-general-rigid}

The $\delta$-family is a specific one-parameter subfamily of a much larger class.  By the method of characteristics, the conservative Burgers equation
\[
  \lambda_x + \lambda\,\lambda_y \;=\; 0
\]
admits implicit solutions of the form
\begin{equation}\label{eq:delta-general-burgers}
  \lambda \;=\; f\!\bigl(y - \lambda\, x\bigr),
\end{equation}
where $f$ is a function with values in the upper half-plane $\operatorname{Im} f > 0$.  However, the requirement that \eqref{eq:delta-general-burgers} satisfy conservative Burgers at $x \neq 0$ constrains $f$ beyond mere regularity: since the argument $w = y - \lambda x$ is generically complex-valued, the function $f$ must be evaluated off the real axis.  The characteristic cancellation in $\lambda_x + \lambda\,\lambda_y$ requires the chain rule to factor through a single complex derivative; this holds if and only if $f$ is \emph{holomorphic}.

\begin{proposition}\label{prop:rigid-holomorphic}
Let $U \subseteq \mathbb{C}$ be open and let $f \colon U \to \mathbb{C}_+$ be $C^1$ (in the real sense).  If the implicit solution $\lambda = f(y - \lambda x)$ satisfies $\lambda_x + \lambda\,\lambda_y = 0$ on an open set containing $x = 0$, then $f$ is holomorphic on~$U$.
\end{proposition}

\begin{proof}
Differentiating $\lambda = f(w)$ with $w = y - \lambda x$, and writing $f_w, f_{\bar{w}}$ for the Wirtinger derivatives:
\begin{align*}
  \lambda_x &= f_w(-\lambda - \lambda_x\, x) + f_{\bar{w}}(-\bar\lambda - \bar\lambda_x\, x), \\
  \lambda_y &= f_w(1 - \lambda_y\, x) + f_{\bar{w}}(1 - \bar\lambda_y\, x).
\end{align*}
At $x = 0$ these reduce to $\lambda_x = -f_w\,\lambda - f_{\bar w}\,\bar\lambda$ and $\lambda_y = f_w + f_{\bar w}$, giving
\[
  \lambda_x + \lambda\,\lambda_y
  \;=\; -f_w\,\lambda - f_{\bar w}\,\bar\lambda + \lambda\,(f_w + f_{\bar w})
  \;=\; f_{\bar w}\,(\lambda - \bar\lambda)
  \;=\; 2i\,(\operatorname{Im} f)\,f_{\bar w}.
\]
Since $\operatorname{Im} f > 0$ by the ellipticity condition, the vanishing of the obstruction forces $f_{\bar w} = 0$, i.e.\ $f$ is holomorphic.
\end{proof}

\begin{definition}[Burgers transform]\label{def:burgers-transform}
Let $U \subseteq \mathbb{C}$ be open and $f \in \operatorname{Hol}(U, \mathbb{C}_+)$.  The \emph{Burgers transform} of~$f$ is the map $\mathcal{B}[f] \colon \Omega_f \to \mathbb{C}_+$ defined implicitly by
\[
  \mathcal{B}[f](x,y) \;=\; f\!\bigl(y - \mathcal{B}[f](x,y)\cdot x\bigr),
\]
where
\[
  \Omega_f \;:=\; \bigl\{(x,y) \in \mathbb{R}^2 : \text{the equation } \lambda = f(y - \lambda x) \text{ has a unique solution } \lambda \in \mathbb{C}_+\bigr\}
\]
is the maximal domain guaranteed by the implicit function theorem.  The \emph{Beltrami--Burgers transform} is $\mathcal{B}_\mu := \mathcal{C} \circ \mathcal{B}$, where $\mathcal{C}(\lambda) = (\lambda - i)/(\lambda + i)$ is the Cayley map.
\end{definition}

The Burgers transform has the following basic properties:
\begin{enumerate}[label=(\roman*)]
\item \textbf{Rigidity.}  For every $f \in \operatorname{Hol}(U, \mathbb{C}_+)$, the output $\lambda = \mathcal{B}[f]$ satisfies $\lambda_x + \lambda\,\lambda_y = 0$.
\item \textbf{Inversion.}  $\mathcal{B}^{-1}[\lambda] = \lambda(0,\,\cdot\,)$.  Restriction to the $y$-axis recovers~$f$.
\item \textbf{Regularity.}  $f \in C^{k,\alpha} \implies \mathcal{B}[f] \in C^{k,\alpha}(\Omega_f)$.
\item \textbf{Domain boundary.}  $(x,y) \in \partial\Omega_f$ if and only if $1 + f'(w)\,x = 0$ at $w = y - \lambda x$; this is the shock locus of the Burgers flow.
\end{enumerate}

The $\delta$-family corresponds to the holomorphic choice
\[
  f(\xi) \;=\; \xi + i\delta,
\]
i.e., $f$ is the identity map shifted by~$i\delta$.  In this case the implicit equation \eqref{eq:delta-general-burgers} can be solved explicitly: $\lambda = y - \lambda x + i\delta$ gives $\lambda(1+x) = y + i\delta$, recovering \eqref{eq:delta-lambda}.

The Beltrami--Burgers transform $\mathcal{B}_\mu[f] = \mathcal{C} \circ \mathcal{B}[f]$ produces a rigid Beltrami coefficient for every holomorphic $f \colon U \to \mathbb{C}_+$, and by Proposition~\ref{prop:rigid-holomorphic}, \emph{only} for holomorphic~$f$.  The rigid class is therefore parameterised by $\operatorname{Hol}(U, \mathbb{C}_+)$---an infinite-dimensional space, vastly larger than the one-parameter $\delta$-family.

\begin{remark}[Holomorphicity as the cost of rigidity]\label{rem:hol-rigidity}
  The requirement that $f$ be holomorphic is not an a priori assumption but a \emph{consequence} of demanding rigidity at $x \neq 0$.  Initial data $g \colon \mathbb{R} \to \mathbb{C}_+$ prescribed on the $y$-axis must be analytically continued to complex arguments $w = y - \lambda x$ in order for the characteristic transport to remain conservative.  The holomorphic extension is the unique such continuation; any $C^1$ extension that is not holomorphic introduces a nonzero obstruction $H = 2i\,(\operatorname{Im} f)\,f_{\bar{w}} \neq 0$.
\end{remark}

\begin{remark}\label{rem:delta-nonlinear-superposition}
  Different choices of~$f$ produce rigid structures with qualitatively different behaviour.  For instance, $f(\xi) = \xi^2 + i$ yields a rigid structure whose Beltrami coefficient has nontrivial dependence on both~$x$ and~$y$; the Riemann zeta function $f(\xi) = \zeta(\xi + i\delta)$ produces a rigid structure whose degeneracy locus reflects the distribution of the nontrivial zeros; and the Cauchy kernel $f(\xi) = -1/(\xi + i\delta)$ inverts the geometry of the Beltrami disk.  The Burgers transform thus provides an inexhaustible supply of rigid structures, each inheriting the complex-analytic character of its holomorphic seed.
\end{remark}

\section{Consequences and discussion}
\label{sec:delta-consequences}

The $\delta$-family shows that rigid structures ($G = 0$) are not a thin set; they cover the entire Beltrami disk at the pointwise level.  We draw three consequences.

\subsection*{Inadequacy of the classical difficulty measure}

The classical difficulty measure $\|\mu\|_\infty$ cannot distinguish a rigid structure from a non-rigid one with the same $|\mu|$~profile.  A solver that relies solely on~$|\mu|$ will treat a rigid structure with $|\mu| = 0.999$ as ``nearly degenerate'' and allocate expensive resources, unaware that the problem is actually trivial: the system can be solved by transport at $\delta$-independent cost.

\subsection*{Concrete illustration of the Independence Theorem}

The Independence Theorem (Theorem~\ref{thm:independence}) is illustrated concretely: for any prescribed value of $\|\mu\|_\infty \in [0,1)$ one can find a rigid structure with that exact supremum (by choosing $\mu_0$ with $|\mu_0| = \|\mu\|_\infty$ and applying Theorem~\ref{thm:delta-surjective}).  Thus the two invariants---position and velocity in the Beltrami disk---are indeed independent.

\subsection*{Test problems for numerical methods}

The family provides a rich source of test problems for numerical methods.  Any fixed elliptic solver (Beurling--Ahlfors iteration, finite elements, etc.)\ will fail for sufficiently small~$\delta$, while the transport-based method of Chapter~\ref{chap:burgers-universal} succeeds with cost independent of~$\delta$.  This underscores the practical importance of first computing the transport obstruction~$G$ before selecting a solution strategy.

\section{Open questions}
\label{sec:delta-open}

Theorem~\ref{thm:delta-surjective} establishes that the pointwise range of the rigid class equals~$\mathbb{D}$.  Several natural questions remain.

\begin{question}\label{q:delta-density}
  Are rigid Beltrami coefficients dense in $L^\infty(\Omega, \mathbb{D})$?  That is, given an arbitrary (possibly non-rigid) Beltrami coefficient $\mu \in L^\infty(\Omega, \mathbb{D})$ and $\varepsilon > 0$, does there exist a rigid Beltrami coefficient~$\mu_{\mathrm{rig}}$ with $\|\mu - \mu_{\mathrm{rig}}\|_\infty < \varepsilon$?
\end{question}

A positive answer would mean that every elliptic structure can be approximated by one that is solvable by transport.  The general solution \eqref{eq:delta-general-burgers} suggests this may be plausible: the free function~$f$ provides enough degrees of freedom to approximate arbitrary targets.  However, the implicit nature of \eqref{eq:delta-general-burgers} and the nonlinear relationship between $\lambda$ and $\mu$ make this nontrivial.

\begin{question}\label{q:delta-Lp-density}
  If density fails in $L^\infty$, does it hold in $L^p(\Omega, \mathbb{D})$ for some $p < \infty$?
\end{question}

The distinction is relevant because the Measurable Riemann Mapping Theorem operates with $L^\infty$ bounds on~$\mu$, while much of the regularity theory for Beltrami equations uses $L^p$ conditions on the derivatives of~$\mu$.  Density in~$L^p$ would suffice for many approximation-theoretic applications.

\begin{question}\label{q:delta-rigidization}
  Given a non-rigid Beltrami coefficient $\mu$ with $\|\mu\|_\infty < k < 1$, can one always find a rigid $\mu_{\mathrm{rig}}$ with $\|\mu_{\mathrm{rig}}\|_\infty \leq k$ that is ``close'' to~$\mu$ in a suitable sense?  In other words, can one rigidise while respecting the ellipticity bound?
\end{question}

This is related to the rigidization problem studied in Chapter~\ref{sec:rigidization}, but formulated as an approximation question rather than a deformation question.  A positive answer would provide a practical pathway: approximate a difficult problem by a rigid one, solve the rigid problem exactly by transport, and then correct.

\section{Conclusion}
\label{sec:delta-conclusion}

The $\delta$-family is more than a curiosity: it is a constructive proof that rigid structures populate the whole Beltrami disk at the pointwise level.  Together with the general solution \eqref{eq:delta-general-burgers}, which shows that the rigid class is parameterised by an arbitrary function of one variable, this chapter demonstrates that rigidity is an abundant rather than exceptional phenomenon.

Combined with the Fundamental Independence Theorem of Chapter~\ref{ch:independence}, this completes the theoretical argument: not only \emph{can} the two invariants vary independently, but they \emph{do} so in a maximally rich way---rigid structures are not confined to any neighbourhood of the origin in the Beltrami disk, but fill it entirely.  The classical Beltrami coefficient alone is insufficient; the pair $(\|\mu\|_\infty, \|G\|)$ provides the correct two-dimensional picture of difficulty.

Whether this pointwise abundance extends to function-space density (Questions~\ref{q:delta-density}--\ref{q:delta-rigidization}) remains open, and its resolution would have significant implications for both the theoretical foundations and the practical algorithms of variable elliptic structures.

\chapter*{Epilogue: The Hopf Description of the Obstruction}

The theory developed in this monograph began with a quadratic relation
\[
i^2+\beta i+\alpha=0,
\qquad
4\alpha-\beta^2>0,
\]
and with the systematic differentiation of its generator.
Every deviation from classical complex analysis was shown to be governed by the single quantity
\[
G := i_x + i\,i_y .
\]

In this epilogue we show that this obstruction admits a natural and familiar
interpretation in differential–geometric terms.
No new hypotheses are introduced, and no new results are proved.
Rather, we express the same invariant in coordinates adapted to a standard geometric structure.
The purpose is not to replace the algebraic development of the monograph, but to show that it already
fits into a well–known geometric framework.

\section*{1. Normalization and the Sphere}

Define
\[
u := \frac{2i+\beta}{\sqrt{4\alpha-\beta^2}} .
\]
A direct computation shows that \(u^2=-1\).
Thus the normalized generator takes values in
\[
S^2 \subset \operatorname{Im}\mathbb H .
\]

Consequently, an elliptic quadratic structure is equivalent to a smooth
map into the two–sphere.
This is not an additional assumption: it is forced by the quadratic closure itself.

For notational simplicity, we continue to denote the normalized generator by \(i\).
The obstruction \(G\) transforms covariantly under this normalization, so its vanishing
is unaffected.

\section*{2. The Hopf Lift}

Let
\[
\pi : S^3 \to S^2,
\qquad
\pi(q) = q\,\mathbf i\,q^{-1},
\]
denote the Hopf fibration, where \(\mathbf i\) is a fixed imaginary unit.
Locally, any sphere–valued field \(i(x,y)\) admits a lift
\[
i = q\,\mathbf i\,q^{-1},
\qquad
q : \Omega \to S^3 .
\]

The lift is unique up to the right action
\[
q \longmapsto q\,e^{\mathbf i\theta(x,y)} ,
\]
which is the only gauge freedom present.
No global structure is required; all considerations remain local.

\section*{3. Differentiation as Conjugation}

Write \(i = q\mathbf i q^{-1}\) and define
\[
\Omega_x := q_x q^{-1},
\qquad
\Omega_y := q_y q^{-1},
\]
which take values in \(\mathfrak{su}(2)\cong\mathbb R^3\).
Using the identity \(q_x^{-1}=-q^{-1}q_x q^{-1}\), one computes
\[
i_x = [\Omega_x,i],
\qquad
i_y = [\Omega_y,i].
\]

Thus the differentiation of the generator \(i\) is induced by conjugation in \(S^3\).
This is merely a rewriting of the derivative; no geometric assumption has been added.

\section*{4. Horizontal and Vertical Components}

At each point, the Lie algebra splits orthogonally as
\[
\mathfrak{su}(2)
=
\operatorname{span}\{i\}
\;\oplus\;
\{X\in\mathfrak{su}(2): X\perp i\}.
\]
The first summand is the vertical subspace of the Hopf fibration,
and the second is its horizontal complement.

Decompose
\[
\Omega_x = \Omega_x^{\mathrm{hor}} + \Omega_x^{\mathrm{ver}},
\qquad
\Omega_y = \Omega_y^{\mathrm{hor}} + \Omega_y^{\mathrm{ver}} .
\]
Since vertical components commute with \(i\),
\[
[\Omega^{\mathrm{ver}},i]=0,
\]
only the horizontal components contribute:
\[
i_x = [\Omega_x^{\mathrm{hor}},i],
\qquad
i_y = [\Omega_y^{\mathrm{hor}},i].
\]

\section*{5. The Obstruction Revisited}

Recall that
\[
G = i_x + i\,i_y .
\]
Substituting the commutator expressions gives
\[
G
=
[\Omega_x^{\mathrm{hor}},i]
+
i[\Omega_y^{\mathrm{hor}},i].
\]

Two facts are immediate:

\begin{itemize}
\item \(G\) depends only on the horizontal component of the pulled–back Maurer–Cartan form.
\item The gauge transformation \(q\mapsto q e^{\mathbf i\theta}\) affects only the vertical component.
\end{itemize}

Hence \(G\) is intrinsic: it does not depend on the choice of Hopf lift.
This explains, in geometric terms, why the obstruction identified earlier in purely algebraic
calculations is invariant.

\section*{6. Rigidity}

Rigidity was defined by the condition
\[
G=0.
\]
Since the commutator with \(i\) is injective on the horizontal subspace,
this is equivalent to
\[
\Omega_x^{\mathrm{hor}} + i\,\Omega_y^{\mathrm{hor}} = 0 .
\]

Expressed back in terms of the original quadratic coefficients, this condition reproduces
exactly the conservative Burgers equation
\[
\lambda_x + \lambda\,\lambda_y = 0 ,
\]
derived earlier by direct elimination.
No new constraint has appeared; the same invariant is being expressed in different coordinates.

\section*{7. Summary}

The quadratic relation determines a sphere–valued field.
Differentiation pulls back the Maurer–Cartan form of \(S^3\).
Its horizontal component produces the obstruction \(G\).
The vanishing of that component is equivalent to rigidity and to Burgers transport.

The obstruction is therefore not an analytic anomaly.
It is the horizontal part of a conjugation derivative.

\section*{Discussion}

Throughout the monograph, the theory was developed by explicit calculation:
differentiate the structure, eliminate what can be eliminated, and identify what remains.
No geometric framework was assumed. The Hopf description does not alter the theory.
It introduces no new invariant, no new flatness condition, and no additional structure.
It simply places the existing calculations into a coordinate system that is familiar
from differential geometry.

From this viewpoint:
\begin{itemize}
\item The quadratic closure selects a point of \(S^2\).
\item Differentiation pulls back the Maurer–Cartan form of \(S^3\).
\item The obstruction \(G\) is its horizontal component.
\item Rigidity is the vanishing of that component.
\item Burgers transport is the algebraic expression of this vanishing.
\end{itemize}

Classical complex analysis appears as the special case in which the horizontal component
vanishes identically—equivalently, the Hopf lift is constant along the transport direction.
The obstruction \(G\) measures precisely the failure of this constancy.

The theory did not require geometry in order to be discovered.
The geometry merely confirms that the calculations were inevitable.

\medskip
\noindent
The algebra moved.\\ 
We differentiated it.\\
We isolated what could not be removed.\\

Only at the end do we recognize that this motion took place on the Hopf bundle.

\chapter*{Open Questions}

The framework developed in this monograph raises a number of natural questions
that remain unresolved.  We collect here those that appear most accessible or
most directly connected to the existing theory.

\medskip

\noindent\textbf{1.\ Global existence and singularity formation for rigid structures.}
The $\varepsilon$--family demonstrates that ellipticity restricts the domain:
degeneracy occurs along the characteristic parabola
$\varepsilon^{2}y^{2}=4(1-\varepsilon x)$.
Characterize the nature of this boundary singularity.  Is it always a gradient
catastrophe in the Burgers sense?  More broadly, classify all rigid elliptic
structures that remain elliptic on the entire plane.  The constraint
$\operatorname{Im}\lambda>0$ imposes strong restrictions on the conservative
Burgers equation $\lambda_{x}+\lambda\lambda_{y}=0$; determine whether
global smooth solutions exist beyond the affine class.

\medskip

\noindent\textbf{2.\ \texorpdfstring{$L^{p}$}{Lp}\,theory for the covariant operator.}
The present work operates in the $C^{1}$ category.  Develop a weak theory
for the covariant Cauchy--Riemann operator $Df=\partial_{\bar{z}}f+\tfrac{1}{2}f\,i_{y}$
in Sobolev spaces $W^{1,p}$.  In particular, establish that the area integral
in the Cauchy--Pompeiu formula defines a bounded operator on~$L^{p}$, and
determine for which~$p$ the Fredholm theory extends to the variable rigid
setting.

\medskip

\noindent\textbf{3.\ Improved existence and regularity from the self-dilatation structure.}
The Beltrami dictionary (\S\,\ref{sec:P-bridge}) shows that the rigidization problem,
expressed in Beltrami coordinates, is the self-dilatation equation
$\mu_{\bar{z}}+\mu\,\mu_{z}=0$: the structure is a dilatational map with
respect to its own conformal class.
This is strictly more constrained than the general Beltrami equation, and the
constraint is of Burgers (conservation-law) type rather than of
singular-integral type.
Determine whether this additional structure yields improved existence or
regularity results.  In particular: can the self-dilatation equation be solved
by the method of characteristics in regimes where the general Beltrami equation
requires the Ahlfors--Bers singular integral machinery?  Does the conservation-law
structure provide a priori estimates (entropy conditions, BV bounds) that have
no counterpart in the general quasiconformal setting?

\medskip

\noindent\textbf{4.\ Global rigidization.}
Conjecture~\ref{conj:global-rigidization} (Appendix~\ref{app:rigidization-computational})
asserts that every smooth elliptic structure on a compact simply connected
domain can be rigidized by a smooth diffeomorphism, without any smallness
condition on the obstruction.
The continuity method reduces this to a priori estimates: a uniform Jacobian
lower bound and a uniform $C^{2,\alpha}$ bound on the rigidizing
diffeomorphisms along the interpolation path.
Three coupled sub-problems are identified in
Section~\ref{sec:global-rigidization-open}: the Jacobian lower bound
(via maximum principle for $\log J_{\Phi}$), the Burgers injectivity estimate
(no shock formation in the elliptic regime), and the Schauder closure
(Agmon--Douglis--Nirenberg for the specific quasilinear $2\times 2$ system).
None of the three has been rigorously closed.
This is the most prominent open problem in the monograph.

\medskip

\noindent\textbf{5.\ Moduli of rigid structures.}
The $\varepsilon$--family is a one-parameter family.  Characterize the local
moduli space of rigid elliptic structures in a neighborhood of the constant
structure.  The jet analysis (Appendix~\ref{app:jet-analysis}) shows that the
tangent space is parametrized by holomorphic functions and that higher jets are
slaved through forced Cauchy--Riemann equations.  Determine whether the formal
jet hierarchy of Appendix~\ref{app:jet-analysis} (\S\,D.10) converges,
and whether the resulting moduli space carries additional structure
(for instance, an infinite-dimensional complex manifold structure).

\medskip

\noindent\textbf{6.\ Optimal regularity for the Rigidity--Flatness Theorem.}
Theorem~\ref{thm:main} assumes smooth coefficients.  Determine the minimal regularity
($C^{k}$ or $W^{k,p}$) under which the conclusion $\tau\equiv\mathrm{const}$
persists.  The proof relies on the characteristic representation
$\tau=\Phi(\zeta)$ with $\zeta=y-x\tau$, which requires at least~$C^{2}$;
it is not clear whether a distributional formulation can be made to work.

\medskip

\noindent\textbf{7.\ Spectral theory of the rigid variable Laplacian.}
Chapter~8 shows that $4\,\partial_{z}\partial_{\bar{z}}=L_{\alpha,\beta}+{}$first-order
drift under rigidity.  Study the spectral properties of this operator on
bounded domains within the elliptic region: Weyl asymptotics, comparison
with the standard Laplacian, and the effect of the drift terms controlled
by~$\alpha_{y}$ and~$\beta_{y}$ on eigenvalue distribution.

\medskip

\noindent\textbf{8.\ The constructive envelope: sharp or lossy?}
The residual budget $B(\mu)=(1-|\mu|^{2})^{2}/(4\,|1-\mu|)$ of
Definition~\ref{def:budget} determines the region in the difficulty plane
$\bigl(\|\mu\|_{C^{0}},\,\|R\|_{C^{0,\alpha}}\bigr)$ where the rigidization
scheme converges.
Is this envelope sharp---meaning that for every structure outside it, no
rigidizing diffeomorphism exists locally---or is it merely the boundary of the
Newton method's basin of attraction, with rigidization achievable beyond it
by other means?
More precisely, interpret $B(\mu)$ as a density on the disk measuring the
volume of first-order data compatible with constructive solvability.
Is this density exact, or can it be enlarged by a different iteration scheme
or a non-perturbative argument?

\medskip

\noindent\textbf{9.\ A variational principle for near-rigidity.}
The Poincar\'e-weighted energy
\[
E(\mu) \;=\; \int_{\Omega} \frac{|R(\mu)|^{2}}{(1-|\mu|^{2})^{4}}\,dA
\]
is a natural functional that penalizes the residual by the hyperbolic density.
Its critical points would provide a variational characterization of
``nearly rigid'' structures.
However, the $\delta$--family has $R\equiv 0$ (hence $E=0$) while
$|\mu|\to 1$ on sets of positive measure, so the energy can vanish even
when the pointwise Poincar\'e density diverges.
Determine whether $E$ is lower semicontinuous in an appropriate topology,
whether its minimizers (subject to boundary conditions) exist, and whether
its Euler--Lagrange equation has a clean geometric interpretation in the
Poincar\'e disk.

\medskip

\noindent\textbf{10.\ Extension to Clifford structures in higher dimensions.}
The present theory is deliberately two-dimensional.  The original motivation
from Tutschke--Vanegas involves parameter-dependent Clifford algebras.
Investigate whether the hierarchy \emph{transport\,$\to$\,rigidity\,$\to$\,integrability}
extends to variable Clifford structures in~$\mathbb{R}^{n}$, where the
Nijenhuis tensor is no longer automatically zero and the interplay between
integrability and transport becomes richer.

\chapter*{Conclusion}
\addcontentsline{toc}{chapter}{Conclusion}

This monograph began from a simple but decisive shift in viewpoint:
the imaginary unit should not be treated as a fixed algebraic constant,
but as intrinsic geometric data.
Allowing the generator \( i(x,y) \) of a rank--two real algebra bundle to vary
forces the algebra itself to become part of the geometry of the plane.
Once this step is taken, differentiation of the structure is no longer optional,
and the derivatives \( i_x \) and \( i_y \) emerge as genuine geometric quantities.

The quadratic relation
\[
i^2 + \beta i + \alpha = 0,
\qquad 4\alpha - \beta^2 > 0,
\]
admits a canonical normalization
\[
u := \frac{2i+\beta}{\sqrt{4\alpha-\beta^2}},
\qquad u^2 = -1.
\]
Thus every elliptic quadratic structure determines a smooth
sphere--valued map \( u : \Omega \to S^2 \subset \mathrm{Im}\,\mathbb{H} \).
In this form, the moving imaginary unit is literally a rotating point on the sphere.

Locally, such a map admits a Hopf lift
\[
u = q\, i\, q^{-1},
\qquad q : \Omega \to S^3.
\]
Differentiating the generator becomes differentiation of a conjugation.
The intrinsic obstruction
\[
G := i_x + i\, i_y
\]
is precisely the horizontal component of the pullback of the Maurer--Cartan form of \( S^3 \).
No new invariant has appeared.
The obstruction identified through algebraic elimination
is the part of the conjugation derivative that cannot be absorbed
by a vertical gauge change.

The variability of the coefficients \( (\alpha,\beta) \)
is governed by a universal transport law.
Eliminating \( i_x \) and \( i_y \) from the differentiated quadratic relation
forces the spectral parameter
\[
\lambda = \frac{-\beta + i\sqrt{4\alpha-\beta^2}}{2},
\qquad \Im \lambda > 0,
\]
to satisfy a (possibly forced) complex inviscid Burgers equation
in the standard complex algebra.
Rigidity,
\[
G = 0,
\]
selects the regime in which the horizontal component vanishes.
In that regime,
the Burgers equation becomes conservative,
the generalized Cauchy--Riemann operator becomes a derivation,
and the generator is transported compatibly with its own multiplication.
Rigidity is the condition that the motion of the imaginary unit
is purely vertical in the Hopf bundle.

On the analytic side, rigidity marks the threshold
at which a coherent function theory reappears.
A Cauchy--Pompeiu formula persists,
a covariant Cauchy--Riemann operator emerges canonically,
weighted products restore multiplicative closure,
and power series representations follow after freezing the algebra at a point.
The variable theory does not abolish complex analysis;
it reveals which of its properties depend on rigidity
and which depend on the hidden constancy of the standard structure.

The translation to the Beltrami variable
\[
\mu = \frac{\lambda - i}{\lambda + i} \in \mathbb{D}
\]
recasts the entire theory in the language of the Poincar\'e disk.
The rigidity condition \( G = 0 \) becomes the self--dilatation equation
\[
\mu_{\bar z} + \mu\,\mu_z = 0,
\]
which states that \( \mu \) is a dilatational map \emph{with respect to its own structure}.
The Poincar\'e metric, which appeared uninvited in the convergence theory
as the normalization \( b_{\min}^{-2} \), turns out to be the intrinsic geometry
of the parameter space \( \mathbb{D} \).

From this vantage, the central discovery of the monograph becomes visible.
The difficulty of a variable elliptic structure lives in a plane, not on a line.
Two invariants govern it:
the \emph{Beltrami modulus} \( \|\mu\|_{C^0} \), a zeroth--order quantity
measuring where the structure sits in the Poincar\'e disk,
and the \emph{Beltrami residual} \( R(\mu) = \mu_{\bar z} + \mu\,\mu_z \),
a first--order quantity measuring how the structure moves through it.
These are independently prescribable.
All four quadrants of the difficulty plane
\( \bigl(\|\mu\|_{C^0},\, \|R\|_{C^{0,\alpha}}\bigr) \)
are populated by smooth elliptic structures.

The classical Beltrami literature---the Measurable Riemann Mapping Theorem,
the Ahlfors--Bers theorem, the theory of quasiconformal mappings---projects
this plane onto the horizontal axis.
Only \( \|\mu\|_{C^0} \) survives.
The vertical axis, the transport field, is invisible.
A rigid structure at \( |\mu| = 0.99 \) supports a complete function theory.
A non--rigid structure at \( |\mu| = 0.01 \) does not, without first being rigidized.
The zeroth--order invariant cannot see the first--order distinction.

The explicit families constructed in this work make this concrete.
The \( \varepsilon \)--family demonstrates that rigidity does not force constancy:
nontrivial elliptic structures correspond to genuine motions on~\( S^2 \),
though ellipticity naturally restricts the domain through characteristic phenomena.
The \( \delta \)--family demonstrates that zeroth--order difficulty and first--order
difficulty are decoupled:
\( \|\mu\|_{C^0} \to 1 \) while \( R \equiv 0 \) identically.
A solver that measures difficulty by \( |\mu| \) alone sees a problem
approaching maximal degeneracy.
The rigidizer sees a problem already solved.

The \( p(x) \)--analytic structures---those with \( \beta = 0 \) and
\( \alpha = p > 0 \)---populate the real diameter of the Poincar\'e disk:
\( \mu = (\sqrt{p}-1)/(\sqrt{p}+1) \in (-1,1) \).
Along this diameter, the structure polynomial reduces to \( X^2 + p \)
and the variable exponent enters the algebra itself, changing the fiber
at each point.
The classical $p$--analytic theory of Polozhii, and the generalized
analytic function theories of Bers, Vekua, and Tutschke, operate over
the standard complex algebra (\( \alpha = 1 \), \( \beta = 0 \));
the function \( p \) appears in those theories as a conformal weight,
invisible to the principal symbol
(Remark~\ref{rem:polozhii}).
In the VES framework, \( p(x) \)--analytic structures are genuinely
variable: \( p \) modulates the algebra, not the metric, and the
transport obstruction is nonzero whenever \( p \) is non--constant.
Yet the Beltrami coefficient remains real, so these structures never
leave the real diameter---\( \beta \) stays zero.
The relationship is exact:
\[
\begin{array}{ccccc}
\text{real analysis}
& \xrightarrow{\quad\text{analytic continuation}\quad}
& \text{complex analysis} \\[6pt]
p(x)\text{-analytics}
& \xrightarrow{\quad\beta \neq 0\quad}
& \text{VES}
\end{array}
\]
The generating pairs, successor structures, embedding theorems,
and regularity results of the established theory over the standard
algebra form the boundary data from which the full variable theory departs.
The observation is not that these results are insufficient.
It is that they explore the real diameter of a disk whose
off--axis structure was never investigated.

The residual budget \( B(\mu) \) at each point of the disk measures
how far the constructive theory reaches:
the maximum Beltrami residual for which the rigidization scheme converges.
The budget is not rotationally symmetric.
It reflects an asymmetry intrinsic to the M\"obius transformation
relating the spectral parameter to the Beltrami variable.
The constructive envelope in the difficulty plane is therefore
a computable region, not a universal constant.

The diagnostic algorithm that emerges from this analysis
classifies first--order elliptic systems into three branches:
structures within the constructive envelope, accessible to the full VES toolkit;
\( p(x) \)--analytic structures on the real diameter, accessible to
the classical Vekua theory after conformal normalization;
and structures with genuine transport obstruction beyond the envelope,
requiring classical quasiconformal methods.
Without the independence of position and velocity in the Poincar\'e disk,
this triage is impossible.

Throughout, the development has remained local and first--order.
The geometry of the Hopf bundle was not required to derive the theory;
it confirms, at the end, what the algebra had already forced.
The Poincar\'e disk was not assumed as a framework;
it emerged from the convergence theory as the natural geometry
of the parameter space.
The independence of the two axes of difficulty
was not imposed by construction;
it follows from the elementary fact that a function
and its derivative are independent objects.

Transport is fundamental.
Rigidity is selective.
Analytic closure is consequential.

A fixed complex structure gives analysis.
A variable one gives transport.
Rigidity is where the two agree to dance.

In Hopf coordinates we see what was moving:
a rotating imaginary unit on \( S^2 \).

In Beltrami coordinates we see what was hidden:
a two--dimensional difficulty space projected onto a line.

Here, the algebra moves.

\appendix
\chapter{Constructive Differentiability of the Structure Generator}

This appendix records a recursive argument showing that the structure generator can be differentiated as many times as permitted by the regularity of the structure coefficients. Although only the first and second derivatives are used in the main body of the text, the argument clarifies the general mechanism and justifies the regularity bookkeeping adopted throughout the work.

\section*{Statement of the problem}

Let $\Omega \subset \mathbb{R}^2$ be open and let $\alpha, \beta : \Omega \to \mathbb{R}$ be given functions. Consider the structure polynomial
\[
X^2 + \beta(x,y) X + \alpha(x,y),
\]
and let $i = i(x,y)$ be a chosen root of this polynomial. We assume that the algebraic element $2i + \beta$ is invertible at every point of $\Omega$ (this includes, in particular, the elliptic regime).

The question addressed here is the following:

\medskip
\noindent
\emph{Given $\alpha, \beta \in C^k(\Omega)$, to what extent can the derivatives of $i$ be defined and computed explicitly?}
\medskip

\section*{Constructive recursive principle}

The guiding principle is entirely constructive: derivatives of $i$ are defined only insofar as they can be obtained by differentiating the structure polynomial and solving the resulting linear equations. No differentiability of $i$ is assumed a priori.

\begin{proposition}[Recursive constructivist differentiability]
Assume $\alpha, \beta \in C^k(\Omega)$ for some integer $k \ge 0$, and assume that $2i + \beta$ is invertible on $\Omega$. Then for every multi-index $\gamma$ with $|\gamma| \le k$, the partial derivative $\partial^\gamma i$ exists, is continuous, and can be computed explicitly in terms of derivatives of $\alpha$ and $\beta$ of order $\le |\gamma|$ and derivatives of $i$ of lower order. In particular, $i$ is constructively $C^k$ on $\Omega$.
\end{proposition}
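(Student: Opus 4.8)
The plan is to argue by induction on the differentiation order, the inductive statement $H(m)$ (for $0\le m\le k$) being: $i\in C^m(\Omega)$, and for every multi-index $\gamma$ with $|\gamma|\le m$ there is a universal rational expression $\Phi_\gamma$ with
\[
\partial^\gamma i=\Phi_\gamma\!\left(\,(\partial^\delta\alpha)_{|\delta|\le|\gamma|},\ (\partial^\delta\beta)_{|\delta|\le|\gamma|},\ (\partial^\delta i)_{|\delta|<|\gamma|}\,\right),
\]
whose only denominators are powers of $\Delta:=4\alpha-\beta^2$ (equivalently, powers of $2i+\beta$; note $\Delta\neq0$ precisely because $2i+\beta$ is invertible, with $(2i+\beta)^{-1}=(-\beta-2i)/\Delta$ as in \eqref{eq:inv2i}). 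Since $|\gamma|\le m\le k$, all arguments on the right are at least continuous, so $H(m)$ already packages the asserted existence, continuity, and constructive explicitness; the content lies entirely in the implication $H(m)\Rightarrow H(m+1)$.

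For the base case $m=0$ one needs only $i\in C^0(\Omega)$, which is what ``a chosen root'' should mean in the invertible regime: such a root is simple, and simple roots of a monic quadratic depend continuously on its continuous coefficients. The case $m=1$ is nothing but the implicit differentiation of the structure reduction \eqref{eq:structure} already carried out in Chapter~\ref{chap:def}, which produces \eqref{eq:ixiy-formulas} and hence the two first-order formulas $\Phi_{(1,0)},\Phi_{(0,1)}$.

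For the inductive step, assume $H(m)$ and $m+1\le k$, so that in particular $\alpha,\beta\in C^{m+1}$. Fix $|\gamma'|=m$. Every term of \eqref{eq:structure} is $C^m$ (using $i\in C^m$ from $H(m)$), so I may apply $\partial^{\gamma'}$ and expand by the generalized Leibniz rule. Commutativity of each fibre algebra merges the two extreme terms of $\partial^{\gamma'}(i\cdot i)$ into $2i\,\partial^{\gamma'}i$, and $\partial^{\gamma'}(\beta i)$ contributes the single top term $\beta\,\partial^{\gamma'}i$; isolating these gives
\[
(2i+\beta)\,\partial^{\gamma'}i=F_{\gamma'},
\]
where $F_{\gamma'}$ collects $-\partial^{\gamma'}\alpha$ together with mixed products in which every derivative of $i$ has order at most $m-1$ and every derivative of $\alpha,\beta$ has order at most $m$. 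Multiplying by $(2i+\beta)^{-1}$ recovers the formula $\partial^{\gamma'}i=\Phi_{\gamma'}$ of $H(m)$; the new observation is that, since $\partial^{\le m}\alpha,\partial^{\le m}\beta$ are $C^1$ (as $\alpha,\beta\in C^{m+1}$), the lower-order derivatives $\partial^{\le m-1}i$ are $C^1$ (as $i\in C^m$), and $(2i+\beta)^{-1}=(-\beta-2i)/\Delta$ is $C^1$, the whole right-hand side is $C^1$. Hence $\partial^{\gamma'}i\in C^1$ for every $|\gamma'|=m$, i.e.\ $i\in C^{m+1}$; and for $|\gamma|=m+1$ we write $\gamma=\gamma'+e_j$ with $e_j$ a coordinate direction and $|\gamma'|=m$, so that $\partial^\gamma i=\partial_j\Phi_{\gamma'}$ exists, is continuous, and --- expanding by the ordinary product and quotient rules --- is again of the claimed form $\Phi_\gamma$. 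This establishes $H(m+1)$, and after $k$ steps $i\in C^k(\Omega)$ with the stated explicit recursion.

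The one point needing care --- and the closest thing to a genuine obstacle --- is the bookkeeping that guarantees \emph{every} non-leading term of $\partial^{\gamma'}(i^2+\beta i+\alpha)$ involves a derivative of $i$ of order strictly below $m$: this is precisely what lets $H(m)$ be invoked to upgrade those terms from merely continuous to $C^1$, and it rests on commutativity to collapse $(\partial^\delta i)(\partial^{\gamma'-\delta}i)$ with its transpose and on keeping $\delta=0$ and $\delta=\gamma'$ out of the remainder. A second, more foundational subtlety --- but one already settled in Chapter~\ref{chap:def} --- is that $\partial_x,\partial_y$ may be treated as derivations for the point-dependent fibre multiplication; this is legitimate because the algebra bundle has discrete structure group $\mathrm{Aut}_{\mathbb R}(\C)\cong\mathbb Z/2$ and so admits local trivializations in which the product is constant, all of the identities above being independent of the chosen frame.
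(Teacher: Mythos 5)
Your proof is correct and follows essentially the same route as the paper's: induction on the order of differentiation, applying $\partial^\gamma$ to the structure relation $i^2+\beta i+\alpha=0$, observing that the top derivative enters linearly with coefficient $2i+\beta$, and inverting. If anything, your version is slightly more careful at the point the paper glosses over --- you justify that $\partial^{\gamma'}i=\Phi_{\gamma'}$ is genuinely $C^1$ (so that the next derivative exists classically rather than being merely \emph{defined} by the recursion), and you note the continuity of the root at order zero --- while the side remark on the discrete structure group is an unneeded detour, since the derivation property of $\partial_x,\partial_y$ for the moving product is verified directly in Appendix~\ref{app:algebraic-universality}.
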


\begin{proof}
The proof proceeds by induction on the order of differentiation.

\medskip
\noindent
\textbf{Base case ($|\gamma| = 0$).} For $|\gamma| = 0$, the statement is trivial: $i$ is defined pointwise as a root of the structure polynomial.

\medskip
\noindent
\textbf{First-order case.} Assume $\alpha, \beta \in C^1(\Omega)$. Differentiating the identity
\[
i^2 + \beta i + \alpha = 0
\]
with respect to $x$ and $y$ yields the linear equations
\[
(2i + \beta)i_x = -(\beta_x i + \alpha_x), 
\qquad
(2i + \beta)i_y = -(\beta_y i + \alpha_y).
\]
Since $2i + \beta$ is invertible, these equations uniquely define $i_x$ and $i_y$. The right-hand sides are continuous, hence $i \in C^1(\Omega)$.

\medskip
\noindent
\textbf{Inductive step.} Assume that for some $m$ with $0 \le m < k$, all partial derivatives $\partial^\eta i$ with $|\eta| \le m$ have been defined and are continuous, and that each such derivative is given by an explicit algebraic expression involving derivatives of $\alpha$ and $\beta$ of order $\le m$ and derivatives of $i$ of order $< m$.

Let $\gamma$ be a multi-index with $|\gamma| = m+1$. Choose a coordinate direction $p \in \{x,y\}$ and a multi-index $\eta$ with $|\eta| = m$ such that
\[
\partial^\gamma = \partial_p \partial^\eta.
\]
Apply $\partial^\eta$ to the structure identity and then differentiate once more with respect to $p$. By repeated application of the Leibniz rule, all resulting terms can be expressed as sums of products involving:
\begin{itemize}
\item derivatives of $\alpha$ and $\beta$ of order $\le m+1$,
\item derivatives of $i$ of order $\le m+1$.
\end{itemize}

Crucially, the only terms involving the highest derivative $\partial^\gamma i$ appear linearly and with the same algebraic coefficient. More precisely, one obtains an identity of the form
\[
(2i + \beta)\,\partial^\gamma i = F_\gamma,
\]
where $F_\gamma$ is an explicit expression depending only on derivatives of $\alpha$ and $\beta$ of order $\le m+1$ and derivatives of $i$ of order $\le m$.

Since $2i + \beta$ is invertible, this equation uniquely defines
\[
\partial^\gamma i := (2i + \beta)^{-1} F_\gamma.
\]
By the induction hypothesis and the assumption $\alpha, \beta \in C^{m+1}$, the right-hand side is continuous. This completes the inductive step.

The conclusion follows by induction.
\end{proof}

\section*{Remarks}

\begin{remark}
The argument shows that the differentiability of the structure generator is entirely determined by the regularity of the structure coefficients. No gain of regularity occurs: $i$ cannot be differentiated more times than allowed by the least regular of $\alpha$ and $\beta$.
\end{remark}

\begin{remark}
Although the proof is presented in the elliptic setting, the recursive construction relies only on the invertibility of $2i + \beta$ and therefore applies verbatim in the hyperbolic regime away from zero divisors. For the parabolic regime this cannot be applied directly as $2i + \beta$ is a zero divisor.
\end{remark}

\chapter{Partial Derivatives of the Structure Generator in the Parabolic Regime}
\label{app:parabolic-derivatives}

This appendix describes how the partial derivatives of the structure generator
$i(x,y)$ behave in the \emph{parabolic regime}
\[
\Delta := 4\alpha - \beta^2 = 0,
\]
where the quadratic algebra degenerates and the element $2i+\beta$ ceases to be
invertible. In this case the elliptic/hyperbolic formulas for $i_x$ and $i_y$
cannot be used directly, and a different interpretation is required.

\section{Algebraic structure when \texorpdfstring{$\Delta=0$}{Delta = 0}}

If $\Delta=0$, then
\[
\alpha = \frac{\beta^2}{4}, 
\qquad
X^2+\beta X+\alpha
= \Bigl(X+\frac{\beta}{2}\Bigr)^2.
\]
Hence in each fiber algebra
\[
\Bigl(i+\frac{\beta}{2}\Bigr)^2 = 0.
\]
Define the nilpotent element
\[
\varepsilon := i+\frac{\beta}{2}.
\]
Then
\[
\varepsilon^2 = 0,
\qquad
i = -\frac{\beta}{2} + \varepsilon,
\]
and the fiber algebra is isomorphic to the algebra of dual numbers
\[
A_z \cong \mathbb{R}[\varepsilon]/(\varepsilon^2).
\]
In particular,
\[
2i+\beta = 2\varepsilon
\]
is nilpotent and therefore a zero divisor.

\section{Differentiating the parabolic structure relation}

Since $\varepsilon^2=0$, differentiating with respect to $x$ and $y$ gives
\[
2\varepsilon\,\varepsilon_x = 0,
\qquad
2\varepsilon\,\varepsilon_y = 0.
\]
Using $\varepsilon_x = i_x + \tfrac{\beta_x}{2}$ and
$\varepsilon_y = i_y + \tfrac{\beta_y}{2}$, we obtain the constraints
\begin{equation}
\varepsilon\Bigl(i_x+\tfrac{\beta_x}{2}\Bigr)=0,
\qquad
\varepsilon\Bigl(i_y+\tfrac{\beta_y}{2}\Bigr)=0.
\label{eq:parabolic_constraint}
\end{equation}

In the dual number algebra, the annihilator of $\varepsilon$ is precisely the
ideal it generates. Hence \eqref{eq:parabolic_constraint} implies the general
form
\begin{equation}
i_x = -\frac{\beta_x}{2} + \phi_x\,\varepsilon,
\qquad
i_y = -\frac{\beta_y}{2} + \phi_y\,\varepsilon,
\label{eq:parabolic_general_form}
\end{equation}
for some real functions $\phi_x,\phi_y$.

Thus, unlike the elliptic and hyperbolic cases, the structure relation alone
does \emph{not} uniquely determine the derivatives of $i$. There is a one–dimensional
nilpotent freedom in each derivative.

\section{Canonical choice via a nondegenerate limit}

A natural way to remove the ambiguity in \eqref{eq:parabolic_general_form}
is to define $i_x$ and $i_y$ as limits of the elliptic/hyperbolic formulas as
$\Delta\to 0$.

For $\Delta\neq 0$, one has
\[
i_x = -\frac{\beta_x}{2} - \frac{\Delta_x}{4(\beta+2i)},
\qquad
i_y = -\frac{\beta_y}{2} - \frac{\Delta_y}{4(\beta+2i)}.
\]
Since $(\beta+2i)^2=-\Delta$, the denominators scale like $\sqrt{|\Delta|}$.
If $\Delta_x$ and $\Delta_y$ vanish at least as fast as $\sqrt{|\Delta|}$,
the singular terms disappear in the limit $\Delta\to 0$, yielding the
\emph{canonical parabolic derivatives}
\begin{equation}
i_x := -\frac{\beta_x}{2},
\qquad
i_y := -\frac{\beta_y}{2}.
\label{eq:parabolic_canonical}
\end{equation}

These are exactly the choices obtained by eliminating the nilpotent component
in \eqref{eq:parabolic_general_form}.

\section{Consistency check}

With the canonical choice \eqref{eq:parabolic_canonical},
\[
2i+\beta = 2\varepsilon,
\]
and therefore
\[
(2i+\beta)i_x
= 2\varepsilon\Bigl(-\frac{\beta_x}{2}\Bigr)
= -\beta_x \varepsilon,
\]
which matches the derivative of the identity $\varepsilon^2=0$:
\[
\partial_x(\varepsilon^2)=2\varepsilon\,\varepsilon_x=0.
\]
Thus the canonical derivatives satisfy all algebraic constraints imposed by
the parabolic structure.

\section{Interpretation}

In the parabolic regime the quadratic algebra acquires a nilpotent direction,
and the operator $2i+\beta$ loses invertibility. Consequently, the structure
equation no longer determines $i_x$ and $i_y$ uniquely; it only constrains them
up to a nilpotent term.

The canonical choice \eqref{eq:parabolic_canonical} selects the derivatives that
arise as limits of nondegenerate (elliptic or hyperbolic) structures and
removes the nilpotent ambiguity. This provides a consistent notion of
``constructive differentiability'' in the parabolic setting.

\chapter{Jet Analysis of the \texorpdfstring{$\varepsilon$}{epsilon}--Family at the Constant Structure}
\label{app:jet-analysis}

This appendix analyzes the $\varepsilon$--family of rigid elliptic structures introduced in
Chapter~\ref{chap:rigid-transport-examples} from the viewpoint of jet theory at
$\varepsilon=0$.
Although the limiting structure is exactly the standard complex plane, the jet analysis
reveals nontrivial infinitesimal geometry encoded in the family.
The purpose of this appendix is to make precise what information is contained in the
$\varepsilon$--jets and what conclusions can be drawn from them.

\section{The \texorpdfstring{$\varepsilon$}{epsilon}--family and its constant limit}

Recall the explicit rigid family
\[
\alpha^\varepsilon(x,y)=\frac{1}{1-\varepsilon x},
\qquad
\beta^\varepsilon(x,y)=\frac{\varepsilon y}{1-\varepsilon x},
\]
defined on the elliptic domain
\[
4(1-\varepsilon x)-\varepsilon^2 y^2>0.
\]

At $\varepsilon=0$ one has
\[
\alpha^0\equiv 1,
\qquad
\beta^0\equiv 0,
\]
so the structure polynomial reduces to
\[
i^2+1=0,
\]
and the algebra bundle becomes canonically identified with the standard complex plane.
All derivatives of the generator vanish:
\[
i_x=i_y=0,
\]
and the obstruction field is identically zero.

Thus the $\varepsilon=0$ structure is not merely rigid but completely constant.
Any nontrivial behavior of the family must therefore appear at the level of jets in
$\varepsilon$.

\section{First \texorpdfstring{$\varepsilon$}{epsilon}--jet of the coefficients}

Expand the coefficients in $\varepsilon$:
\[
\alpha^\varepsilon
=
1+\varepsilon\,\alpha^{(1)}+O(\varepsilon^2),
\qquad
\beta^\varepsilon
=
\varepsilon\,\beta^{(1)}+O(\varepsilon^2),
\]
where
\[
\alpha^{(1)}(x,y)=x,
\qquad
\beta^{(1)}(x,y)=y.
\]

The pair $(\alpha^{(1)},\beta^{(1)})$ constitutes the first $\varepsilon$--jet of the structure.
It measures the initial direction in which the algebra departs from the constant complex
structure as $\varepsilon$ is turned on.

\section{Jet of the spectral parameter}

Recall the canonical spectral parameter
\[
\lambda
=
\frac{-\beta+i\sqrt{4\alpha-\beta^2}}{2}.
\]

For the $\varepsilon$--family one finds
\[
\lambda^\varepsilon
=
i+\varepsilon\,\mu+O(\varepsilon^2),
\]
where $\mu$ is a complex--valued scalar function determined by the first jet of
$(\alpha,\beta)$.
A direct expansion gives
\[
\mu
=
\frac{1}{2}(\alpha^{(1)}+i\,\beta^{(1)}).
\]

Thus the first $\varepsilon$--jet of the structure is equivalently encoded in the complex
function $\mu$.

\section{Linearization of the Burgers transport}

Rigidity is equivalent to the conservative Burgers equation
\[
\lambda_x+\lambda\,\lambda_y=0.
\]

Substituting
\[
\lambda^\varepsilon=i+\varepsilon\mu+O(\varepsilon^2)
\]
and retaining only first--order terms in $\varepsilon$ yields the linearized equation
\[
\mu_x+i\,\mu_y=0.
\]

This is precisely the classical Cauchy--Riemann equation for the complex--valued function
$\mu$.

\section{Interpretation of the jet equation}

The jet equation
\[
\mu_x+i\,\mu_y=0
\]
has a clear structural meaning:

\begin{itemize}
\item It is not an analytic assumption but the \emph{linearization} of the universal transport
law governing variable elliptic structures.
\item It describes the tangent space at the constant structure to the space of rigid variable
elliptic structures.
\item Its solutions are exactly classical holomorphic functions.
\end{itemize}

Thus classical complex analysis appears here as the infinitesimal theory controlling how a
constant elliptic structure may begin to vary while remaining rigid to first order.

\section{What the jet analysis does \emph{not} say}

It is important to emphasize what conclusions should \emph{not} be drawn:

\begin{itemize}

\item The $\varepsilon$--family is not a deformation of complex analysis itself, but of
the geometric structure for which complex analysis is the linearized theory at
$\varepsilon=0$.

\item Holomorphic functions do not parametrize finite rigid structures; they parametrize only
their first--order directions.
\item Nonlinear effects enter at second order in $\varepsilon$, where the full Burgers
transport becomes essential.
\end{itemize}

Thus the jet analysis captures only the infinitesimal geometry of the space of structures,
not its global behavior.

\section{Conceptual conclusion}

The jet analysis of the explicit rigid $\varepsilon$--family shows that, at the constant
elliptic structure, the universal Burgers transport linearizes to the classical
Cauchy--Riemann equations.
This illustrates a general mechanism of the theory: complex analysis arises as the
linearized shadow of the nonlinear transport geometry governing variable elliptic
structures.
We do not claim that this linearization has been established for arbitrary families,
but the computation reveals the structural origin of the phenomenon.

At $\varepsilon=0$ the structure is exactly the standard complex plane.
The first $\varepsilon$--jet detects how this structure can begin to move, and the
linearized Burgers equation reduces to the Cauchy--Riemann system.
Higher--order jets encode genuinely nonlinear transport phenomena with no classical
counterpart.

From this perspective, rigidity does not trivialize the theory at the constant structure.
Rather, it identifies complex analysis as the tangent theory at a distinguished point in
the space of variable elliptic geometries.

\section{Second \texorpdfstring{$\varepsilon$}{epsilon}--jet: onset of nonlinearity}

We now examine the second $\varepsilon$--jet of the rigid family.
Write the spectral parameter as
\[
\lambda^\varepsilon
=
i+\varepsilon\mu+\varepsilon^2\nu+O(\varepsilon^3),
\]
where $\mu$ is the first--order jet discussed above and $\nu$ is the second--order jet.

Substitute this expansion into the conservative Burgers equation
\[
\lambda_x+\lambda\,\lambda_y=0
\]
and collect terms of order $\varepsilon^2$.
Using $\mu_x+i\mu_y=0$, one finds
\[
\nu_x+i\nu_y = -\,\mu\,\mu_y.
\]

This equation has several important features.

\begin{itemize}
\item It is \emph{inhomogeneous}, with a source term quadratic in the first jet $\mu$.
\item The operator on the left--hand side is again the classical $\partial_{\bar z}$ operator.
\item The right--hand side depends only on $\mu$ and its first derivatives.
\end{itemize}

Thus the second $\varepsilon$--jet is no longer free data.
It is determined by a forced Cauchy--Riemann equation whose source encodes the first
nonlinear interaction of the infinitesimal deformation with itself.

In particular, even when $\mu$ is holomorphic, $\nu$ is generally \emph{not} holomorphic.
This marks the first point at which classical complex analysis ceases to describe the
structure exactly.

\section{Third \texorpdfstring{$\varepsilon$}{epsilon}--jet: hierarchy of transport corrections}

Proceeding one order further, write
\[
\lambda^\varepsilon
=
i+\varepsilon\mu+\varepsilon^2\nu+\varepsilon^3\rho+O(\varepsilon^4).
\]

Substitution into Burgers and collection of $\varepsilon^3$ terms yields
\[
\rho_x+i\rho_y
=
-\bigl(\mu\,\nu_y+\nu\,\mu_y\bigr).
\]

At this level the structure of the jet hierarchy becomes clear:

\begin{itemize}
\item The third jet $\rho$ satisfies a linear first--order equation.
\item The forcing term is bilinear in the lower jets $(\mu,\nu)$.
\item No new differential operators appear; only $\partial_{\bar z}$ persists.
\end{itemize}

Thus each successive jet solves a linear transport equation whose source is constructed
from the previous jets.
This recursive structure is entirely determined by the Burgers nonlinearity and does not
involve any analytic assumptions.

\section{Structure of the full jet hierarchy}\label{app:struct-jet-hierach}

The pattern observed above persists to all orders.
Writing formally
\[
\lambda^\varepsilon
=
i+\sum_{k\ge1}\varepsilon^k\lambda^{(k)},
\]
one obtains, for every $k\ge1$,
\[
\partial_{\bar z}\lambda^{(k)}
=
F_k\bigl(\lambda^{(1)},\dots,\lambda^{(k-1)}\bigr),
\]
where $F_k$ is a universal polynomial expression involving only products and $y$--derivatives
of the lower--order jets.

The hierarchy has the following conceptual interpretation:

\begin{itemize}
\item The first jet $\lambda^{(1)}$ is free and holomorphic.
\item All higher jets are \emph{slaved} to $\lambda^{(1)}$ through forced
Cauchy--Riemann equations.
\item Nonlinearity enters immediately at second order and propagates upward.
\end{itemize}

Thus classical holomorphic data parametrize the tangent space at the constant structure,
but they do not determine a finite deformation uniquely without solving the full nonlinear
transport problem.

\section{Refined conclusion of the jet analysis}

Combining the first, second, and third jet equations yields a precise refinement of the
conceptual picture:

\begin{quote}
Classical complex analysis governs the \emph{infinitesimal} geometry of rigid variable
elliptic structures, but nonlinear Burgers transport controls their finite behavior.
\end{quote}

At first order, rigidity linearizes to the Cauchy--Riemann equations.
At second order, self--interaction appears and produces forced corrections.
At third and higher orders, a hierarchical transport system emerges, entirely determined
by the initial holomorphic jet.

In this sense, complex analysis is not deformed but \emph{embedded} as the tangent theory
of a nonlinear geometric transport problem.

\chapter[Integrability and Coordinate Trivialization]{Integrability, Coordinate Trivialization, and the Intrinsic Obstruction}
\label{app:integrability-objection}
\paragraph{Terminology warning.}
In this appendix, the word \emph{integrability} is used in the classical differential--geometric sense of the Newlander--Nirenberg theorem: the existence of local coordinates in which an almost complex structure becomes constant. 

This notion is distinct from the \emph{analytic integrability} discussed in the main text, which concerns the closure of the variable Cauchy--Riemann calculus and the existence of a coherent function theory in a \emph{fixed} coordinate frame. The intrinsic obstruction $G$ is unrelated to the former notion but is decisive for the latter.

\section{The Classical Integrability Objection}

A natural objection to the study of variable elliptic structures in real dimension two
arises from classical complex geometry.
In the theory of almost complex manifolds, the fundamental invariant measuring
non--integrability is the \emph{Nijenhuis tensor}.
Given a smooth vector--valued one--form $J$ acting as an almost complex structure,
the Nijenhuis tensor is defined by
\[
N_J(X,Y)
=
[JX,JY]
-
J[JX,Y]
-
J[X,JY]
-
[X,Y],
\]
for vector fields $X,Y$.

The Newlander--Nirenberg theorem states that a smooth almost complex structure is
locally integrable if and only if its Nijenhuis tensor vanishes identically.
In real dimension two (complex dimension one), it is a classical result that
\[
N_J \equiv 0
\qquad
\text{for every smooth almost complex structure}.
\]
As a consequence, every two--dimensional almost complex structure admits local
\emph{isothermal coordinates}: there exists a local diffeomorphism
\[
\Phi : (x,y) \longmapsto (u,v)
\]
in which the complex structure becomes constant.

This fact gives rise to the following objection.
If every variable elliptic structure in dimension two can be locally flattened by a
change of coordinates, does the intrinsic obstruction introduced in this work,
\[
G := i_x + i\,i_y,
\]
carry any genuine geometric or analytic meaning?

\section{What the Obstruction Does \emph{Not} Measure}

Before answering this objection, it is essential to clarify what the obstruction $G$
does \emph{not} measure.

The quantity $G$ is \emph{not} an integrability invariant in the sense of complex
geometry.
Its vanishing does not assert the existence of holomorphic coordinates, nor does its
nonvanishing obstruct such coordinates.
In real dimension two, integrability in the classical sense is guaranteed independently
of $G$.

Accordingly:
\begin{itemize}
\item $G \equiv 0$ does not imply \emph{coordinate integrability} (existence of local isothermal coordinates);
\item $G \not\equiv 0$ does not obstruct \emph{coordinate integrability}.
\end{itemize}

The intrinsic obstruction belongs to a different conceptual layer than the Nijenhuis
tensor.

\section{Parallelism versus Coordinate Trivialization}

The obstruction $G$ arises from differentiating a moving generator
\[
i = i(x,y)
\]
of a rank--two real algebra bundle relative to the fixed background coordinates
$(x,y)$.
It measures the failure of $i$ to be \emph{parallel} with respect to the flat background
connection:
\[
G = i_x + i\,i_y.
\]

By contrast, the Nijenhuis tensor detects whether there exists a coordinate system in
which the complex structure becomes constant.
This is a statement about the existence of a preferred \emph{co--moving frame}, not
about the behavior of the structure relative to a fixed frame.

Passing to isothermal coordinates eliminates $G$ by construction, but only at the
expense of abandoning the original background connection.
The obstruction disappears because the frame is chosen to move with the structure,
not because the structure was intrinsically stationary.

Thus:
\begin{itemize}
\item the vanishing of the Nijenhuis tensor concerns \emph{coordinate trivialization};
\item the vanishing of $G$ concerns \emph{parallel transport in a fixed frame}.
\end{itemize}

These notions are logically independent.

\section{Eulerian and Lagrangian Viewpoints}

The distinction admits a transparent interpretation in transport--theoretic terms.

\paragraph{Lagrangian (co--moving) viewpoint.}
Choosing isothermal coordinates corresponds to following the geometry itself.
In this frame, the structure is static by definition.
This viewpoint underlies classical integrability results.

\paragraph{Eulerian (laboratory) viewpoint.}
Fixing the background coordinates $(x,y)$ corresponds to observing the structure as
it moves through space.
In this frame, the generator $i(x,y)$ evolves, and its evolution is governed by the
transport laws derived in Chapter~2.
The obstruction $G$ is precisely the Eulerian measure of this motion.

From this perspective, the forced Burgers equation
\[
\lambda_x + \lambda \lambda_y = G
\]
describes the transport of the structure coefficients relative to the fixed background.
The existence of a co--moving frame in which this motion disappears does not negate
the reality of the motion itself.

\section{Conclusion}

In real dimension two, classical integrability is automatic and therefore incapable of
distinguishing variable elliptic structures.
The Nijenhuis tensor is blind to the phenomena studied in this monograph because it
detects only the possibility of flattening the geometry by a change of coordinates.

The intrinsic obstruction
\[
G = i_x + i\,i_y
\]
addresses a different question.
It measures the failure of the structure to be parallel relative to a fixed flat
connection and governs the transport dynamics of the structure coefficients.
It captures the kinematics of a variable algebraic structure as seen in an Eulerian
frame, independently of whether the structure may be locally trivialized in a
co--moving frame.

The theory developed here is therefore not a theory of \emph{geometric integrability} of complex structures in the Newlander--Nirenberg sense, but a theory of transport of algebraic structures and of the analytic calculus they induce in a fixed background frame.


\chapter{Quaternionic Reformulation of Rigidity via the Hopf Fibration}\label{app:hopf-rigidity}

\section{Motivation}
A variable elliptic structure on a domain $\Omega\subset\mathbb R^2$ is encoded by a
generator $i=i(x,y)$ satisfying the structure reduction
\begin{equation}\label{eq:structure-poly}
  i^2+\beta(x,y)\,i+\alpha(x,y)=0,
\end{equation}
with ellipticity discriminant
\begin{equation}\label{eq:Delta}
  \Delta(x,y):=4\alpha(x,y)-\beta(x,y)^2>0.
\end{equation}
The intrinsic obstruction is
\begin{equation}
  G:=i_x+i\,i_y,
\end{equation}
and the structure is called \emph{rigid} precisely when $G\equiv 0$.

The purpose of this appendix is to recast rigidity in a purely quaternionic language
via the Hopf fibration, clarifying the geometric content of $G$.

\subsection{Normalization to a unit imaginary quaternion}
Completing the square in \eqref{eq:structure-poly} yields
\[
  (2i+\beta)^2=\beta^2-4\alpha=-\Delta.
\]
Define the normalized generator
\begin{equation}\label{eq:u-def}
  u(x,y):=\frac{2i(x,y)+\beta(x,y)}{\sqrt{\Delta(x,y)}}.
\end{equation}
Then
\begin{equation}\label{eq:u-square}
  u^2=-1,
\end{equation}
so $u$ takes values in the unit imaginary sphere
\[
  S^2=\{\,q\in\operatorname{Im}\mathbb H:\ |q|=1\,\}\subset \mathbb H.
\]
Thus, an elliptic quadratic structure canonically determines a smooth map
$u:\Omega\to S^2$.

\section{Hopf lift and gauge freedom}
Fix a constant imaginary unit $i_0\in\operatorname{Im}\mathbb H$ with $i_0^2=-1$.
Since every $u(x,y)\in S^2$ is conjugate to $i_0$, locally there exists a \emph{Hopf lift}
$q:\Omega\to S^3$ (unit quaternions) such that
\begin{equation}\label{eq:hopf-lift}
  u=q\,i_0\,q^{-1}.
\end{equation}
The lift is unique up to the $S^1$-gauge transformation
\begin{equation}\label{eq:gauge}
  q\ \mapsto\ q\,e^{i_0\theta(x,y)}.
\end{equation}

\section{Maurer--Cartan form and differentiation}
Define the left-trivialized Maurer--Cartan components
\begin{equation}\label{eq:MC}
  \Omega_x:=q_xq^{-1},\qquad \Omega_y:=q_yq^{-1},
\end{equation}
which lie in $\mathfrak{su}(2)\cong\operatorname{Im}\mathbb H$.
Differentiating \eqref{eq:hopf-lift} and using $(q^{-1})_x=-q^{-1}q_xq^{-1}$ gives
\begin{equation}\label{eq:u-derivs-comm}
  u_x=[\Omega_x,u],\qquad u_y=[\Omega_y,u],
\end{equation}
where $[A,B]=AB-BA$ denotes the commutator in $\mathbb H$.

\subsection{Vertical--horizontal decomposition}
At each point, $\mathfrak{su}(2)\cong\operatorname{Im}\mathbb H$ splits as
\[
  \operatorname{Im}\mathbb H=\mathrm{span}\{u\}\ \oplus\ u^\perp.
\]
Accordingly decompose
\begin{equation}\label{eq:vh-decomp}
  \Omega_x=\Omega_x^{\mathrm{ver}}+\Omega_x^{\mathrm{hor}},\qquad
  \Omega_y=\Omega_y^{\mathrm{ver}}+\Omega_y^{\mathrm{hor}},
\end{equation}
where $\Omega^{\mathrm{ver}}\parallel u$ and $\Omega^{\mathrm{hor}}\perp u$.
Since $\Omega^{\mathrm{ver}}$ commutes with $u$, it follows from \eqref{eq:u-derivs-comm} that
\begin{equation}\label{eq:u-derivs-hor}
  u_x=[\Omega_x^{\mathrm{hor}},u],\qquad u_y=[\Omega_y^{\mathrm{hor}},u].
\end{equation}
Under the gauge change \eqref{eq:gauge}, the Maurer--Cartan form changes by adding a
vertical term, so the horizontal components and the commutators in \eqref{eq:u-derivs-hor}
capture the gauge-invariant variation of $u$.

\section{Quaternionic form of the obstruction}
The obstruction $G=i_x+i\,i_y$ can be rewritten in terms of $u$.
Up to the scalar normalizations in \eqref{eq:u-def}, the intrinsic combination governing
deviations from the constant theory corresponds to
\begin{equation}\label{eq:uq-obstruction}
  \mathcal G:=u_x+u\,u_y.
\end{equation}
Using \eqref{eq:u-derivs-hor}, we may express $\mathcal G$ in Hopf variables as
\begin{equation}\label{eq:uq-obstruction-MC}
  \mathcal G
  =[\Omega_x^{\mathrm{hor}},u] \;+\; u\,[\Omega_y^{\mathrm{hor}},u].
\end{equation}
This expression is gauge-invariant (it is insensitive to vertical modifications of the lift).

\section{Quaternionic rigidity theorem}
\begin{theorem}[Quaternionic rigidity]\label{thm:quat-rigidity}
Let $u:\Omega\to S^2\subset\operatorname{Im}\mathbb H$ be the normalized generator
\eqref{eq:u-def} associated with an elliptic quadratic structure \eqref{eq:structure-poly}.
Let $q$ be a local Hopf lift \eqref{eq:hopf-lift} with Maurer--Cartan components
\eqref{eq:MC} and decomposition \eqref{eq:vh-decomp}.
Then the following are equivalent:
\begin{enumerate}
\item[\textup{(i)}] \textup{(Quaternionic rigidity)} $\ \mathcal G\equiv 0$, i.e.
\begin{equation}\label{eq:quat-rigidity}
  u_x+u\,u_y=0 \quad\text{on }\Omega.
\end{equation}
\item[\textup{(ii)}] \textup{(Horizontal Maurer--Cartan cancellation)}
\begin{equation}\label{eq:MC-cancel}
  [\Omega_x^{\mathrm{hor}},u] \;+\; u\,[\Omega_y^{\mathrm{hor}},u]=0 \quad\text{on }\Omega.
\end{equation}
\item[\textup{(iii)}] \textup{(Purely vertical transport along the characteristic direction)}
the pulled-back Maurer--Cartan form has no effective horizontal component along the
direction $\partial_x+u\,\partial_y$, i.e. $u$ undergoes only fiber (Hopf) gauge rotation
along that direction.
\end{enumerate}
Moreover, \eqref{eq:quat-rigidity} is the Hopf/quaternionic avatar of the original rigidity
condition $G=i_x+i\,i_y=0$.
\end{theorem}

\begin{proof}
By \eqref{eq:u-derivs-hor}, we have $u_x=[\Omega_x^{\mathrm{hor}},u]$ and
$u_y=[\Omega_y^{\mathrm{hor}},u]$. Substituting these into \eqref{eq:uq-obstruction}
yields \eqref{eq:uq-obstruction-MC}, and hence
$\mathcal G\equiv 0$ is equivalent to \eqref{eq:MC-cancel}. The gauge statement follows
from the fact that changing the lift by \eqref{eq:gauge} alters only the vertical part of the
Maurer--Cartan form, leaving the horizontal commutator expressions unchanged.
\end{proof}

\section{Interpretation}
Equation \eqref{eq:quat-rigidity} says that the $S^2$-valued imaginary direction field $u$
is transported compatibly with its own multiplication rule: along the characteristic direction
$\partial_x+u\,\partial_y$, the field exhibits no genuine horizontal twisting in $S^2$, only
Hopf fiber rotation. In this sense rigidity is a horizontality cancellation condition in the
Hopf fibration rather than an analytic assumption.

\chapter{Rigidization as a Computational Normal Form}
\label{app:rigidization-computational}

\section*{Purpose of this Appendix}

This appendix clarifies the intended scope of \emph{rigidization} within the
theory of variable elliptic structures.
Rigidization is not proposed as a replacement for global uniformization
(Beltrami theory), nor as a universal flattening mechanism.
Instead, it is designed as a \emph{computationally optimized normal form}
that exploits intrinsic transport structure when present, and that can be
used either as a standalone reduction or as a preprocessing step for
uniformization-based solvers.

The guiding philosophy is:
\begin{quote}
Rigidization removes exactly the analytic obstructions that prevent the use of
the constant-coefficient toolbox, while preserving geometric variability
that is irrelevant for computation but expensive to eliminate.
\end{quote}

\section{Rigidization versus Uniformization}

Let $\lambda(x,y)$ denote the spectral slope of a variable elliptic structure,
with intrinsic obstruction
\[
T := \lambda_x + \lambda\,\lambda_y .
\]

\subsection*{Uniformization (Beltrami)}

Uniformization seeks a diffeomorphism $\Phi$ such that the pullback structure
is \emph{constant}.  This solves the maximal geometric problem but requires
solving a globally coupled nonlinear elliptic PDE\@.
From a computational standpoint, uniformization is:
\begin{itemize}
\item global and nonlocal,
\item expensive to compute,
\item insensitive to existing transport structure,
\item often unnecessary when only analytic compatibility is required.
\end{itemize}

\subsection*{Rigidization}

Rigidization seeks a diffeomorphism $\Phi$ such that the pullback structure
satisfies
\[
\widetilde T = 0,
\qquad
\text{i.e.}\quad
\widetilde\lambda_X + \widetilde\lambda\,\widetilde\lambda_Y = 0,
\]
but does \emph{not} require $\widetilde\lambda$ to be constant.

This is a strictly weaker requirement than uniformization.
Nevertheless, once rigidity is achieved:
\begin{itemize}
\item the generalized Cauchy--Riemann system becomes homogeneous,
\item Cauchy--Pompeiu and similarity principles apply,
\item the full constant-structure analytic toolkit becomes available.
\end{itemize}

Thus rigidization reaches the minimal normal form required for analysis,
without over-solving the geometric problem.

\section{Why ``Weaker'' Means ``Easier''}

The rigidization condition is first-order in the structure and second-order
in the coordinates, and its linearization is an elliptic system whose
principal symbol is explicitly computable.
As shown in Chapter~\ref{ch:rigidization-convergence}, this enables:
\begin{itemize}
\item local Newton convergence under a quantitative small-obstruction condition
      (Theorem~\ref{thm:main-convergence}),
\item inner solvers based on transport--wave splitting (shifted ADI with
      Wachspress parameters),
\item direct control of analytic compatibility without global flattening.
\end{itemize}

In contrast, full uniformization removes curvature and transport simultaneously,
even when curvature plays no analytic role for the PDE being solved.

From a solver perspective, rigidization is therefore cheaper, more local,
and better aligned with problems that already possess a preferred transport
geometry.

\section{A Solver-Oriented Triage}

Rigidization is intended to be used selectively.
Three intrinsic indicators guide solver choice:
\begin{enumerate}
\item \textbf{Obstruction magnitude:}
\[
\rho_T := \frac{\|T\|_{C^{0,\alpha}}}{b_{\min}^2},
\qquad
b_{\min} := \inf \operatorname{Im}\lambda .
\]
\item \textbf{Metric curvature magnitude:} a scale-normalized curvature
indicator $\rho_K$ for the canonical metric.
\item \textbf{Transport dominance:} a problem-dependent measure of whether
a preferred first-order flow direction is present.
\end{enumerate}

These lead to the following computational policy:

\begin{itemize}
\item \textbf{Small obstruction ($\rho_T \ll 1$):}
Rigidize immediately using the Newton scheme of Chapter~\ref{ch:rigidization-convergence}.
This is the regime covered by Theorem~\ref{thm:main-convergence} with full convergence guarantees.

\item \textbf{Moderate obstruction, transport-dominated PDE:}
Attempt rigidization via a continuity strategy (interpolating from a rigid
reference structure $\lambda_0 = i$ via
$\lambda^t = (1-t)\lambda_0 + t\lambda$, taking incremental steps in~$t$
with each step small enough for the Newton iteration of
Theorem~\ref{thm:main-convergence} to converge).
If the obstruction is substantially reduced, proceed in rigid coordinates;
otherwise fall back to uniformization.

\item \textbf{Large obstruction, negligible curvature:}
Use Beltrami uniformization directly.

\item \textbf{Diffusion-dominated problems:}
Use uniformization; rigidization offers no advantage when no transport
structure is present.
\end{itemize}

This policy is not ad hoc: it reflects the analytic role played by the
obstruction in the failure of derivation properties, even though the specific
thresholds and decision boundaries are solver-dependent.
\section{Rigidization as a Preconditioner}

Even when rigidization does not succeed globally, it can still be valuable.

Chapter~\ref{ch:rigidization-convergence} shows that naive transport--wave splitting does not converge as a
standalone iteration, but is effective as an \emph{inner solver} for the
linearized elliptic problem.
This mirrors standard numerical practice:
\begin{itemize}
\item rigidization removes transport incompatibility first,
\item uniformization then operates on a better-conditioned structure,
\item overall solver stability and convergence improve.
\end{itemize}

In this sense, rigidization acts as a \emph{geometric preconditioner} for
uniformization-based methods.

More concretely, the incremental continuity strategy described above provides
a practical algorithm even without a closed-form global existence theorem:
one advances in~$t$ until the Newton step fails to converge, records the
partially rigidized structure, and hands the residual obstruction to a
uniformization solver operating on a problem whose effective $\rho_T$
has been reduced.

\section{The Global Rigidization Problem}
\label{sec:global-rigidization-open}

It is natural to ask whether rigidization can always be achieved on compact
domains, without any smallness condition on the obstruction~$T$.

\begin{conjecture}[Global rigidization]
\label{conj:global-rigidization}
Let $\Omega\subset\mathbb{R}^2$ be a compact simply connected domain with
$C^{2,\alpha}$ boundary.
Let $\lambda\in C^{2,\alpha}(\bar\Omega)$ with
$\operatorname{Im}\lambda \ge b_{\min}>0$ on~$\bar\Omega$.
Then there exists a $C^{2,\alpha}$ diffeomorphism
$\Phi:\bar\Omega\to\Phi(\bar\Omega)$ such that the pullback structure is rigid.
\end{conjecture}

A natural proof strategy is the continuity method: interpolating from a rigid
reference $\lambda_0 = i$ to the target~$\lambda$, the success set is nonempty
(at $t=0$) and open (by Theorem~\ref{thm:main-convergence}).
Closedness would follow from uniform a~priori estimates on the rigidizing
diffeomorphisms---specifically, a uniform $C^{2,\alpha}$ bound and a uniform
Jacobian lower bound along the family.

These a~priori estimates constitute the core difficulty.
Three coupled sub-problems are involved:
\begin{enumerate}
\item \textbf{Jacobian lower bound.}
Show that $\inf_{\bar\Omega} J_\Phi \ge c > 0$ for any rigidizing
diffeomorphism, with~$c$ depending only on $b_{\min}$,
$\|\lambda\|_{C^{2,\alpha}}$, and~$\Omega$.
A promising route is to derive a second-order elliptic equation (or
inequality) for~$\log J_\Phi$ from the ratio condition and apply the
maximum principle, using the boundary normalization
$\Phi|_{\partial\Omega}=\mathrm{id}$ (so $J=1$ on~$\partial\Omega$).

\item \textbf{Burgers constraint on compact domains.}
The pullback~$\widetilde\lambda$ solves conservative Burgers, whose
solutions are constant along characteristics.
On compact domains with $\operatorname{Im}\widetilde\lambda>0$, the
characteristic map from boundary data to the interior is expected to be
injective (no shock formation in the elliptic regime).
A correct proof requires a quantitative injectivity estimate on the
characteristic map---specifically, a determinant lower bound on the map
$(s,X)\mapsto(X,Y)$ involving $\partial\widetilde\lambda/\partial s$
and domain geometry, not merely the condition
$\operatorname{Im}\widetilde\lambda>0$.

\item \textbf{$C^{2,\alpha}$ a~priori bound.}
With the Jacobian controlled, the rigidization system is uniformly
elliptic.
Schauder estimates (Agmon--Douglis--Nirenberg) should yield global
$C^{2,\alpha}$ bounds, but the application to this specific quasilinear
$2\times2$ system requires verifying the structural conditions under
which elliptic regularity and boundary estimates hold.
\end{enumerate}

These sub-problems are mutually dependent: the Burgers constraint feeds into
the Jacobian bound, which in turn feeds into the Schauder estimates.
As of this writing, none of the three has been rigorously closed, and the
conjecture remains open.

We record it here both as an invitation to further work and as context for
the solver-oriented approach of this appendix: the computational triage
and preconditioner strategies above are designed to be effective regardless
of whether Conjecture~\ref{conj:global-rigidization} is eventually proved.

\section{What Is \emph{Not} Claimed}

To be explicit about the boundaries of the current theory, this monograph
does \emph{not} claim:
\begin{itemize}
\item global rigidization for arbitrary elliptic structures on compact
      domains (this remains an open problem;
      see Conjecture~\ref{conj:global-rigidization}),
\item a priori Jacobian lower bounds independent of the obstruction
      magnitude (the estimates in Chapter~\ref{ch:rigidization-convergence} depend on the smallness
      condition $\|T\|<C\,b_{\min}^2$),
\item replacement of Beltrami theory as a universal tool.
\end{itemize}

These are not limitations of design intent but reflections of the current
state of the analysis.
The global theory may well be within reach---the structural ingredients
(ellipticity of the linearized problem, Burgers characteristic geometry,
similarity-principle arguments) are all present---but assembling them into
a complete a~priori package requires new ideas, particularly for the
control of the Jacobian.

\section{Summary}

Rigidization should be viewed as:
\begin{itemize}
\item a maximal \emph{computable} normal form,
\item a solver-side exploitation of transport geometry,
\item a selective alternative or complement to uniformization,
\item a mechanism that preserves just enough geometry to be efficient.
\end{itemize}

When it applies, rigidization delivers the full analytic power of the
constant-coefficient theory at a fraction of the computational cost.
When its applicability has not been established, it still provides valuable
structural diagnostics and preconditioning for heavier methods.

In this sense, rigidization is not a competitor to uniformization but a
practical intermediary between raw variable geometry and full geometric
flattening---one whose global reach remains a compelling open question in
its own right.

\chapter[Symbolic Verification Second--Order Expansion]{Symbolic Verification of the Rigid Second--Order Expansion}
\label{app:sympy-verification}

\noindent
This appendix contains a complete symbolic verification of
Theorem~\ref{thm:rigid-dzdbar} using the computer algebra system
\texttt{SymPy}.

\medskip
\noindent
The script expands the operator $4\,\partial_z\partial_{\bar z}$ acting on
$f=u+v\,i$ using the defining structure relation
\[
i^2+\beta i+\alpha=0,
\]
substitutes the rigidity (Burgers) system
\[
\alpha_x=\alpha\,\beta_y,
\qquad
\beta_x+\alpha_y=\beta\,\beta_y,
\]
and simplifies the resulting expressions in the basis $\{1,i\}$.

\medskip
\noindent
It verifies that:
\begin{itemize}
\item the principal part is exactly $L_{\alpha,\beta}$ acting on $(u,v)$;
\item all second--order cross terms cancel under rigidity;
\item the remaining terms are purely first order and coincide with
     \eqref{eq:R0}--\eqref{eq:R1};
\item no zero--order terms occur.
\end{itemize}

\medskip
\noindent
The computation is purely algebraic and does not rely on any numerical
approximation.

\bigskip

\begin{lstlisting}[language=Python,
caption={SymPy verification of Theorem~\ref{thm:rigid-dzdbar}},
label={lst:sympy-verify}]
# --- begin SymPy verification script ---

#!/usr/bin/env python3
"""
verify_rigid_dzdbar.py

SymPy verification of the second-order expansion in the chapter
'Second-Order Operators and Factorization in the Rigid Regime'.

It verifies, under the rigidity (Burgers) system
   alpha_x = alpha*beta_y,
   beta_x  = beta*beta_y - alpha_y,
that for f = u + v i one has

   4 d_z d_{\bar z} f
     = (L_{alpha,beta} u + R0[u,v]) + (L_{alpha,beta} v + R1[u,v]) i,

with
   L_{alpha,beta} = d_x^2 - beta d_{xy} + alpha d_y^2,
   R0[u,v] = alpha_y u_y + alpha_y v_x - 2 alpha beta_y v_y,
   R1[u,v] = beta_y u_y + beta_y v_x + (2 alpha_y - 2 beta beta_y) v_y.

The computation is done in the 2D commutative algebra generated by i with
   i^2 + beta i + alpha = 0
and uses the identity (elliptic regime)
   (2i+beta)^{-1} = (-beta - 2i)/Delta,  Delta=4alpha-beta^2,
to express i_x and i_y from the differentiated structure polynomial.

Run:
   python3 verify_rigid_dzdbar.py
"""

import sympy as sp


def main():
   # Base variables and scalar coefficient functions
   x, y = sp.symbols("x y")
   alpha = sp.Function("alpha")(x, y)
   beta = sp.Function("beta")(x, y)
   u = sp.Function("u")(x, y)
   v = sp.Function("v")(x, y)

   Delta = 4 * alpha - beta**2

   # --- Algebra representation: pairs (a0, a1) correspond to a0 + a1*i ---
   def add(a, b):
       return (sp.simplify(a[0] + b[0]), sp.simplify(a[1] + b[1]))

   def smul(s, a):
       return (sp.simplify(s * a[0]), sp.simplify(s * a[1]))

   def mul(a, b):
       # (a0 + a1 i)(b0 + b1 i) with i^2 = -beta i - alpha
       a0, a1 = a
       b0, b1 = b
       c0 = a0 * b0 - alpha * a1 * b1
       c1 = a0 * b1 + a1 * b0 - beta * a1 * b1
       return (sp.simplify(c0), sp.simplify(c1))

   I = (sp.Integer(0), sp.Integer(1))        # i
   hatI = (-beta, sp.Integer(-1))            # \hat i = -beta - i

   # (2i+beta)^{-1} = (-beta - 2i)/Delta
   inv_twoIplusbeta = (-beta / Delta, sp.Integer(-2) / Delta)

   # --- i_x and i_y from differentiated structure polynomial ---
   alpha_x = sp.diff(alpha, x)
   beta_x = sp.diff(beta, x)
   alpha_y = sp.diff(alpha, y)
   beta_y = sp.diff(beta, y)

   ix = mul((-alpha_x, -beta_x), inv_twoIplusbeta)  # -(alpha_x + beta_x i)/(2i+beta)
   iy = mul((-alpha_y, -beta_y), inv_twoIplusbeta)  # -(alpha_y + beta_y i)/(2i+beta)

   # --- Derivatives of a section f = u + v i, i.e. pair (u,v) ---
   def dx(f):
       f0, f1 = f
       # (u_x + v_x i) + v i_x
       return add((sp.diff(f0, x), sp.diff(f1, x)), smul(f1, ix))

   def dy(f):
       f0, f1 = f
       # (u_y + v_y i) + v i_y
       return add((sp.diff(f0, y), sp.diff(f1, y)), smul(f1, iy))

   def dbar(f):
       # d_{\bar z} = 1/2 (d_x + i d_y)
       return smul(sp.Rational(1, 2), add(dx(f), mul(I, dy(f))))

   def dz(f):
       # d_z = 1/2 (d_x + \hat i d_y)
       return smul(sp.Rational(1, 2), add(dx(f), mul(hatI, dy(f))))

   f = (u, v)
   lhs = smul(4, dz(dbar(f)))  # pair (lhs0, lhs1)

   # --- Rigidity/Burgers substitutions (eliminate x-derivatives of alpha,beta) ---
   subs = {
       sp.diff(alpha, x): alpha * sp.diff(beta, y),
       sp.diff(beta, x): beta * sp.diff(beta, y) - sp.diff(alpha, y),
   }
   # mixed and second x-derivatives via differentiating the Burgers system
   subs.update(
       {
           sp.diff(alpha, x, y): sp.diff(alpha * sp.diff(beta, y), y),
           sp.diff(alpha, y, x): sp.diff(alpha * sp.diff(beta, y), y),
           sp.diff(alpha, x, x): sp.diff(alpha * sp.diff(beta, y), x),
           sp.diff(beta, x, y): sp.diff(beta * sp.diff(beta, y) - sp.diff(alpha, y), y),
           sp.diff(beta, y, x): sp.diff(beta * sp.diff(beta, y) - sp.diff(alpha, y), y),
           sp.diff(beta, x, x): sp.diff(beta * sp.diff(beta, y) - sp.diff(alpha, y), x),
       }
   )

   def enforce_rigidity(expr):
       e = expr
       for _ in range(6):
           e = sp.expand(e)
           e = e.xreplace(subs)
           e = sp.simplify(sp.expand(e))
       return sp.simplify(e)

   lhs0 = enforce_rigidity(lhs[0])
   lhs1 = enforce_rigidity(lhs[1])

   # --- RHS (principal part + first-order correction) ---
   L_u = sp.diff(u, x, 2) - beta * sp.diff(u, x, y) + alpha * sp.diff(u, y, 2)
   L_v = sp.diff(v, x, 2) - beta * sp.diff(v, x, y) + alpha * sp.diff(v, y, 2)

   R0 = alpha_y * sp.diff(u, y) + alpha_y * sp.diff(v, x) - 2 * alpha * beta_y * sp.diff(v, y)
   R1 = beta_y * sp.diff(u, y) + beta_y * sp.diff(v, x) + (2 * alpha_y - 2 * beta * beta_y) * sp.diff(v, y)

   rhs0 = enforce_rigidity(L_u + R0)
   rhs1 = enforce_rigidity(L_v + R1)

   diff0 = sp.simplify(lhs0 - rhs0)
   diff1 = sp.simplify(lhs1 - rhs1)

   print("Component (1) difference after rigidity simplification:")
   print(diff0)
   print("\nComponent (i) difference after rigidity simplification:")
   print(diff1)

   assert diff0 == 0, "Scalar component does not match."
   assert diff1 == 0, "i-component does not match."

   print("\nOK: Identity verified (both components simplify to 0).")


if __name__ == "__main__":
   main()


# --- end script ---
\end{lstlisting}

\chapter[Symbolic Verification Rigidity-Flatness]{Symbolic Verification for the Rigidity-Flatness Theorem Condition}

\UseRawInputEncoding
\begin{lstlisting}[language=Python,
caption={SymPy verification of Theorem~\ref{thm:rigid-dzdbar}},
label={lst:sympy-rig-flat-verify}]
# --- begin SymPy verification script ---
#!/usr/bin/env python3
"""
rigid_flat_vacuum_check.py

Reproducible SymPy computations for the "full vacuum" question in the
planar elliptic structure framework:

  - Define tau = p + i q (q>0), with canonical metric
        ds^2 = (1/q) (dx^2 + 2 p dx dy + (p^2 + q^2) dy^2).
  - Compute the Gaussian curvature K of this metric for general p(x,y), q(x,y).
  - Impose rigidity (torsion-free / conservative transport):
        tau_x + tau tau_y = 0
    i.e.
        p_x + p p_y - q q_y = 0
        q_x + p q_y + q p_y = 0
    and substitute the consequent relations for second derivatives
    to obtain K|_{rigid}.
  - Sanity-check with an explicit nontrivial rigid family:
        tau = (eps*y + i) / (1 + eps*x)

Outputs:
  - General K(p,q) as a symbolic expression
  - Rigid-reduced curvature K_rigid (expressed in y-derivatives only)
  - Curvature for the explicit rigid family

Run:
  python3 rigid_flat_vacuum_check.py

Dependencies:
  sympy
"""

import sympy as sp


def main():
    # Coordinates
    x, y = sp.symbols('x y', real=True)

    # Real/imag parts of tau = p + i q (q>0)
    p = sp.Function('p')(x, y)
    q = sp.Function('q')(x, y)

    # Canonical metric in (x,y):
    # ds^2 = (1/q) (dx^2 + 2 p dx dy + (p^2 + q^2) dy^2)
    g = sp.Matrix([
        [1/q,      p/q],
        [p/q, (p**2 + q**2)/q]
    ])
    g_inv = sp.simplify(g.inv())

    coords = [x, y]

    def d(expr, var):
        return sp.diff(expr, var)

    # Christoffel symbols Γ^k_{ij}
    Gamma = [[[0, 0], [0, 0]] for _ in range(2)]
    for k in range(2):
        for i in range(2):
            for j in range(2):
                s = 0
                for l in range(2):
                    s += g_inv[k, l] * (d(g[j, l], coords[i]) +
                                        d(g[i, l], coords[j]) -
                                        d(g[i, j], coords[l]))
                Gamma[k][i][j] = sp.simplify(sp.Rational(1, 2) * s)

    # Riemann tensor R^l_{ijk}
    Riem = [[[[0 for _ in range(2)] for _ in range(2)] for _ in range(2)] for _ in range(2)]
    for l in range(2):
        for i in range(2):
            for j in range(2):
                for k in range(2):
                    term = d(Gamma[l][i][k], coords[j]) - d(Gamma[l][i][j], coords[k])
                    for m in range(2):
                        term += Gamma[l][j][m] * Gamma[m][i][k] - Gamma[l][k][m] * Gamma[m][i][j]
                    Riem[l][i][j][k] = sp.simplify(term)

    # Ricci tensor and scalar curvature
    Ricci = sp.Matrix([[0, 0], [0, 0]])
    for i in range(2):
        for k in range(2):
            Ricci[i, k] = sp.simplify(sum(Riem[l][i][l][k] for l in range(2)))

    R_scalar = sp.simplify(sum(g_inv[i, k] * Ricci[i, k] for i in range(2) for k in range(2)))
    K = sp.simplify(R_scalar / 2)  # Gaussian curvature in 2D

    print("\n=== General Gaussian curvature K(p,q) ===")
    print(K)

    # --- Rigidity (torsion-free) equations: tau_x + tau tau_y = 0 ---
    # Split into real equations for p,q:
    #   p_x + p p_y - q q_y = 0
    #   q_x + p q_y + q p_y = 0
    px = sp.Derivative(p, x)
    py = sp.Derivative(p, y)
    qx = sp.Derivative(q, x)
    qy = sp.Derivative(q, y)

    # Solve for x-derivatives
    p_x_expr = -p * py + q * qy
    q_x_expr = -p * qy - q * py

    # Differentiate in y to get mixed derivatives
    p_xy_expr = sp.diff(p_x_expr, y)
    q_xy_expr = sp.diff(q_x_expr, y)

    # Differentiate in x and eliminate x-derivatives via rigidity
    p_xx_expr = sp.diff(p_x_expr, x).xreplace({px: p_x_expr, qx: q_x_expr})
    q_xx_expr = sp.diff(q_x_expr, x).xreplace({px: p_x_expr, qx: q_x_expr})

    # Substitute mixed derivatives to eliminate remaining x-dependence
    p_xx_expr = sp.simplify(p_xx_expr.subs({
        sp.Derivative(p, x, y): p_xy_expr,
        sp.Derivative(q, x, y): q_xy_expr
    }))
    q_xx_expr = sp.simplify(q_xx_expr.subs({
        sp.Derivative(p, x, y): p_xy_expr,
        sp.Derivative(q, x, y): q_xy_expr
    }))

    subs_rigid = {
        sp.Derivative(p, x): p_x_expr,
        sp.Derivative(q, x): q_x_expr,
        sp.Derivative(p, x, 2): p_xx_expr,
        sp.Derivative(q, x, 2): q_xx_expr,
        sp.Derivative(p, x, y): p_xy_expr,
        sp.Derivative(q, x, y): q_xy_expr,
    }

    K_rigid = sp.simplify(sp.together(K.subs(subs_rigid)))

    print("\n=== Curvature reduced under rigidity (T=0) ===")
    print(sp.factor(K_rigid))

    # --- Sanity check: explicit rigid family tau = (eps*y + i)/(1 + eps*x) ---
    eps = sp.symbols('eps', real=True)
    p_ex = eps * y / (1 + eps * x)
    q_ex = 1 / (1 + eps * x)

    K_family = sp.simplify(K.subs({p: p_ex, q: q_ex}))

    print("\n=== Curvature for rigid family tau=(eps*y+i)/(1+eps*x) ===")
    print(sp.factor(K_family))

    # Optional: show that K_family == 0 implies eps == 0 (generic statement)
    # This is not a full proof, but indicates nontrivial family is not flat.
    print("\nDone.")


if __name__ == "__main__":
    main()


\end{lstlisting}

\chapter*{Acknowledgements}

\noindent\textbf{Use of Generative AI Tools.}
Portions of the writing and editing of this manuscript were assisted by generative AI language tools. These tools were used to improve clarity of exposition, organization of material, and language presentation. All mathematical results, statements, proofs, and interpretations were developed, verified, and validated by the author. The author takes full responsibility for the accuracy, originality, and integrity of all content in this work, including any material produced with the assistance of AI tools. No generative AI system is listed as an author of this work.

\end{document}